\newcommand{\lvt}{\left|\kern-1.35pt\left|\kern-1.3pt\left|}
\newcommand{\rvt}{\right|\kern-1.3pt\right|\kern-1.35pt\right|}
\renewcommand*\env@matrix[1][*\c@MaxMatrixCols c]{%
	\hskip -\arraycolsep
	\let\@ifnextchar\new@ifnextchar
	\array{#1}}
\theoremstyle{plain}
\newtheorem{Theorem}{Theorem}[section]
\newtheorem{Corollary}[Theorem]{Corollary}
\newtheorem{lemma}[Theorem]{Lemma}
\newtheorem{pro}[Theorem]{Proposition}
\newtheorem{Definition}[Theorem]{Definition}
\newtheorem{Remark}[Theorem]{Remark}
\newcommand{\ii}{\operatorname{i}}
\renewcommand{\d}{\mathrm{d}}
\newcommand{\Exp}[1]{\operatorname{e}^{#1}}
\newcommand{\diag}{\operatorname{diag}}
\newcommand{\C}{\mathbb{C}}
\newcommand{\T}{\mathbb{T}}
\newcommand{\N}{\mathbb{N}}
\newcommand{\Z}{\mathbb{Z}}
\renewcommand{\L}{\mathscr{L}}
\newcommand{\U}{\mathscr{U}}
\renewcommand{\l}{\mathscr{l}}
\renewcommand{\u}{\mathscr{u}}
\newcommand{\A}{\mathscr{A}}
\newcommand{\B}{\mathscr{B}}
\newcommand{\M}{\mathscr{M}}
\newcommand{\Tscr}{\mathscr{T}}
\newcommand{\Rscr}{\mathscr{R}}
\newcommand{\Sscr}{\mathscr{S}}
\begin{document}

\title[Mixed Multiple Orthogonal Laurent Polynomials on the Unit Circle]{Mixed Multiple Orthogonal Laurent Polynomials\\ on the Unit Circle}

\author[EJ Huertas]{Edmundo J. Huertas$^{1}$\orcidlink{0000-0001-6802-3303}}

\address{$^{1}$Departamento de Física y Matemáticas, Universidad de Alcalá,
	Ctra. Madrid-Barcelona, Km. 33,600.
	28805 - Alcalá de Henares, Madrid, Spain}

\email{$^{1}$edmundo.huertas@uah.es}

\author[M Mañas]{Manuel Mañas$^{2}$\orcidlink{0000-0003-3764-5737}}
\address{$^2$Departamento de Física Teórica, Universidad Complutense de Madrid, Plaza Ciencias 1, 28040-Madrid, Spain 
}
\email{$^2$manuel.manas@ucm.es}

\keywords{Mixed multiple orthogonal Laurent polynomials, unit circle, Christoffel--Darboux formulas, ABC theorem, recurrence relations, Christoffel perturbations, Geronimus perturbations}

\subjclass{33C45, 33C47,42C05, 15A23}

\date{\emph{\today}}
\maketitle

\begin{abstract}
Mixed orthogonal Laurent polynomials on the unit circle of CMV type are constructed  utilizing a matrix of moments and its Gauss--Borel factorization and employing a multiple extension of the CMV ordering. A systematic analysis of the associated multiple orthogonality and biorthogonality relations, and an examination of the degrees of the Laurent polynomials is given. Recurrence relations, expressed in terms of banded matrices, are found. These recurrence relations lay the groundwork for corresponding Christoffel--Darboux kernels and relations, as well as for elucidating the ABC theorem. The paper also develops the theory of diagonal Christoffel and Geronimus perturbations of the matrix of measures.  Christoffel formulas are found for both perturbations.
\end{abstract}
\allowdisplaybreaks 
\tableofcontents


\section{Introduction}

Defined with respect to multiple weight functions or measures, multiple orthogonal polynomials form a distinctive class of polynomials that play an important role in fields such as numerical analysis, approximation theory, and mathematical physics. Unlike traditional orthogonal polynomials associated with a single weight function, these polynomials satisfy simultaneous orthogonality conditions, making them valuable for tackling complex analytical problems. For a comprehensive introduction to multiple orthogonal polynomials, see \cite{ismail} and \cite{Nikishin-Sorokin}, while \cite{afm} explores their connections with integrable systems. Their significance also extends to the study of Markov processes and generalized random walks beyond the standard birth-and-death framework, as shown in \cite{bidiagonal2, Reference 10, Reference 11, Reference 12}.

The concept of mixed multiple orthogonal polynomials was first introduced by Sorokin in 1994 \cite{sorokin1} and later expanded in 1997 in collaboration with van Iseghem \cite{Sorokin_Van_Iseghem_1}, as they explored matrix orthogonality of vector polynomials, see also \cite{Sorokin_Van_Iseghem_2,Sorokin_Van_Iseghem_3}. This framework re-emerged in 2004 \cite{Evi_Arno} within the study of multiple non-intersecting Brownian motions, where the term “mixed multiple orthogonal” was formally coined. Further discussions can be found in \cite{mixto_FUG}, \cite{afm}, and \cite{Ulises-Sergio-Judith}. In these studies, a rectangular \( q \times p \) matrix of weights, each of rank 1 at every point in the support, was constructed. However, the most relevant frameworks for the present discussion are found in \cite{Sorokin_Van_Iseghem_1} and \cite{Ulises-Sergio-Judith}, where the \( q \times p \) rectangular matrix consists of functionals or measures.

Mixed multiple orthogonal polynomials play a significant role in the spectral theory of banded operators. For certain banded matrices that admit a positive bidiagonal factorization, it has been demonstrated that a Favard-type spectral theorem holds in this extended framework, generalizing the classical tridiagonal case of Jacobi matrices; see \cite{bidiagonal1, bidiagonal2, bidiagonal3}.

Orthogonal polynomials on the unit circle have been the subject of extensive study; see, for example, \cite{Simon}. A major development emerged with the introduction of the Cantero–Moral–Velázquez (CMV) ordering and the discussion of CMV orthogonal Laurent polynomials \cite{CMV,Simon,am}. These ideas have since been extended to various settings, such as matrix orthogonality \cite{am2} and multivariate Laurent orthogonal polynomials on tori \cite{am3}. Additionally, the Christoffel and Geronimus perturbations have been examined in these extended cases \cite{amt,am4,GMM}, see also \cite{am3}. In the matrix case, the Riemann–Hilbert problem was discussed in connection with the matrix discrete Painlevé II equation \cite{CM}.

However, to date, almost only two papers have explored possible constructions of multiple orthogonal polynomials on the unit circle: the seminal paper \cite{MV-CA08} and later \cite{CDR-JCAM15}. Since then, there has been little further development. In January 2024, the first author announced at the Biennial Meeting of The Spanish Royal Mathematical Society the results presented in this paper; see this \href{https://2024.bienalrsme.com/sites/default/files/S2%20PolnoOrtog_FE_app.pdf}{link}. As we were completing the final revisions of this paper, two new papers on the subject appeared \cite{KM, k}.

This paper develops the framework of mixed multiple orthogonal Laurent polynomials, beginning with an introduction to CMV mixed multiple Laurent orthogonality in Section 2. This section establishes foundational concepts, including the construction of the moment matrix and its Gauss–Borel factorization, which are essential to understanding the properties and structure of these polynomials.

Section 3 explores mixed multiple orthogonal Laurent polynomials in depth, covering their orthogonality properties, recurrence relations, Szegő recurrence matrices, and various recurrence structures specific to both matrix and scalar cases. Additionally, this section introduces the Christoffel–Darboux kernels, which are central to the analysis within this framework, as well as second kind functions.

The study of perturbations begins in Section 4, which addresses diagonal Christoffel perturbations. This section examines the connector matrix, connection formulas, and the Christoffel formulas for key polynomial functions, analyzing how these perturbations modify and extend the classical structure.

Section 5 then explores Geronimus perturbations, presenting foundational tools such as the connector matrix and connection formulas in this context. This section also includes the study of diagonal Geronimus perturbations with singular parts, as well as Christoffel–Geronimus formulas for diagonal Geronimus  transformations.


\label{S01-Intro}

\section{CMV mixed multiple Laurent orthogonality in the unit circle}

\subsection{The  moment matrix and its Gauss--Borel factorization}

\label{S02-Defs}

A measure $\mu$ in unit circle 
\begin{align*}
	\begin{aligned}
		\mathbb{T}&\coloneq \{z\in\C:|z|=1\}, & z&=\Exp{\ii \theta}, &\theta&\in[-\pi,\pi],
	\end{aligned}
\end{align*}
can be always split into its absolutely continuous part with respect to the normalized Lebesgue measure $\frac{\d\theta}{2\pi}$ and its singular part $\mu_s$,
\begin{align*}
	\mu(\theta)=w(\theta)\frac{\d\theta}{2\pi}+\mu_s(\theta).
\end{align*}
Notice that on $\mathbb T$ we have $\d z=\d \Exp{\ii \theta}=\ii \Exp{\ii \theta}\d\theta$, so that
\begin{align*}
\frac{1}{2\pi}	\d\theta=\frac {\d z}{2\pi \ii z}.
\end{align*}
Hence, we can write for $z\in\T$
\begin{align*}
\mu(z)=w(z)\frac{\d z}{2\pi\ii z}+\d\mu_s.
\end{align*}
We assume that the support of the measure $\mu$ is a infinite set in $\T$.
Given two function $f,g$ over $\T$ two possible inner products can be considered
\begin{align*}
\begin{aligned}
	&	\int_{-\pi}^\pi \overline{f(\theta)} g(\theta) \d\mu(\theta), &
	&\int_{-\pi}^\pi {f(\theta)} \overline{g(\theta) }\d\mu(\theta),
\end{aligned}
\end{align*} 
where the overline denotes the complex conjugate.
In terms of integrals in the unit circle these integrals are written
\begin{align*}
	\begin{aligned}
		&	\int_\T \bar f(z^{-1}) g(z) \d\mu(z), &
		&\int_\T {f(z)} \bar g(z^{-1}) \d\mu(\theta),
	\end{aligned}
\end{align*} 
respectively.

Further, as we are going to discuss mixed multiple orthogonal systems in the unit circle we need to 
 consider a  $q\times p$ rectangular matrix, $q,p\in \mathbb{N}$, with $q$ rows and $p$ columns,
of Borel complex measures, with support having infinite points,  on the unit circle $\mathbb{T}$%
\begin{align}
	\mu (z)&\coloneq%
	\begin{bmatrix}
		\mu _{1,1} (z)& \cdots &\mu _{1,p}(z) \\ 
		\vdots &  & \vdots \\ 
	\mu _{q,1} (z)& \cdots & \mu _{q,p}(z)%
	\end{bmatrix}  \label{MatrixMeasures}
\end{align}%
with its splitting in terms of absolutely continuous and singular parts is
\begin{gather*}
	\d\mu (z)=w(z)\frac{\d z}{2\pi \ii z}+\d\mu_s,\\
\begin{aligned}
	w&=	\begin{bmatrix}
		w _{1,1} (z)& \cdots &  w _{1,p}(z) \\ 
		\vdots &  & \vdots \\ 
		w _{q,1} (z)& \cdots & w _{q,p}(z)%
	\end{bmatrix}, &	\mu_s&= \begin{bmatrix}\mu _{s,1,1} (z)& \cdots & \mu _{s,1,p}(z) \\ 
		\vdots &  & \vdots \\ 
		\mu _{s,q,1} (z)& \cdots & \mu _{s,q,p}(z).%
	\end{bmatrix}.
\end{aligned}
\end{gather*}

The corresponding moments are
 \begin{align*}
\begin{aligned}
		c_{n}&=\oint_{\mathbb{T}}z^{n}\d\mu (z)=\int_{-\pi}^\pi \Exp{\ii n\theta}\d \mu(\theta)\in \mathbb{C}^{q\times p}, & n&\in\Z,
\end{aligned}
\end{align*}
are, in turn, $q\times p$ complex matrices with entries $c_{b,a,n}$ $a\in\{1,\dots,p\}$ and $q\in\{1,\dots,q\}$.
The Fourier series of this rectangular matrix of measures is
\begin{align*}
	F_\mu(u)\coloneq\sum_{n=-\infty}^{\infty} c_n u^n,
\end{align*}
 for absolutely continuous measures, $\d \mu(\theta)=w(\theta) \frac{\d \theta}{2\pi}$
  satisfies 
 \begin{align*}
 	F_\mu(\theta)=w(\theta).
 \end{align*}
 Let $A(0 ; r, R)=\{z \in \mathbb{C}: r<|z|<R\}$ denote the annulus around $z=0$ with interior and exterior radii $r$ and $R$. For $a\in\{1,\dots,p\}$ and $b\in\{1,\dots,q\}$, consider  
\begin{align*}
\begin{aligned}
		 R_{b,a, \pm}:=\left(\limsup _{n \rightarrow \infty} \sqrt[n]{\left|c_{b,a, \pm n}\right|}\right)^{\mp 1}
\end{aligned}
\end{align*}
 and 
\[ \begin{aligned}
 	R_{+}&=\min _{\substack{a\in\{1,\dots,p\},\\b\in\{1,\dots,q\}}} R_{b,a,+}, & R_{-}&=\max  _{\substack{a\in\{1,\dots,p\},\\b\in\{1,\dots,q\}}} R_{b,a,-}. 
 \end{aligned}\]
 According to the Cauchy--Hadamard theorem, the series $F_\mu(z)$ converges uniformly in any compact set $K$ that belongs to  the annulus $A\left(0 ; R_{-}, R_{+}\right)$.

Let us introduce  basic objects in our discussion. 
\begin{Definition}[CMV monomial matrices]
 Given $r\in \mathbb{N}$, and being $I_{r}$ the 
identity matrix of size $r\times r$, we define the following semi-infinite
monomial matrix%
\begin{align}
Z_{[r]}(z)\coloneq%
\begin{bmatrix}
I_{r} \\ 
z^{-1}I_{r} \\ 
zI_{r} \\ 
z^{-2}I_{r} \\ 
z^{2}I_{r} \\ 
\vdots%
\end{bmatrix},
\label{ZnmonCMV}
\end{align}%
and denote by $Z_{[r]}^{(j)}$, $j=1,\ldots ,r$ the $j$-th semi-infinite
column of $Z_{[r]}$, such that%
\begin{align}
Z_{[r]}(z)=%
\begin{bmatrix}
Z_{[r]}^{(1)} & Z_{[r]}^{(2)} & \cdots & Z_{[r]}^{(r)}%
\end{bmatrix}%
.  \label{ZnmonCMVj}
\end{align}%
Provided that on the unit circle it holds $\bar{z}=z^{-1}$, one also has the
following monomial matrix that will be useful in the sequel%
\begin{align}
\left( Z_{[r]}(z)\right) ^{\dag }=\overline{Z_{[r]}^{\top }(z)}=\bar{Z}%
_{[r]}^{\top }(\bar{z})=Z_{[r]}^{\top }(z^{-1})=%
\begin{bmatrix}
I_{r} & zI_{r} & z^{-1}I_{r} & z^{2}I_{r} & \Cdots[shorten-end=6pt]%
\end{bmatrix}%
.  \label{cojugaZ}
\end{align}
\end{Definition}

We also denote by $\left( Z_{[r]}^{(i)}\right) ^{\top }$, $i=1,\ldots ,r$ the $i$-th semi-infinite \textit{row} of $Z_{[r]}^{\top }$,
such that
\begin{align*}
Z_{[r]}^{\top }=
\begin{bmatrix}
\left( Z_{[r]}^{(1)}\right) ^{\top } \\ 
\left( Z_{[r]}^{(2)}\right) ^{\top } \\ 
\vdots \\ 
\left( Z_{[r]}^{(r)}\right) ^{\top }
\end{bmatrix}%
.
\end{align*}
Then, the corresponding matrix of moments
are given by:

\begin{Definition}[CMV moment matrices]
\label{DefBlockM} 
Let us consider the following semi-infinite left CMV moment matrix:
\begin{align}
\begin{aligned}
	M_\mu&\coloneq\oint_{\mathbb{T}}Z_{[q]}(z)\d\mu
(z)\,Z_{[p]}^{\top }(z^{-1})=\int_{-\pi}^\pi Z_{[q]}(\Exp{\ii\theta})\d\mu
(\theta)\, Z_{[p]}(\Exp{-\ii\theta})
\\&=%
\left[\begin{NiceMatrix}
c_{0} & c_{1} & c_{-1} & c_{2} & \cdots \\ 
c_{-1} & c_{0} & c_{-2} & c_{1} & \cdots \\ 
c_{1} & c_{2} & c_{0} & c_{3} & \cdots \\ 
c_{-2} & c_{-1} & c_{-3} & c_{0} & \cdots \\ 
\vdots & \vdots & \vdots & \vdots & %
\end{NiceMatrix}\right]%
, 
\end{aligned} \label{MMleft}
\end{align}%
and semi-infinite right CMV moment matrix:%

\begin{equation}
\begin{aligned}
	\M_\mu&\coloneq\oint_{\mathbb{T}}Z_{[q]}(z^{-1})\d\mu (z)\,Z_{[p]}^{\top }(z)=\int_{-\pi}^\pi Z_{[q]}(\Exp{-\ii\theta})\d\mu (\theta)\,Z_{[p]}^{\top }(\Exp{\ii\theta})\
\\&=%
\left[\begin{NiceMatrix}
c_{0} & c_{-1} & c_{1} & c_{-2} & \Cdots \\ 
c_{1} & c_{0} & c_{2} & c_{-1} & \Cdots \\ 
c_{-1} & c_{-2} & c_{0} & c_{-3} & \Cdots \\ 
c_{2} & c_{1} & c_{3} & c_{0} & \Cdots \\ 
\Vdots & \Vdots & \Vdots & \Vdots & %
\end{NiceMatrix}\right]. 
\end{aligned} \label{MMright}
\end{equation}
\end{Definition}

Some basic properties of these moments matrices are:
\begin{pro}\label{pro:sym}
	The moment matrices fulfill
\[	\begin{aligned}
		\overline{M_\mu}&=\M_{\bar \mu}, & (M_\mu)^\top&=\M_{\mu^\top}, &
		(M_\mu)^\dagger&= M_{\mu^\dagger}, & (\M_\mu)^\dagger&= \M_{\mu^\dagger}.
	\end{aligned}\]
\end{pro}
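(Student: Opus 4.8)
The plan is to verify each of the four identities directly from the integral representations in Definition~\ref{DefBlockM}, using only (i) the behaviour of the monomial matrices $Z_{[r]}$ under conjugation recorded in \eqref{cojugaZ}, namely $\overline{Z_{[r]}(z)}=Z_{[r]}(z^{-1})$ for $z\in\T$ and $\left(Z_{[r]}(z)\right)^{\dag}=Z_{[r]}^{\top}(z^{-1})$, and (ii) the elementary way the operations $\mu\mapsto\bar\mu$, $\mu\mapsto\mu^\top$, $\mu\mapsto\mu^\dagger$ act on the matrix of measures entrywise. Throughout I will work with the parametrisation $z=\Exp{\ii\theta}$, $\theta\in[-\pi,\pi]$, so that $\bar z=z^{-1}$ on $\T$ and $\overline{\d\mu(\theta)}=\d\bar\mu(\theta)$.

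First I would treat $\overline{M_\mu}=\M_{\bar\mu}$. Starting from $M_\mu=\oint_\T Z_{[q]}(z)\,\d\mu(z)\,Z_{[p]}^{\top}(z^{-1})$ and taking complex conjugates inside the integral, I get $\overline{M_\mu}=\oint_\T \overline{Z_{[q]}(z)}\;\d\bar\mu(z)\;\overline{Z_{[p]}^{\top}(z^{-1})}$. By \eqref{cojugaZ}, $\overline{Z_{[q]}(z)}=Z_{[q]}(z^{-1})$ and $\overline{Z_{[p]}^{\top}(z^{-1})}=Z_{[p]}^{\top}(z)$ on $\T$, so the right-hand side is exactly $\oint_\T Z_{[q]}(z^{-1})\,\d\bar\mu(z)\,Z_{[p]}^{\top}(z)=\M_{\bar\mu}$ by \eqref{MMright}. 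For $(M_\mu)^\top=\M_{\mu^\top}$, I transpose the product under the integral: $(M_\mu)^\top=\oint_\T \left(Z_{[p]}^{\top}(z^{-1})\right)^{\top}\d\mu(z)^\top\left(Z_{[q]}(z)\right)^{\top}=\oint_\T Z_{[p]}(z^{-1})\,\d\mu^\top(z)\,Z_{[q]}^{\top}(z)$, which is the defining expression for $\M_{\mu^\top}$ with the roles of $p$ and $q$ interchanged (note the $q\times p$ shape becomes $p\times q$, consistent with $\mu^\top$). The remaining two identities $(M_\mu)^\dagger=M_{\mu^\dagger}$ and $(\M_\mu)^\dagger=\M_{\mu^\dagger}$ follow by combining the two computations just performed, or directly: $(M_\mu)^\dagger=\overline{(M_\mu)^\top}=\overline{\M_{\mu^\top}}=\M_{\overline{\mu^\top}}$, and one checks $\overline{\mu^\top}=\mu^\dagger$; alternatively apply $\dagger$ to the integrand using $\left(Z_{[r]}(z)\right)^{\dag}=Z_{[r]}^{\top}(z^{-1})$ and $\left(Z_{[r]}^{\top}(z^{-1})\right)^{\dag}=Z_{[r]}(z)$ from \eqref{cojugaZ}, so that $(M_\mu)^\dagger=\oint_\T Z_{[p]}(z)\,\d\mu^\dagger(z)\,Z_{[q]}^{\top}(z^{-1})=M_{\mu^\dagger}$, and similarly for $\M$.

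The only point requiring a little care --- and the closest thing to an obstacle --- is bookkeeping the shapes and the index swaps: $M_\mu$ is built from $Z_{[q]}$ on the left and $Z_{[p]}$ on the right because $\d\mu$ is $q\times p$, whereas $M_{\mu^\top}$ and $M_{\mu^\dagger}$ involve a $p\times q$ measure, so the sizes $[q]$ and $[p]$ of the monomial matrices exchange roles; one must confirm that $\M_\mu$ and $M_\mu$ are defined by the same pattern of $Z$'s up to this swap, which is immediate from \eqref{MMleft}--\eqref{MMright}. There is no analytic subtlety: all manipulations are term-by-term operations on absolutely convergent matrix integrals (the support is infinite but the measures are finite Borel measures on the compact set $\T$), and conjugation, transposition and the operations on $\mu$ all commute with $\oint_\T$. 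I would present the computation for $\overline{M_\mu}=\M_{\bar\mu}$ in full and then remark that the other three are obtained by the same two elementary moves, transposition and conjugation, applied to the integrand together with \eqref{cojugaZ}.
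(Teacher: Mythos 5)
Your proof is correct and is exactly the routine verification the paper has in mind (the paper states this proposition without proof, as an immediate consequence of Definition \ref{DefBlockM} and \eqref{cojugaZ}). Your bookkeeping of the $p\leftrightarrow q$ swap for $\mu^\top$ and $\mu^\dagger$ and the reduction of the $\dagger$-identities to conjugation plus transposition are both accurate, so nothing further is needed.
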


For the sake of simplicity, if not required we will drop the subindex indicating the measure $\mu$.

The  Gauss factorization of the above
matrices \eqref{MMleft}  and \eqref{MMright} will be instrumental %
\begin{align}
M=&L^{-1}\bar U^{-1},  \label{LUMMleft} \\
\M =&\bar{\mathcal{L}}^{-1}\mathcal{U}^{-1},
\label{LUMMright}
\end{align}%
such that $L$ and $\mathcal{L}$ ($U$ and $\mathcal{U}$), are
respectively semi-infinite invertible lower (upper) triangular matrices.  Notice that all the diagonal entries of $L$ or $U$ are assumed to be different from zero.
The matrices $L$ and $U$ are not uniquely determined by $M$, indeed the transformation $L\to DL$, $U\to UD^{-1}$ will provide new possible factors whenever $D$ is a diagonal matrix with nonzero diagonal entries. Similar comments hold for $\L$ and $\U$. These factorizations exist if and only if the leading principal minors are nonzero.

Taking into account the symmetries described in Proposition \ref{pro:sym} we find
\begin{pro}\label{pro:LU_sym}
The upper an lower triangular matrices in the Gauss--Borel factorization can be chosen so that
	\begin{align*}
		&\begin{aligned}
			{L_\mu}&=\L_{\bar \mu}, & {U_\mu}&=\U_{\bar \mu},
		\end{aligned}& &
		 \begin{aligned}
		 	(U_\mu)^\top&=\L_{\mu^\top}, &(L_\mu)^\top&=\U_{\mu^\top}, 
		 \end{aligned}&&
	\begin{aligned}
			(U_\mu)^\dagger&= L_{\mu^\dagger}, 
			&	(\U_\mu)^\dagger&= \L_{\mu^\dagger}.
	\end{aligned}
	\end{align*}
\end{pro}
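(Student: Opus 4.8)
The plan is to derive each identity in Proposition \ref{pro:LU_sym} from the corresponding identity in Proposition \ref{pro:sym} by applying the uniqueness-up-to-diagonal freedom of the Gauss--Borel factorization. Recall that if $M = L^{-1}\bar U^{-1}$ is a valid factorization into (inverse) lower-triangular times (inverse) upper-triangular factors with nonzero diagonals, then any other such factorization is of the form $L \to DL$, $U \to UD^{-1}$ (equivalently $\bar U \to \bar U \bar D^{-1}$) for a diagonal $D$ with nonzero entries. So the strategy for each symmetry is: take the known factorization of $M_\mu$ (or $\M_\mu$), transform it using the moment-matrix symmetry from Proposition \ref{pro:sym}, observe that the transformed object is again a lower-times-upper factorization of the relevant moment matrix, and conclude that it must coincide with the chosen factorization of that matrix up to a diagonal; then fix that residual diagonal freedom consistently. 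The one subtlety worth stating up front is that these normalizations must be made compatibly, so the proposition should be read as: one \emph{can} choose the factors of all the measures appearing (namely $\mu$, $\bar\mu$, $\mu^\top$, $\mu^\dagger$) so that all listed identities hold simultaneously.

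Concretely, I would proceed as follows. \textbf{First}, for the conjugation symmetry: from $M_\mu = L_\mu^{-1}\bar U_\mu^{-1}$ and $\overline{M_\mu} = \M_{\bar\mu}$ (Proposition \ref{pro:sym}), conjugating gives $\M_{\bar\mu} = \overline{L_\mu^{-1}}\,\overline{\bar U_\mu^{-1}} = (\overline{L_\mu})^{-1} U_\mu^{-1}$. Comparing with $\M_{\bar\mu} = \bar{\mathcal L}_{\bar\mu}^{-1}\mathcal U_{\bar\mu}^{-1}$, the pairs $(\overline{\bar{\mathcal L}_{\bar\mu}}, \mathcal U_{\bar\mu})$ and $(\overline{L_\mu}, U_\mu)$ differ by a diagonal; since $\overline{\bar{\mathcal L}_{\bar\mu}} = \mathcal L_{\bar\mu}$ (conjugating a lower-triangular matrix keeps it lower-triangular), we may simply \emph{define} $\mathcal L_{\bar\mu} := L_\mu$, i.e. $\L_{\bar\mu} = L_\mu$, and $\mathcal U_{\bar\mu} := U_\mu$. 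This is exactly $L_\mu = \L_{\bar\mu}$, $U_\mu = \U_{\bar\mu}$. \textbf{Second}, for the transpose symmetry: transposing $M_\mu = L_\mu^{-1}\bar U_\mu^{-1}$ yields $(M_\mu)^\top = (\bar U_\mu)^{-\top}(L_\mu)^{-\top} = \overline{(U_\mu^\top)}^{-1}(L_\mu^\top)^{-1}$, and since $U_\mu^\top$ is lower-triangular and $L_\mu^\top$ is upper-triangular, this is a lower-times-upper factorization of $(M_\mu)^\top = \M_{\mu^\top}$; matching against $\M_{\mu^\top} = \bar{\mathcal L}_{\mu^\top}^{-1}\mathcal U_{\mu^\top}^{-1}$ and using the diagonal freedom, we can set $\L_{\mu^\top} := U_\mu^\top$ and $\U_{\mu^\top} := L_\mu^\top$, which is the claimed $(U_\mu)^\top = \L_{\mu^\top}$, $(L_\mu)^\top = \U_{\mu^\top}$.

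\textbf{Third}, the two Hermitian-conjugate identities follow by composing the previous two, since $\mu^\dagger = \overline{\mu^\top} = (\bar\mu)^\top$: apply the transpose relation to $\bar\mu$ and then the conjugation relation, or equivalently take daggers directly. From $M_\mu = L_\mu^{-1}\bar U_\mu^{-1}$ one gets $(M_\mu)^\dagger = (\bar U_\mu)^{-\dagger}(L_\mu)^{-\dagger} = (U_\mu^\dagger)^{-1}(L_\mu^\dagger)^{-1}$; since $U_\mu^\dagger$ is lower-triangular and $L_\mu^\dagger$ is upper-triangular and $(M_\mu)^\dagger = M_{\mu^\dagger}$, this is a lower-times-upper factorization of $M_{\mu^\dagger}$, giving $L_{\mu^\dagger} := U_\mu^\dagger$, i.e. $(U_\mu)^\dagger = L_{\mu^\dagger}$. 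Similarly $(\M_\mu)^\dagger = \M_{\mu^\dagger}$ gives $(\U_\mu)^\dagger = \L_{\mu^\dagger}$ after the analogous manipulation of $\M_\mu = \bar{\mathcal L}_\mu^{-1}\mathcal U_\mu^{-1}$.

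\textbf{The main obstacle} is purely bookkeeping: ensuring the normalization choices are globally consistent rather than merely pairwise. In each step above I am \emph{choosing} a particular representative among the diagonal-equivalence class of factors for a new measure ($\bar\mu$, $\mu^\top$, $\mu^\dagger$), and one must check these choices do not clash — e.g. the factor $\L_{\bar\mu}$ fixed in step one should be consistent with whatever the transpose relation would dictate for $(\bar\mu)^\top = \mu^\dagger$. In practice, since the four measures $\mu, \bar\mu, \mu^\top, \mu^\dagger$ close up under the group generated by conjugation and transposition (a Klein four-group action), one fixes the factorization of $\mu$ freely and then \emph{defines} the factorizations of the other three by the formulas above; the group relations (conjugation and transpose commute, each is an involution, dagger is their product) guarantee consistency, which is the only point that needs a line of verification.
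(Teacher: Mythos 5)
Your handling of the first two pairs of identities is correct and is essentially the argument the paper has in mind (the paper gives no explicit proof beyond invoking Proposition \ref{pro:sym}): conjugate, respectively transpose, the factorization $M_\mu=L_\mu^{-1}\bar U_\mu^{-1}$, recognize the result as a lower-times-upper factorization of $\M_{\bar\mu}$, respectively $\M_{\mu^\top}$, and use uniqueness up to a diagonal to fix the factors of the transformed measure. Your closing remark about choosing the normalizations compatibly under the group generated by conjugation and transposition is also the right point to address.

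The third pair is where your argument breaks down. You write $(\bar U_\mu)^{-\dagger}=(U_\mu^{\dagger})^{-1}$, but the dagger of the entrywise conjugate is the plain transpose: $(\bar U_\mu)^{\dagger}=\overline{\bar U_\mu}^{\,\top}=U_\mu^{\top}$, whereas $U_\mu^{\dagger}=\bar U_\mu^{\top}$. Carrying the computation through correctly gives
\begin{align*}
M_{\mu^\dagger}=(M_\mu)^{\dagger}=\left(U_\mu^{\top}\right)^{-1}\left(\bar L_\mu^{\top}\right)^{-1},
\end{align*}
so the identification one actually obtains is $L_{\mu^\dagger}=U_\mu^{\top}$ and $U_{\mu^\dagger}=L_\mu^{\top}$ (and, by the same manipulation on $\M_\mu$, $\L_{\mu^\dagger}=\U_\mu^{\top}$), which differs from the asserted $(U_\mu)^{\dagger}=L_{\mu^\dagger}$ by a complex conjugation. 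Your alternative route of composing the first two relations via $\mu^{\dagger}=\overline{\mu^{\top}}$ yields the same thing: $L_{\mu^\dagger}=L_{\overline{\mu^{\top}}}=\L_{\mu^{\top}}=(U_\mu)^{\top}$, again a transpose and not a dagger, so the third pair does not in fact follow from the first two as you claim. Nor can the discrepancy be absorbed into the residual diagonal freedom, since that would require $U_\mu^{-1}\bar U_\mu$ to be diagonal, which fails for a generic matrix of measures (compare the Hermitian scalar case, where the Cholesky factorization $M=CC^{\dagger}$ forces $U=L^{\top}$, not $U=\bar L^{\top}$). As written, your step proves $(U_\mu)^{\top}=L_{\mu^\dagger}$ and $(\U_\mu)^{\top}=\L_{\mu^\dagger}$ rather than the stated identities, and the passage from transpose to dagger is a genuine gap.
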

Hence, 	we find
\begin{Corollary}\label{cor:LU-sym}
	\begin{enumerate}
		\item If the matrix of measures is real, i.e. $\bar \mu=\mu$, then
		\begin{align*}
		\begin{aligned}
				{L}&=\L, &  {U}&=\U.
		\end{aligned}
		\end{align*}
		\item For $p=q$, if the matrix of measures is symmetric, i.e. $\mu=\mu^\top$, then 
		 		 \begin{align*}
		 \begin{aligned}
		 		U^\top&=\L, &L^\top&=\U.
		 \end{aligned}
		 \end{align*}
				\item For $p=q$, if the matrix of measures is Hermitian, i.e. $\mu=\mu^\dagger$, then  
				\begin{align*}
				\begin{aligned}
					U^\dagger&= L, 	&	\U^\dagger&= \L.
				\end{aligned}
				\end{align*}
				\item  For $p=q$, real symmetric matrix of measures we find
				\begin{align*}
					\begin{aligned}
						U&=L^\top, & \L&=L, & \U&=L^\top. 
					\end{aligned}
				\end{align*}
	\end{enumerate}
\end{Corollary}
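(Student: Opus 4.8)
The plan is to derive Corollary~\ref{cor:LU-sym} as a direct specialization of Proposition~\ref{pro:LU_sym}, combined with the uniqueness-up-to-diagonal ambiguity of the Gauss--Borel factorization noted after \eqref{LUMMright}. The starting point in each case is to feed the relevant symmetry hypothesis on $\mu$ into Proposition~\ref{pro:sym} to see which moment matrices coincide, and then to read off the consequence for the triangular factors from Proposition~\ref{pro:LU_sym}. The only subtlety is that Proposition~\ref{pro:LU_sym} asserts that the factors \emph{can be chosen} to satisfy the stated identities; so throughout I will assume we have fixed one such admissible normalization (for instance by fixing the diagonal part of $L$ and $\mathcal L$), and all equalities below are understood for that choice.

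For item~(1), the hypothesis $\bar\mu=\mu$ gives $M_\mu=M_{\bar\mu}$ and $\M_\mu=\M_{\bar\mu}$; the first identity in Proposition~\ref{pro:LU_sym}, namely $L_\mu=\L_{\bar\mu}$ and $U_\mu=\U_{\bar\mu}$, then immediately yields $L=\L$ and $U=\U$. For item~(2), with $p=q$ and $\mu=\mu^\top$, the identities $(U_\mu)^\top=\L_{\mu^\top}$ and $(L_\mu)^\top=\U_{\mu^\top}$ from Proposition~\ref{pro:LU_sym} specialize to $U^\top=\L$ and $L^\top=\U$. For item~(3), with $p=q$ and $\mu=\mu^\dagger$, the identities $(U_\mu)^\dagger=L_{\mu^\dagger}$ and $(\U_\mu)^\dagger=\L_{\mu^\dagger}$ become $U^\dagger=L$ and $\U^\dagger=\L$. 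Here one should check that the normalization chosen to make Proposition~\ref{pro:LU_sym} hold is compatible with imposing all the required identities simultaneously; since $\dagger$ and $\top$ and complex conjugation all act on the diagonal ambiguity $D$ in a controlled way ($D\mapsto \bar D$, $D\mapsto D$, $D\mapsto D^\dagger$ respectively), a single consistent choice exists, which is the mild point to be verified.

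Item~(4) is then obtained by combining (1), (2), and (3): a real symmetric matrix of measures satisfies $\bar\mu=\mu$ and $\mu=\mu^\top$ simultaneously (and hence also $\mu=\mu^\dagger$). From (1) we get $\L=L$ and $\U=U$; from (2) we get $U^\top=\L$ and $L^\top=\U$; substituting $\L=L$ and $\U=U$ into these gives $U=L^\top$ and $U=L^\top$, consistently, so $U=L^\top$, $\L=L$, and $\U=L^\top$, which is exactly the claim.

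The main obstacle is not computational but bookkeeping: one must make sure that the phrase ``can be chosen so that'' in Proposition~\ref{pro:LU_sym} does not hide an incompatibility when several of its identities are used at once (as in items (3) and (4)). The resolution is to observe that all the relevant involutions act on the residual diagonal freedom $L\to DL$, $\L\to \tilde D\L$ through simple rules, so the set of admissible normalizations is nonempty and one may pick any element of it; with that element fixed, every line of the corollary follows by pure substitution. I would state this compatibility check explicitly at the start of the proof and then dispatch the four items in a few lines each.
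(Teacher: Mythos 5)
Your proposal is correct and follows essentially the same route as the paper, which presents this Corollary as an immediate specialization of Proposition \ref{pro:LU_sym} (the paper offers no separate proof beyond ``Hence, we find''). Your extra remark about checking that the ``can be chosen so that'' normalizations are simultaneously compatible is a sensible point of rigor that the paper leaves implicit, but it does not change the argument.
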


\subsection{Mixed multiple orthogonal Laurent polynomials on the unit circle}

Building upon the Gauss factorization \eqref{LUMMleft} of the CMV left moment matrix \( M \), we introduce the following sets of matrices with  Laurent polynomial entries:
\begin{Definition}[CMV left Laurent polynomials]
\label{CMVBlockPolBA} 
Associated with the CMV left
moment matrix ${M}$, let us introduce the following semi-infinite $\infty\times q$  matrix of Laurent polynomials%
\begin{align}
\label{Bpol} \begin{aligned}
	B(z)&\coloneq LZ_{[q]}(z) \\&=\left[\begin{NiceMatrix}
{L}_{0,0} & 0_{q} & 0_{q} & 0_{q} & \Cdots \\ 
{L}_{1,0} & {L}_{1,1} & 0_{q} & 0_{q} & \Cdots \\ 
{L}_{2,0} & {L}_{2,1} & {L}_{2,2} & 0_{q} & \Cdots
\\ 
{L}_{3,0} & {L}_{3,1} & {L}_{3,2} & {L}_{3,3}
& \Cdots \\ 
\Vdots & \Vdots & \Vdots & \Vdots & %
\end{NiceMatrix}\right]%
\begin{bmatrix}
I_{q} \\ 
z^{-1}I_{q} \\ 
zI_{q} \\ 
z^{-2}I_{q} \\ 
z^{2}I_{r} \\ 
\vdots%
\end{bmatrix}
=%
\begin{bmatrix}
{B}_{0}(z) \\ 
{B}_{1}(z) \\ 
{B}_{2}(z) \\ 
{B}_{3}(z) \\ 
\vdots%
\end{bmatrix}%
=%
\begin{bmatrix}
B^{(1)}(z) & B^{(2)}(z) & \cdots & B^{(q)}(z)%
\end{bmatrix}%
\end{aligned}  
\end{align}%
where $I_{q}$ (identity matrix), ${L}_{i,j}$, and ${B}%
_{m}(z) $, are $q\times q$ square complex matrices.  $i=j$, ${L}_{i,i}$\
is a lower triangular complex matrix. When $i<j$, $L_{i,j}=0_{q}$, which
stands for the $q\times q$ zero matrix. When $i>j$, ${L}_{i,j}$\ is,
in general, a full matrix, and 
\begin{align}
\begin{aligned}
	B^{(b)}(z)&\coloneq LZ_{[q]}^{(b)}(z),& b&\in \{1,\ldots ,q\},
\end{aligned}
\label{Bpol(b)}
\end{align}%
is the $b$-th semi-infinite column vector of the above CMV matrix Laurent
polynomial $B(z)$. For $m\in\N_0 $, we also consider the
\textquotedblleft block polynomials,\textquotedblright which are $q\times q$
square matrices with Laurent polynomial entries, as follows%
\begin{align}
{B}^{[q]}_{m}(z)\coloneq%
\begin{bNiceMatrix}
B_{mq}^{(1)}(z) & \Cdots & B_{mq}^{(q)}(z) \\ 
\Vdots[shorten-end=-4pt] &  & \Vdots[shorten-end=-4pt] \\ 
B_{(m+1)q-1}^{(1)}(z) & \Cdots & B_{(m+1)q-1}^{(q)}(z)%
\end{bNiceMatrix}%
.  \label{BlockPolB}
\end{align}%
Analogously, we define%
\begin{align}
A(z)&\coloneq Z_{[p]}^{\top }(z)U \\&=%
\begin{bmatrix}
I_{p} & z^{-1}I_{p} & zI_{p} & z^{-2}I_{p} & z^{2}I_{r} &\cdots[shorten-end=6pt]%
\end{bmatrix}%
\left[\begin{NiceMatrix}
{U}_{0,0} & {U}_{0,1} & {U}_{0,2} & {U}_{0,3}
& \Cdots \\ 
0_{p} & {U}_{1,1} & {U}_{1,2} & {U}_{1,3} & \Cdots
\\ 
0_{p} & 0_{p} & {U}_{2,2} & {U}_{2,3} & \Cdots \\ 
0_{p} & 0_{p} & 0_{p} & {U}_{3,3} & \Cdots \\ 
\Vdots & \Vdots & \Vdots & \Vdots & %
\end{NiceMatrix}\right]
\label{Apol} \\
&=%
\begin{bmatrix}
{A}_{0}(z) & {A}_{1}(z) & {A}_{2}(z)&& \cdots[shorten-end=6pt]%
\end{bmatrix}%
=%
\begin{bmatrix}
A^{(1)}(z) \\ 
A^{(2)}(z) \\ 
\vdots \\ 
A^{(p)}(z)%
\end{bmatrix},
\notag
\end{align}%
where $I_{p}$ (identity matrix), and ${U}_{i,j}$, $A_m(z)$, are $%
p\times p$ square matrices. When $i=j$, ${U}_{i,i}$\ is an upper
triangular matrix. When $i>j$, ${U}_{i,j}=0_{p}$, which stands for
the $p\times p$ zero matrix. When $i<j$, ${U}_{i,j}$\ is, in
general, a full matrix, and 
\begin{align}
\begin{aligned}
	A^{(a)}(z)&\coloneq\left( Z_{[p]}^{(a)}(z)\right) ^{\top }U,&
a&\in \{1,\ldots ,p\} ,
\end{aligned} \label{Apol(a)}
\end{align}%
is the $a$-th row of the above CMV matrix Laurent polynomial $A(z)$.
Thus, for $m\in\N_0$, we also have the corresponding \textquotedblleft
block polynomials\textquotedblright\ ${A}_{m}(z)$, which are
the following $p\times p$ square matrices with polynomial entries%
\begin{align}
{A}^{[p]}_{m}(z)\coloneq%
\begin{bmatrix}
A_{mp}^{(1)}(z) & \cdots & A_{(m+1)p-1}^{(1)}(z) \\ 
\Vdots[shorten-end=2pt] &  & \Vdots[shorten-end=-4pt] \\ 
A_{mp}^{(p)}(z) & \cdots & A_{(m+1)p-1}^{(p)}(z)%
\end{bmatrix}%
.  \label{BlockPolA}
\end{align}
\end{Definition}


Notice that $B^{(b)}(z)$, $b\in \{1,\ldots ,q\}$ and $A^{(a)}(z)$, $%
a\in \{1,\ldots ,p\}$ are semi-infinite column (resp. row) vectors whose
entries are polynomials, namely%
\begin{align}
\begin{aligned}
	B^{(b)}(z)&=%
\begin{bmatrix}
B_{0}^{(b)}(z) \\[3pt]
B_{1}^{(b)}(z) \\[3pt]
B_{2}^{(b)}(z) \\ 
\vdots[shorten-end=4pt,shorten-start=-2pt]%
\end{bmatrix}%
,& A^{(a)}(z)&=%
\begin{bmatrix}
A_{0}^{(a)}(z) & A_{1}^{(a)}(z) & A_{2}^{(a)}(z) & \cdots[shorten-end=6pt]%
\end{bmatrix}.
\end{aligned}  \label{defAB}
\end{align}%
Concerning the degrees of every entry of the above vector polynomials, we state the next result. First, in what follows, given a Laurent polynomial
\begin{align*}
	L(z)=\frac{L_{-d_-}}{z^{d_-}}+\frac{L_{-d_-+1}}{z^{d_--1}}+\cdots+L_{d_+-1}z^{d_+-1}+L_{d_+} z^{d_+},
\end{align*}
with $L_{\pm d_\pm}\neq 0$ and $d_\pm\in\N_0$, we say that its degrees are
\begin{align*}
		\deg^\pm L&\coloneq d_{\pm}.
\end{align*}

\begin{lemma}[Degrees of the CMV  Laurent
polynomials]
\label{degreesMLP} The Laurent polynomials at the \( n \)-th entry, \( n \in \mathbb{N}_0 \), of every semi-infinite column vector \( B^{(b)}(z) \), \( b \in \{1,\ldots ,q\} \), of the CMV matrix Laurent polynomial \( B(z) \), satisfy
\begin{align*}
\begin{aligned}
		\deg ^{+}B_{n}^{(b)} &\leq \left\lceil \frac{n+2-b}{2q}\right\rceil -1, & 
	\deg ^{-}B_{n}^{(b)} &\leq \left\lceil \frac{n+2-b-q}{2q}\right\rceil,
\end{aligned}
\end{align*}%
and the \( \deg^+ \) and \( \deg^- \) bounds are attained for \( n=2Nq+a-1 \) and \( n=(2N+1)q+a-1 \), $N\in\N_0$,  respectively. Similarly, the Laurent polynomials at the \( n \)-th entry (for \( n\in\mathbb{N}_0 \)) of every semi-infinite row vector \( A^{(a)}(z) \), \( a\in \{1,\ldots ,p\} \) of the CMV matrix Laurent polynomial \( A(z) \), satisfy
\begin{align*}
	\begin{aligned}
		\deg^{+}A_{n}^{(a)} &\leq \left\lceil \frac{n+2-a}{2p}\right\rceil -1, & 
		\deg^{-}A_{n}^{(a)} &\leq \left\lceil \frac{n+2-a-p}{2p}\right\rceil,
	\end{aligned}
\end{align*}%
and the \( \deg^+ \) and \( \deg^- \) bounds are attained for \( n=2Np+a-1 \) and \( n=(2N+1)p+a-1 \), $N\in\N_0$, respectively.
\end{lemma}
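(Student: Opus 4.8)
The plan is to prove the bounds for $B^{(b)}(z)$ and note that the ones for $A^{(a)}(z)$ follow by transposition, since by Proposition~\ref{pro:LU_sym} the matrix $A$ associated with $\mu$ has the same triangular structure as $B$ associated with $\mu^\top$ (with $p$ and $q$ swapped). So I focus on $B(z)=LZ_{[q]}(z)$.

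\textbf{Step 1: understand the exponents in $Z_{[q]}$.} The key combinatorial fact is to identify, for the semi-infinite column vector $Z_{[q]}(z)$, the exponent of $z$ appearing in its $k$-th block-entry. The CMV ordering groups the monomials as $1, z^{-1}, z, z^{-2}, z^{2}, \dots$; in block form, the block $Z^{[q]}_k$ carries the scalar power $z^{-k/2}I_q$ if $k$ is even and $z^{(k+1)/2}I_q$ if $k$ is odd, for $k\in\N_0$. Writing $n = kq + (b-1)$ with $b\in\{1,\dots,q\}$ identifies in which block and which row within the block the $n$-th scalar entry of $B^{(b)}$ picks up its contributions.

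\textbf{Step 2: read off the degrees from the triangular structure of $L$.} Because $L$ is lower triangular in the block sense, the $n$-th scalar row of $B(z)=LZ_{[q]}(z)$ is a finite linear combination of the block-entries $Z^{[q]}_0,\dots,Z^{[q]}_k$ where $k=\lfloor n/q\rfloor$ (more precisely, the precise $k$ is determined by which block the index $n$ falls into; here one must be careful that the relevant row $n$ of $L$ has nonzero entries only in columns indexed $0$ through $n$, hence only blocks up to $\lceil (n+1)/q\rceil-1$ enter). Among those blocks, the largest positive power of $z$ occurring is achieved by the largest odd $k\le$ that bound, giving $\deg^+ B_n^{(b)}\le (k_{\mathrm{odd}}+1)/2$, and the largest negative power by the largest even $k$, giving $\deg^- B_n^{(b)}\le k_{\mathrm{even}}/2$. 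Converting these in terms of $n$ and $b$ via the substitution $n=kq+b-1$ and simplifying the floor/ceiling arithmetic yields exactly $\lceil (n+2-b)/(2q)\rceil-1$ and $\lceil (n+2-b-q)/(2q)\rceil$.

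\textbf{Step 3: sharpness.} For the attainment claims, I would argue that the leading coefficients cannot vanish: the diagonal blocks $L_{i,i}$ are invertible lower triangular matrices (diagonal entries of $L$ nonzero by hypothesis), so when $n=2Nq+b-1$ the coefficient of the top positive power $z^N$ in $B_n^{(b)}$ is, up to lower-block contributions that carry strictly smaller powers, an entry of $L_{i,i}$ times $1$, hence nonzero; similarly for $n=(2N+1)q+b-1$ with the negative power. One has to check that no cancellation with contributions from other blocks is possible, which holds precisely because those other blocks contribute only strictly smaller powers of $z$ at that row — this is where the CMV ordering does its job. The main obstacle is purely bookkeeping: getting the index shifts in Step~2 exactly right (whether a given $n$ sits in an "even" or "odd" block, and matching the off-by-one in the ceiling functions), and then verifying in Step~3 that the extremal monomial genuinely survives; both are elementary but error-prone, and a clean proof will hinge on setting up the block/row decomposition $n=kq+b-1$ once and for all and never leaving it.
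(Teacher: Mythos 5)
Your strategy is the same as the paper's: the published proof consists precisely of ``multiply the lower triangular $L$ by $Z_{[q]}(z)$ and read off the extremal exponents'' (with details deferred to an appendix), and your reduction of the $A$-case to the $B$-case via Proposition~\ref{pro:LU_sym} is also consistent with how the paper treats it. However, there is a concrete error in Step~1 that your Step~2 inherits. From \eqref{ZnmonCMV} the CMV ordering is $1, z^{-1}, z, z^{-2}, z^{2},\dots$, so the block with index $k$ carries $z^{+k/2}I_q$ when $k$ is \emph{even} and $z^{-(k+1)/2}I_q$ when $k$ is \emph{odd}; you have stated exactly the reverse. Step~2 then consistently builds on the reversed assignment (``the largest positive power \dots is achieved by the largest odd $k$'', ``the largest negative power by the largest even $k$''), so if one actually carries out the floor/ceiling arithmetic you describe, one obtains the two stated bounds with $\deg^{+}$ and $\deg^{-}$ interchanged, not the bounds of the Lemma. (Check $q=1$, $b=1$: the correct count gives $\deg^{+}B_n\le\lfloor n/2\rfloor$ and $\deg^{-}B_n\le\lceil n/2\rceil$, matching $\lceil(n+1)/2\rceil-1$ and $\lceil n/2\rceil$; your parity assignment gives the swap, which differs for odd $n$.) Your Step~3 then silently reverts to the correct convention, since you attribute the top \emph{positive} power $z^{N}$ to the row $n=2Nq+b-1$ sitting in the even block $2N$ — so Steps~1--2 and Step~3 contradict each other.

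The fix is purely local: with $n$ fixed and column $b$ chosen, the lower triangularity of $L$ gives
\begin{align*}
B_n^{(b)}(z)=\sum_{k\,:\,kq+b-1\le n} L_{n,\,kq+b-1}\,z^{e_k},\qquad e_{2m}=m,\quad e_{2m+1}=-(m+1),
\end{align*}
so with $K=\lfloor(n+1-b)/q\rfloor$ one gets $\deg^{+}\le\lfloor K/2\rfloor$ and $\deg^{-}\le\lceil K/2\rceil$, which after the ceiling gymnastics are the stated bounds; sharpness at $n=2Nq+b-1$ (resp.\ $n=(2N+1)q+b-1$) follows exactly as you argue in Step~3, because the extremal monomial is contributed only by the diagonal entry $L_{n,n}\neq0$ and every other block hitting column $b$ contributes a strictly smaller (in the relevant direction) exponent. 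With the parity assignment corrected, your argument is complete and coincides with the paper's.
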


\begin{proof}
Concerning the degrees of the Laurent polynomials $B^{(b)}(z)$, $b\in
\{1,\ldots ,q\}$ we multiply the semi-infinite lower triangular matrix $L$
by $Z_{[q]}(z)$ in order to obtain every block polynomial $B_{m}(z) $, and after this computation, it is a simple matter to obtain the
the possible maximum $(\deg ^{+})$ and minimum $(\deg ^{-})$
degrees for the polynomials in the $b$-th column of $B(z)$. Concerning the degrees of the Laurent polynomials $A_{n}^{(a)}(z^{-1})$, $%
a\in \{1,\ldots ,p\}$, similar arguments apply. See Appendix.
\end{proof}

Likewise, based on the Gauss--Borel factorization \eqref{LUMMright} of the
CMV left moment matrix $\mathcal M$, we proceed with the
following

\begin{Definition}[CMV right Laurent polynomials]
\label{CMVBlockPolBA copy(1)} Associated with the left moment matrix $\M$, let us define the matrix biorthogonal
polynomials%
\begin{align*}
\begin{aligned}
	\A(z)&\coloneq\mathcal{L}Z_{[q]}(z),  
&\B(z)&\coloneq Z_{[p]}^{\top }(z)\mathcal{U},
\end{aligned}
\end{align*}
which have size $\infty \times q$ and $p \times \infty$ respectively.
More specifically, 
\begin{align*}
 \begin{aligned}
		\A(z)&=\left[\begin{NiceMatrix}
			{\L}_{0,0} & 0_{q} & 0_{q} & 0_{q} & \Cdots \\ 
			{\L}_{1,0} & {\L}_{1,1} & 0_{q} & 0_{q} & \Cdots \\ 
			{\L}_{2,0} & {\L}_{2,1} & {\L}_{2,2} & 0_{q} & \Cdots
			\\ 
			{\L}_{3,0} & {\L}_{3,1} & {\L}_{3,2} & {\L}_{3,3}
			& \Cdots \\ 
			\Vdots & \Vdots & \Vdots & \Vdots & %
		\end{NiceMatrix}\right]%
		\begin{bmatrix}
			I_{q} \\ 
			z^{-1}I_{q} \\ 
			zI_{q} \\ 
			z^{-2}I_{q} \\ 
			z^{2}I_{r} \\ 
			\vdots%
		\end{bmatrix}
		=%
		\begin{bmatrix}
			{\A}_{0}(z) \\ 
			{\A}_{1}(z) \\ 
			{\A}_{2}(z) \\ 
			{\A}_{3}(z) \\ 
			\vdots%
		\end{bmatrix}%
		=%
		\begin{bmatrix}
			\A^{(1)}(z) & \A^{(2)}(z) & \cdots & \A^{(q)}(z)%
		\end{bmatrix},%
	\end{aligned}  
\end{align*}
where ${\L}_{i,j}$, and ${\A}%
_{m}(z) $, are $q\times q$ square complex matrices.  $i=j$, ${\L}_{i,i}$\
is a lower triangular complex matrix. When $i<j$, $\L_{i,j}=0_{q}$. When $i>j$, ${\L}_{i,j}$\ is,
in general, a full matrix, and 
\begin{align*}
\begin{aligned}
		\A^{(b)}(z)&\coloneq LZ_{[q]}^{(b)}(z),& b\in \{1,\ldots ,q\},
\end{aligned}
\end{align*}%
is the $b$-th semi-infinite column vector of the above CMV matrix Laurent
polynomial $\A(z)$. For $m\in\N_0 $ we also consider the
\textquotedblleft block polynomials\textquotedblright , which are $q\times q$
square matrices with polynomial entries, as follows%
\begin{align*}
	{\A}_{m}(z)\coloneq%
	\begin{bNiceMatrix}
		\A_{mq}^{(1)}(z) & \Cdots & \A_{mq}^{(q)}(z) \\ 
		\Vdots[shorten-end=-4pt] &  & \Vdots[shorten-end=-4pt] \\ 
		\A_{(m+1)q-1}^{(1)}(z) & \Cdots & \A_{(m+1)q-1}^{(q)}(z)%
	\end{bNiceMatrix}.%
\end{align*}%
We also  have%
\begin{align*}
	\B(z)&=%
	\begin{bmatrix}
		I_{p} & z^{-1}I_{p} & zI_{p} & z^{-2}I_{p} & z^{2}I_{r} &\cdots[shorten-end=6pt]%
	\end{bmatrix}%
	\left[\begin{NiceMatrix}
		{\U}_{0,0} & {\U}_{0,1} & {\U}_{0,2} & {\U}_{0,3}
		& \Cdots \\ 
		0_{p} & {\U}_{1,1} & {\U}_{1,2} & {\U}_{1,3} & \Cdots
		\\ 
		0_{p} & 0_{p} & {\U}_{2,2} & {\U}_{2,3} & \Cdots \\ 
		0_{p} & 0_{p} & 0_{p} & {\U}_{3,3} & \Cdots \\ 
		\Vdots & \Vdots & \Vdots & \Vdots & %
	\end{NiceMatrix}\right]
\\
	&=%
	\begin{bmatrix}
		{\B}_{0}(z) & {\B}_{1}(z) & {\B}_{2}(z)&& \cdots[shorten-end=6pt]%
	\end{bmatrix}%
	=%
	\begin{bmatrix}
		\B^{(1)}(z) \\ 
		\B^{(2)}(z) \\ 
		\vdots \\ 
		\B^{(p)}(z)%
	\end{bmatrix},
	\notag
\end{align*}%
where  ${\U}_{i,j}$, $\B_m(z)$, are $%
p\times p$ square matrices. When $i=j$, ${\U}_{i,i}$\ is an upper
triangular matrix. When $i>j$, ${\U}_{i,j}=0_{p}$. When $i<j$, ${\U}_{i,j}$\ is, in
general, a full matrix, and 
\begin{align*}
	\begin{aligned}
		\B^{(a)}(z)&\coloneq\left( Z_{[p]}^{(a)}(z)\right) ^{\top }\U,&
		a&\in \{1,\ldots ,p\} ,
	\end{aligned}
\end{align*}%
is the $a$-th row of the above CMV matrix Laurent polynomial $\B(z)$.
Thus, for $m\in\N_0$, we also have the corresponding \textquotedblleft
block polynomials\textquotedblright\ ${\B}_{m}(z)$, which are
the following $p\times p$ square matrices with polynomial entries%
\begin{align*}
	{\B}_{m}(z)\coloneq%
	\begin{bmatrix}
		\B_{mp}^{(1)}(z) & \cdots & \B_{(m+1)p-1}^{(1)}(z) \\ 
		\Vdots[shorten-end=2pt] &  & \Vdots[shorten-end=-4pt] \\ 
		\B_{mp}^{(p)}(z) & \cdots & \B_{(m+1)p-1}^{(p)}(z)%
	\end{bmatrix}.%
\end{align*}
\end{Definition}
As in Lemma \ref{degreesMLP} we have
\begin{lemma}[Degrees of the CMV right Laurent
	polynomials]
The Laurent polynomials at the \( n \)-th entry, \( n \in \mathbb{N}_0 \), of every semi-infinite column vector \( B^{(b)}(z) \), \( b \in \{1,\ldots ,q\} \), of the CMV matrix Laurent polynomial \( B(z) \), satisfy
\begin{align*}
\begin{aligned}
		\deg ^{+}\B_{n}^{(b)} &\leq \left\lceil \frac{n+2-b}{2q}\right\rceil -1, & 
	\deg ^{-}\B_{n}^{(b)} &\leq \left\lceil \frac{n+2-b-q}{2q}\right\rceil,
\end{aligned}
\end{align*}%
and the \( \deg^+ \) and \( \deg^- \) bounds are attained for \( n=2Nq+a-1 \) and \( n=(2N+1)q+a-1 \), respectively. Similarly, the Laurent polynomials at the \( n \)-th entry (for \( n\in\mathbb{N}_0 \)) of every semi-infinite row vector \( A^{(a)}(z) \), \( a\in \{1,\ldots ,p\} \) of the CMV matrix Laurent polynomial \( A(z) \), satisfy
\begin{align*}
	\begin{aligned}
		\deg^{+}\A_{n}^{(a)} &\leq \left\lceil \frac{n+2-a}{2p}\right\rceil -1, & 
		\deg^{-}\A_{n}^{(a)} &\leq \left\lceil \frac{n+2-a-p}{2p}\right\rceil,
	\end{aligned}
\end{align*}%
and the \( \deg^+ \) and \( \deg^- \) bounds are attained for \( n=2Np+a-1 \) and \( n=(2N+1)p+a-1 \), respectively.
\end{lemma}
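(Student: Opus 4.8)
The plan is to observe that the second lemma is the exact analogue of Lemma~\ref{degreesMLP} with the roles of the left objects $(L,U,B,A)$ replaced by the right objects $(\mathcal L,\mathcal U,\A,\B)$, and that the proof of Lemma~\ref{degreesMLP} used only the \emph{structural} features of the factors --- namely that $\mathcal L$ is lower triangular with square $q\times q$ blocks $\L_{i,j}$ (zero for $i<j$) and $\mathcal U$ is upper triangular with square $p\times p$ blocks $\U_{i,j}$ (zero for $i>j$) --- together with the fixed CMV ordering of monomials encoded in $Z_{[q]}(z)$ and $Z_{[p]}^\top(z)$. Since $\A(z)=\mathcal L Z_{[q]}(z)$ and $\B(z)=Z_{[p]}^\top(z)\mathcal U$ have precisely the same triangular-times-monomial shape as $B(z)=LZ_{[q]}(z)$ and $A(z)=Z_{[p]}^\top(z)U$, the computation is word for word the same.

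Concretely, I would first recall the CMV exponent pattern: the $k$-th block row of $Z_{[q]}(z)$ carries the power $z^{e_k}$ where $e_0=0$, $e_1=-1$, $e_2=1$, $e_3=-2$, $e_4=2,\dots$, i.e. $e_{2N}=N$ and $e_{2N+1}=-(N+1)$. Writing $n=mq+(b-1)$ with $0\le b-1\le q-1$, the $n$-th scalar entry $\A_n^{(b)}(z)$ of $\A(z)$ is a linear combination $\sum_{k=0}^{m}(\text{entry of }\L_{m,k})\,z^{e_k}$ over all block rows $k\le m$ that the lower-triangular structure of $\mathcal L$ permits. Hence $\deg^+\A_n^{(b)}\le \max_{0\le k\le m} e_k$ over even $k$, and $\deg^-\A_n^{(b)}\le \max_{0\le k\le m}(-e_k)$ over odd $k$; evaluating these maxima in terms of $m=\lfloor n/q\rfloor$ and then re-expressing via the ceiling function gives exactly $\lceil (n+2-b)/(2q)\rceil-1$ and $\lceil (n+2-b-q)/(2q)\rceil$, and one reads off that equality is forced precisely when the top power actually appears, i.e. for $n=2Nq+b-1$ and $n=(2N+1)q+b-1$. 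The argument for $\B_n^{(a)}(z)$, using $\B(z)=Z_{[p]}^\top(z)\mathcal U$ and the upper-triangular block structure of $\mathcal U$ with $p\times p$ blocks, is identical with $q$ replaced by $p$ and rows replaced by columns.

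The only genuine content is therefore the bookkeeping that converts the floor-of-$n/q$ description of the reachable CMV exponents into the stated ceiling formulas and pins down the cases of equality; this is routine but slightly fiddly, and it is the same bookkeeping already carried out for Lemma~\ref{degreesMLP} (whose details are deferred to the Appendix). I expect no real obstacle: the main point to state clearly is that Proposition~\ref{pro:LU_sym} is \emph{not} needed here --- the result is not a symmetry consequence of Lemma~\ref{degreesMLP} but rather follows from the identical shape of the defining formulas --- so the proof is simply a reference back.

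\begin{proof}
The matrices $\A(z)=\mathcal L Z_{[q]}(z)$ and $\B(z)=Z_{[p]}^\top(z)\mathcal U$ have exactly the same triangular-times-CMV-monomial structure as $B(z)=LZ_{[q]}(z)$ and $A(z)=Z_{[p]}^\top(z)U$ in Definition~\ref{CMVBlockPolBA}: $\mathcal L$ is lower triangular with $q\times q$ blocks $\L_{i,j}$ (vanishing for $i<j$) and $\mathcal U$ is upper triangular with $p\times p$ blocks $\U_{i,j}$ (vanishing for $i>j$). Since the degree bounds in Lemma~\ref{degreesMLP} depend only on this block-triangular shape and on the CMV ordering of exponents in $Z_{[q]}(z)$ and $Z_{[p]}^\top(z)$, and not on the specific values of the entries of $L$, $U$, the very computation performed there --- multiplying the block-triangular factor into the monomial vector and reading off the largest and smallest powers of $z$ reachable in the $n$-th scalar entry --- applies verbatim with $L$, $U$, $B$, $A$ replaced by $\mathcal L$, $\mathcal U$, $\A$, $\B$. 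This yields the asserted bounds on $\deg^{\pm}\A_n^{(b)}$ and $\deg^{\pm}\B_n^{(a)}$, together with the stated values of $n$ at which they are attained. See Appendix.
\end{proof}
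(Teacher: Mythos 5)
Your proposal is correct and matches the paper's treatment exactly: the paper offers no separate argument for this lemma beyond the remark ``As in Lemma~\ref{degreesMLP} we have,'' which is precisely your observation that $\A=\L Z_{[q]}$ and $\B=Z_{[p]}^\top\U$ share the triangular-times-CMV-monomial structure of $B$ and $A$, so the Appendix computation carries over verbatim. Your final paragraph also (correctly) pairs $\A^{(b)}$ with the $2q$-denominator bounds and $\B^{(a)}$ with the $2p$-denominator ones, which is the natural reading given the sizes $\infty\times q$ and $p\times\infty$ of $\A$ and $\B$.
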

According to Proposition \ref{pro:LU_sym}, we establish the following:
\begin{pro}
	The following relations between the matrix Laurent polynomials hold true:
\[	\begin{aligned}
		B_\mu(z)&=\A_{\bar{\mu}}(z), &	 A_\mu(z)&=\B_{\bar{\mu}}(z), \\ B_\mu^\top(z)&=\B_{\mu^\top}(z), &	A_\mu^\top(z)&=\A_{\mu^\top}(z),\\
		( B_\mu(\bar z))^\dagger&=A_{\mu^\dagger}(z), &	( \B_\mu(\bar z))^\dagger&=\A_{\mu^\dagger}(z).
	\end{aligned}\]
\end{pro}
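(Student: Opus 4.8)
The plan is to read off all six identities from Proposition~\ref{pro:LU_sym} on the Gauss--Borel factors, using the defining relations $B=LZ_{[q]}$, $A=Z_{[p]}^{\top}U$, $\A=\L Z_{[q]}$, $\B=Z_{[p]}^{\top}\U$ together with the elementary behaviour of the monomial matrices under transposition and complex conjugation. One bookkeeping point should be fixed at the outset: for a $p'\times q'$ matrix of measures $\nu$, the column objects $\A_\nu,B_\nu$ are built from $Z_{[p']}$ while the row objects $\B_\nu,A_\nu$ are built from $Z_{[q']}$; hence in passing from $\mu$ (which is $q\times p$) to $\mu^{\top}$ or $\mu^{\dagger}$ (both $p\times q$) the sizes $p$ and $q$ get interchanged, so that, e.g., $\B_{\mu^{\top}}(z)=Z_{[q]}^{\top}(z)\,\U_{\mu^{\top}}$ and $\A_{\mu^{\dagger}}(z)=\L_{\mu^{\dagger}}Z_{[p]}(z)$. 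All triangular factors are taken in the normalization of Proposition~\ref{pro:LU_sym}, and the six identities refer to the polynomials built from that choice.

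The first line is immediate: multiplying the identity $L_\mu=\L_{\bar\mu}$ on the right by $Z_{[q]}(z)$ gives $B_\mu(z)=\L_{\bar\mu}Z_{[q]}(z)=\A_{\bar\mu}(z)$, and multiplying $U_\mu=\U_{\bar\mu}$ on the left by $Z_{[p]}^{\top}(z)$ gives $A_\mu(z)=Z_{[p]}^{\top}(z)\U_{\bar\mu}=\B_{\bar\mu}(z)$. For the second line I would transpose the defining relations: $B_\mu^{\top}(z)=Z_{[q]}^{\top}(z)(L_\mu)^{\top}$, and substituting $(L_\mu)^{\top}=\U_{\mu^{\top}}$ (Proposition~\ref{pro:LU_sym}) and recalling that $\mu^{\top}$ has $q$ columns yields $B_\mu^{\top}(z)=\B_{\mu^{\top}}(z)$; symmetrically $A_\mu^{\top}(z)=(U_\mu)^{\top}Z_{[p]}(z)=\L_{\mu^{\top}}Z_{[p]}(z)=\A_{\mu^{\top}}(z)$, since $\mu^{\top}$ has $p$ rows.

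The third line needs, in addition, the conjugation rule for the monomial matrices: since the entries of $Z_{[r]}(w)$ are the blocks $w^{k}I_r$, one has $\overline{Z_{[r]}(w)}=Z_{[r]}(\bar w)$, hence $(Z_{[r]}(\bar z))^{\dagger}=\overline{Z_{[r]}^{\top}(\bar z)}=Z_{[r]}^{\top}(z)$ and likewise $(Z_{[r]}^{\top}(\bar z))^{\dagger}=Z_{[r]}(z)$ --- a step that uses nothing about $z$ lying on $\T$ and in particular does not invoke $\bar z=z^{-1}$. Then $(B_\mu(\bar z))^{\dagger}=(Z_{[q]}(\bar z))^{\dagger}(L_\mu)^{\dagger}=Z_{[q]}^{\top}(z)(L_\mu)^{\dagger}$; applying $(U_\nu)^{\dagger}=L_{\nu^{\dagger}}$ with $\nu=\mu^{\dagger}$ and taking $\dagger$ gives $(L_\mu)^{\dagger}=U_{\mu^{\dagger}}$, and since $\mu^{\dagger}$ has $q$ columns this produces $(B_\mu(\bar z))^{\dagger}=Z_{[q]}^{\top}(z)U_{\mu^{\dagger}}=A_{\mu^{\dagger}}(z)$. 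The last identity is entirely parallel: $(\B_\mu(\bar z))^{\dagger}=(\U_\mu)^{\dagger}(Z_{[p]}^{\top}(\bar z))^{\dagger}=(\U_\mu)^{\dagger}Z_{[p]}(z)=\L_{\mu^{\dagger}}Z_{[p]}(z)=\A_{\mu^{\dagger}}(z)$, using $(\U_\mu)^{\dagger}=\L_{\mu^{\dagger}}$ and that $\mu^{\dagger}$ has $p$ rows. The only genuine obstacle is the dimensional bookkeeping ($p\leftrightarrow q$ after a transpose or Hermitian conjugate) and keeping the conjugation identities for $Z_{[r]}$ written in terms of $\bar z$ rather than $z^{-1}$; there is no analytic content, everything being formal manipulation of semi-infinite triangular and monomial matrices.
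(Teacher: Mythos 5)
Your proof is correct and follows exactly the route the paper intends: the paper states this proposition as an immediate consequence of Proposition \ref{pro:LU_sym} without writing out the details, and your argument supplies precisely those details (right/left multiplication of the factor identities by the monomial matrices, plus the $p\leftrightarrow q$ bookkeeping under $\top$ and $\dagger$ and the conjugation rule $(Z_{[r]}(\bar z))^{\dagger}=Z_{[r]}^{\top}(z)$). Nothing is missing.
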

Note the subindex indicating the associated measure to the orthogonal polynomial sequence.

In instances where the matrix of measures exhibits symmetry, the corresponding sequences of orthogonal polynomials are aligned:
\begin{pro}\label{pro:sym-poly}
	\begin{enumerate}
		\item If the matrix of measures is real, i.e., \( \bar{\mu}=\mu \), then
		\begin{align*}
			\begin{aligned}
				\A&= B, &  \B&= A.
			\end{aligned}
		\end{align*}
		\item When \( p=q \), and the matrix of measures is symmetric, i.e., \( \mu^\top=\mu \), then
		\begin{align*}
			\begin{aligned}
				B^\top&=\B, & A^\top&=\A.
			\end{aligned}
		\end{align*}
		\item  When \( p=q \), and the matrix of measures is Hermitian, i.e., \( \mu^\dagger=\mu \), then
		\begin{align*}
			\begin{aligned}
				(A(\bar z))^\dagger&=B(z), & (\A(\bar z))^\dagger&= \B(z).
			\end{aligned}
		\end{align*}
		\item  When \( p=q \), for real symmetric matrix of measures, i.e., \( \bar \mu=\mu \) and \( \mu^\top=\mu \), we have
		\begin{align*}
			\begin{aligned}
				A&= B^\top, & \A&=B, & \B=B^\top.
			\end{aligned}
		\end{align*}
	\end{enumerate}
\end{pro}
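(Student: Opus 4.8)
The plan is to read off Proposition~\ref{pro:sym-poly} from the identities already available, specialized to each symmetry class of $\mu$. There are two equivalent routes: either specialize the (unnumbered) proposition stated just above, which relates $B_\mu,A_\mu$ to $\A_{\bar\mu},\B_{\bar\mu}$, to $\B_{\mu^\top},\A_{\mu^\top}$, and to $A_{\mu^\dagger},\A_{\mu^\dagger}$ under transposition and conjugate transposition; or, equivalently, substitute the factorization symmetries of Corollary~\ref{cor:LU-sym} directly into the defining formulas $B=LZ_{[q]}$, $A=Z_{[p]}^\top U$, $\A=\L Z_{[q]}$, $\B=Z_{[p]}^\top\U$. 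I will follow the latter. The only auxiliary facts needed are the elementary monomial identities $\overline{Z_{[r]}(\bar z)}=Z_{[r]}(z)$, and hence $\big(Z_{[r]}(\bar z)\big)^\dagger=Z_{[r]}^\top(z)$, both immediate from the explicit form \eqref{ZnmonCMV}, together with associativity of the formal products of the semi-infinite triangular matrices involved, which lets $(\cdot)^\top$ and $(\cdot)^\dagger$ distribute over them in the usual way (every relevant sum being finite since $L,\L$ are lower and $U,\U$ upper triangular).

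For item (1), $\bar\mu=\mu$ gives $L=\L$ and $U=\U$ by Corollary~\ref{cor:LU-sym}(1), so $\A=\L Z_{[q]}=LZ_{[q]}=B$ and $\B=Z_{[p]}^\top\U=Z_{[p]}^\top U=A$. For item (2), with $p=q$ and $\mu^\top=\mu$, Corollary~\ref{cor:LU-sym}(2) gives $U^\top=\L$ and $L^\top=\U$, whence $B^\top=(LZ_{[q]})^\top=Z_{[q]}^\top L^\top=Z_{[q]}^\top\U=\B$ and $A^\top=(Z_{[p]}^\top U)^\top=U^\top Z_{[p]}=\L Z_{[q]}=\A$. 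For item (3), with $p=q$ and $\mu^\dagger=\mu$, Corollary~\ref{cor:LU-sym}(3) gives $U^\dagger=L$ and $\U^\dagger=\L$; applying the monomial identities, $\big(A(\bar z)\big)^\dagger=\big(Z_{[p]}^\top(\bar z)U\big)^\dagger=U^\dagger\,\overline{Z_{[p]}(\bar z)}=L\,Z_{[q]}(z)=B(z)$, and likewise $\big(\A(\bar z)\big)^\dagger=\big(\L Z_{[q]}(\bar z)\big)^\dagger=Z_{[q]}^\top(z)\,\L^\dagger=Z_{[p]}^\top(z)\,\U=\B(z)$. Finally, item (4) is the conjunction of the hypotheses of (1) and (2): Corollary~\ref{cor:LU-sym}(4) gives $U=L^\top$, $\L=L$, $\U=L^\top$, so $A=Z_{[p]}^\top L^\top=(LZ_{[q]})^\top=B^\top$, $\A=\L Z_{[q]}=LZ_{[q]}=B$, and $\B=Z_{[p]}^\top L^\top=B^\top$; alternatively one simply combines $\A=B$, $\B=A$ from (1) with $B^\top=\B$ from (2).

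I anticipate no genuine difficulty here: the argument is pure bookkeeping. The points that require a little care are checking $\overline{Z_{[r]}(\bar z)}=Z_{[r]}(z)$ block-by-block so that the conjugate transposes in item (3) come out with the right arguments, keeping the order of factors straight when transposing products of semi-infinite matrices, and observing that, although $L,U,\L,\U$ are determined only up to the diagonal rescaling mentioned after \eqref{LUMMright}, Corollary~\ref{cor:LU-sym} already selects mutually compatible normalizations, and Proposition~\ref{pro:sym-poly} is to be read with that choice so that $A,B,\A,\B$ are normalized accordingly.
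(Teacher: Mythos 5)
Your proof is correct and follows essentially the route the paper intends: the paper states this proposition without proof as an immediate consequence of Proposition~\ref{pro:LU_sym} and Corollary~\ref{cor:LU-sym} (via the unnumbered proposition relating $B_\mu,A_\mu$ to the polynomials of $\bar\mu$, $\mu^\top$, $\mu^\dagger$), and your direct substitution of the factorization symmetries into $B=LZ_{[q]}$, $A=Z_{[p]}^\top U$, $\A=\L Z_{[q]}$, $\B=Z_{[p]}^\top\U$ is the same bookkeeping. Your explicit checks of $\overline{Z_{[r]}(\bar z)}=Z_{[r]}(z)$ and of the normalization caveat are appropriate and all four items come out right.
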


\subsection{Orthogonality properties}

We now present the basic orthogonality  properties of these Laurent polynomials due to their connection to the Gauss--Borel factorization.

\begin{pro}[Matrix biorthogonality]
	\label{PropMatrBiort} The biorthogonality between the matrix Laurent polynomials \( B(z) \) and \( A(z^{-1}) \) can be expressed as follows:
\[	\begin{aligned}
		\oint_\mathbb{T}B(z)\d\mu (z) \bar A(z^{-1})&=I, &
		\oint_\mathbb{T}\bar{\A}(z^{-1}) \d\mu (z) \mathcal{B}(z)=I,  
	\end{aligned}\]
	where \( I \) is the semi-infinite identity matrix, i.e., \( I_{m,n}=\delta_{n,m} \).
	
	Expanding this relation entrywise using definitions \eqref{Bpol(b)} and \eqref{Apol(a)}, we obtain:
\[	\begin{aligned}
		\sum_{b=1}^{q}\sum_{a=1}^{p}\oint_{\mathbb{T}}B_{n}^{(b)}(z)\d\mu
		_{b,a}(z)\overline{A_{m}^{(a)}(z)}&=\delta _{n,m},&
		\sum_{b=1}^{q}\sum_{a=1}^{p}\oint_{\mathbb{T}}\overline{\A_{n}^{(b)}(z)}\d\mu
		_{b,a}(z)\B_{m}^{(a)}(z)&=\delta _{n,m},
	\end{aligned}\]
	for \( n,m\in \mathbb{N}_0 \).
\end{pro}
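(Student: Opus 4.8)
The plan is to derive the biorthogonality relations directly from the Gauss--Borel factorizations \eqref{LUMMleft} and \eqref{LUMMright}, exploiting the fact that the moment matrices were precisely defined as the Gram-type matrices of the monomial vectors $Z_{[q]}$ and $Z_{[p]}$ against $\d\mu$. First I would write out, for the left case, the quantity
\[
\oint_\mathbb{T} B(z)\,\d\mu(z)\,\bar A(z^{-1})
= \oint_\mathbb{T} L Z_{[q]}(z)\,\d\mu(z)\,\overline{Z_{[p]}^\top(z^{-1})\,U}\,,
\]
using the definitions $B(z)=LZ_{[q]}(z)$ from \eqref{Bpol} and $A(z)=Z_{[p]}^\top(z)U$ from \eqref{Apol}. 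The key observation is that $L$ and $U$ do not depend on $z$, so they can be pulled outside the integral; and $\overline{Z_{[p]}^\top(z^{-1})\,U}=\overline{U}^{\,}\,\overline{Z_{[p]}^\top(z^{-1})}$. On the unit circle $\bar z=z^{-1}$, so by \eqref{cojugaZ} we have $\overline{Z_{[p]}^\top(z^{-1})}=\overline{Z_{[p]}^\top(\bar z)}=\overline{Z_{[p]}}^{\,\top}(\bar z)$, which, reading \eqref{cojugaZ} backwards, is exactly $Z_{[p]}(z)$ — wait, more carefully: $(Z_{[p]}(z))^\dagger=Z_{[p]}^\top(z^{-1})$, hence $\overline{Z_{[p]}^\top(z^{-1})}=\overline{(Z_{[p]}(z))^\dagger}=(Z_{[p]}(z))^\top$ up to the conjugation bookkeeping; the cleanest route is simply to note $\overline{A(z^{-1})}=\overline{Z_{[p]}^\top(z^{-1})U}$ and that $\oint_\mathbb{T}Z_{[q]}(z)\,\d\mu(z)\,Z_{[p]}^\top(z^{-1})=M_\mu$ by \eqref{MMleft}, after moving the $z$-independent conjugate of $U$ out. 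Thus the left-hand side collapses to $L\,M\,\bar U$, and by \eqref{LUMMleft}, $M=L^{-1}\bar U^{-1}$, so $L M \bar U = L L^{-1}\bar U^{-1}\bar U = I$.

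Second, for the right case I would do the analogous computation with $\A(z)=\mathcal L Z_{[q]}(z)$ and $\B(z)=Z_{[p]}^\top(z)\mathcal U$: the quantity $\oint_\mathbb{T}\bar{\A}(z^{-1})\,\d\mu(z)\,\mathcal B(z)$ becomes $\overline{\mathcal L}\left(\oint_\mathbb{T}\overline{Z_{[q]}^\top(z^{-1})}^{\,\top\!\!}\cdots\right)$ — again the point is to recognize the sandwiched integral as $\M_\mu=\oint_\mathbb{T}Z_{[q]}(z^{-1})\,\d\mu(z)\,Z_{[p]}^\top(z)$ from \eqref{MMright}, after pulling the $z$-independent $\overline{\mathcal L}$ to the left and $\mathcal U$ to the right, and using $\overline{\A(z^{-1})}=\overline{\mathcal L}\,\overline{Z_{[q]}(z^{-1})}=\overline{\mathcal L}\,Z_{[q]}(z)$ via \eqref{cojugaZ}-type identities on $\mathbb T$. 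Then $\overline{\mathcal L}\,\M\,\mathcal U = \overline{\mathcal L}\,\bar{\mathcal L}^{-1}\mathcal U^{-1}\mathcal U = I$ by \eqref{LUMMright}.

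Third, for the entrywise form I would simply substitute the column/row decompositions $B(z)=\begin{bmatrix}B^{(1)}(z)&\cdots&B^{(q)}(z)\end{bmatrix}$ and $A(z)=\begin{bmatrix}A^{(1)}(z)\\\vdots\\A^{(p)}(z)\end{bmatrix}$ from \eqref{Bpol} and \eqref{Apol}, together with the matrix-of-measures structure $\d\mu=(\d\mu_{b,a})_{b,a}$, and observe that the $(n,m)$ entry of the matrix product $B(z)\,\d\mu(z)\,\bar A(z^{-1})$ is precisely $\sum_{b=1}^q\sum_{a=1}^p B_n^{(b)}(z)\,\d\mu_{b,a}(z)\,\overline{A_m^{(a)}(z)}$, using $\overline{A_m^{(a)}(z^{-1})}=\overline{A_m^{(a)}(\bar z)}$ which on $\mathbb T$ is the conjugate of a Laurent polynomial evaluated normally; equating entries with $I_{n,m}=\delta_{n,m}$ gives the claimed scalar identities, and likewise for the $\A,\B$ relation. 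I do not expect a genuine obstacle here — the result is essentially a restatement of the Gauss--Borel factorization — so the only care needed is the bookkeeping of conjugations and the $\bar z = z^{-1}$ substitution on $\mathbb T$, making sure that $\overline{Z_{[p]}^\top(z^{-1})}$ is correctly identified (via \eqref{cojugaZ}) so that the sandwiched integral is literally $M_\mu$ (resp. $\M_\mu$) and not some transpose or conjugate thereof; getting that identification right is the one place where a sign or transpose slip could occur.
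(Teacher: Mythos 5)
Your proposal is correct and follows essentially the same route as the paper: rewrite the Gauss--Borel factorizations as $L M \bar U = I$ and $\bar{\L}\,\M\,\U = I$, recognize the sandwiched integrals as $M$ and $\M$, and identify the outer factors with $B(z)$, $\bar A(z^{-1})$, $\bar{\A}(z^{-1})$, $\B(z)$. The one delicate point you flag --- whether $\bar A(z^{-1})$ is literally $Z_{[p]}^{\top}(z^{-1})\bar U$ --- is settled by reading the bar as conjugation of coefficients only (so that on $\T$ one has $\bar A(z^{-1})=\overline{A(z)}$, exactly the convention appearing in the entrywise statement), after which the identification with $M_\mu$ is immediate and no $z\leftrightarrow z^{-1}$ swap is needed.
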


\begin{proof}
	From \eqref{LUMMleft} and \eqref{LUMMright}, we have:
\[	
\begin{aligned}
		I&=\oint_{\mathbb{T}}LZ_{[q]}(z)\d\mu (z) Z_{[p]}^{\top
		}(z^{-1})\bar U, &
		I&=\oint_{\mathbb{T}}\bar{\L}Z_{[q]}(z^{-1})\d\mu (z) Z_{[p]}^{\top
		}(z) \U,
	\end{aligned}
	\]
	respectively. Now, taking into account Definitions \ref{CMVBlockPolBA} and \ref{CMVBlockPolBA copy(1)}, we obtain \( \oint_\mathbb{T}B(z)\d\mu (z) \bar A(z^{-1})=I \) and \( \oint_\mathbb{T}\bar{\A}(z^{-1}) \d\mu (z) \mathcal{B}(z)=I \). Entrywise biorthogonality follows directly from these relations.
\end{proof}

The matrix biorthogonality above has a direct extension to the block polynomials \eqref{BlockPolB} and \eqref{BlockPolA}, from which \( B(z) \) and \( A(z^{-1}) \) are formed. Thus, we establish the following corollary. The proof is omitted as it follows straightforwardly from the previous proposition.

\begin{Definition}[Truncated matrix polynomials]
	For \( r\in \mathbb{N} \), let \( {B}_{n}^{[r]}(z) \) denote the \( r\times q \) matrix of polynomial entries obtained from the \( \infty \times q \) matrix \( B(z) \) by taking consecutive matrices of size \( r\times q \). Similarly, let \( {A}_{n}^{[r]}(z^{-1}) \) denote the \( p\times r \) matrix of polynomial entries obtained from the \( p\times \infty \) matrix \( A(z^{-1}) \) by taking consecutive matrices of size \( p\times r \).
\end{Definition}

\begin{Corollary}[Matrix biorthogonality of block polynomials]
	The following biorthogonality relation holds:
	\begin{align}
		\oint_{\mathbb{T}}{B}_{n}^{[r]}(z)\d\mu (z) {\bar A}_{m}^{[r]}(z^{-1})=\delta _{n,m}I_{r},\quad n,m\in \mathbb{N}_0.
		\label{rMatrBiorth}
	\end{align}
	Moreover, when \( q=p \), we have
\[	\begin{aligned}
		\oint_{{T}}{B}_{n}(z)\d\mu (z) \overline{{A}_{m}(
			z)}&=\delta _{n,m}\,I_{p},& n,m&\in \mathbb{N}_0.
	\end{aligned}\]
	On the other hand, when \( q\neq p \), we obtain
\[	\begin{aligned}
		\oint_{{T}}B^{[p]}_{n}(z)\d\mu (z) \overline{{A}_{m}(z)}&=\delta _{n,m}\,I_{p},&
		\oint_{{T}}{B}_{n}(z)\d\mu (z) \overline{{A}^{[q]}_{m}(z)}&=\delta _{n,m}\,I_{q},
	\end{aligned}\]
	for \( n,m\in \mathbb{N}_0 \).
\end{Corollary}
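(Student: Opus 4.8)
The plan is to obtain every displayed identity by reading it off, as a block, from the single semi-infinite relation
\[
\oint_{\mathbb{T}}B(z)\,\d\mu(z)\,\bar A(z^{-1})=I
\]
established in Proposition \ref{PropMatrBiort}, once the truncated polynomials are recognised as consecutive row/column strips of the semi-infinite factors $B(z)$ and $\bar A(z^{-1})$.

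First I would fix $r\in\N$ and recall from the Definition of truncated matrix polynomials that ${B}_{n}^{[r]}(z)$ is exactly the $r\times q$ strip consisting of the rows $nr,nr+1,\dots,(n+1)r-1$ of the $\infty\times q$ matrix $B(z)$, and ${\bar A}_{m}^{[r]}(z^{-1})$ is the $p\times r$ strip consisting of the columns $mr,mr+1,\dots,(m+1)r-1$ of the $p\times\infty$ matrix $\bar A(z^{-1})$. Hence the $r\times r$ matrix $\oint_{\mathbb{T}}{B}_{n}^{[r]}(z)\,\d\mu(z)\,{\bar A}_{m}^{[r]}(z^{-1})$ is precisely the $(n,m)$ block obtained from $\oint_{\mathbb{T}}B(z)\,\d\mu(z)\,\bar A(z^{-1})$ when one partitions both its rows and its columns into consecutive groups of size $r$. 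Since that integral equals the semi-infinite identity $I$, with $I_{i,j}=\delta_{i,j}$, its $(n,m)$ block is $I_r$ for $n=m$ and the zero matrix for $n\neq m$: in the latter case the row index set $\{nr,\dots,(n+1)r-1\}$ and the column index set $\{mr,\dots,(m+1)r-1\}$ are disjoint (if $n<m$ then $(n+1)r-1<mr$), so all the entries of $I$ involved vanish. This is \eqref{rMatrBiorth}. Equivalently, one may expand entrywise through \eqref{Bpol(b)} and \eqref{Apol(a)} and invoke the scalar biorthogonality $\sum_{b,a}\oint_{\mathbb{T}}B_{i}^{(b)}(z)\,\d\mu_{b,a}(z)\,\overline{A_{j}^{(a)}(z)}=\delta_{i,j}$ with $i=nr+\alpha$ and $j=mr+\beta$, $1\le\alpha,\beta\le r$.

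For the remaining identities I would use the notational identifications built into the definitions of $B(z)$ and $A(z)$, namely that ${B}_{n}(z)={B}_{n}^{[q]}(z)$ is the $q\times q$ block polynomial and ${A}_{m}(z)={A}_{m}^{[p]}(z)$ is the $p\times p$ one, together with the observation that on $\mathbb{T}$ one has $\bar A(z^{-1})=\overline{A(z)}$ entrywise (since $\bar z=z^{-1}$), so that ${\bar A}_{m}^{[p]}(z^{-1})=\overline{{A}_{m}(z)}$ and ${\bar A}_{m}^{[q]}(z^{-1})=\overline{{A}_{m}^{[q]}(z)}$. Then the case $q\neq p$ is exactly \eqref{rMatrBiorth} specialised to $r=p$, which reads $\oint_{\mathbb{T}}B^{[p]}_{n}(z)\,\d\mu(z)\,\overline{{A}_{m}(z)}=\delta_{n,m}I_p$, and to $r=q$, which reads $\oint_{\mathbb{T}}{B}_{n}(z)\,\d\mu(z)\,\overline{{A}^{[q]}_{m}(z)}=\delta_{n,m}I_q$; when $q=p$ these two coincide and both give $\oint_{\mathbb{T}}{B}_{n}(z)\,\d\mu(z)\,\overline{{A}_{m}(z)}=\delta_{n,m}I_p$.

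Since the argument only re-indexes an identity that is already proved, there is no analytic obstacle. The one step that deserves an explicit line is the disjointness of the index ranges when $n\neq m$ — that each $r\times r$ off-diagonal block of the semi-infinite identity is genuinely all zeros, so the Kronecker delta is all-or-nothing at the block level; everything else is immediate from Proposition \ref{PropMatrBiort} and the Definition of the truncated polynomials.
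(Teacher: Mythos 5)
Your proposal is correct and follows exactly the route the paper intends: the paper omits the proof of this Corollary, stating that it "follows straightforwardly from the previous proposition," and your block-partition of the semi-infinite identity $\oint_{\mathbb{T}}B(z)\,\d\mu(z)\,\bar A(z^{-1})=I$, together with the specialisations $r=p$, $r=q$ and the unit-circle identification $\bar A(z^{-1})=\overline{A(z)}$, is precisely that straightforward deduction. The explicit remark on the disjointness of the row/column index ranges for $n\neq m$ is a correct (if routine) justification of the off-diagonal vanishing.
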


\begin{Remark}
	Even in the case \( p=q \), the orthogonality presented here differs from that studied in the work \cite{am3}. What we present here is a Gauss factorization in the step-line of the moment matrices (see \eqref{LUMMleft} and \eqref{LUMMright}), whereas in \cite{am3}, the authors performed a block decomposition of the corresponding moment matrices, and those blocks do not appear in the present analysis.
\end{Remark}

This leads to the following natural definition of mixed-type multiple orthogonality on the unit circle.

\begin{Corollary}[Mixed multiple orthogonality]
	The orthogonality relations for the CMV matrix Laurent polynomials can be written as follows:
	\begin{align*}
		\begin{aligned}
			\oint_{\mathbb{T}}\left(\sum_{b=1}^{q}B_{n}^{(b)}(z)\d\mu _{b,a}(z)\right)\overline{	z^{k}}&=0, & k&\in\left\{-\left\lceil \frac{n+2-a}{2p}\right\rceil +1,\dots,{\left\lceil \frac{n+2-a-p}{2p}\right\rceil }\right\},& a&\in \{1,\ldots ,p\}, 
		\end{aligned}\\
		\begin{aligned}
			\oint_{\mathbb{T}}z^{k}\left(\sum_{a=1}^{p}\overline{A_{n}^{(a)}(z)}\d\mu
			_{b,a} (z)\right)&=0,   &
			k&\in\left\{-{\left\lceil \frac{n+2-b-q}{2q}\right\rceil , \cdots,\left\lceil \frac{n+2-b}{2q}\right\rceil -1}\right\},& b&\in \{1,\ldots ,q\},
		\end{aligned}\\
		\begin{aligned}
			\oint_{\mathbb{T}}\overline{z^{k}}\left(\sum_{a=1}^{p}{\B_{n}^{(a)}(z)}\d\mu
			_{b,a} (z)\right)&=0,   &
			k&\in\left\{-\left\lceil \frac{n+2-b}{2q}\right\rceil +1, \cdots,\left\lceil \frac{n+2-b-q}{2q}\right\rceil \right\},& b&\in \{1,\ldots ,q\},
		\end{aligned}\\
		\begin{aligned}
			\oint_{\mathbb{T}}	\left(\sum_{b=1}^{q}\overline{\A_{n}^{(b)}(z)}\d\mu _{b,a}(z)\right)z^{k}&=0, & k&\in\left\{-{\left\lceil \frac{n+2-a-p}{2p}\right\rceil },\dots, \left\lceil \frac{n+2-a}{2p}\right\rceil -1\right\},& a&\in \{1,\ldots ,p\}.
		\end{aligned}
	\end{align*}
\end{Corollary}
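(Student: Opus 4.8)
The plan is to derive the stated mixed multiple orthogonality relations directly from the matrix biorthogonality already established in Proposition~\ref{PropMatrBiort}, combined with the degree bounds of Lemma~\ref{degreesMLP}. The key observation is that biorthogonality gives, entrywise,
\[
\sum_{b=1}^{q}\sum_{a=1}^{p}\oint_{\mathbb{T}}B_{n}^{(b)}(z)\,\d\mu_{b,a}(z)\,\overline{A_{m}^{(a)}(z)}=\delta_{n,m},
\]
and the same for the $\A,\B$ pair. The idea is to fix $n$ and let $m$ range over a suitable interval below $n$; since the right-hand side vanishes for $m<n$, one gets a family of linear relations among the moments of the $B_n^{(b)}$ against the rows $A_m^{(a)}$. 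The strategy is then to show that, as $m$ runs over an appropriate block of indices, the Laurent monomials $\overline{A_m^{(a)}}$ — more precisely, the relevant powers $\overline{z^k}$ appearing in them — span exactly the range of exponents displayed in the statement.

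First I would recall from Lemma~\ref{degreesMLP} that $A^{(a)}_m(z)$ has $\deg^+ \le \lceil (m+2-a)/(2p)\rceil-1$ and $\deg^-\le\lceil(m+2-a-p)/(2p)\rceil$, and that both bounds are \emph{attained} at the sharp indices $m=2Np+a-1$ and $m=(2N+1)p+a-1$. Fixing $a$ and letting $m$ run through all indices $\equiv a-1 \pmod p$ that are strictly less than $n$, the leading exponents $z^{\deg^+}$ and $z^{-\deg^-}$ of the corresponding $A_m^{(a)}$ sweep out a consecutive block of integers; by a triangularity/induction argument on $m$ one upgrades this to the statement that the span of $\{\,\overline{A_m^{(a)}(z)} : m<n,\ m\equiv a-1 \pmod p\,\}$ equals the span of $\{\,\overline{z^k} : k\in\{-\lceil(n+2-a)/(2p)\rceil+1,\dots,\lceil(n+2-a-p)/(2p)\rceil\}\,\}$. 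Feeding this into the biorthogonality relations with $m<n$ (all giving $0$) yields precisely
\[
\oint_{\mathbb{T}}\Bigl(\sum_{b=1}^{q}B_n^{(b)}(z)\,\d\mu_{b,a}(z)\Bigr)\overline{z^k}=0
\]
for $k$ in that range and every $a\in\{1,\dots,p\}$. The three remaining assertions follow symmetrically: the second from the transposed reading of the first biorthogonality (testing $A_n^{(a)}$ against $B_m^{(b)}$ for $m<n$), and the third and fourth from the second biorthogonality identity involving $\A$ and $\B$, using the degree bounds of the companion lemma for the right Laurent polynomials.

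I would carry the steps out in this order: (i) state the bookkeeping lemma that for fixed $a$ the set of exponents carried by $\{A_m^{(a)}\}_{m<n,\ m\equiv a-1(p)}$ is the claimed interval — this is where the arithmetic of the ceilings $\lceil(n+2-a)/(2p)\rceil$ must be matched carefully against the CMV ordering $1,z^{-1},z,z^{-2},z^2,\dots$; (ii) invoke biorthogonality with $m<n$ to kill the corresponding moments; (iii) repeat with roles of $B$ and $A$ swapped, and then for the $\A,\B$ pair. The main obstacle is step~(i): one has to be scrupulous about the interplay between the two-sided CMV ordering, the floor/ceiling arithmetic in the degree bounds, and the exact index at which each bound is attained, to be sure the exponents line up into a \emph{gapless} interval of the stated length with the stated endpoints. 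Once that combinatorial identity is pinned down, the rest is an immediate consequence of Proposition~\ref{PropMatrBiort}, so I would relegate the detailed index-chasing to an appendix, mirroring the treatment of Lemma~\ref{degreesMLP}.
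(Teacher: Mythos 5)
Your overall strategy---extract the displayed relations from the triangularity hidden in the Gauss--Borel factorization, then translate ``orthogonal to all lower-index $A_m$'' into ``orthogonal to a block of monomials''---is the natural one, and is essentially the argument the paper leaves implicit. But as written the plan has two problems. The lesser one is the component separation: the biorthogonality relations $\sum_{a,b}\oint B_n^{(b)}\,\d\mu_{b,a}\,\overline{A_m^{(a)}}=0$ for $m<n$ are \emph{scalar} equations coupling all $a\in\{1,\dots,p\}$ at once, so you cannot argue one component at a time with the families $\{A_m^{(a)}:m\equiv a-1\ (\operatorname{mod} p)\}$. The correct mechanism is at the vector level: since $A=Z_{[p]}^{\top}U$ with $U$ upper triangular and invertible diagonal, the span of the vector-valued Laurent polynomials $\{A_m\}_{m<n}$ equals the span of the first $n$ columns of $Z_{[p]}^{\top}$, each of which is a pure monomial times a coordinate vector $\boldsymbol e_a^{\top}$. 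Equivalently---and more directly---read the relations off $LM=\bar U^{-1}$: the $(n,j)$ entry with $j=mp+a-1$ is exactly $\oint\bigl(\sum_b B_n^{(b)}\,\d\mu_{b,a}\bigr)\overline{z^{k_m}}$ and vanishes for $j<n$. No span lemma is then needed at all.

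The serious problem is that the ``bookkeeping lemma'' you propose to relegate to an appendix cannot be established for the ranges as printed, so step (i) fails. Triangularity gives exactly $n$ vanishing entries in row $n$, hence $n$ monomial conditions distributed over $a$; but the stated ranges contain $\sum_{a=1}^{p}\bigl(\lceil\tfrac{n+2-a}{2p}\rceil+\lceil\tfrac{n+2-a-p}{2p}\rceil\bigr)=n+1$ exponents. The extra exponent corresponds to a diagonal (or first superdiagonal) entry of the triangular factor, which is nonzero by hypothesis. Concretely, for $p=q=1$ and $n=0$ the first relation would assert $\oint B_0\,\d\mu=L_{0,0}\,c_0=0$, false whenever $c_0\neq0$; for $n=5$ it would assert orthogonality to $\overline{z^{3}}$, i.e.\ the vanishing of the $(5,6)$ entry of $\bar U^{-1}$, which is generically nonzero. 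So either one endpoint of each range must be shrunk by one (the upper endpoint in the first and third relations, the lower endpoint in the second and fourth), or the proof cannot close. The index-chasing you defer to an appendix is precisely where this surfaces; it is the substance of the corollary, not a routine verification, and it should be confronted explicitly.
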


Therefore, with these orthogonality and biorthogonality properties in mind, we can aptly refer to these polynomials as mixed Multiple Orthogonal Laurent Polynomials on the Unit Circle (MOLPUC).

Along the article we will consider following Cauchy transforms of the CMV matrix Laurent polynomials, that is, the second kind functions
\begin{align}
	D(z) =&\oint_{\mathbb{T}}\frac{1}{z-u^{-1}}B(u)\d\mu (u), \label{skfD}\\
	C(z) =&\oint_{\mathbb{T}}\frac{1}{z^{-1}-u}\d\mu (u)A(u^{-1}),  \label{skfC}
\end{align}%
Observe that the size of $D(z)$  is $\infty \times p$, and the size of $C(z)$ i s $q\times \infty $.

\subsection{Recurrence relations}

To establish recurrence relations, we introduce the following matrices, which are of significant interest in the theory of mixed MOLPUC.

\begin{Definition}
	For any \( r \in \mathbb{N} \), and denoting \( I_{r} \) and \( 0_{r} \) as the \( r \times r \) identity matrix and the \( r \times r \) zero matrix respectively, we define the following semi-infinite block matrices:
	\begin{align}
		\Upsilon_{[r]} & =\left[\begin{NiceMatrix}
			\Block[borders={bottom,right,tikz=solid}]{2-2}{} 0_{r} & 0_{r} &
			\Block[borders={bottom,right,tikz=solid}]{2-2}{} I_{r} & 0_{r} &
			\Block[borders={bottom,right,tikz=solid}]{2-2}{} 0_{r} & 0_{r} &
			\Block[borders={bottom,right,tikz=solid}]{2-2}{} 0_{r} & 0_{r} &
			\Block[borders={bottom,tikz=solid}]{2-1}{} \cdots[shorten-start=6pt] \\ 
			I_{r} & 0_{r} & 0_{r} & 0_{r} & 0_{r} & 0_{r} & 0_{r} & 0_{r} & \cdots[shorten-start=6pt] \\
			\Block[borders={bottom,right,tikz=solid}]{2-2}{} 0_{r} & 0_{r} &
			\Block[borders={bottom,right,tikz=solid}]{2-2}{} 0_{r} & 0_{r} &
			\Block[borders={bottom,right,tikz=solid}]{2-2}{} I_{r} & 0_{r} &
			\Block[borders={bottom,right,tikz=solid}]{2-2}{} 0_{r} & 0_{r} &
			\Block[borders={bottom,tikz=solid}]{2-1}{} \cdots[shorten-start=6pt] \\ 
			0_{r} & I_{r} & 0_{r} & 0_{r} & 0_{r} & 0_{r} & 0_{r} & 0_{r} & \cdots[shorten-start=6pt] \\
			\Block[borders={bottom,right,tikz=solid}]{2-2}{} 0_{r} & 0_{r} &
			\Block[borders={bottom,right,tikz=solid}]{2-2}{} 0_{r} & 0_{r} &
			\Block[borders={bottom,right,tikz=solid}]{2-2}{} 0_{r} & 0_{r} &
			\Block[borders={bottom,right,tikz=solid}]{2-2}{} I_{r} & 0_{r} &
			\Block[borders={bottom,tikz=solid}]{2-1}{} \cdots[shorten-start=6pt] \\
			0_{r} & 0_{r} & 0_{r} & I_{r} & 0_{r} & 0_{r} & 0_{r} & 0_{r} & \cdots[shorten-start=6pt] \\
			\Block[borders={bottom,right,tikz=solid}]{2-2}{} 0_{r} & 0_{r} &
			\Block[borders={bottom,right,tikz=solid}]{2-2}{} 0_{r} & 0_{r} &
			\Block[borders={bottom,right,tikz=solid}]{2-2}{} 0_{r} & 0_{r} &
			\Block[borders={bottom,right,tikz=solid}]{2-2}{} I_{r} & 0_{r} & 
			\Block[borders={bottom,tikz=solid}]{2-1}{} \cdots[shorten-start=6pt] \\
			0_{r} & 0_{r} & 0_{r} & 0_{r} & 0_{r} & I_{r} & 0_{r} & 0_{r} & \cdots[shorten-start=6pt] \\
			\Block[borders={right,tikz=solid}]{1-2}{} \vdots[shorten=5pt] & \vdots[shorten=5pt] &
			\Block[borders={right,tikz=solid}]{1-2}{} \vdots[shorten=5pt] & \vdots[shorten=5pt] &
			\Block[borders={right,tikz=solid}]{1-2}{}
			\vdots[shorten=5pt] & \vdots[shorten=5pt] & 
			\Block[borders={right,tikz=solid}]{1-2}{} \vdots[shorten=5pt] & \vdots[shorten=5pt] &  
		\end{NiceMatrix}\right],  
		\label{Upsilon(r)}
	\end{align}
	the intertwining matrix:
	\begin{align}
		\eta_{[r]} & =\left[\begin{NiceMatrix}
			\Block[borders={bottom,right,tikz=solid}]{1-1}{} I_{r} &
			\Block[borders={bottom,right,tikz=solid}]{1-2}{} 0_{r} & 0_{r} &
			\Block[borders={bottom,right,tikz=solid}]{1-2}{} 0_{r} & 0_{r} &
			\Block[borders={bottom,right,tikz=solid}]{1-2}{} 0_{r} & 0_{r} &
			\Block[borders={bottom,tikz=solid}]{1-1}{} \cdots[shorten-start=6pt] \\
			\Block[borders={bottom,right,tikz=solid}]{2-1}{} 0_{r} &
			\Block[borders={bottom,right,tikz=solid}]{2-2}{} 0_{r} & I_{r} &
			\Block[borders={bottom,right,tikz=solid}]{2-2}{} 0_{r} & 0_{r} &
			\Block[borders={bottom,right,tikz=solid}]{2-2}{} 0_{r} & 0_{r} &
			\Block[borders={bottom,tikz=solid}]{2-1}{} \cdots[shorten-start=6pt] \\ 
			0_{r} & I_{r} & 0_{r} & 0_{r} & 0_{r} & 0_{r} & 0_{r} & \cdots[shorten-start=6pt] \\
			\Block[borders={bottom,right,tikz=solid}]{2-1}{} 0_{r} &
			\Block[borders={bottom,right,tikz=solid}]{2-2}{} 0_{r} & 0_{r} &
			\Block[borders={bottom,right,tikz=solid}]{2-2}{} 0_{r} & I_{r} &
			\Block[borders={bottom,right,tikz=solid}]{2-2}{} 0_{r} & 0_{r} &
			\Block[borders={bottom,tikz=solid}]{2-1}{} \cdots[shorten-start=6pt] \\ 
			0_{r} & 0_{r} & 0_{r} & I_{r} & 0_{r} & 0_{r} & 0_{r} & \cdots[shorten-start=6pt] \\
			\Block[borders={bottom,right,tikz=solid}]{2-1}{} 0_{r} &
			\Block[borders={bottom,right,tikz=solid}]{2-2}{} 0_{r} & 0_{r} &
			\Block[borders={bottom,right,tikz=solid}]{2-2}{} 0_{r} & 0_{r} & 
			\Block[borders={bottom,right,tikz=solid}]{2-2}{} 0_{r} & I_{r} &  
			\Block[borders={bottom,tikz=solid}]{2-1}{} \cdots[shorten-start=6pt] \\
			0_{r} & 0_{r} & 0_{r} & 0_{r} & 0_{r} & I_{r} & 0_{r} & \cdots[shorten-start=6pt] \\
			\Block[borders={right,tikz=solid}]{1-1}{} \vdots[shorten=5pt] &
			\Block[borders={right,tikz=solid}]{1-2}{} \vdots[shorten=5pt] & \vdots[shorten=5pt] &
			\Block[borders={right,tikz=solid}]{1-2}{} \vdots[shorten=5pt] & \vdots[shorten=5pt] & \Block[borders={right,tikz=solid}]{1-2}{} \vdots[shorten=5pt] & \vdots[shorten=5pt] & \end{NiceMatrix}\right],  
		\label{Eta(r)}
	\end{align}
		and the untangling matrix:
	\begin{align}
		\nu_{[r]} & =\left[\begin{NiceMatrix}
			\Block[borders={bottom,right,tikz=solid}]{2-2}{} 0_{r} & I_{r} &
			\Block[borders={bottom,right,tikz=solid}]{2-2}{} 0_{r} & 0_{r} &
			\Block[borders={bottom,right,tikz=solid}]{2-2}{} 0_{r} & 0_{r} &
			\Block[borders={bottom,right,tikz=solid}]{2-2}{} 0_{r} & 0_{r} &
			\Block[borders={bottom,tikz=solid}]{2-1}{} \cdots[shorten-start=6pt] \\ 
			I_{r} & 0_{r} & 0_{r} & 0_{r} & 0_{r} & 0_{r} & 0_{r} & 0_{r} & \cdots[shorten-start=6pt] \\
			\Block[borders={bottom,right,tikz=solid}]{2-2}{} 0_{r} & 0_{r} &
			\Block[borders={bottom,right,tikz=solid}]{2-2}{} 0_{r} & I_{r} &
			\Block[borders={bottom,right,tikz=solid}]{2-2}{} 0_{r} & 0_{r} &
			\Block[borders={bottom,right,tikz=solid}]{2-2}{} 0_{r} & 0_{r} &
			\Block[borders={bottom,tikz=solid}]{2-1}{} \cdots[shorten-start=6pt] \\ 
			0_{r} & 0_{r} & I_{r} & 0_{r} & 0_{r} & 0_{r} & 0_{r} & 0_{r} & \cdots[shorten-start=6pt] \\
			\Block[borders={bottom,right,tikz=solid}]{2-2}{} 0_{r} & 0_{r} &
			\Block[borders={bottom,right,tikz=solid}]{2-2}{} 0_{r} & 0_{r} &
			\Block[borders={bottom,right,tikz=solid}]{2-2}{} 0_{r} & I_{r} &
			\Block[borders={bottom,right,tikz=solid}]{2-2}{} 0_{r} & 0_{r} &
			\Block[borders={bottom,tikz=solid}]{2-1}{} \cdots[shorten-start=6pt] \\
			0_{r} & 0_{r} & 0_{r} & 0_{r} & I_{r} & 0_{r} & 0_{r} & 0_{r} & \cdots[shorten-start=6pt] \\
			\Block[borders={bottom,right,tikz=solid}]{2-2}{} 0_{r} & 0_{r} &
			\Block[borders={bottom,right,tikz=solid}]{2-2}{} 0_{r} & 0_{r} &
			\Block[borders={bottom,right,tikz=solid}]{2-2}{} 0_{r} & 0_{r} &
			\Block[borders={bottom,right,tikz=solid}]{2-2}{} 0_{r} & I_{r} & 
			\Block[borders={bottom,tikz=solid}]{2-1}{} \cdots[shorten-start=6pt] \\
			0_{r} & 0_{r} & 0_{r} & 0_{r} & 0_{r} & 0_{r} & I_{r} & 0_{r} & \cdots[shorten-start=6pt] \\
			\Block[borders={right,tikz=solid}]{1-2}{} \vdots[shorten=5pt] & \vdots[shorten=5pt] &
			\Block[borders={right,tikz=solid}]{1-2}{} \vdots[shorten=5pt] & \vdots[shorten=5pt] &
			\Block[borders={right,tikz=solid}]{1-2}{} \vdots[shorten=5pt] & \vdots[shorten=5pt] & 
			\Block[borders={right,tikz=solid}]{1-2}{} \vdots[shorten=5pt] & \vdots[shorten=5pt] &  
		\end{NiceMatrix}\right].  
		\label{Nu(r)}
	\end{align}
\end{Definition}

Notice that $\Upsilon_{[r]}$ is constructed as a sparse banded matrix, with its highest nonzero upper-diagonal and lowest nonzero lower-diagonal both positioned at $(2r+1)$. This characteristic renders $\Upsilon_{[r]}$ a $(4r+1)$-banded matrix, or succinctly, a pentadiagonal block matrix. For instance, when $r=1$, $\Upsilon _{[ 1]}$ forms a pentadiagonal semi-infinite matrix; while for $r=2$, $\Upsilon _{[ 2]}$ manifests as a pentadiagonal $2\times 2$ block matrix, akin to a ninediagonal semi-infinite matrix.

Regarding the successive powers of $\Upsilon_{[r]}$, it's straightforward to verify that $\left( \Upsilon _{[r]}\right) ^{d}$ maintains a sparse banded structure. Here, the highest nonzero upper-diagonal and lowest nonzero lower-diagonal occur at position $(2dr+1)$. This property extends $\left( \Upsilon _{[r]}\right) ^{d}$ to a $\left(4dr+1\right) $-diagonal matrix, effectively forming a pentadiagonal $dr\times dr$ block matrix. The composition of $\left( \Upsilon _{[r]}\right) ^{d}$ holds significance for subsequent analyses.

\begin{pro}
For any given \( r \in \mathbb{N} \), the following factorizations of the matrix \( \Upsilon_{[r]} \) and its transpose hold:
\begin{equation}
	\label{factorMain}\begin{aligned}
	\Upsilon_{[r]} &= \nu_{[r]}\eta_{[r]}, &
	\Upsilon_{[r]}^{\top } &= \eta_{[r]}\nu _{[r]} = \Upsilon_{[r]}^{-1}.
\end{aligned}
\end{equation}
As a direct consequence, the subsequent expressions are valid:
\[\begin{aligned}
	\Upsilon_{[r]}\nu_{[r]} &= \nu_{[r]}\Upsilon_{[r]}^{\top }, &
	\Upsilon_{[r]}^{\top }\eta_{[r]} &= \eta _{[r]}\Upsilon_{[r]},
\end{aligned}\]
and for any \( d \in \mathbb{N} \),
\[\begin{aligned}
	\left( \Upsilon_{[r]}\right) ^{d} &= \nu_{[r]}\left( \Upsilon_{[r]}^{\top }\right) ^{d-1}\eta_{[r]}, &
	\left( \Upsilon_{[r]}^{\top }\right) ^{d} &= \eta _{[r]}\left( \Upsilon_{[r]}\right) ^{d-1}\nu_{[r]}.
\end{aligned}\]
\end{pro}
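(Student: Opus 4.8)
The plan is to establish the basic factorization $\Upsilon_{[r]} = \nu_{[r]}\eta_{[r]}$ by direct block-matrix multiplication, then deduce everything else formally. First I would fix $r$ and describe the three matrices purely in terms of their block structure: $\nu_{[r]}$ is the block permutation that swaps the pairs of blocks $(2k, 2k+1)$ for $k\in\N_0$ (counting block-rows from $0$), $\eta_{[r]}$ is the block permutation that fixes block $0$ and swaps the pairs $(2k-1, 2k)$ for $k\in\N$, and $\Upsilon_{[r]}$ is the pentadiagonal shift whose only nonzero blocks in block-row $0$ is an $I_r$ in block-column $2$, in block-row $1$ an $I_r$ in block-column $0$, and then a shifted repeating $2\times 2$ pattern. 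Since $\nu_{[r]}$ and $\eta_{[r]}$ are both block permutation matrices (involutions, in fact), multiplying them amounts to composing the two underlying permutations of $\N_0$; carrying out this composition block-row by block-row and checking it lands exactly on the nonzero-block pattern of $\Upsilon_{[r]}$ is the single computational core of the argument. I expect this to be entirely mechanical once the indexing convention is pinned down — the only real care needed is handling the exceptional block-row $0$ separately from the generic two-periodic pattern.

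Second, from $\nu_{[r]}^{\top}=\nu_{[r]}$, $\eta_{[r]}^{\top}=\eta_{[r]}$ (each is a permutation matrix that is its own transpose, being an involution) we immediately get $\Upsilon_{[r]}^{\top} = (\nu_{[r]}\eta_{[r]})^{\top} = \eta_{[r]}^{\top}\nu_{[r]}^{\top} = \eta_{[r]}\nu_{[r]}$. To see $\eta_{[r]}\nu_{[r]} = \Upsilon_{[r]}^{-1}$ I would use that each factor is an involution, so $\Upsilon_{[r]}(\eta_{[r]}\nu_{[r]}) = \nu_{[r]}\eta_{[r]}\eta_{[r]}\nu_{[r]} = \nu_{[r]}\nu_{[r]} = I$, and likewise on the other side; hence $\Upsilon_{[r]}^{\top} = \Upsilon_{[r]}^{-1}$, i.e. $\Upsilon_{[r]}$ is (block-)orthogonal, which is anyway clear since it too is a permutation matrix. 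This dispatches \eqref{factorMain}.

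Third, the two intertwining identities follow by pure algebra from \eqref{factorMain}: starting from $\Upsilon_{[r]}\nu_{[r]} = \nu_{[r]}\eta_{[r]}\nu_{[r]}$ and recognizing the right-hand side as $\nu_{[r]}(\eta_{[r]}\nu_{[r]}) = \nu_{[r]}\Upsilon_{[r]}^{\top}$ gives the first; symmetrically $\Upsilon_{[r]}^{\top}\eta_{[r]} = \eta_{[r]}\nu_{[r]}\eta_{[r]} = \eta_{[r]}(\nu_{[r]}\eta_{[r]}) = \eta_{[r]}\Upsilon_{[r]}$ gives the second. Finally, for the power formulas I would argue by induction on $d$. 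For $d=1$ the claim $\Upsilon_{[r]} = \nu_{[r]}(\Upsilon_{[r]}^{\top})^{0}\eta_{[r]} = \nu_{[r]}\eta_{[r]}$ is \eqref{factorMain}. For the inductive step, write $\Upsilon_{[r]}^{d} = \Upsilon_{[r]}\cdot\Upsilon_{[r]}^{d-1} = \nu_{[r]}\eta_{[r]}\cdot\nu_{[r]}(\Upsilon_{[r]}^{\top})^{d-2}\eta_{[r]}$ and collapse the middle $\eta_{[r]}\nu_{[r]} = \Upsilon_{[r]}^{\top}$, obtaining $\nu_{[r]}(\Upsilon_{[r]}^{\top})^{d-1}\eta_{[r]}$ as desired; the companion identity for $(\Upsilon_{[r]}^{\top})^{d}$ is obtained identically with the roles of $\nu_{[r]}$ and $\eta_{[r]}$ exchanged (using $\nu_{[r]}\eta_{[r]}=\Upsilon_{[r]}$). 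The main obstacle, such as it is, is purely bookkeeping: getting the block-index arithmetic in the first step exactly right, including the off-by-one at block-row $0$, so that the product of the two explicit sparse patterns visibly reproduces the explicit sparse pattern of $\Upsilon_{[r]}$; everything after that is formal manipulation of involutions.
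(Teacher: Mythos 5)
Your proposal is correct and follows essentially the same route as the paper: the paper's proof likewise establishes $\Upsilon_{[r]}=\nu_{[r]}\eta_{[r]}$ and $\Upsilon_{[r]}^{\top}=\eta_{[r]}\nu_{[r]}$ by direct inspection of the block structures (i.e., by composing the underlying block permutations) and then obtains the remaining identities as formal consequences. Your write-up is merely more explicit about the involution properties $\nu_{[r]}^{2}=\eta_{[r]}^{2}=I$ and the induction for the powers, all of which the paper leaves as "straightforward consequences."
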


\begin{proof}
The proof follows directly from the specific form of the semi-infinite matrices involved. Multiplying \( \nu_{[ r]} \) by \( \eta_{[r]} \) in the given order yields \( \Upsilon_{[r]} \). Notably, \( \nu_{[r]}\eta_{[r]} = \Upsilon_{[r]} \) holds whether the multiplication is performed by \( r\times r \) blocks (\( r\geq 2 \)) or in the standard manner. Similarly, multiplying \( \eta _{[r]} \) by \( \nu _{[r]} \) in sequence results in \( \Upsilon_{[r]}^{\top } \). The remaining expressions are straightforward consequences of the initial factorizations.
\end{proof}

Next, we present a series of specific properties concerning the relationships between the matrices \( \Upsilon_{[r]} \), \( \eta_{[r]} \), and \( \nu_{[r]} \):
\begin{pro}
The matrix \( \Upsilon_{[r]} \) is a banded matrix with \( 2r \) superdiagonals and \( 2r \) subdiagonals, while \( \eta_{[r]} \) and \( \nu_{[r]} \) are banded matrices with \( r \) superdiagonals and \( r \) subdiagonals. Additionally, the following matrix properties hold:
\begin{align}
	\Upsilon_{[r]}^{-1} &= \Upsilon_{[r]}^{\top} = \Upsilon_{[r]}^{\dag}, \label{prop01} \\
	\eta_{[r]} &= \eta_{[r]}^{-1} = \eta_{[r]}^{\top} = \eta_{[r]}^{\dag}, \label{prop11} \\
	\nu_{[r]} &= \nu_{[r]}^{-1} = \nu_{[r]}^{\top} = \nu_{[r]}^{\dag}. \label{prop21}
\end{align}
\end{pro}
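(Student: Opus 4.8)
The plan is to read everything off the explicit block forms \eqref{Upsilon(r)}, \eqref{Eta(r)}, \eqref{Nu(r)} together with the factorization already proved in \eqref{factorMain}. First I would settle the bandwidth claims by simple inspection: the $I_r$ blocks of $\Upsilon_{[r]}$ occupy only the block-diagonals at signed distance at most $2r$ from the main one (equivalently, as already remarked, the outermost nonzero diagonals sit at position $2r+1$), while for $\eta_{[r]}$ and $\nu_{[r]}$ the $I_r$ blocks lie within distance $r$ of the main block-diagonal. This step is purely a matter of locating the nonzero blocks in the displayed matrices.

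Next I would dispose of all the Hermitian-conjugate assertions at once: every block entry of the three matrices is either $0_r$ or $I_r$, both real and symmetric, so for each $X\in\{\Upsilon_{[r]},\eta_{[r]},\nu_{[r]}\}$ one has $X^{\dagger}=X^{\top}$. This reduces \eqref{prop01}--\eqref{prop21} to statements about $X^{\top}$ and $X^{-1}$ only.

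For $\eta_{[r]}$ and $\nu_{[r]}$ the key observation is that each is a block permutation matrix, with exactly one $I_r$ in every block row and every block column, realizing a symmetric permutation: $\eta_{[r]}$ fixes the first block and transposes the consecutive pairs $(2,3),(4,5),\dots$, while $\nu_{[r]}$ transposes the pairs $(1,2),(3,4),\dots$. Since these permutations are involutions, one gets $\eta_{[r]}^{\top}=\eta_{[r]}$ and $\nu_{[r]}^{\top}=\nu_{[r]}$, and since a permutation matrix is orthogonal this immediately yields $\eta_{[r]}^{-1}=\eta_{[r]}$ and $\nu_{[r]}^{-1}=\nu_{[r]}$, proving \eqref{prop11} and \eqref{prop21}. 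Finally, \eqref{prop01} is obtained by quoting the preceding proposition: \eqref{factorMain} already records $\Upsilon_{[r]}^{\top}=\eta_{[r]}\nu_{[r]}=\Upsilon_{[r]}^{-1}$, and combining this with $\Upsilon_{[r]}^{\dagger}=\Upsilon_{[r]}^{\top}$ from the previous paragraph closes the chain $\Upsilon_{[r]}^{-1}=\Upsilon_{[r]}^{\top}=\Upsilon_{[r]}^{\dagger}$.

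The only step that takes any care, and the one I would treat as the main obstacle, is confirming that the permutations underlying $\eta_{[r]}$ and $\nu_{[r]}$ really are exactly the symmetric transposition patterns claimed; with $r\times r$ blocks the displayed matrices are bulky, so one must be careful bookkeeping the block indices, and for $r\geq 2$ one should note that "symmetric as a block permutation" coincides with "symmetric as an ordinary matrix" precisely because each nonzero block is the symmetric $I_r$. Everything else is either a direct reading of the definitions or a citation of the factorization proposition.
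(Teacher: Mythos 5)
Your argument is correct and is exactly the reading-off-the-explicit-forms argument the paper intends: the proposition is stated without proof (the neighbouring one is justified by "follows directly from the specific form of the semi-infinite matrices involved"), and your observations --- that $\eta_{[r]}$ and $\nu_{[r]}$ are block permutation matrices of involutive permutations with real symmetric blocks, and that \eqref{prop01} follows from \eqref{factorMain} plus $\Upsilon_{[r]}^{\dagger}=\Upsilon_{[r]}^{\top}$ --- fill in precisely what is left implicit. Your identification of the underlying permutations (fix $1$, swap $(2,3),(4,5),\dots$ for $\eta_{[r]}$; swap $(1,2),(3,4),\dots$ for $\nu_{[r]}$) is consistent with the displayed matrices and with the actions $\eta_{[r]}Z_{[r]}(z)=Z_{[r]}(z^{-1})$ and $\nu_{[r]}Z_{[r]}(z)=z^{-1}Z_{[r]}(z^{-1})$, so no gap remains.
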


Now, we delineate their actions on the monomials as outlined in Section \ref{S02-Defs}.

\begin{pro}
	The matrix \( \Upsilon_{[r]} \) complies with the following eigenvalue properties:
	\begin{align}
		\Upsilon_{[r]}Z_{[r]}(z) &= z\,Z_{[r]}(z),  \label{prop02} \\
		\Upsilon_{[r]}^{-1}Z_{[r]}(z) &= z^{-1}\,Z_{[r]}(z),  \label{prop03} \\
		Z_{[r]}^{\top }(z)\Upsilon_{[r]}^{-1} &= z\,Z_{[r]}^{\top }(z), \label{prop04} \\
		Z_{[r]}^{\top }(z)\Upsilon_{[r]} &= z^{-1}\,Z_{[r]}^{\top }(z). \label{prop05}
	\end{align}
	
	The intertwining matrix \( \eta_{[r]} \) conforms to the following properties:
	\begin{align}
		\eta_{[r]}Z_{[r]}(z) &= Z_{[r]}(z^{-1}) = Z_{[r]}(\bar{z}) = \overline{\bar{Z}_{[r]}(z)},  \label{prop12} \\
		\eta_{[r]}Z_{[r]}(z^{-1}) &= Z_{[r]}(z) = \overline{\bar{Z}_{[r]}(z^{-1})},  \label{prop13} \\
		Z_{[r]}^{\top }(z)\eta_{[r]} &= Z_{[r]}^{\top }(z^{-1}) = Z_{[r]}^{\top }(\bar{z}) = \left( Z_{[r]}(z) \right) ^{\dag }, \label{prop14} \\
		Z_{[r]}^{\top }(z^{-1})\eta_{[r]} &= Z_{[r]}^{\top }(z) = \left( Z_{[r]}(z^{-1}) \right) ^{\dag }. \label{prop15}
	\end{align}
	
	The untangling matrix \( \nu_{[r]} \) adheres to the ensuing properties:
	\begin{align}
		\nu_{[r]}Z_{[r]}(z) &= z^{-1}Z_{[r]}(z^{-1}) = \bar{z}\,Z_{[r]}(\bar{z}),  \label{prop22} \\
		\nu_{[r]}Z_{[r]}(z^{-1}) &= zZ_{[r]}(z),  \label{prop23} \\
		Z_{[r]}^{\top }(z^{-1})\nu_{[r]} &= zZ_{[r]}^{\top }(z), \label{prop24} \\
		Z_{[r]}^{\top }(z)\nu_{[r]} &= z^{-1}Z_{[r]}^{\top }(z^{-1}). \label{prop25}
	\end{align}
\end{pro}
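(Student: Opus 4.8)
The plan is to reduce all twelve identities to exactly three primitive computations — the left action of each of $\Upsilon_{[r]}$, $\eta_{[r]}$, $\nu_{[r]}$ on the monomial column $Z_{[r]}(z)$ — and then obtain everything else by formal manipulation, using the already-established algebraic facts $\Upsilon_{[r]}^{-1}=\Upsilon_{[r]}^{\top}$, $\eta_{[r]}=\eta_{[r]}^{-1}=\eta_{[r]}^{\top}$, $\nu_{[r]}=\nu_{[r]}^{-1}=\nu_{[r]}^{\top}$ (the preceding proposition, \eqref{prop01}, \eqref{prop11}, \eqref{prop21}), the factorization \eqref{factorMain}, the substitution $z\mapsto z^{-1}$, and — for the cosmetic right-hand sides involving bars and daggers — the facts that on $\mathbb{T}$ one has $\bar z=z^{-1}$ and that the coefficient blocks of $Z_{[r]}$ are the \emph{real} matrices $I_r$, so that $\overline{Z_{[r]}^{\top}(z)}=Z_{[r]}^{\top}(\bar z)$ as recorded in \eqref{cojugaZ}, and $\dag$ is transposition followed by entrywise conjugation.

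First I would fix the block indexing: the $j$-th block of $Z_{[r]}(z)$ ($j\in\mathbb{N}_0$) is $z^{e(j)}I_r$ with $e(0)=0$, $e(2k-1)=-k$, $e(2k)=k$ for $k\geq 1$ — this is just the CMV ordering $0,-1,1,-2,2,\dots$ of exponents. Reading off the sparse patterns in \eqref{Upsilon(r)}, \eqref{Eta(r)}, \eqref{Nu(r)}: the matrix $\Upsilon_{[r]}$ carries block row $2k$ onto the unit block in column $2k+2$ and block row $2k+1$ onto the unit block in column $2k-1$ (column $0$ when $k=0$), which on every block is the map $e\mapsto e+1$; hence $\Upsilon_{[r]}Z_{[r]}(z)=zZ_{[r]}(z)$, which is \eqref{prop02}. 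The matrix $\eta_{[r]}$ fixes block $0$ and transposes the pairs $2k-1\leftrightarrow 2k$, i.e.\ $e\mapsto -e$; hence $\eta_{[r]}Z_{[r]}(z)=Z_{[r]}(z^{-1})$, which is \eqref{prop12}. The matrix $\nu_{[r]}$ transposes the pairs $2k\leftrightarrow 2k+1$, which sends the $j$-th block $z^{e(j)}I_r$ to $z^{-1}z^{-e(j)}I_r$, i.e.\ $e\mapsto -e-1$; hence $\nu_{[r]}Z_{[r]}(z)=z^{-1}Z_{[r]}(z^{-1})$, which is \eqref{prop22}. As a consistency check, $\nu_{[r]}\eta_{[r]}Z_{[r]}(z)=\nu_{[r]}Z_{[r]}(z^{-1})=zZ_{[r]}(z)$, matching \eqref{prop02} through \eqref{factorMain}.

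All remaining identities are then one-line consequences. For $\Upsilon_{[r]}$: multiplying \eqref{prop02} on the left by $\Upsilon_{[r]}^{-1}$ gives \eqref{prop03}; transposing \eqref{prop02} and \eqref{prop03} and using $\Upsilon_{[r]}^{\top}=\Upsilon_{[r]}^{-1}$ and $(\Upsilon_{[r]}^{-1})^{\top}=\Upsilon_{[r]}$ gives \eqref{prop04} and \eqref{prop05}. For $\eta_{[r]}$: applying $\eta_{[r]}$ to \eqref{prop12} and using $\eta_{[r]}^{2}=I$ gives \eqref{prop13}; transposing \eqref{prop12} and \eqref{prop13} with $\eta_{[r]}^{\top}=\eta_{[r]}$ gives \eqref{prop14} and \eqref{prop15}. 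For $\nu_{[r]}$: the substitution $z\mapsto z^{-1}$ in \eqref{prop22} gives \eqref{prop23}, and transposing \eqref{prop22} and \eqref{prop23} with $\nu_{[r]}^{\top}=\nu_{[r]}$ gives \eqref{prop25} and \eqref{prop24}. The alternative forms of the right-hand sides — $Z_{[r]}(\bar z)$, $\overline{\bar Z_{[r]}(z)}$, $(Z_{[r]}(z))^{\dag}$, and so on — are then mere rewritings using $\bar z=z^{-1}$ on $\mathbb{T}$, the reality of the blocks $I_r$, and \eqref{cojugaZ}.

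\textbf{Expected main obstacle.} There is no genuine difficulty; the only point demanding care is the bookkeeping of the CMV block ordering against the off-diagonal positions of the three sparse matrices — in particular checking that the boundary blocks (blocks $0$ and $1$, i.e.\ the $k=0$ case) obey the same formulas $e\mapsto e+1$, $e\mapsto -e$, $e\mapsto -e-1$ as the generic blocks, so that the left action on $Z_{[r]}(z)$ is \emph{uniform} and no term is dropped or duplicated at the top-left corner. Everything past that is purely formal manipulation with transposes, inverses, and the involution $z\mapsto z^{-1}$.
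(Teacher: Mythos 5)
Your proof is correct and follows essentially the same route as the paper: verify the three primitive left actions from the explicit sparse block structures (with the exponent bookkeeping $e\mapsto e+1$, $e\mapsto -e$, $e\mapsto -e-1$) and obtain the remaining identities by transposition, inversion, the substitution $z\mapsto z^{-1}$, and the relations $\Upsilon_{[r]}^{\top}=\Upsilon_{[r]}^{-1}$, $\eta_{[r]}^{2}=\nu_{[r]}^{2}=I$. The only (immaterial) divergence is that the paper derives \eqref{prop22} from \eqref{prop02} and \eqref{prop12} via the factorization $\Upsilon_{[r]}=\nu_{[r]}\eta_{[r]}$, whereas you compute $\nu_{[r]}Z_{[r]}(z)$ directly from \eqref{Nu(r)} and use the factorization only as a consistency check.
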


\begin{proof}
Property \eqref{prop02} is easily verified from \eqref{Upsilon(r)}, which provides the explicit matrix expression for \( \Upsilon_{[ r]} \). To establish \eqref{prop03}, it suffices to left-multiply both sides of \eqref{prop02} by \( \Upsilon_{[r]}^{-1} \). For \eqref{prop04}, transposing both sides of \eqref{prop02} and considering \( \Upsilon_{[r]}^{\top }=\Upsilon_{[r]}^{-1} \) suffices. Similarly, for \eqref{prop05}, transposing both sides of \eqref{prop03} and considering \( (\Upsilon_{[r]}^{\top })^{\top }=\Upsilon_{[r]} \) is adequate.

As for the intertwining matrix \( \eta_{[r]} \), property \eqref{prop12} is straightforward to confirm from the matrix expression in \eqref{Eta(r)}. Notably, \( \eta_{[r]} \) interchanges \( z \) with \( z^{-1} \) in the argument of the monomial matrix \( Z_{[r]}(z) \), and given that we operate on the unit circle \( z\in \mathbb{T} \), we have \( \bar{z}=z^{-1} \). Property \eqref{prop13} follows the interchange of \( z \) with \( z^{-1} \) in \eqref{prop12}. Upon transposing \eqref{prop12}, property \eqref{prop14} emerges, and considering \( \eta_{[r]}=\eta_{[r]}^{-1} \) completes the proof. Furthermore, combining \eqref{cojugaZ} with property \eqref{prop14} yields \eqref{prop15} straightforwardly.

On the other hand, for the untangling matrix \( \nu_{[r]} \), property \eqref{prop22} results from considering \eqref{factorMain} in expression \eqref{prop02} and subsequently applying \eqref{prop12}. Property \eqref{prop23} ensues from the interchange of \( z \) with \( z^{-1} \) in \eqref{prop22}. Transposing \eqref{prop23} yields property \eqref{prop24}, and considering \eqref{prop21} completes the proof. Finally, property \eqref{prop25} follows from the interchange of \( z \) with \( z^{-1} \) in \eqref{prop24}.
\end{proof}

Next, we analyze the action of the spectral matrices mentioned above on the moment matrices \( M \) and \( \mathcal{M} \).
\begin{pro}
The matrices \( \Upsilon_{[q]} \), \( \eta_{[ q]} \), and \( \nu_{[ q]} \) satisfy the following properties:
\begin{align}
	\Upsilon_{[q]}M &= M\Upsilon_{[p]}, \label{YMMY1} \\
	\Upsilon_{[q]}^{\top }\mathcal{M} &= \mathcal{M}\Upsilon_{[p]}^{\top }, \label{YMMY2} \\
	\eta_{[ q]}M\eta_{[ p]} &= \nu_{[ q]}M\nu_{[ p]}, \label{YMMY3} \\
	\eta_{[ q]}\mathcal{M}\eta_{[ p]} &= \nu_{[ q]}\mathcal{M}\nu_{[ p]}. \label{YMMY4}
\end{align}
\end{pro}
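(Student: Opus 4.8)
The plan is to verify each of the four identities by substituting the integral representations \eqref{MMleft} and \eqref{MMright} of $M$ and $\mathcal M$ and then applying the spectral relations for $\Upsilon_{[r]}$, $\eta_{[r]}$ and $\nu_{[r]}$ acting on the monomial matrices $Z_{[r]}$, established in the previous proposition. The only preliminary remark needed is that $\Upsilon_{[r]}$, $\eta_{[r]}$ and $\nu_{[r]}$ are banded, so each entry of a product such as $\Upsilon_{[q]}Z_{[q]}(z)$ or $Z_{[p]}^{\top}(z^{-1})\nu_{[p]}$ is a finite sum; hence these multiplications may be carried out term by term and interchanged with the contour integral, with no convergence issue beyond those already inherent to $M$ itself. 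I expect this to be a routine observation rather than an obstacle.

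First I would treat \eqref{YMMY1}. On one side, by \eqref{prop02},
\[
\Upsilon_{[q]}M=\oint_{\mathbb{T}}\bigl(\Upsilon_{[q]}Z_{[q]}(z)\bigr)\,\d\mu(z)\,Z_{[p]}^{\top}(z^{-1})=\oint_{\mathbb{T}}z\,Z_{[q]}(z)\,\d\mu(z)\,Z_{[p]}^{\top}(z^{-1});
\]
on the other, applying \eqref{prop05} with $z$ replaced by $z^{-1}$,
\[
M\Upsilon_{[p]}=\oint_{\mathbb{T}}Z_{[q]}(z)\,\d\mu(z)\,\bigl(Z_{[p]}^{\top}(z^{-1})\Upsilon_{[p]}\bigr)=\oint_{\mathbb{T}}Z_{[q]}(z)\,\d\mu(z)\,z\,Z_{[p]}^{\top}(z^{-1}).
\]
Since $z$ is a scalar, the two integrands agree, which proves \eqref{YMMY1}. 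Identity \eqref{YMMY2} follows in exactly the same way from \eqref{MMright}: using $\Upsilon_{[r]}^{\top}=\Upsilon_{[r]}^{-1}$ from \eqref{prop01}, relation \eqref{prop03} with $z\mapsto z^{-1}$ gives $\Upsilon_{[q]}^{\top}Z_{[q]}(z^{-1})=z\,Z_{[q]}(z^{-1})$, while \eqref{prop04} gives $Z_{[p]}^{\top}(z)\Upsilon_{[p]}^{\top}=z\,Z_{[p]}^{\top}(z)$, and once again the two integrands coincide.

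For \eqref{YMMY3} and \eqref{YMMY4} I would in fact prove the sharper statements $\eta_{[q]}M\eta_{[p]}=\nu_{[q]}M\nu_{[p]}=\mathcal M$ and $\eta_{[q]}\mathcal M\eta_{[p]}=\nu_{[q]}\mathcal M\nu_{[p]}=M$, which contain the claimed equalities. Conjugating \eqref{MMleft} by $\eta$ and using \eqref{prop12} and \eqref{prop15} turns $Z_{[q]}(z)$ into $Z_{[q]}(z^{-1})$ and $Z_{[p]}^{\top}(z^{-1})$ into $Z_{[p]}^{\top}(z)$, producing exactly the integral \eqref{MMright} defining $\mathcal M$; conjugating \eqref{MMleft} by $\nu$ and using \eqref{prop22} and \eqref{prop24} produces a scalar factor $z^{-1}$ on the left and $z$ on the right, which cancel, again leaving $\mathcal M$. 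Interchanging the roles of $M$ and $\mathcal M$, the analogous computation with \eqref{prop13} and \eqref{prop14} (for $\eta$) and \eqref{prop23} and \eqref{prop25} (for $\nu$) yields the two identities with $M$ on the right-hand side.

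The only thing demanding attention --- the nearest thing to an obstacle --- is careful bookkeeping: one must apply the spectral identities with the argument $z$ replaced by $z^{-1}$ in the $Z_{[p]}^{\top}$ factors, and keep track of whether the emitted scalar weight is $z$ or $z^{-1}$, since a wrong exponent would destroy the cancellation in the $\nu$ cases. Once these substitutions are tabulated, each of the four identities reduces to a one-line comparison of integrands, so I do not anticipate any genuine difficulty.
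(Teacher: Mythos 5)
Your proof is correct. For \eqref{YMMY1} and \eqref{YMMY2} it is essentially the paper's argument: both reduce to comparing integrands after applying the eigenvalue relations \eqref{prop02}--\eqref{prop05} to the monomial factors (the paper massages the left-hand side step by step into the right-hand side, passing the scalar $z$ through the $\dagger$; you show both sides equal the same integral, which is marginally cleaner). For \eqref{YMMY3} and \eqref{YMMY4} your route genuinely differs: the paper deduces them \emph{algebraically} from \eqref{YMMY1}--\eqref{YMMY2}, writing $\Upsilon_{[r]}=\nu_{[r]}\eta_{[r]}$ via \eqref{factorMain} and then multiplying $\nu_{[q]}\eta_{[q]}M=M\nu_{[p]}\eta_{[p]}$ on the left by $\nu_{[q]}=\nu_{[q]}^{-1}$ and on the right by $\eta_{[p]}=\eta_{[p]}^{-1}$, whereas you return to the integral representations and prove the sharper identities $\eta_{[q]}M\eta_{[p]}=\nu_{[q]}M\nu_{[p]}=\mathcal{M}$ and $\eta_{[q]}\mathcal{M}\eta_{[p]}=\nu_{[q]}\mathcal{M}\nu_{[p]}=M$ directly from \eqref{prop12}--\eqref{prop15} and \eqref{prop22}--\eqref{prop25}. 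Your computation of the cancelling $z^{\mp1}$ weights in the $\nu$ case is accurate, and your stronger statement is exactly the content of the paper's \emph{next} proposition (\eqref{simeta01}--\eqref{simnu02}), so your approach proves more in one pass, at the cost of not exhibiting the purely algebraic dependence of \eqref{YMMY3}--\eqref{YMMY4} on \eqref{YMMY1}--\eqref{YMMY2} that the paper highlights.
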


\begin{proof}
Left-multiplying $\Upsilon_{[q]}$ by the moment matrix $M$ in \eqref{MMleft} yields, 
\begin{align*}
\Upsilon_{[q]}M=\oint_{\mathbb{T}}\Upsilon
_{[ q]}Z_{[q]}(z)\d\mu (z) \left( Z_{[p]}(z)\right) ^{\dag }.
\end{align*}%
From \eqref{prop02} we have%
\begin{align*}
\Upsilon_{[q]}M=\oint_\mathbb{T}zZ_{[q]}(z)\d\mu (z)\left( Z_{[p]}(z)\right) ^{\dag }=\oint_\mathbb{T}Z_{[q]}(z)\d\mu (z) \left( z^{-1}Z_{[p]}(z)\right) ^{\dag }
\end{align*}%
and next from \eqref{prop01} and \eqref{prop04}, we state%
\begin{align*}
\Upsilon_{[q]}M = &\oint_{\mathbb{T}}Z_{[q]}(z)\d\mu (z) \left( z^{-1}Z_{[p]}(z)\right) ^{\dag } \\
=&\oint_{\mathbb{T}}Z_{[q]}(z)\d\mu (z) \left( \Upsilon
_{[ p]}^{\top }Z_{[p]}(z)\right) ^{\dag } \\
=&\oint_{\mathbb{T}}Z_{[q]}(z)\d\mu (z) \left(
Z_{[p]}(z)\right) ^{\dag }\Upsilon_{[p]}=M\Upsilon
_{[ p]}.
\end{align*}%
Proceeding in a similar fashion with the moment matrix $\mathcal M$ given in \eqref{MMright}, we find \eqref{YMMY2}. Combining \eqref{factorMain} with $\Upsilon_{[q]}M=M\Upsilon_{[ p]}$ leads to $\nu _{[ q]}\eta _{[ q]}M= M\nu _{[ p]}\eta _{[ p]}$, and next multiplying both sides by $\nu _{[ q]}=\nu _{[ q]}^{-1}$ and $\eta_{[r]}=\eta_{[r]}^{-1}$ yields \eqref{YMMY3}, and proceeding in a similar fashion for $\Upsilon_{[q]}^{\top }\mathcal M=\mathcal M\Upsilon_{[p]}^{\top }$ we obtain \eqref{YMMY4}.
\end{proof}
Now, let's examine the effect of \( \eta_{[ q]} \) and \( \nu_{[ q]} \) on the moment matrices \( M \) and \( \mathcal{M} \).

\begin{pro}
The matrices \( \eta_{[r]} \) and \( \nu_{[r]} \) exhibit the following relationships with the moment matrices \( M \) and \( \mathcal{M} \):
\begin{align}
	\eta _{[ q]}M &=\mathcal M\eta _{[ p]},
	\label{simeta01} \\
	M\eta _{[ p]} &=\eta _{[ q]}\mathcal M,
	\label{simeta02} \\
	\nu _{[ q]}M &=\mathcal M\nu _{[ p]},
	\label{simnu01} \\
	M\nu _{[ p]} &=\nu _{[ q]}\mathcal M.
	\label{simnu02}
\end{align}
\end{pro}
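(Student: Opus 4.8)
The plan is to read both relations off the integral representations \eqref{MMleft} and \eqref{MMright},
\[
M=\oint_{\mathbb T}Z_{[q]}(z)\,\d\mu(z)\,Z_{[p]}^{\top}(z^{-1}),\qquad
\mathcal M=\oint_{\mathbb T}Z_{[q]}(z^{-1})\,\d\mu(z)\,Z_{[p]}^{\top}(z),
\]
by pushing the constant sparse matrices $\eta_{[r]},\nu_{[r]}$ through the integral and past the monomial factors via the action formulas already established. Since $\eta_{[r]}$ and $\nu_{[r]}$ do not depend on $z$, moving them inside the contour integral is legitimate.

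First I would prove the two ``conjugation'' identities $\eta_{[q]}M\eta_{[p]}=\mathcal M$ and $\nu_{[q]}M\nu_{[p]}=\mathcal M$. For the first, insert $M$ in integral form, use \eqref{prop12} to replace $\eta_{[q]}Z_{[q]}(z)$ by $Z_{[q]}(z^{-1})$, and use \eqref{prop14} to replace $Z_{[p]}^{\top}(z^{-1})\eta_{[p]}$ by $Z_{[p]}^{\top}(z)$; the integrand is then exactly the one defining $\mathcal M$. For the second, \eqref{prop22} gives $\nu_{[q]}Z_{[q]}(z)=z^{-1}Z_{[q]}(z^{-1})$ and \eqref{prop24} gives $Z_{[p]}^{\top}(z^{-1})\nu_{[p]}=z\,Z_{[p]}^{\top}(z)$, so the scalar factors $z^{-1}$ and $z$ cancel under the integral sign and one again recovers $\mathcal M$; this is consistent with the already proved \eqref{YMMY3}. (Equivalently, one may skip the intermediate identities and check each of the four equations directly by the same substitution on both sides, e.g. $\eta_{[q]}M=\oint_{\mathbb T}Z_{[q]}(z^{-1})\,\d\mu(z)\,Z_{[p]}^{\top}(z^{-1})=\mathcal M\eta_{[p]}$ using \eqref{prop12} and \eqref{prop14}.)

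The four stated relations then follow at once from the fact that $\eta_{[r]}$ and $\nu_{[r]}$ are involutions, \eqref{prop11} and \eqref{prop21}: right-multiplying $\eta_{[q]}M\eta_{[p]}=\mathcal M$ by $\eta_{[p]}$ yields \eqref{simeta01}, left-multiplying it by $\eta_{[q]}$ yields \eqref{simeta02}, and the same two manipulations applied to $\nu_{[q]}M\nu_{[p]}=\mathcal M$ give \eqref{simnu01} and \eqref{simnu02}. There is no genuine obstacle here; the argument is pure bookkeeping once the monomial action formulas \eqref{prop12}--\eqref{prop15} and \eqref{prop22}--\eqref{prop25} are available. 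The only points requiring care are to apply the ``$z\leftrightarrow z^{-1}$'' formulas to the left monomial factor and the transposed versions to the right factor, to recall the identification $Z_{[p]}^{\top}(z^{-1})=\big(Z_{[p]}(z)\big)^{\dag}$ valid on $\mathbb T$ from \eqref{cojugaZ}, and to keep track of the extra weights $z^{\pm1}$ carried by $\nu_{[r]}$ so that they cancel.
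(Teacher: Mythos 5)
Your proposal is correct and follows essentially the same route as the paper: both push the constant matrices $\eta_{[r]},\nu_{[r]}$ inside the integral representations of $M$ and $\mathcal M$ and apply the monomial action formulas \eqref{prop12}--\eqref{prop15}, \eqref{prop22}--\eqref{prop25} (with the $z^{\pm1}$ weights of $\nu$ cancelling), then use involutivity to pass between the four identities. Your intermediate observation that $\eta_{[q]}M\eta_{[p]}=\nu_{[q]}M\nu_{[p]}=\mathcal M$ is a correct (and slightly sharper) restatement of what the paper verifies directly.
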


\begin{proof}
From \eqref{MMleft} and \eqref{prop12}, we immediately observe that
\begin{align*}
	\eta _{[ q]}M &= \oint_{\mathbb{T}}\eta_{[ q]}Z_{[q]}(z)\,\mathrm{d}\mu (z) Z_{[p]}^{\top }(z^{-1}) \\
	&= \oint_{\mathbb{T}}Z_{[q]}(z^{-1})\,\mathrm{d}\mu (z) Z_{[p]}^{\top }(z^{-1}).
\end{align*}
Considering \eqref{prop13}, we conclude
\begin{align*}
	\eta _{[ q]}M &= \oint_{\mathbb{T}}Z_{[q]}(z^{-1})\,\mathrm{d}\mu (z) Z_{[p]}^{\top }(z)\eta_{[ p]} = \mathcal M\eta_{[ p]}.
\end{align*}
Proceeding similarly to compute \(M\eta_{[ p]}\) and considering \eqref{prop12} and \eqref{prop15}, we obtain \eqref{simeta02}. Obtaining \eqref{simnu01} is straightforward by considering \eqref{prop22} and \eqref{prop25}. Thus, from \eqref{prop11} and \eqref{prop21} we have
\[\begin{aligned}
	M\eta_{[ p]} &= \eta_{[ q]}\mathcal M, & \nu_{[ q]}M &= \mathcal M\nu_{[ p]}.
\end{aligned}\]
Finally, combining \eqref{simeta02} with \eqref{factorMain} and \eqref{YMMY3} gives \eqref{simnu02}.
\end{proof}



In order to obtain recurrence relations for the mixed MOLPUC $B(z)$ and $A(z^{-1})$, we first consider the
following relations and definitions. From \eqref{YMMY1} and \eqref{LUMMleft} we have
\begin{align*}
\Upsilon_{[q]}L^{-1}\bar U^{-1}=L^{-1}\bar U^{-1}\Upsilon_{[p]}.
\end{align*}%
We then right-multiply by $\bar U^{-1}$ and left-multiply by $L^{-1}$ the above
expression, wich gives%
\begin{align*}
L\Upsilon_{[q]}L^{-1}=\bar U^{-1}\Upsilon_{[p]}\bar U.
\end{align*}%
We follow a similar approach from \eqref{YMMY2} and \eqref{LUMMright} to get%
\begin{align*}
\bar{\mathcal{L}}\Upsilon_{[q]}^{\top }\bar{\mathcal{L}}^{-1}=\mathcal{U}%
^{-1}\Upsilon_{[p]}^{\top }\mathcal{U}.
\end{align*}%
This leads to our next

\begin{Definition}
Let us define the matrices%
\begin{align}
T\coloneq L\Upsilon_{[q]}L^{-1} =&\bar U^{-1}\Upsilon_{[p]}\bar U,
\label{Tmatrix} \\
\Tscr \coloneq\bar{\mathcal{L}}\Upsilon_{[q]}^{\top }\bar{\L}^{-1} =&\mathcal{U}^{-1}\Upsilon_{[p]}^{\top }\mathcal{U}.
\label{Tmatrixwave}
\end{align}%
The matrices $T$ and $\Tscr $, have respectively the structures given in \eqref{bandedmatrixv01}.
\end{Definition}

\begin{pro}[Banded structure of the recursion matrix]\label{bandedmatrixv01}
\begin{enumerate}
	\item 	The matrix $T$ is a banded matrix with $2(p+q)+1$ diagonals, $2q$ superdiagonals and $2p$ subdiagonals.
\item 	The matrix $\Tscr$ is a banded matrix with $2(p+q)+1$ diagonals, $2p$ superdiagonals and $2q$ subdiagonals.	
\end{enumerate}
\end{pro}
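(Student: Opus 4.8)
The plan is to read off the two one-sided band widths of $T$ (and of $\Tscr$) from the \emph{two different} triangular factorizations recorded in \eqref{Tmatrix} (resp.\ \eqref{Tmatrixwave}), using the elementary principle that conjugating a banded matrix by triangular matrices of a fixed triangularity cannot widen the band on the matching side. Precisely, the sublemma I would isolate first is: if $X$ is a semi-infinite invertible lower triangular matrix (so $X^{-1}$ is lower triangular as well) and $Y$ has at most $s$ nonzero superdiagonals, then $XYX^{-1}$ has at most $s$ nonzero superdiagonals; dually, if $X$ is upper triangular and $Y$ has at most $t$ nonzero subdiagonals, then $XYX^{-1}$ has at most $t$ nonzero subdiagonals. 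Each half is a one-line index chase: for $X$ lower triangular one has $X_{i,j}\neq 0\Rightarrow i\geq j$ and $Y_{j,k}\neq 0\Rightarrow k\leq j+s$, hence $(XY)_{i,k}\neq 0\Rightarrow k\leq i+s$, and multiplying on the right by the lower triangular $X^{-1}$ cannot enlarge this range; the subdiagonal assertion is the transpose of this computation.

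With this in hand the proof is immediate. For $T$: from $T=L\,\Upsilon_{[q]}\,L^{-1}$ with $L$ lower triangular and $\Upsilon_{[q]}$ carrying $2q$ superdiagonals (its outermost block-superdiagonal being occupied by identity blocks), the sublemma gives that $T$ has at most $2q$ superdiagonals; from $T=\bar U^{-1}\,\Upsilon_{[p]}\,\bar U$ with $\bar U$ upper triangular and $\Upsilon_{[p]}$ carrying $2p$ subdiagonals, it gives that $T$ has at most $2p$ subdiagonals. Hence $T$ is banded with $2q$ superdiagonals, $2p$ subdiagonals, and therefore $2q+2p+1=2(p+q)+1$ diagonals. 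For $\Tscr$ one repeats the argument on the two expressions in \eqref{Tmatrixwave}: by \eqref{prop01} the matrices $\Upsilon_{[q]}^{\top}=\Upsilon_{[q]}^{-1}$ and $\Upsilon_{[p]}^{\top}=\Upsilon_{[p]}^{-1}$ are again $(4q+1)$- and $(4p+1)$-banded, so the lower triangular sandwich $\bar{\mathcal L}\,\Upsilon_{[q]}^{\top}\,\bar{\mathcal L}^{-1}$ controls one family of diagonals while the upper triangular sandwich $\mathcal U^{-1}\,\Upsilon_{[p]}^{\top}\,\mathcal U$ controls the other, yielding the stated $2(p+q)+1$-banded structure of $\Tscr$.

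If the statement is also meant to assert that the extreme diagonals are genuinely present (so that the diagonal count is exact, not merely an upper bound), I would append a short sharpness remark: on the outermost block-superdiagonal the factorization $T=L\Upsilon_{[q]}L^{-1}$ leaves, after a block index chase forcing the summation indices, only the ``diagonal'' contribution $L_{i,i}\,I_q\,(L^{-1})_{i+2,i+2}$, an invertible $q\times q$ matrix whose own diagonal lands exactly on the $2q$-th scalar diagonal of $T$ without cancellation; the same applies on the $2p$-th subdiagonal via the $\bar U$ factorization, and analogously for $\Tscr$.

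I do not foresee a real obstacle: the whole content is the triangular-times-banded sublemma together with careful bookkeeping. The only point demanding attention is keeping straight which of $\Upsilon_{[q]}$, $\Upsilon_{[p]}$ (resp.\ their transposes) sits inside which triangular sandwich, so that the values $2q$ and $2p$ get attached to the correct side; this $p\leftrightarrow q$ asymmetry between superdiagonals and subdiagonals is precisely why both factorizations in \eqref{Tmatrix} (resp.\ \eqref{Tmatrixwave}) are indispensable, since either factorization on its own pins down only one of the two one-sided band widths.
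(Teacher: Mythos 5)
Your proposal is correct and follows essentially the same route as the paper: both read off the bound of $2q$ superdiagonals from the lower-triangular sandwich $T=L\Upsilon_{[q]}L^{-1}$ and the bound of $2p$ subdiagonals from the upper-triangular sandwich $T=\bar U^{-1}\Upsilon_{[p]}\bar U$, and then repeat the argument for $\Tscr$ using \eqref{Tmatrixwave}. Your explicit triangular-conjugation sublemma and the sharpness remark about the extreme diagonals merely spell out details the paper leaves implicit.
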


\begin{proof}
 \eqref{Tmatrix}  and the band structure of $\Upsilon_{[q]}$ we deduce that $T=L\Upsilon_{[q]}L^{-1}$ has all its superdiagonals above the $2q$ superdiagonal with zero entries. We also deduce that $T=\bar U^{-1}\Upsilon_{[p]}\bar U$ has all its subdiagonals below the $2q$ subdiagonal with zero entries.  For $\Tscr$'s band structure we have a similar argument starting from  \eqref{Tmatrixwave}.
\end{proof}
Next, let's illustrate the band structure of \( T \):
	\begin{center}
	\vspace*{.7cm}
	\drawmatrixset{bbox style={fill=Gray!30}}
	$\drawmatrix[bbox/.append, ,scale=8,banded, width=1.3,
	]T$

	\vspace*{-8.66cm}
	\hspace*{-6.2cm}	
	\begin{tikzpicture}
		\draw[ line width=3pt,dashed,RoyalBlue] (0,6)--(0,8.4) node [midway, above, sloped]  {$2p+1$}--(4.8,8.4) node [midway, above, sloped]  {$2q+1$} ;
	\end{tikzpicture}
	\vspace*{6cm}
\end{center}

With the above definitions in place, we can establish the following recurrence relations for \( B(z) \) and \( A(z^{-1}) \):
\begin{pro}[Recurrence Relations]
The mixed MOLPUC matrices \( B(z) \)
and \( A(z) \) satisfy the following recurrence relation, taking into
account the aforementioned matrix \( T \):
\begin{align}
	TB(z)&=z\,B(z),  \label{TBz} \\
	\bar A(z^{-1})T&=z\bar A(z^{-1}).  \label{AzT}
\end{align}%
Concerning the right mixed type multiple orthogonal polynomials on the step line \( \mathcal{B}(z) \) and \( \A(z) \), and the matrix \( \Tscr \), we find the analogous relations:
\begin{align}
	\Tscr  \bar{\A}(z^{-1}) =&{z}\, \bar{\A}(z^{-1}),  \label{TBz-wave}
	\\
	\mathcal{B}(z)\Tscr  =&z\,\mathcal{B}(z).
	\label{AzT-wave}
\end{align}
\end{pro}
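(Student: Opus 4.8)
The plan is to derive each of the four recurrence relations directly from the eigenvalue property \eqref{prop02} of $\Upsilon_{[r]}$ together with the definitions of the CMV Laurent polynomials and the conjugation matrices $T$, $\Tscr$. First I would establish \eqref{TBz}: by Definition \ref{CMVBlockPolBA} we have $B(z) = LZ_{[q]}(z)$, so applying \eqref{prop02} with $r=q$ gives $\Upsilon_{[q]}Z_{[q]}(z) = z\,Z_{[q]}(z)$. Left-multiplying by $L$ and inserting $L^{-1}L$ yields $L\Upsilon_{[q]}L^{-1}\,LZ_{[q]}(z) = z\,LZ_{[q]}(z)$, which is exactly $TB(z) = z\,B(z)$ once we invoke the definition $T = L\Upsilon_{[q]}L^{-1}$ from \eqref{Tmatrix}.

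Next I would treat \eqref{AzT}. Here the relevant object is $A(z) = Z_{[p]}^{\top}(z)U$, and the "$T$ acts from the right on $\bar A(z^{-1})$" pattern suggests using \eqref{prop04}, namely $Z_{[p]}^{\top}(z)\Upsilon_{[p]}^{-1} = z\,Z_{[p]}^{\top}(z)$, equivalently $Z_{[p]}^{\top}(z)\Upsilon_{[p]} = z^{-1}Z_{[p]}^{\top}(z)$. I would compute $\bar A(z^{-1})$ explicitly: since $A(u^{-1}) = Z_{[p]}^{\top}(u^{-1})U$ and on the unit circle $\overline{Z_{[p]}(z^{-1})} = Z_{[p]}(z)$, one finds $\bar A(z^{-1}) = Z_{[p]}^{\top}(z)\,\bar U$. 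Then $\bar A(z^{-1})\,T = Z_{[p]}^{\top}(z)\bar U\,\bar U^{-1}\Upsilon_{[p]}\bar U = Z_{[p]}^{\top}(z)\Upsilon_{[p]}\bar U$, using the second form $T = \bar U^{-1}\Upsilon_{[p]}\bar U$ of \eqref{Tmatrix}; applying \eqref{prop05} gives $z^{-1}Z_{[p]}^{\top}(z)\bar U$. This produces $z^{-1}\bar A(z^{-1})$, not $z\bar A(z^{-1})$, so I would need to recheck the bookkeeping: the correct route is to start from $\bar A(z^{-1}) = Z_{[p]}^{\top}(z^{-1})\bar U$ instead (no conjugation of the monomial argument is actually forced here since we plug $z^{-1}$ directly and only conjugate coefficients), whereupon $Z_{[p]}^{\top}(z^{-1})\Upsilon_{[p]} = z\,Z_{[p]}^{\top}(z^{-1})$ follows by substituting $z \mapsto z^{-1}$ in \eqref{prop05}, yielding $\bar A(z^{-1})T = z\,\bar A(z^{-1})$. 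The delicate point throughout is keeping straight which of $z$, $z^{-1}$, $\bar z$ appears in the monomial argument versus in the conjugated coefficients; this is the main obstacle and deserves careful attention rather than the matrix algebra itself.

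For the two $\Tscr$-relations \eqref{TBz-wave} and \eqref{AzT-wave} I would argue by complete analogy using Definition \ref{CMVBlockPolBA copy(1)}: $\A(z) = \mathcal{L}Z_{[q]}(z)$ and $\B(z) = Z_{[p]}^{\top}(z)\mathcal{U}$, together with $\Tscr = \bar{\mathcal L}\Upsilon_{[q]}^{\top}\bar{\mathcal L}^{-1} = \mathcal{U}^{-1}\Upsilon_{[p]}^{\top}\mathcal{U}$ from \eqref{Tmatrixwave}. For \eqref{TBz-wave} one writes $\bar\A(z^{-1}) = \bar{\mathcal L}Z_{[q]}(z^{-1})$ and applies $\Upsilon_{[q]}^{\top}Z_{[q]}(z^{-1}) = z\,Z_{[q]}(z^{-1})$, which is \eqref{prop04} with $z\mapsto z^{-1}$ since $\Upsilon_{[q]}^{\top}=\Upsilon_{[q]}^{-1}$; then $\Tscr\bar\A(z^{-1}) = \bar{\mathcal L}\Upsilon_{[q]}^{\top}Z_{[q]}(z^{-1}) = z\,\bar\A(z^{-1})$. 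For \eqref{AzT-wave} one uses $\B(z) = Z_{[p]}^{\top}(z)\mathcal{U}$ and $Z_{[p]}^{\top}(z)\Upsilon_{[p]}^{\top} = Z_{[p]}^{\top}(z)\Upsilon_{[p]}^{-1} = z\,Z_{[p]}^{\top}(z)$ from \eqref{prop04}, giving $\B(z)\Tscr = Z_{[p]}^{\top}(z)\Upsilon_{[p]}^{\top}\mathcal{U} = z\,\B(z)$. In all four cases the proof is two lines once the correct eigenvalue identity from the preceding proposition is selected; I would present it compactly, flagging only the $z \leftrightarrow z^{-1}$ substitutions as the place where care is required.
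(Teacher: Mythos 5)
Your proposal is correct and follows essentially the same route as the paper: each identity is obtained by sandwiching the appropriate eigenvalue relation of $\Upsilon_{[r]}$ (or its inverse/transpose, with $z$ replaced by $z^{-1}$ where needed) between the triangular factors defining $T$ and $\Tscr$, and your self-corrected reading $\bar A(z^{-1})=Z_{[p]}^{\top}(z^{-1})\bar U$ is exactly the one the paper uses. The only cosmetic slip is that the identity $\Upsilon_{[q]}^{\top}Z_{[q]}(z^{-1})=z\,Z_{[q]}(z^{-1})$ used for \eqref{TBz-wave} is the column-vector relation \eqref{prop03} (with $z\mapsto z^{-1}$), not \eqref{prop04}.
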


\begin{proof}
From \eqref{Tmatrix}, we know that the action of \( T \) on the CMV left-matrix Laurent polynomials \( B(z) \) is as follows:
\begin{align*}
	TB(z) =&L\Upsilon_{[q]}L^{-1}LZ_{[q]}(z)=L\Upsilon _{[
		q]}Z_{[q]}(z) \\
	=&zLZ_{[q]}(z)=z\,B(z),
\end{align*}
which provides a matrix form recurrence relation for the biorthogonal polynomials \( B(z) \). The action of \( T \) on the left-hand side of the
biorthogonal polynomials \( \bar A(z^{-1}) \), taking into account \eqref{Tmatrix}, \eqref{prop01}, and \eqref{prop03}, is:
\begin{align*}
	\bar A(z^{-1})T =&Z_{[p]}^{\top }(z^{-1})\bar U\bar U^{-1}\Upsilon _{[
		p]}\bar U=Z_{[p]}^{\top }(z^{-1})\Upsilon_{[p]}\bar U \\
	=&\left( \Upsilon_{[p]}^{\top }Z_{[p]}(z^{-1})\right) ^{\top
	}\bar U=z\,Z_{[p]}^{\top }(z^{-1})\bar U=z\bar A(z^{-1}).
\end{align*}

Concerning mixed MOLPUCs \( \mathcal{A}(z^{-1}) \) and \( \mathcal{B}(z) \), we conclude:
\begin{align*}
	\Tscr \bar{\A}(z^{-1}) =& \bar{\L}\Upsilon_{[q]}^{\top }%
	\bar{\L}^{-1}\bar{\L}Z_{[q]}(z^{-1})=\bar{\L}\Upsilon _{[
		q]}^{\top }Z_{[q]}(z^{-1}) \\
	=& z\bar{\L}Z_{[q]}(z^{-1})=z\bar{\A}(z^{-1}),
\end{align*}
and
\begin{align*}
	\mathcal{B}(z)\Tscr  =& Z_{[p]}^{\top }(z)\mathcal{U}%
	\mathcal{U}^{-1}\Upsilon_{[p]}^{\top }\mathcal{U}=Z_{[p]}^{\top
	}(z)\Upsilon_{[p]}^{\top }\mathcal{U} \\
	=& \left( \Upsilon_{[p]}Z_{[p]}(z)\right) ^{\top }\mathcal{U}%
	=z Z_{[r]}^{\top }(z)\mathcal{U}={z}\,\mathcal{B}(z).
\end{align*}

\end{proof}



\subsection{Szeg\H{o} recurrence matrices}

Associated to the intertwining and untangling matrices,  see \eqref{Eta(r)} and \eqref{Nu(r)}, and the symmetries of the moment matrices as described in \eqref{YMMY3} and \eqref{YMMY4} we have:
\begin{pro}
The following identities hold:
\[	\begin{aligned}
	\bar{\L}\eta _{[ q]}L^{-1} =&{\U}^{-1}\eta
		_{[ p]}\bar U,&
		 L\eta _{[ q]}\bar{\L}%
		^{-1}=\bar U^{-1}\eta _{[ p]}\U,   \\
		 L\nu _{[ q]}\bar{\L}^{-1} =&\bar U^{-1}\nu _{[ p]}%
		\U, & \bar{\L}\nu _{[q]}L^{-1}=\U^{-1}\nu _{[ p]}\bar U. 
	\end{aligned}\]
\end{pro}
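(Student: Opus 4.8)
The plan is purely algebraic: substitute the Gauss--Borel factorizations \eqref{LUMMleft} and \eqref{LUMMright}, that is $M=L^{-1}\bar U^{-1}$ and $\mathcal M=\bar{\L}^{-1}\U^{-1}$, into the $M$--$\mathcal M$ intertwining relations \eqref{simeta01}--\eqref{simnu02} (which are the working form of the symmetries \eqref{YMMY3}--\eqref{YMMY4}), and then isolate the triangular factors by multiplying by the appropriate inverses.

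Concretely, I would start from \eqref{simeta01}, $\eta_{[q]}M=\mathcal M\eta_{[p]}$, replace both moment matrices by their factorizations to get $\eta_{[q]}L^{-1}\bar U^{-1}=\bar{\L}^{-1}\U^{-1}\eta_{[p]}$, and then left-multiply by $\bar{\L}$ and right-multiply by $\bar U$; this yields the first identity $\bar{\L}\eta_{[q]}L^{-1}=\U^{-1}\eta_{[p]}\bar U$. The same substitution applied to \eqref{simnu01}, $\nu_{[q]}M=\mathcal M\nu_{[p]}$, gives $\nu_{[q]}L^{-1}\bar U^{-1}=\bar{\L}^{-1}\U^{-1}\nu_{[p]}$ and hence, after the same two multiplications, the fourth identity $\bar{\L}\nu_{[q]}L^{-1}=\U^{-1}\nu_{[p]}\bar U$. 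The remaining two identities require no new computation: taking the inverse of both sides of the first identity and using $\eta_{[q]}^{-1}=\eta_{[q]}$, $\eta_{[p]}^{-1}=\eta_{[p]}$ from \eqref{prop11} produces $L\eta_{[q]}\bar{\L}^{-1}=\bar U^{-1}\eta_{[p]}\U$, the second identity, while inverting the fourth identity and using \eqref{prop21} gives $L\nu_{[q]}\bar{\L}^{-1}=\bar U^{-1}\nu_{[p]}\U$, the third. Alternatively one may obtain the second and third directly from \eqref{simeta02} and \eqref{simnu02} by the same substitution, this time clearing $L$ on the left and $\U$ on the right.

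There is no genuine obstacle here; the work is bookkeeping. The two points that need care are: keeping the complex conjugations attached to the correct factors — the factorizations carry $\bar U$ and $\bar{\L}$, and the bar distributes over products and commutes with inversion, so nothing subtle happens — and pairing each target identity with the symmetry relation of the right ``chirality'', the left-multiplication relations \eqref{simeta01} and \eqref{simnu01} producing the $\bar{\L}\,(\cdot)\,L^{-1}$ shape and the right-multiplication relations \eqref{simeta02} and \eqref{simnu02} the $L\,(\cdot)\,\bar{\L}^{-1}$ shape. The fact that the second identity is exactly the inverse of the first (and the third the inverse of the fourth) is itself the natural consistency check that the chosen triangular factors are the coherent ones.
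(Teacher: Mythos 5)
Your proposal is correct and follows essentially the same route as the paper: substitute the Gauss--Borel factorizations into the intertwining relations \eqref{simeta01}--\eqref{simnu02} and clear the triangular factors. The only cosmetic difference is that you obtain the second and third identities by inverting the first and fourth (using $\eta_{[r]}^{-1}=\eta_{[r]}$, $\nu_{[r]}^{-1}=\nu_{[r]}$), whereas the paper derives them directly from \eqref{simeta02} and \eqref{simnu02} — a variant you yourself note as an alternative.
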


\begin{proof}
From equations \eqref{simeta01} and \eqref{simnu01}, we deduce:
\[\begin{aligned}
	\eta_{[q]}L^{-1}\bar U^{-1} &= \bar{\L}^{-1}\U^{-1}\eta_{[p]}, &
	\nu_{[q]}L^{-1}U^{-1} &= \bar{\L}^{-1}\U^{-1}\nu_{[p]},
\end{aligned}\]
which leads to:
\[\begin{aligned}
	\bar{\L}\eta_{[q]}L^{-1} &= \U^{-1}\eta_{[p]}\bar U, &
	\bar{\L}\nu_{[q]}L^{-1} &= \U^{-1}\nu_{[p]}\bar U.
\end{aligned}\]

Similarly, from equations \eqref{simeta02} and \eqref{simnu02}, we derive:
\[\begin{aligned}
	L^{-1}\bar U^{-1}\eta_{[p]} &= \eta_{[q]}\bar{\L}^{-1}\U^{-1}, &
	L^{-1}U^{-1}\nu_{[p]} &= \nu_{[q]}\bar{\L}^{-1}\U^{-1},
\end{aligned}\]
resulting in:
\[\begin{aligned}
	\bar U^{-1}\eta_{[p]}\U &= L\eta_{[q]}\bar{\L}^{-1}, &
	\bar U^{-1}\nu_{[p]}\U &= L\nu_{[q]}\bar{\L}^{-1}.
\end{aligned}\]
\end{proof}%
Motivated by these results, we define the Szeg\H{o} recurrence matrices as follows:

\begin{Definition}[The Szeg\H{o} recurrence matrices]\label{Definition:eta_nu}
the Szeg\H{o}  recurrence matrices are
\begin{align}
R\coloneq\bar{\L}\eta _{[ q]}L^{-1} =&{\U}^{-1}\eta
_{[ p]}\bar U,& \Rscr&\coloneq L\eta _{[ q]}\bar{\L}%
^{-1}=\bar U^{-1}\eta _{[ p]}\U,  \label{defSs} \\
S\coloneq L\nu _{[ q]}\bar{\L}^{-1} =&\bar U^{-1}\nu _{[ p]}%
 \U, & \Sscr&\coloneq\bar{\L}\nu _{[q]}L^{-1}=\U^{-1}\nu _{[ p]}\bar U.  \label{defRs}
\end{align}
\end{Definition}

\begin{pro}[Banded structure of the  Szeg\H{o}  recurrence matrices]\label{bandedmatrixv01eta}
		 The Szeg\H{o} matrices are banded matrices, each possessing $p+q+1$ diagonals, with $q$ superdiagonals and $p$ subdiagonals.
\end{pro}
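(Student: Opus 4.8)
The plan is to read off the band structure of each Szegő matrix directly from the two equivalent expressions in Definition~\ref{Definition:eta_nu}, exactly as was done for $T$ and $\Tscr$ in Proposition~\ref{bandedmatrixv01}. Recall from the preceding Proposition that $\eta_{[r]}$ and $\nu_{[r]}$ are banded with $r$ superdiagonals and $r$ subdiagonals, while $L$ and $\mathcal L$ are lower triangular and $U$ and $\mathcal U$ are upper triangular (all with nonzero diagonal, hence their inverses share the same triangular shape). The key observation is that multiplying a banded matrix on the left (resp.\ right) by a lower triangular matrix cannot create nonzero entries above its existing superdiagonals, and multiplying on the left (resp.\ right) by an upper triangular matrix cannot create nonzero entries below its existing subdiagonals.

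First I would treat $R$. From $R=\bar{\mathcal L}\,\eta_{[q]}\,L^{-1}$, the factors $\bar{\mathcal L}$ (lower triangular) and $L^{-1}$ (lower triangular) can only add mass on or below the diagonal, so $R$ has no nonzero entry strictly above the $q$-th superdiagonal of $\eta_{[q]}$; that is, $R$ has at most $q$ superdiagonals. From the second expression $R=\mathcal U^{-1}\eta_{[p]}\bar U$, the factors $\mathcal U^{-1}$ and $\bar U$ (both upper triangular) can only add mass on or above the diagonal, so $R$ has no nonzero entry strictly below the $p$-th subdiagonal of $\eta_{[p]}$; that is, $R$ has at most $p$ subdiagonals. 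Combining the two, $R$ is banded with $q$ superdiagonals and $p$ subdiagonals, i.e.\ $p+q+1$ diagonals in total. The same two-sided argument applies verbatim to $S=L\,\nu_{[q]}\,\bar{\mathcal L}^{-1}=\bar U^{-1}\nu_{[p]}\,\mathcal U$, and, after swapping the roles of $L\leftrightarrow\mathcal L$ and $U\leftrightarrow\mathcal U$, to $\Rscr$ and $\Sscr$ as well.

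The only genuine point to check — and the place where one must be slightly careful rather than merely invoke "triangular times banded" — is that the bound really is $q$ on one side and $p$ on the other, and not, say, $q$ on both sides for one expression: each Szegő matrix is rectangular-in-block in the sense that its left factors and right factors have mismatched block size $q$ versus $p$ when $q\neq p$, so one must use \emph{both} displayed expressions, reading the superdiagonal bound from the one whose outer factors are lower triangular and the subdiagonal bound from the one whose outer factors are upper triangular. This is the same mechanism already used for $T$, so I would phrase the proof compactly by reference to that argument.

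\begin{proof}
By the preceding proposition, $\eta_{[r]}$ and $\nu_{[r]}$ are banded with $r$ superdiagonals and $r$ subdiagonals, and by construction $L,\mathcal L$ (and their inverses) are lower triangular while $U,\mathcal U$ (and their inverses) are upper triangular. Consider $R=\bar{\mathcal L}\,\eta_{[q]}\,L^{-1}$: left multiplication by the lower triangular $\bar{\mathcal L}$ and right multiplication by the lower triangular $L^{-1}$ do not produce entries above the highest superdiagonal of $\eta_{[q]}$, so $R$ has at most $q$ superdiagonals. Using the alternative expression $R=\mathcal U^{-1}\eta_{[p]}\bar U$, left multiplication by the upper triangular $\mathcal U^{-1}$ and right multiplication by the upper triangular $\bar U$ do not produce entries below the lowest subdiagonal of $\eta_{[p]}$, so $R$ has at most $p$ subdiagonals. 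Hence $R$ is banded with $q$ superdiagonals, $p$ subdiagonals, and $p+q+1$ diagonals. The identical argument applied to $S=L\,\nu_{[q]}\,\bar{\mathcal L}^{-1}=\bar U^{-1}\nu_{[p]}\,\mathcal U$ gives the same bands, and interchanging $L\leftrightarrow\mathcal L$, $U\leftrightarrow\mathcal U$ throughout yields the corresponding statements for $\Rscr$ and $\Sscr$.
\end{proof}
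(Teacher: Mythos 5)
Your proof is correct and follows essentially the same route as the paper: the paper likewise reads the superdiagonal bound $q$ from the expression with lower-triangular outer factors and the subdiagonal bound $p$ from the expression with upper-triangular outer factors, by the same argument already used for the banded structure of $T$. Your write-up is in fact more explicit than the paper's (which simply refers back to the $T$ argument), and the point you flag about needing both displayed expressions when $p\neq q$ is exactly the mechanism at work.
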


\begin{proof}
From the definition of the matrices $\nu_{[r]}$ and $\eta_{[r]}$, it's evident that they are banded matrices, each featuring $r$ superdiagonals and $r$ subdiagonals. Then, by examining Definition \ref{Definition:eta_nu} and employing a similar argument as for the banded structure of $T$, we arrive at the asserted conclusion.
\end{proof}

\begin{pro}\label{pro:SR}
The banded matrices $S$ and $\Sscr$ are inverses:
\begin{align*}
	S\Sscr = \Sscr S = I,
\end{align*}
and likewise, $R$ and $\Rscr$ are inverses:
\begin{align*}
	R\Rscr = \Rscr R = I,
\end{align*}
meaning $\Sscr = S^{-1}$ and $\Rscr = R^{-1}$.
\end{pro}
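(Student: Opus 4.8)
The plan is to read off both identities directly from the twin factorized expressions for the Szeg\H{o} matrices in Definition~\ref{Definition:eta_nu}, using only that $\eta_{[q]}$ and $\nu_{[q]}$ are involutions: from $\eta_{[q]}=\eta_{[q]}^{-1}$ and $\nu_{[q]}=\nu_{[q]}^{-1}$, recorded in \eqref{prop11} and \eqref{prop21}, one gets $\eta_{[q]}^{2}=I$ and $\nu_{[q]}^{2}=I$. Everything then reduces to cancelling a pair of mutually inverse triangular factors.

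First I would treat $S$ and $\Sscr$. Taking the representations $S=L\nu_{[q]}\bar{\L}^{-1}$ and $\Sscr=\bar{\L}\nu_{[q]}L^{-1}$ from \eqref{defRs}, the inner triangular factors cancel:
\begin{align*}
S\Sscr &= L\nu_{[q]}\bar{\L}^{-1}\,\bar{\L}\nu_{[q]}L^{-1}=L\nu_{[q]}^{2}L^{-1}=L L^{-1}=I,\\
\Sscr S &= \bar{\L}\nu_{[q]}L^{-1}\,L\nu_{[q]}\bar{\L}^{-1}=\bar{\L}\nu_{[q]}^{2}\bar{\L}^{-1}=I.
\end{align*}
The same computation with $\eta_{[q]}$ in place of $\nu_{[q]}$, now using $R=\bar{\L}\eta_{[q]}L^{-1}$ and $\Rscr=L\eta_{[q]}\bar{\L}^{-1}$ from \eqref{defSs}, yields
\begin{align*}
R\Rscr &= \bar{\L}\eta_{[q]}L^{-1}\,L\eta_{[q]}\bar{\L}^{-1}=\bar{\L}\eta_{[q]}^{2}\bar{\L}^{-1}=I,\\
\Rscr R &= L\eta_{[q]}\bar{\L}^{-1}\,\bar{\L}\eta_{[q]}L^{-1}=L\eta_{[q]}^{2}L^{-1}=I.
\end{align*}
Since all four matrices are genuine semi-infinite matrices, these identities say exactly that $\Sscr=S^{-1}$ and $\Rscr=R^{-1}$.

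There is essentially no obstacle here; the only point requiring a word of care is that for each pair one must commit to a single one of the two available representations, so that the adjacent copies of $L$, $L^{-1}$ (or $\bar{\L}$, $\bar{\L}^{-1}$) actually sit next to each other and cancel. As a consistency check one can rerun the computation with the other representations, e.g.\ $S=\bar U^{-1}\nu_{[p]}\U$ and $\Sscr=\U^{-1}\nu_{[p]}\bar U$, which gives $S\Sscr=\bar U^{-1}\nu_{[p]}^{2}\bar U=I$ in exactly the same way; this is compatible with the first route precisely because the equalities of the two representations were already established in the proposition immediately preceding Definition~\ref{Definition:eta_nu}.
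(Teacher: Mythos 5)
Your proof is correct and is precisely the argument the paper intends: the identities follow from the factorized expressions in Definition~\ref{Definition:eta_nu} together with $\eta_{[r]}^{2}=\nu_{[r]}^{2}=I$, with the adjacent triangular factors cancelling. The paper states this in one line; your version simply writes out the cancellations explicitly.
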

\begin{proof}
	It follows from Definition \ref{Definition:eta_nu}  and the fact that $\nu_{[r]}^2=I$ and $\eta_{[r]}^2=I$.
\end{proof}

With these new matrices in hand, demonstrating the following is straightforward:
\begin{pro}[Factorizations of the recurrence matrices]\label{pro:TSR}
The following factorizations hold true:
\begin{align*}
	\begin{array}{lll}
		T = SR, & & T^{-1} = \Rscr\Sscr, \\ 
		\Tscr^{-1} = \Sscr\Rscr, & & \Tscr = RS.%
	\end{array}
\end{align*}
\end{pro}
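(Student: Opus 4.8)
The plan is to simply substitute the definitions of the Szeg\H{o} recurrence matrices from Definition \ref{Definition:eta_nu} into the products $SR$ and $RS$, and recognize the outcome as the definitions of $T$ and $\Tscr$ from \eqref{Tmatrix} and \eqref{Tmatrixwave}. Concretely, using the left-hand expressions $S=L\nu_{[q]}\bar{\L}^{-1}$ and $R=\bar{\L}\eta_{[q]}L^{-1}$ one computes
\[
SR = L\nu_{[q]}\bar{\L}^{-1}\bar{\L}\eta_{[q]}L^{-1} = L\nu_{[q]}\eta_{[q]}L^{-1} = L\Upsilon_{[q]}L^{-1} = T,
\]
where the middle step uses the cancellation of $\bar{\L}^{-1}\bar{\L}=I$ and the third step invokes the factorization $\Upsilon_{[q]}=\nu_{[q]}\eta_{[q]}$ from \eqref{factorMain}. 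Symmetrically, using $R=\U^{-1}\eta_{[p]}\bar U$ and $S=\bar U^{-1}\nu_{[p]}\U$ from the right-hand expressions,
\[
RS = \U^{-1}\eta_{[p]}\bar U\bar U^{-1}\nu_{[p]}\U = \U^{-1}\eta_{[p]}\nu_{[p]}\U = \U^{-1}\Upsilon_{[p]}^{\top}\U = \Tscr,
\]
where now the last step uses $\Upsilon_{[p]}^{\top}=\eta_{[p]}\nu_{[p]}$ from \eqref{factorMain} together with the alternative expression for $\Tscr$ in \eqref{Tmatrixwave}.

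For the inverses, I would then invoke Proposition \ref{pro:SR}, which gives $\Sscr=S^{-1}$ and $\Rscr=R^{-1}$. From $T=SR$ it follows immediately that $T^{-1}=R^{-1}S^{-1}=\Rscr\Sscr$, and from $\Tscr=RS$ that $\Tscr^{-1}=S^{-1}R^{-1}=\Sscr\Rscr$. These four identities are exactly the claimed factorizations.

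I do not anticipate any genuine obstacle here: the argument is a short chain of substitutions and cancellations, entirely a consequence of the two equivalent expressions in Definition \ref{Definition:eta_nu}, the factorizations in \eqref{factorMain}, and the involutivity $\nu_{[r]}^2=\eta_{[r]}^2=I$ already recorded in Proposition \ref{pro:SR}. The only point requiring a little care is bookkeeping which of the two equivalent forms of $S$ and $R$ to use so that the inner triangular factors ($\bar{\L}$, $\bar U$) cancel cleanly — using the ``$L$-side'' forms for $SR$ and the ``$U$-side'' forms for $RS$ makes this transparent. This is precisely why the statement is flagged in the text as ``straightforward.''
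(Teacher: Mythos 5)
Your proposal is correct and follows essentially the same route as the paper: substitute the two equivalent expressions for $R$ and $S$ from Definition \ref{Definition:eta_nu}, cancel the inner triangular factors, and identify $\nu\eta=\Upsilon$ (resp. $\eta\nu=\Upsilon^{\top}$) via \eqref{factorMain} with the expressions \eqref{Tmatrix}--\eqref{Tmatrixwave} for $T$ and $\Tscr$. The only cosmetic difference is that the paper also computes $\Rscr\Sscr$ and $\Sscr\Rscr$ by the same direct substitution, whereas you obtain those two identities by inverting $T=SR$ and $\Tscr=RS$ using Proposition \ref{pro:SR}; both are equally valid.
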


\begin{proof}
Taking into account \eqref{prop01}, and based on all the above definitions, we can readily derive the following expressions:
\begin{align*}
	SR &= \left( \bar U^{-1}\nu_{[p]}\mathcal{U}\right) \left( \mathcal{U}^{-1}\eta_{[p]}\bar U\right) = \bar U^{-1}\nu_{[p]}\eta_{[p]}\bar U = \bar U^{-1}\Upsilon_{[p]}\bar U = T, \\
	RS &= \left( \mathcal{U}^{-1}\eta_{[p]}\bar U\right) \left( \bar U^{-1}\nu_{[p]}\mathcal{U}\right) = \mathcal{U}^{-1}\eta_{[p]}\nu_{[p]}\mathcal{U} = \mathcal{U}^{-1}\Upsilon_{[p]}^{-1}\mathcal{U} = \mathcal{R}\mathcal{S} = \left( \bar U^{-1}\eta_{[p]}\mathcal{U}\right) \left( \mathcal{U}^{-1}\nu_{[p]}\bar U\right) \\
	&= \bar U^{-1}\Upsilon_{[p]}^{\top}\bar U = \left( \bar U^{-1}\Upsilon_{[p]}\bar U\right)^{-1} = T^{-1}, \\
	\mathcal{S}\mathcal{R} &= \left( \mathcal{U}^{-1}\nu_{[p]}\bar U\right) \left( \bar U^{-1}\eta_{[p]}\mathcal{U}\right) = \mathcal{U}^{-1}\nu_{[p]}\eta_{[p]}\mathcal{U} = \mathcal{U}^{-1}\Upsilon_{[p]}\mathcal{U} = \Tscr^{-1}.
\end{align*}
\end{proof}

In the context of symmetries, Corollary \ref{cor:LU-sym} implies the following:
\begin{pro}[Symmetry properties of the Szeg\H{o} recurrence matrices]\label{pro:RS-sym}
	The following holds:
	\begin{enumerate}
		\item  If the matrix of measures is real, i.e., $\mu = \bar{\mu}$, then:
		\begin{align*}
			R \bar{R} = S \bar{S} = I,
		\end{align*}
		with $\Rscr = \bar{R}$ and $\Sscr = \bar{S}$. Specifically,
		\begin{align*}
			R &= \bar{L}\eta_{[q]}L^{-1} = U^{-1}\eta_{[p]}\bar{U}, \\
			S &= L\nu_{[q]}\bar{L}^{-1} = \bar{U}^{-1}\nu_{[p]}U.
		\end{align*}
		Moreover, $\Tscr = \bar{T}^{-1}$ and $T = L\Upsilon_{[q]}L^{-1} = \bar{U}^{-1}\Upsilon_{[p]}\bar{U}$.
	\item  For $p=q$, if the matrix of measures is symmetric, i.e., $\mu = \mu^\top$, then the matrices $R$, $S$, $\Rscr$, and $\Sscr$ are symmetric complex matrices. Specifically,
	\begin{align*}
		R &= \bar{U}^\top \eta L^{-1} = L^{-\top}\eta\bar{U}, \\
		S &= L\nu\bar{U}^{-\top} = \bar{U}^{-1}\nu L^\top.
	\end{align*}
	Moreover, $\Tscr = T^\top$, where $T = L\Upsilon L^{-1} = \bar{U}^{-1}\Upsilon\bar{U}$ and $\Tscr = \bar{U}^\top\Upsilon^\top\bar{U}^{-\top} = L^{-\top}\Upsilon^\top L^\top$.
	\item For $p=q$, whenever the matrix of measures is Hermitian, i.e. $\mu=\mu^\dagger$, the matrices $R,S,T,\Rscr,\Sscr$ and $\Tscr$ are  orthogonal complex matrices 
	\begin{align*}
		RR^\top=R^\top R=SS^\top=S^\top S=	TT^\top=T^\top T=&=I,\\
		\Rscr\Rscr^\top=\Rscr^\top \Rscr=\Sscr\Sscr^\top=\Sscr^\top \Sscr=\Tscr\Tscr^\top=\Tscr^\top \Tscr&=I.
	\end{align*}
 Specifically,
\[\begin{aligned}
	R&=\bar \L \eta L^{-1}=\L^{-\dagger}\eta L^\top,&
	S&=L \nu \bar{\L}^{-1}=L^{-\top}\nu \L^\dagger.
\end{aligned}\]
	\item If $p=q$ and the matrix of measures is both real and symmetric, i.e., $\mu = \bar{\mu}$ and $\mu = \mu^\top$, then the matrices $R = \bar{\Rscr}$ and $S = \bar{\Sscr}$ satisfy:
\[	\begin{aligned}
		R &= R^\top, & S &= S^\top, & R\bar{R} = S\bar{S} &= I.
	\end{aligned}\]
	Specifically,
\[	\begin{aligned}
		R &= L\eta L^{-1} = L^{-\top}\eta L^\top, &
		S &= L\nu L^{-1} = L^{-\top}\nu L^\top.
	\end{aligned}\]
	Notice that $R$, $S$, and $T$ are unitary matrices:
	\begin{align*}
		RR^\dagger = R^\dagger R = SS^\dagger = S^\dagger S = TT^\dagger = T^\dagger T = I.
	\end{align*}
	Moreover, $\Tscr = \bar{T}^{-1} = T^\top$ are also unitary matrices.
	\end{enumerate}
\end{pro}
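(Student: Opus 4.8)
The plan is to prove the four items in turn, in each case substituting the symmetry relations among $L,U,\L,\U$ supplied by Corollary~\ref{cor:LU-sym} into the two defining expressions for $R,S,\Rscr,\Sscr$ in \eqref{defSs}--\eqref{defRs}, and then invoking only a handful of structural facts: that $\eta_{[r]}$ and $\nu_{[r]}$ are real, self-transpose and self-adjoint (so conjugation fixes them and transposition merely reverses a product), that $\Upsilon_{[r]}^\top=\Upsilon_{[r]}^{-1}$, and the identities $\Rscr=R^{-1}$, $\Sscr=S^{-1}$, $T=SR$, $\Tscr=RS$ of Propositions~\ref{pro:SR} and~\ref{pro:TSR}. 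The repeated mechanism is that, after imposing a symmetry of the Gauss factors, the transpose (or conjugate, or adjoint) of the \emph{left} form of $R$ lands precisely on the \emph{right} form of $R$, or on $\Rscr$; the stated identities then drop out, and the corresponding statements for $S$, $T$, $\Tscr$, $\Rscr$, $\Sscr$ follow either by the same computation or via Proposition~\ref{pro:TSR}.

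For item~(1), $\bar\mu=\mu$ gives $L=\L$, $U=\U$; conjugating \eqref{defSs}--\eqref{defRs} and using that $\eta_{[q]},\nu_{[q]}$ are real yields $\overline R=L\eta_{[q]}\bar L^{-1}=\Rscr$ and $\overline S=\bar L\nu_{[q]}L^{-1}=\Sscr$, so $R\bar R=R\Rscr=I$ and $S\bar S=I$ by Proposition~\ref{pro:SR}; the explicit formulas and $\Tscr=\overline T^{-1}$ follow by the same substitution in \eqref{Tmatrix}--\eqref{Tmatrixwave} together with $\Upsilon_{[q]}^\top=\Upsilon_{[q]}^{-1}$. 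For item~(2), $p=q$ and $\mu=\mu^\top$ give $\L=U^\top$, $\U=L^\top$; transposing the left form of $R$, $R^\top=(\bar\L\eta_{[q]}L^{-1})^\top=L^{-\top}\eta_{[q]}\bar\L^\top=\U^{-1}\eta_{[q]}\bar U=R$, the final equality being the right form of $R$, so $R=R^\top$, and likewise $S=S^\top$; then $\Rscr=R^{-1}$ and $\Sscr=S^{-1}$ are symmetric, and $\Tscr=RS=R^\top S^\top=(SR)^\top=T^\top$.

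For item~(3), $p=q$ and $\mu=\mu^\dagger$ give $L=\bar U^\top$ and $\L=\bar\U^\top$, equivalently $L^\top=\bar U$ and $\bar\L^\top=\U$; transposing the left form of $R$ and substituting, $R^\top=L^{-\top}\eta_{[q]}\bar\L^\top=\bar U^{-1}\eta_{[q]}\U=\Rscr=R^{-1}$, using the right form of $\Rscr$ and Proposition~\ref{pro:SR}, so $R^\top R=RR^\top=I$; similarly $S^\top=\Sscr=S^{-1}$, and hence $T^\top=(SR)^\top=R^\top S^\top=\Rscr\Sscr=(SR)^{-1}=T^{-1}$, with the analogue for $\Tscr$, which exhausts the six matrices. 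The two explicit formulas in this item are the right forms of $R$ and $S$ rewritten via $L^\top=\bar U$ and $\L^\dagger=\U$.

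Item~(4) is the superposition of the first three: a real symmetric matrix of measures is Hermitian, so (1)--(3) all apply; combining $\L=L$ (realness), $U^\top=\L$ (symmetry) and $L=\bar U^\top$ (Hermiticity) forces $L=\bar L$, whence $\L=L$ and $\U=U=L^\top$ are real too and \eqref{defSs}--\eqref{defRs} collapse to $R=L\eta_{[q]}L^{-1}=L^{-\top}\eta_{[q]}L^\top$ and $S=L\nu_{[q]}L^{-1}=L^{-\top}\nu_{[q]}L^\top$; then $R=R^\top$ from~(2) and $R\bar R=I$ from~(1) give $RR^\dagger=R\,\overline{R^\top}=R\bar R=I$, i.e. $R$ is unitary, and the same for $S$, $T$, and $\Tscr=\overline T^{-1}=T^\top$. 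I expect the only genuinely non-mechanical step to be this deduction that $L$ is real; relatedly, one should verify that the normalizations in Corollary~\ref{cor:LU-sym} can be chosen simultaneously so that all its identities hold for one and the same pair $(L,U)$. Everything else is transpose/conjugate bookkeeping that is guaranteed to close because $\eta_{[r]}$, $\nu_{[r]}$, $\Upsilon_{[r]}$ are real and symmetric.
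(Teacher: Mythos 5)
The paper gives no proof of this proposition at all, so there is nothing to compare against line by line; your strategy — substitute the normalizations of Corollary~\ref{cor:LU-sym} into \eqref{defSs}--\eqref{defRs} and close the loop with $\eta^\top=\eta$, $\nu^\top=\nu$, $\Rscr=R^{-1}$, $\Sscr=S^{-1}$, $T=SR$, $\Tscr=RS$ — is the intended one, and your computations for items (1)--(3) are correct.

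There is, however, a genuine gap exactly at the step you flagged in item (4): the deduction that $L=\bar L$. The three normalizations of Corollary~\ref{cor:LU-sym} cannot in general be imposed on one and the same pair $(L,U)$. The real$+$symmetric choice $U=\U=L^\top$, $\L=L$ gives $M^{-1}=\bar U L=L^\dagger L$, while adding the Hermitian choice $U=L^\dagger$ would force $L^\top=\bar L^{\top}$, hence $L$ real and $M^{-1}=L^\top L$ real — but $M$ is only Hermitian, not real, for a real symmetric matrix of measures. Already in the scalar case $p=q=1$ (where symmetry is automatic) a real positive weight such as $w(\theta)=1+\tfrac12\cos\theta+\tfrac12\sin\theta$ has $c_1=\tfrac14+\tfrac{\ii}{4}\notin\R$, so $M$, and with it $L$, is genuinely complex; this is consistent with the later subsection on the scalar case, where $\bar L_n$ and complex Verblunsky coefficients appear throughout. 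Consequently the bar-free formulas $R=L\eta L^{-1}=L^{-\top}\eta L^\top$ cannot be derived: under the Corollary's item-(4) normalization one gets $R=\bar L\,\eta\,L^{-1}=L^{-\top}\eta\,\bar L^{\top}$ and $S=L\,\nu\,\bar L^{-1}=\bar L^{-\top}\nu\,L^{\top}$ (the statement in the paper appears to have dropped these conjugations). The substantive conclusions of item (4) are unaffected, and your own derivation of them is the right one: $R^\top=R$ comes from the item-(2) computation and $\bar R=\Rscr=R^{-1}$ from the item-(1) computation, both valid under the single normalization $U=\U=L^\top$, $\L=L$, whence $RR^\dagger=R\overline{R^{\top}}=R\bar R=I$, and similarly for $S$, $T$, $\Tscr=\bar T^{-1}=T^\top$. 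You should simply delete the claim that $L$ is real and state the explicit formulas with the bars; do not invoke item (3), since its normalization is the incompatible one.
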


To recover information about the specific entries  of the matrices $R, \Rscr,S$ and $\Sscr^{-1}$ 

\begin{Definition}
\begin{enumerate}
	\item Let us consider $q\times q$ blocks to describe  the lower triangular matrices $L,\bar \L$  and   $p\times p$ blocks to describe  the upper triangular matrices $U,\bar \U$: 
\begin{align*}
	L&=\left[\begin{NiceMatrix}
		L_0 & 0_q&0_q&0_q &\Cdots[shorten-end=-3pt]&\phantom{t}  \\
	0_q& L_{1}& 0_q & 0_q&\Cdots[shorten-end=-3pt]&\phantom{t}\\
		0_q&	0_q& L_{2}& 0_q &\Cdots[shorten-end=-3pt]&\phantom{t}\\[-7pt]
		0_q&	0_q&	0_q& L_{3}& \Ddots[shorten-end=-3pt]&\phantom{t}\\
	\Vdots[shorten-end=-7pt]&\Vdots[shorten-end=-7pt]&\Ddots[shorten-end=-7pt]&\Ddots[shorten-end=0pt]&\Ddots[shorten-end=0pt]&\phantom{t}\\&&&&&\\
	\phantom{t}&\phantom{t}&\phantom{t}&\phantom{t}&\phantom{t}
	\end{NiceMatrix}\right]\left[\begin{NiceMatrix}
	I_q & 0_q&0_q&0_q &\Cdots[shorten-end=-3pt]&\phantom{t}  \\
l_0& 	I_q & 0_q & 0_q&\Cdots[shorten-end=-3pt]&\phantom{t}\\
	*&		l_{1}& 	I_q & 0_q &\Cdots[shorten-end=-3pt]&\phantom{t}\\[-7pt]
	*&	*&		l_{2}& 	I_q & \Ddots[shorten-end=-3pt]&\phantom{t}\\
	\Vdots[shorten-end=-7pt]&\Vdots[shorten-end=-7pt]&\Ddots[shorten-end=-3pt]&\Ddots[shorten-end=0pt]&\Ddots[shorten-end=0pt]&\phantom{t}\\&&&&&\\
	\phantom{t}&\phantom{t}&\phantom{t}&\phantom{t}&\phantom{t}
\end{NiceMatrix}\right],\\
	 \L&=\left[\begin{NiceMatrix}
	\L_0 & 0_q&0_q&0_q &\Cdots[shorten-end=-3pt]&\phantom{t}  \\
	0_q& \L_{1}& 0_q & 0_q&\Cdots[shorten-end=-3pt]&\phantom{t}\\
	0_q&	0_q& \L_{2}& 0_q &\Cdots[shorten-end=-3pt]&\phantom{t}\\[-7pt]
	0_q&	0_q&	0_q& \L_{3}& \Ddots[shorten-end=-3pt]&\phantom{t}\\
	\Vdots[shorten-end=-7pt]&\Vdots[shorten-end=-7pt]&\Ddots[shorten-end=-7pt]&\Ddots[shorten-end=0pt]&\Ddots[shorten-end=0pt]&\phantom{t}\\&&&&&\\
	\phantom{t}&\phantom{t}&\phantom{t}&\phantom{t}&\phantom{t}
\end{NiceMatrix}\right]\left[\begin{NiceMatrix}
	I_q & 0_q&0_q&0_q &\Cdots[shorten-end=-3pt]&\phantom{t}  \\
	\l_{0}& 	I_q & 0_q & 0_q&\Cdots[shorten-end=-3pt]&\phantom{t}\\
	*&		\l_{1}& 	I_q & 0_q &\Cdots[shorten-end=-3pt]&\phantom{t}\\[-7pt]
	*&	*&		\l_{2}& 	I_q & \Ddots[shorten-end=-3pt]&\phantom{t}\\
	\Vdots[shorten-end=-7pt]&\Vdots[shorten-end=-7pt]&\Ddots[shorten-end=0pt]&\Ddots[shorten-end=0pt]&\Ddots[shorten-end=0pt]&\phantom{t}\\&&&&&\\
	\phantom{t}&\phantom{t}&\phantom{t}&\phantom{t}&\phantom{t}
\end{NiceMatrix}\right],\\
	U&=\left[\begin{NiceMatrix}
	I_p & u_{0} &*&* &\Cdots[shorten-end=-3pt]&\phantom{t}  \\
	0_p& 	I_p & u_{1} & *&\Cdots[shorten-end=-3pt]&\phantom{t}\\
	0_p&	0_p	& 	I_p & u_{2}&\Cdots[shorten-end=-3pt]&\phantom{t}\\[-7pt]
	0_p&	0_p&0_p& 	I_p & \Ddots[shorten-end=-3pt]&\phantom{t}\\
	\Vdots[shorten-end=-7pt]&\Vdots[shorten-end=-7pt]&\Ddots[shorten-end=12pt]&\Ddots[shorten-end=7pt]&\Ddots[shorten-end=0pt]&\phantom{t}\\&&&&&\\
	\phantom{t}&\phantom{t}&\phantom{t}&\phantom{t}&\phantom{t}
\end{NiceMatrix}\right]\left[\begin{NiceMatrix}
U_0 & 0_p&0_p&0_p &\Cdots[shorten-end=-3pt]&\phantom{t}  \\
0_p& U_{1}& 0_p & 0_p&\Cdots[shorten-end=-3pt]&\phantom{t}\\
0_p&	0_p& U_{2}& 0_p &\Cdots[shorten-end=-3pt]&\phantom{t}\\[-7pt]
0_p&	0_p&	0_p& U_{3}& \Ddots[shorten-end=-3pt]&\phantom{t}\\
\Vdots[shorten-end=-7pt]&\Vdots[shorten-end=-7pt]&\Ddots[shorten-end=-7pt]&\Ddots[shorten-end=0pt]&\Ddots[shorten-end=0pt]&\phantom{t}\\&&&&&\\
\phantom{t}&\phantom{t}&\phantom{t}&\phantom{t}&\phantom{t}
\end{NiceMatrix}\right],\\
\U&=\left[\begin{NiceMatrix}
	I_p & \u_{0} &*&* &\Cdots[shorten-end=-3pt]&\phantom{t}  \\
	0_p& 	I_p & \u_{1} & *&\Cdots[shorten-end=-3pt]&\phantom{t}\\
	0_p&	0_p	& 	I_p & \u_{2}&\Cdots[shorten-end=-3pt]&\phantom{t}\\[-7pt]
	0_p&	0_p&0_p& 	I_p & \Ddots[shorten-end=-3pt]&\phantom{t}\\
	\Vdots[shorten-end=-7pt]&\Vdots[shorten-end=-7pt]&\Ddots[shorten-end=20pt]&\Ddots[shorten-end=15pt]&\Ddots[shorten-end=0pt]&\phantom{t}\\&&&&&\\
	\phantom{t}&\phantom{t}&\phantom{t}&\phantom{t}&\phantom{t}
\end{NiceMatrix}\right]\left[\begin{NiceMatrix}
\U_0 & 0_p&0_p&0_p &\Cdots[shorten-end=-3pt]&\phantom{t}  \\
0_p& \U_{1}& 0_p & 0_p&\Cdots[shorten-end=-3pt]&\phantom{t}\\
0_p&	0_p& \U_{2}& 0_p &\Cdots[shorten-end=-3pt]&\phantom{t}\\[-7pt]
0_p&	0_p&	0_p& \U_{3}& \Ddots[shorten-end=-3pt]&\phantom{t}\\
\Vdots[shorten-end=-7pt]&\Vdots[shorten-end=-7pt]&\Ddots[shorten-end=-7pt]&\Ddots[shorten-end=0pt]&\Ddots[shorten-end=0pt]&\phantom{t}\\&&&&&\\
\phantom{t}&\phantom{t}&\phantom{t}&\phantom{t}&\phantom{t}
\end{NiceMatrix}\right].
\end{align*}

Here, for $n\in\N_0$,  $L_n,\L_n$ are nonsingular lower triangular $q\times q$ matrices and $U_n,\U_n$ are nonsingular upper triangular $p\times p$ matrices. For $n\in\N_0$, we also have the blocks $l _{n},\l _{n}\in\C^{q\times q}$ and 
 $u _{n},\u_{n}\in\C^{p\times p}$. Finally, when $p=q$ for $n\in\N_0$,  the notation
 \begin{align*}
 \begin{aligned}
 		H_n&\coloneq L_n^{-1}\bar U_n^{-1}, & \mathcal H_n&\coloneq\bar \L_n^{-1}\U_n^{-1}
 \end{aligned}
 \end{align*}
 will be used.

\item 	Let us introduce, for $n\in\N_0$,  the nonsingular matrices $\mathbb L_n\in\C^{2q\times 2q}$ and 
	$\mathbb U_n\in\C^{2p\times 2p}$ given by 
\[\begin{aligned}
	\mathbb L_n&\coloneq \begin{bNiceMatrix}
		-\bar \L_n l_{n}L_n^{-1} & \bar \L_n L_{n+1}^{-1}\\
		\bar \L_{n+1}(I_q-\bar \l_{n}l_{n})L_n^{-1} & \bar \L_{n+1}\bar \l_{n}L_{n+1}^{-1}
	\end{bNiceMatrix}, 	&\mathbb L_n^{-1}&\coloneq \begin{bNiceMatrix}
	-L_n \bar \l_{n}\bar \L_n^{-1} & L_n \bar \L_{n+1}^{-1}\\[2pt]
L_{n+1}(I_q-l_{n}\bar \l_{n})\bar \L_n^{-1} & L_{n+1}l_{n+1,n}\bar \L_{n+1}^{-1}
	\end{bNiceMatrix},  \\
	\mathbb U_n &\coloneq \begin{bNiceMatrix}
			-\U_n^{-1} \u_{n}\bar U_n & \U^{-1}_{n}(I_p-\u_{n}\bar u_{n})\bar U_{n+1}\\
	\U_{n+1} ^{-1}\bar U_n &  \U^{-1}_{n+1}\bar U_{n+1}\bar U_{n+1}
	\end{bNiceMatrix},&
\	\mathbb U_n ^{-1}&\coloneq \begin{bNiceMatrix}
	-\bar U_n^{-1} \bar u_{n}\U_n & \bar U^{-1}_{n}(I_p-\bar u_{n}\u_{n})\U_{n+1}\\
	\bar U_{n+1} ^{-1}\U_n &  \bar U^{-1}_{n+1}\u_{n}\U_{n+1}
\end{bNiceMatrix} .
\end{aligned}\]
and the lower triangular matrices $\mathbb L_{-1}\coloneq \bar \L_0L_0^{-1}\in\C^{q\times q}$ and the upper triangular matrices  $\mathbb U_{-1}\coloneq \U_0^{-1}\bar U_0\in\C^{p\times p}$.
\end{enumerate}
\end{Definition}

\begin{pro}
	For real matrices of measures, i.e. $\mu=\bar \mu$ 
	we have
\[	\begin{aligned}
		\mathbb L_n&= \overline{\mathbb L^{-1}_n}= \begin{bNiceMatrix}
			-\bar L_n l_{n}L_n^{-1} & \bar L_n L_{n+1}^{-1}\\
			\bar L_{n+1}(I_q-\bar l_{n}l_{n})L_n^{-1} & \bar L_{n+1}\bar l_{n}L_{n+1}^{-1}
		\end{bNiceMatrix}, &
		\mathbb U_n &=	\overline{\mathbb U^{-1}_n }= \begin{bNiceMatrix}
			-U_n^{-1} u_{n}\bar U_n & U^{-1}_{n}(I_p-u_{n}\bar u_{n})\bar U_{n+1}\\
			U_{n+1} ^{-1}\bar U_n &  U^{-1}_{n+1}\bar u_{n}\bar U_{n+1}
		\end{bNiceMatrix},
	\end{aligned}\]
	and $\mathbb L_{-1}=\overline{\mathbb L^{-1}_{-1}}= \bar L_0L_0^{-1}$,   $\mathbb U_{-1}= \overline{\mathbb U_{-1}^{-1}}= U_0^{-1}\bar U_0$.
\end{pro}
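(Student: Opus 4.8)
The plan is to reduce the whole statement to one structural input: for a real matrix of measures the Gauss--Borel factors of the left and right moment matrices coincide. Precisely, Corollary~\ref{cor:LU-sym}(1), which is the specialization of Proposition~\ref{pro:LU_sym} to $\bar\mu=\mu$, gives $L=\L$ and $U=\U$. Since the block factorizations of $L,\L$ (into a block-diagonal factor with diagonal blocks $L_n$, $\L_n$ times a unipotent block-lower-triangular factor with subdiagonal blocks $l_n$, $\l_n$) and of $U,\U$ (into a unipotent block-upper-triangular factor with superdiagonal blocks $u_n$, $\u_n$ times a block-diagonal factor with diagonal blocks $U_n$, $\U_n$) are unique, the identity $L=\L$ forces $\L_n=L_n$ and $\l_n=l_n$ for every $n\in\N_0$, and similarly $\U_n=U_n$, $\u_n=u_n$. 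First I would record this reduction carefully, since it is the only place where $\bar\mu=\mu$ enters; everything afterwards is substitution and conjugation.

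Second, for the middle equalities I would simply insert $\L_n=L_n$, $\l_n=l_n$ (and $\U_n=U_n$, $\u_n=u_n$) into the defining formulas for $\mathbb L_n$ and $\mathbb U_n$ from the preceding Definition; the four blocks then read off immediately as those displayed in the statement. The base case $n=-1$ is equally direct: $\mathbb L_{-1}=\bar\L_0 L_0^{-1}=\bar L_0 L_0^{-1}$ and $\mathbb U_{-1}=\U_0^{-1}\bar U_0=U_0^{-1}\bar U_0$.

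Third, for the equalities $\mathbb L_n=\overline{\mathbb L_n^{-1}}$ and $\mathbb U_n=\overline{\mathbb U_n^{-1}}$ I would take the explicit block formula for $\mathbb L_n^{-1}$ (resp.\ $\mathbb U_n^{-1}$) from the Definition and conjugate it entrywise, using that conjugation commutes with inversion, i.e.\ $\overline{\bar\L_n^{-1}}=(\overline{\bar\L_n})^{-1}=\L_n^{-1}$ and $\overline{\bar\l_n}=\l_n$, and then apply once more $\L_n=L_n$, $\l_n=l_n$ (and the analogues for $\U,\u$). Block by block the result coincides with the matrix displayed in the statement, and the $\mathbb U$ computation is symmetric; the base case amounts to $\overline{\mathbb L_{-1}^{-1}}=\overline{L_0\bar\L_0^{-1}}=\bar L_0\L_0^{-1}=\bar L_0 L_0^{-1}$ and $\overline{\mathbb U_{-1}^{-1}}=\overline{\bar U_0^{-1}\U_0}=U_0^{-1}\bar U_0$.

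The computations are routine, so there is no deep obstacle; the care needed is mainly in (i) justifying that $L=\L$ descends to the block data $\{L_n,l_n\}$ and $U=\U$ to $\{U_n,u_n\}$ — this relies on uniqueness of the block factorization and should be stated explicitly — and (ii) matching the conjugated blocks of $\mathbb L_n^{-1}$ and $\mathbb U_n^{-1}$ term by term against the displayed matrices, reading the defining formulas in the normalization fixed by $\mathbb L_n\mathbb L_n^{-1}=I$ and $\mathbb U_n\mathbb U_n^{-1}=I$. This is bookkeeping rather than mathematics.
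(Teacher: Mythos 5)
Your argument is correct and is precisely the routine verification the paper leaves implicit (this proposition is stated without proof): the only substantive input is $L=\L$, $U=\U$ from Corollary \ref{cor:LU-sym}(1), which you correctly descend to the block data $L_n=\L_n$, $l_n=\l_n$, $U_n=\U_n$, $u_n=\u_n$ via uniqueness of the factorization into a block-diagonal times a unipotent block-triangular factor, after which the displayed identities and the conjugation relations $\mathbb L_n=\overline{\mathbb L_n^{-1}}$, $\mathbb U_n=\overline{\mathbb U_n^{-1}}$ follow by direct substitution and entrywise conjugation. No gaps.
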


Then, we can prove that
\begin{pro}\label{pro:stairs}
	Matrices $R,S,\Rscr$ and $\Sscr$ have the following banded structure
	\begin{align*}
R&=\left[\begin{NiceMatrix}
	\mathbb L_{-1} & 0_{q\times 2q}&0_{q\times 2q}&0_{q\times 2q}&\Cdots[shorten-end=-3pt]&\phantom{t}  \\
*& \mathbb L_{1}& 0_{2q} & 0_{2q}&\Cdots[shorten-end=-3pt]&\phantom{t}\\
		*&	*& \mathbb L_{3}& 0_{2q} &\Cdots[shorten-end=-3pt]&\phantom{t}\\[-5pt]
		*&	*&	*& \mathbb L_{5}& \Ddots[shorten-end=-3pt]&\phantom{t}\\
	\Vdots[shorten-end=-7pt]&\Vdots[shorten-end=-7pt]&\Ddots[shorten-end=20pt]&\Ddots[shorten-end=8pt]&\Ddots[shorten-end=0pt]&\phantom{t}\\&&&&&\\
	\phantom{t}&\phantom{t}&\phantom{t}&\phantom{t}&\phantom{t}\\
	\CodeAfter
	\tikz \draw[blue,line width=1.2pt] (2|-1) |- (3|-2) |- (4|-3) |-(5|-4);
\end{NiceMatrix}\right]=\left[\begin{NiceMatrix}
\mathbb U_{-1} & *&*&*&\Cdots[shorten-end=-3pt]&\phantom{t}  \\
0_{2p\times p}& \mathbb U_{1}&  *& *&\Cdots[shorten-end=-3pt]&\phantom{t}\\
0_{2p\times p}&	0_{2p}& \mathbb U_{3}& *&\Cdots[shorten-end=-3pt]&\phantom{t}\\[-5pt]
0_{2p\times p}&	0_{2p}&	0_{2p}& \mathbb U_{5 }& \Ddots[shorten-end=-3pt]&\phantom{t}\\
\Vdots[shorten-end=-9pt]&\Vdots[shorten-end=-7pt]&\Ddots[shorten-end=7pt]&\Ddots[shorten-end=5pt]&\Ddots[shorten-end=0pt]&\phantom{t}\\&&&&&\\
\phantom{t}&\phantom{t}&\phantom{t}&\phantom{t}&\phantom{t}\\
\CodeAfter
\tikz \draw[blue,line width=1.2pt] (2-|1) -| (3-|2) -| (4-|3) -|(5-|4) --(5-|5);
\end{NiceMatrix}\right],\\
S&=\left[\begin{NiceMatrix}
	\mathbb L_{0}^{-1} & 0_{ 2q}&0_{2q}&0_{2q}&\Cdots[shorten-end=-3pt]&\phantom{t}  \\
	*& \mathbb L_{2}^{-1}& 0_{2q} & 0_{2q}&\Cdots[shorten-end=-3pt]&\phantom{t}\\
	*&	*&  \mathbb L_{4}^{-1}& 0_{2q} &\Cdots[shorten-end=-3pt]&\phantom{t}\\[-5pt]
	*&	*&	*&  \mathbb L_{6}^{-1}& \Ddots[shorten-end=-3pt]&\phantom{t}\\
	\Vdots[shorten-end=-7pt]&\Vdots[shorten-end=-7pt]&\Ddots[shorten-end=12pt]&\Ddots[shorten-end=5pt]&\Ddots[shorten-end=0pt]&\phantom{t}\\&&&&&\\
	\phantom{t}&\phantom{t}&\phantom{t}&\phantom{t}&\phantom{t}\\
	\CodeAfter
	\tikz \draw[blue,line width=1.2pt] (2|-1) |- (3|-2) |- (4|-3) |-(5|-4);
\end{NiceMatrix}\right]=\left[\begin{NiceMatrix}
	\mathbb U_0^{-1} & *&*&*&\Cdots[shorten-end=-3pt]&\phantom{t}  \\
	0_{2p}& \mathbb U^{-1}_{2}&  *& *&\Cdots[shorten-end=-3pt]&\phantom{t}\\
	0_{2p}&	0_{2p}& \mathbb U^{-1}_{4}& *&\Cdots[shorten-end=-3pt]&\phantom{t}\\[-5pt]
	0_{2p}&	0_{2p}&	0_{2p}& \mathbb U^{-1}_{6 }& \Ddots[shorten-end=-3pt]&\phantom{t}\\
	\Vdots[shorten-end=-7pt]&\Vdots[shorten-end=-5pt]&\Ddots[shorten-end=7pt]&\Ddots[shorten-end=5pt]&\Ddots[shorten-end=0pt]&\phantom{t}\\&&&&&\\
	\phantom{t}&\phantom{t}&\phantom{t}&\phantom{t}&\phantom{t}\\
	\CodeAfter
	\tikz \draw[blue,line width=1.2pt] (2-|1) -| (3-|2) -| (4-|3) -|(5-|4) --(5-|5);
\end{NiceMatrix}\right],\\
\Rscr&=\left[\begin{NiceMatrix}
	\mathbb L_{-1}^{-1} & 0_{q\times 2q}&0_{q\times 2q}&0_{q\times 2q}&\Cdots[shorten-end=-3pt]&\phantom{t}  \\
	*&\mathbb L_{1}^{-1}& 0_{2q} & 0_{2q}&\Cdots[shorten-end=-3pt]&\phantom{t}\\
	*&	*& \mathbb L_{3}^{-1}& 0_{2q} &\Cdots[shorten-end=-3pt]&\phantom{t}\\[-5pt]
	*&	*&	*& \mathbb L_{5}^{-1}& \Ddots[shorten-end=-3pt]&\phantom{t}\\
	\Vdots[shorten-end=-7pt]&\Vdots[shorten-end=-7pt]&\Ddots[shorten-end=22pt]&\Ddots[shorten-end=8pt]&\Ddots[shorten-end=0pt]&\phantom{t}\\&&&&&\\
	\phantom{t}&\phantom{t}&\phantom{t}&\phantom{t}&\phantom{t}\\
	\CodeAfter
	\tikz \draw[blue,line width=1.2pt] (2|-1) |- (3|-2) |- (4|-3) |-(5|-4);
\end{NiceMatrix}\right]=\left[\begin{NiceMatrix}
	\mathbb U_{-1}^{-1} & *&*&*&\Cdots[shorten-end=-3pt]&\phantom{t}  \\
	0_{2p\times p}& \mathbb U_{1}^{-1}&  *& *&\Cdots[shorten-end=-3pt]&\phantom{t}\\
	0_{2p\times p}&	0_{2p}& \mathbb U_{3}^{-1}& *&\Cdots[shorten-end=-3pt]&\phantom{t}\\[-5pt]
	0_{2p\times p}&	0_{2p}&	0_{2p}& \mathbb U_{5 }^{-1}& \Ddots[shorten-end=-3pt]&\phantom{t}\\
	\Vdots[shorten-end=-7pt]&\Vdots[shorten-end=-5pt]&\Ddots[shorten-end=7pt]&\Ddots[shorten-end=5pt]&\Ddots[shorten-end=0pt]&\phantom{t}\\&&&&&\\
	\phantom{t}&\phantom{t}&\phantom{t}&\phantom{t}&\phantom{t}\\
	\CodeAfter
	\tikz \draw[blue,line width=1.2pt] (2-|1) -| (3-|2) -| (4-|3) -|(5-|4) --(5-|5);
\end{NiceMatrix}\right],\\
\Sscr&=\left[\begin{NiceMatrix}
	\mathbb L_{0} & 0_{2q}&0_{2q}&0_{2q}&\Cdots[shorten-end=-3pt]&\phantom{t}  \\
	*& \mathbb L_{2}& 0_{2q} & 0_{2q}&\Cdots[shorten-end=-3pt]&\phantom{t}\\
	*&	*& \mathbb L_{4}& 0_{2q} &\Cdots[shorten-end=-3pt]&\phantom{t}\\[-5pt]
	*&	*&	*& \mathbb L_{6}& \Ddots[shorten-end=-3pt]&\phantom{t}\\
	\Vdots[shorten-end=-7pt]&\Vdots[shorten-end=-7pt]&\Ddots[shorten-end=12pt]&\Ddots[shorten-end=5pt]&\Ddots[shorten-end=0pt]&\phantom{t}\\&&&&&\\
	\phantom{t}&\phantom{t}&\phantom{t}&\phantom{t}&\phantom{t}\\
	\CodeAfter
	\tikz \draw[blue,line width=1.2pt] (2|-1) |- (3|-2) |- (4|-3) |-(5|-4);
\end{NiceMatrix}\right]=\left[\begin{NiceMatrix}
	\mathbb U_0 & *&*&*&\Cdots[shorten-end=-3pt]&\phantom{t}  \\
	0_{2p}& \mathbb U_{2}&  *& *&\Cdots[shorten-end=-3pt]&\phantom{t}\\
	0_{2p}&	0_{2p}& \mathbb U_{4}& *&\Cdots[shorten-end=-3pt]&\phantom{t}\\[-5pt]
	0_{2p}&	0_{2p}&	0_{2p}& \mathbb U_{6 }& \Ddots[shorten-end=-3pt]&\phantom{t}\\
	\Vdots[shorten-end=-7pt]&\Vdots[shorten-end=-5pt]&\Ddots[shorten-end=7pt]&\Ddots[shorten-end=5pt]&\Ddots[shorten-end=0pt]&\phantom{t}\\&&&&&\\
	\phantom{t}&\phantom{t}&\phantom{t}&\phantom{t}&\phantom{t}\\
	\CodeAfter
	\tikz \draw[blue,line width=1.2pt] (2-|1) -| (3-|2) -| (4-|3) -|(5-|4) --(5-|5);
\end{NiceMatrix}\right].
\end{align*}
\end{pro}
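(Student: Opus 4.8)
The plan is to obtain each of $R,S,\Rscr,\Sscr$ by a direct block computation from the two expressions assigned to it in Definition~\ref{Definition:eta_nu}, using that $\eta_{[r]}$ and $\nu_{[r]}$ are \emph{block permutations} together with the displayed factorizations of $L,\bar\L,U,\bar U$. Concretely I would write $L=D_L\Lambda_L$, $\bar\L=D_{\bar\L}\Lambda_{\bar\L}$ with $D_L=\diag(L_0,L_1,\dots)$, $D_{\bar\L}=\diag(\bar\L_0,\bar\L_1,\dots)$ and $\Lambda_L$ (resp.\ $\Lambda_{\bar\L}$) the unit block-lower-triangular factor carrying $l_n$ (resp.\ $\bar\l_n$) on its first block-subdiagonal, and likewise $U=\Lambda_UD_U$, $\U=\Lambda_\U D_\U$; note $\Lambda_L^{-1},\Lambda_{\bar\L}^{-1}$ are again unit block-lower-triangular with $-l_n,-\bar\l_n$ on the first subdiagonal, and correspondingly for the $U$'s. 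Since $\Rscr=R^{-1}$ and $\Sscr=S^{-1}$ by Proposition~\ref{pro:SR}, and since the inverse of a matrix which is block-triangular with respect to a fixed coarse grouping of the indices — with invertible diagonal super-blocks — is again block-triangular of the same orientation with the inverted super-blocks on the diagonal, it suffices to establish the claim for $R$ and $S$ in both the $\mathbb L$-form (from the $q\times q$-block expression) and the $\mathbb U$-form (from the $p\times p$-block expression); $\Rscr,\Sscr$ then follow, and $R,S$ being invertible guarantees that the diagonal super-blocks I identify are invertible.

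\emph{Staircase pattern.} From \eqref{Eta(r)}, $\eta_{[r]}$ is, under left or right multiplication, the block permutation $\tau$ fixing the block-index $0$ and transposing $\{2k-1,2k\}$ for every $k\ge1$, while by \eqref{Nu(r)} $\nu_{[r]}$ is the block permutation $\rho$ transposing $\{2k,2k+1\}$ for every $k\ge0$; an elementary check gives $\max_{0\le k'\le m}\tau(k')=2\lceil m/2\rceil$ and $\max_{0\le k'\le m}\rho(k')=2\lfloor m/2\rfloor+1$. In $R=\bar\L\,\eta_{[q]}\,L^{-1}$ the $m$-th block-row of $\eta_{[q]}L^{-1}$ is block-row $\tau(m)$ of $L^{-1}$, hence vanishes in block-columns $>\tau(m)$; since $\bar\L$ is block-lower-triangular, block-row $m$ of $R$ is a combination of block-rows $k'\le m$ of $\eta_{[q]}L^{-1}$, hence vanishes in block-columns $>\max_{k'\le m}\tau(k')=2\lceil m/2\rceil$. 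Grouping the $q\times q$-blocks as $\{0\},\{1,2\},\{3,4\},\dots$, this is exactly block-lower-triangularity, and the same argument with $\rho$ makes $S=L\,\nu_{[q]}\,\bar\L^{-1}$ block-lower-triangular for the grouping $\{0,1\},\{2,3\},\dots$. Running the identical computation on $R=\U^{-1}\eta_{[p]}\bar U$ and $S=\bar U^{-1}\nu_{[p]}\U$, now with block-\emph{upper}-triangularity and $\min$'s, produces the block-upper staircases in the $p\times p$-block groupings.

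\emph{Diagonal super-blocks.} Substituting the factorizations and their inverses into $R=\bar\L\,\eta_{[q]}\,L^{-1}$ and reading off the entries in block-columns $0$ and $2k-1,2k$, the permutation bookkeeping forces all but one or two summands to vanish and the survivors collapse to
\[
(R)_{00}=\bar\L_0L_0^{-1}=\mathbb L_{-1},\qquad
\begin{bNiceMatrix}(R)_{2k-1,2k-1}&(R)_{2k-1,2k}\\(R)_{2k,2k-1}&(R)_{2k,2k}\end{bNiceMatrix}=\mathbb L_{2k-1}\quad(k\ge1),
\]
the blocks introduced just before the proposition; reading $S=L\,\nu_{[q]}\,\bar\L^{-1}$ the same way gives $\mathbb L_{2k}^{-1}$ for the $k$-th diagonal super-block, and the $p$-side expressions give $\mathbb U_{-1}=\U_0^{-1}\bar U_0$, $\mathbb U_{2k-1}$ for $R$ and $\mathbb U_{2k}^{-1}$ for $S$ (the outputs match verbatim the blocks in that Definition, reading $l_{n+1,n}$ there as the $(n+1,n)$-block $l_n$ of $\Lambda_L$). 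Inverting via Proposition~\ref{pro:SR} then yields $\Rscr$ block-lower with diagonal super-blocks $\mathbb L_{-1}^{-1},\mathbb L_1^{-1},\mathbb L_3^{-1},\dots$ (equivalently block-upper with $\mathbb U_{-1}^{-1},\mathbb U_1^{-1},\dots$) and $\Sscr$ block-lower with $\mathbb L_0,\mathbb L_2,\dots$ (equivalently block-upper with $\mathbb U_0,\mathbb U_2,\dots$).

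The work is organizational rather than conceptual: one must keep the two block sizes $q,p$ and the two coarse groupings apart — the $\eta$-grouping (for $R,\Rscr$) has a distinguished singleton block at the top because $\tau$ fixes index $0$, whereas the $\nu$-grouping (for $S,\Sscr$) does not — and one must track which of $\L,\bar\L$ and $U,\bar U$ enters each expression. The point worth stressing, and the only place where something beyond routine matrix algebra is used, is that the band-width count of Proposition~\ref{bandedmatrixv01eta} by itself only shows these matrices are block Hessenberg; the extra vanishing of the first super-diagonal block in every \emph{even} block-row — which fuses the Hessenberg staircase into the coarser triangular staircase drawn above — is precisely the content of the identities $\max_{k'\le m}\tau(k')=2\lceil m/2\rceil$ and $\max_{k'\le m}\rho(k')=2\lfloor m/2\rfloor+1$, and is not visible from the band width alone.
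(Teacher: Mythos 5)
Your proof is correct and follows the same route as the paper's, whose entire argument is the single sentence that a block analysis of the relations in Definition \ref{Definition:eta_nu}, coupled with the banded structure of Proposition \ref{bandedmatrixv01eta}, confirms the result; you simply carry that block analysis out explicitly (and your computed super-blocks do match $\mathbb L_{-1}$, $\mathbb L_{2k-1}$, $\mathbb L_{2k}^{-1}$, $\mathbb U_{-1}$, etc., as defined). Your closing observation is a worthwhile sharpening: the band count of Proposition \ref{bandedmatrixv01eta} alone only gives a block-Hessenberg shape, and the fusion into the coarser triangular staircase genuinely requires the permutation identities for $\eta_{[r]}$ and $\nu_{[r]}$ that you isolate.
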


\begin{proof}
A block analysis of the relations within Definition \ref{Definition:eta_nu}, coupled with the banded structure described in Proposition \ref{bandedmatrixv01eta}, confirms the stated results.
\end{proof}

For example, for  $p=3$ and $q=2$  the leading truncated matrices are depicted below
\setcounter{MaxMatrixCols}{30} 
\[\begin{aligned}
	S,\Sscr&=	
	\left[\begin{NiceMatrix}[small]
		\CodeBefore
		\rectanglecolor{blue!15}{4-1}{6-3}	\rectanglecolor{blue!15}{7-4}{9-6}\rectanglecolor{blue!15}{10-7}{12-9}\rectanglecolor{blue!15}{13-10}{15-12}
		\rectanglecolor{red!15}{1-3}{2-4}	\rectanglecolor{red!15}{3-5}{4-6}\rectanglecolor{red!15}{5-7}{6-8}\rectanglecolor{red!15}{7-9}{8-10}
		\rectanglecolor{red!15}{9-11}{10-12}\rectanglecolor{red!15}{11-13}{12-14}
		\Body
		* &*&*&0&0&0&0&0&0&0&0&0&0&0\\
		* &*&*&*&0&0&0&0&0&0&0&0&0&0\\
		* &*&*&*&0&0&0&0&0&0&0&0&0&0\\
		* &*&*&*&0&0&0&0&0&0&0&0&0&0\\
		0&*&*&*&*&*&*&0&0&0&0&0&0&0\\
		0 &0&*&*&*&*&*&*&0&0&0&0&0&0\\
		0&0&0&0&0&0&*&*&0&0&0&0&0&0\\
		0 &0&0&0&0&0&*&*&0&0&0&0&0&0\\
		0 &0&0&0&0&0&*&*&*&*&*&0&0&0\\
		0 &0&0&0&0&0&*&*&*&*&*&*&0&0\\
		0 &0&0&0&0&0&0&*&*&*&*&*&0&0\\
		0 &0&0&0&0&0&0&0&*&*&*&*&0&0\\
		0 &0&0&0&0&0&0&0&0&0&0&0&*&*\\
		0 &0&0&0&0&0&0&0&0&0&0&0&*&*\\
		0 &0&0&0&0&0&0&0&0&0&0&0&*&*\\
	\end{NiceMatrix}\right], &
	R,\Rscr&=	
	\left[\begin{NiceMatrix}[small]
		\CodeBefore
		\rectanglecolor{blue!15}{4-1}{6-3}	
		\rectanglecolor{blue!15}{7-4}{9-6}\rectanglecolor{blue!15}{10-7}{12-9}\rectanglecolor{blue!15}{13-10}{15-12}
		\rectanglecolor{red!15}{1-3}{2-4}
		\rectanglecolor{red!15}{3-5}{4-6}	\rectanglecolor{red!15}{3-5}{4-6}\rectanglecolor{red!15}{5-7}{6-8}\rectanglecolor{red!15}{7-9}{8-10}
		\rectanglecolor{red!15}{9-11}{10-12}\rectanglecolor{red!15}{11-13}{12-14}
		\Body
		* &0&0&0&0&0&0&0&0&0&0&0&0&0\\
		0&*&0&0&0&0&0&0&0&0&0&0&0&0\\
		0 &0&*&*&*&0&0&0&0&0&0&0&0&0\\
		0 &0&0&*&*&*&0&0&0&0&0&0&0&0\\
		0&0&0&*&*&*&0&0&0&0&0&0&0&0\\
		0 &0&0&*&*&*&0&0&0&0&0&0&0&0\\
		0&0&0&*&*&*&*&*&*&0&0&0&0&0\\
		0 &0&0&0&*&*&*&*&*&*&0&0&0&0\\
		0 &0&0&0&0&*&*&*&*&*&0&0&0&0\\
		0 &0&0&0&0&0&0&0&0&*&0&0&0&0\\
		0 &0&0&0&0&0&0&0&0&*&*&*&*&0\\
		0 &0&0&0&0&0&0&0&0&*&*&*&*&*\\
		0 &0&0&0&0&0&0&0&0&*&*&*&*&*\\
		0 &0&0&0&0&0&0&0&0&0&*&*&*&*\\
		0 &0&0&0&0&0&0&0&0&0&0&*&*&*\\
	\end{NiceMatrix}\right].
\end{aligned}
\]

We now analyze the action of the four matrices defined above on the CMV
biorthogonal Laurent polynomials, and we collect the results as the following

\begin{pro}[Szeg\H{o} type recurrence relations]
	The left (correspondingly right) mixed type multiple orthogonal polynomials
	on the step line $B(z)$ and $A(z^{-1})$ (correspondingly $\mathcal{B}(z)$
	and $\mathcal{A}(z^{-1})$) satisfy as well the following recurrence-type
	relations%
\[	\begin{aligned}
		RB(z)& =\bar{\A}(z^{-1}), &
		\mathcal{B}(z) R&=\bar A(z^{-1}), &
		\Rscr\bar{\A}(z)	&=B(z^{-1}), &\bar A(z)\Rscr&=\mathcal{B}(z^{-1}) , \\
		S\bar{\A}(z^{-1})& =z B(z), &
		\bar A(z^{-1})S &=z\mathcal{B}(z),&
		\Sscr B(z^{-1})&=z\bar{\A}(z), &
		\mathcal{B}(z^{-1})\Sscr&=z	\bar A(z).
	\end{aligned}\]
\end{pro}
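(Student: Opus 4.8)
The plan is to verify the eight identities directly, treating each one as a short telescoping computation. In every case I would substitute \emph{one} of the two equivalent factorizations of the relevant Szeg\H{o} matrix from Definition~\ref{Definition:eta_nu}, together with the explicit forms $B(z)=LZ_{[q]}(z)$, $A(z)=Z_{[p]}^{\top}(z)U$, $\mathcal{A}(z)=\mathcal{L}Z_{[q]}(z)$, $\mathcal{B}(z)=Z_{[p]}^{\top}(z)\mathcal{U}$, and then cancel the invertible triangular factor adjacent to the polynomial vector against its inverse; this reduces the identity to a single one of the monomial relations \eqref{prop12}--\eqref{prop15} and \eqref{prop22}--\eqref{prop25}. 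Here and below, the bar on a matrix Laurent polynomial denotes conjugation of its matrix coefficients with the indeterminate kept formal, so that $\bar A(z^{-1})=Z_{[p]}^{\top}(z^{-1})\bar U$ and $\bar{\mathcal{A}}(z^{-1})=\bar{\mathcal{L}}Z_{[q]}(z^{-1})$, in accordance with \eqref{cojugaZ} and with the proof of Proposition~\ref{PropMatrBiort}. The only genuine choice is which factorization to insert: for a left action on a column-type vector ($B$ or $\mathcal{A}$) I would use the form written with $L,\bar{\mathcal{L}}$, while for a right action on a row-type vector ($A$ or $\mathcal{B}$) I would use the form written with $\bar U,\mathcal{U}$, so that in both cases the adjacent factor telescopes.

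First I would handle the four relations built from $\eta$, which reduce to \eqref{prop12} and \eqref{prop14}. For example,
\[
RB(z)=\bar{\mathcal{L}}\,\eta_{[q]}L^{-1}\,LZ_{[q]}(z)=\bar{\mathcal{L}}\,\eta_{[q]}Z_{[q]}(z)=\bar{\mathcal{L}}Z_{[q]}(z^{-1})=\bar{\mathcal{A}}(z^{-1}),
\]
and, using instead $R=\mathcal{U}^{-1}\eta_{[p]}\bar U$,
\[
\mathcal{B}(z)R=Z_{[p]}^{\top}(z)\mathcal{U}\,\mathcal{U}^{-1}\eta_{[p]}\bar U=Z_{[p]}^{\top}(z)\eta_{[p]}\bar U=Z_{[p]}^{\top}(z^{-1})\bar U=\bar A(z^{-1}).
\]
The two remaining $\eta$-identities, $\Rscr\bar{\mathcal{A}}(z)=B(z^{-1})$ and $\bar A(z)\Rscr=\mathcal{B}(z^{-1})$, are obtained by the same computation with $L\leftrightarrow\mathcal{L}$ and $U\leftrightarrow\mathcal{U}$ interchanged, using $\Rscr=L\eta_{[q]}\bar{\mathcal{L}}^{-1}=\bar U^{-1}\eta_{[p]}\mathcal{U}$ and again \eqref{prop12}, \eqref{prop14}.

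Next I would treat the four relations built from $\nu$, which reduce to \eqref{prop23} and \eqref{prop24}. For instance,
\[
S\bar{\mathcal{A}}(z^{-1})=L\nu_{[q]}\bar{\mathcal{L}}^{-1}\,\bar{\mathcal{L}}Z_{[q]}(z^{-1})=L\,\nu_{[q]}Z_{[q]}(z^{-1})=z\,LZ_{[q]}(z)=z\,B(z),
\]
and, using $S=\bar U^{-1}\nu_{[p]}\mathcal{U}$,
\[
\bar A(z^{-1})S=Z_{[p]}^{\top}(z^{-1})\bar U\,\bar U^{-1}\nu_{[p]}\mathcal{U}=Z_{[p]}^{\top}(z^{-1})\nu_{[p]}\mathcal{U}=z\,Z_{[p]}^{\top}(z)\mathcal{U}=z\,\mathcal{B}(z).
\]
The last two, $\Sscr B(z^{-1})=z\bar{\mathcal{A}}(z)$ and $\mathcal{B}(z^{-1})\Sscr=z\bar A(z)$, follow identically from $\Sscr=\bar{\mathcal{L}}\nu_{[q]}L^{-1}=\mathcal{U}^{-1}\nu_{[p]}\bar U$ together with \eqref{prop23} and \eqref{prop24}.

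I do not expect a real obstacle here: once the correct factorization is matched to the side of the product, the triangular factors cancel and the identity is read off from a single monomial property established earlier. The only points needing care are bookkeeping ones: pairing each of the two factorizations of $R,S,\Rscr,\Sscr$ with the appropriate side of the multiplication, and keeping the conjugation convention $\bar{\mathcal{A}}(z^{-1})=\bar{\mathcal{L}}Z_{[q]}(z^{-1})$, $\bar A(z^{-1})=Z_{[p]}^{\top}(z^{-1})\bar U$ consistent throughout.
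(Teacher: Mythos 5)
Your proposal is correct and follows essentially the same route as the paper: insert the factorization of $R,S$ (resp.\ $\Rscr,\Sscr$) whose triangular factor cancels against the one defining the adjacent polynomial vector, then apply the monomial identities \eqref{prop12}, \eqref{prop14}, \eqref{prop23}, \eqref{prop24}. The paper writes out only the four identities for $R$ and $S$ and leaves the $\Rscr,\Sscr$ cases implicit, exactly as you indicate they follow by the same telescoping computation.
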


\begin{proof}
	The action of $R$ on the matrix biorthogonal polynomials $B(z)$ is as follows%
	\begin{align*}
		RB(z)=\bar{\L}\eta _{[ q]}L^{-1}LZ_{[q]}(z)=\bar{\L}\eta
		_{[ q]}Z_{[q]}(z)=\bar{\L}Z_{[q]}(z^{-1})=\bar{\A}(z^{-1}).
	\end{align*}%
	The action of $R$ on the matrix biorthogonal polynomials $\mathcal{B}(z)$ is%
	\begin{align*}
		\mathcal{B}(z)R=Z_{[p]}^{\top }(z)\mathcal{U}\mathcal{U}
		^{-1}\eta _{[ p]}\bar U=Z_{[p]}^{\top }(z)\eta _{[p]}\bar U=Z_{[p]}^{\top }(z^{-1})\bar U=\bar A(z^{-1}).
	\end{align*}%
	Correspondingly, the action of $S$ on $\mathcal{B}(z)$ yields%
	\begin{align*}
		S\bar{\A}(z^{-1})=L\nu _{[ q]}\bar{\L}^{-1}
		\bar {\L}Z_{[q]}(z^{-1})=L\nu_{[q]}Z_{[q]}(z^{-1})=z LZ_{[q]}(z)=z B(z)
	\end{align*}%
	and finally%
	\begin{align*}
		\bar A(z^{-1})S=Z_{[p]}^{\top }(z^{-1})\bar U\bar U^{-1}\nu _{[ p]}\mathcal{U%
		} =Z_{[p]}^{\top }(z^{-1})\nu _{[ p]}\mathcal{U}%
		=zZ_{[p]}^{\top }(z)\mathcal{U}=z\mathcal{B}(z).
	\end{align*}
	
\end{proof}

\begin{pro}[Reduced Szeg\H{o} type recurrence relations]
	\begin{enumerate}
		\item If  the matrix of measures is real, i.e. $\bar{\mu}=\mu$, then there exits  banded matrices, with $q$ superdiagonals and $p$ subdiagonals, $S$ and $S$ with $R\bar R=S\bar S=I$,  such that
		\begin{align*}
			\begin{aligned}
				R B(z)&= \bar B(z^{-1}), &  A(z)R&=\bar A(z^{-1}),&
				S\bar B(z^{-1})&=zB(z), & \bar A(z^{-1})S&=z  A(z).
			\end{aligned}
		\end{align*}
		\item When $p=q$, if the matrix of measures is symmetric, i.e. $\mu^\top=\mu$, then
		\begin{align*}
			\begin{aligned}
				R B(z)&= A^\dagger(z^{-1}), &  B^\top(z)R&=\bar A(z^{-1}),&
				SA^\dagger(z^{-1})&=zB(z), & \bar A(z^{-1})S&= z B^\top(z).
			\end{aligned}
		\end{align*}
		\item  When $p=q$, if the matrix of measures is Hermitian, i.e. $\mu^\dagger=\mu$, then
		\begin{align*}
			\begin{aligned}
				R B(z)&= \bar \A(z^{-1}), &  \A^\dagger(z^{-1})R&=B^\top(z),&
				S\bar \A(z^{-1})&=zB(z), &B^\top(z^{-1})S&=\A^\dagger(z)
			\end{aligned}
		\end{align*}
		\item  When $p=q$, for real symmetric matrix of measures, i.e. $\bar \mu=\mu$ and $\mu^\top=\mu$, there exits  unitary symmetric banded (with $q$ superdiagonals) matrices, $R=R^\top$, $S=S^\top$, $R R^\dagger=SS^\dagger=I$,  such that 
		\begin{align*}
			\begin{aligned}
				R B(z)&= \bar B(z^{-1}), &  S\bar B(z^{-1})&=zB(z).
			\end{aligned}
		\end{align*}
	\end{enumerate}
\end{pro}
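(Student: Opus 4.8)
The plan is to derive all four items directly from the general Szeg\H{o}-type recurrence relations of the preceding proposition, by substituting the symmetry identifications already established: those for the polynomial families $B,A,\B,\A$ under a symmetry of $\mu$ (Proposition \ref{pro:sym-poly}) and those for the Szeg\H{o} matrices $R,S,\Rscr,\Sscr$ (Proposition \ref{pro:RS-sym}). The structural assertions in each item (band width, symmetry, unitarity) are not proved anew; they are precisely the content of Propositions \ref{bandedmatrixv01eta}, \ref{pro:SR}, \ref{pro:stairs} (which also gives their explicit block-stair pattern) and \ref{pro:RS-sym}(1)--(4), so it suffices to quote the relevant item.

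First, for (1), where $\bar\mu=\mu$, Proposition \ref{pro:sym-poly}(1) gives $\A=B$ and $\B=A$. Feeding these into the four identities $RB(z)=\bar\A(z^{-1})$, $\mathcal B(z)R=\bar A(z^{-1})$, $S\bar\A(z^{-1})=zB(z)$ and $\bar A(z^{-1})S=z\mathcal B(z)$ turns them into the four displayed relations, while the equalities $R\bar R=S\bar S=I$ and the identifications $\Rscr=\bar R$, $\Sscr=\bar S$ are Proposition \ref{pro:RS-sym}(1) and the band count is Proposition \ref{bandedmatrixv01eta}. Item (4) is then the $p=q$ specialization of (1): using $\A=B$, $\B=B^\top$ from Proposition \ref{pro:sym-poly}(4) reduces the Szeg\H{o} relations to $RB(z)=\bar B(z^{-1})$ and $S\bar B(z^{-1})=zB(z)$, while the refinement that $R=R^\top$, $S=S^\top$ and $RR^\dagger=SS^\dagger=I$ is Proposition \ref{pro:RS-sym}(4).

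Items (2) and (3) are handled the same way, except that the relevant identifications now transpose or Hermitian-conjugate the polynomial matrices. For (2) ($p=q$, $\mu^\top=\mu$), Proposition \ref{pro:sym-poly}(2) gives $\B=B^\top$ and $\A=A^\top$, whence $\bar\A(z^{-1})$ equals $A^\dagger(z^{-1})$ in the coefficientwise sense; substituting into the four general relations produces the stated formulas, with $R=R^\top$, $S=S^\top$ from Proposition \ref{pro:RS-sym}(2). For (3) ($p=q$, $\mu^\dagger=\mu$) the relations $RB(z)=\bar\A(z^{-1})$ and $S\bar\A(z^{-1})=zB(z)$ are already in the required shape, and for the remaining two one rewrites the right-hand sides using $\B(z)=(\A(\bar z))^\dagger$ and $(A(\bar z))^\dagger=B(z)$ from Proposition \ref{pro:sym-poly}(3) (equivalently $\bar A(z^{-1})=B^\top(z^{-1})$ and $\mathcal B=\A^\dagger$), together with the orthogonality $R^{-1}=R^\top$, $S^{-1}=S^\top$ of Proposition \ref{pro:RS-sym}(3), applying the substitution $z\mapsto z^{-1}$ where needed.

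\textbf{Main obstacle.} The only delicate point is notational bookkeeping: three distinct operations are simultaneously in play — conjugation of the matrix coefficients of a Laurent polynomial ($\bar{\phantom{A}}$), transposition or Hermitian conjugation of the matrix values ($\top$, $\dagger$), and the variable inversion $z\mapsto z^{-1}$ — and one must track how they interact, using that on $\T$ one has $\bar z=z^{-1}$, which is exactly what produces the identities $\overline{\A(z)}=\bar\A(z^{-1})$ and the stray factors $z^{\pm1}$. Once these rules are applied consistently, each of the twelve identities collapses to a one-line substitution into the general Szeg\H{o} recurrences; I anticipate no conceptual difficulty beyond this, since the banded, symmetric and unitary character of $R$ and $S$ is inherited wholesale from Propositions \ref{bandedmatrixv01eta}, \ref{pro:SR} and \ref{pro:RS-sym}.
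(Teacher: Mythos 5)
Your proposal is correct and is exactly the paper's argument: the paper's proof is a one-line citation of Propositions \ref{pro:sym-poly}, \ref{bandedmatrixv01eta} and \ref{pro:RS-sym}, and your write-up simply carries out the substitutions of those symmetry identifications into the general Szeg\H{o} type recurrences, which is what that citation intends. One small remark: carrying out your substitution in item (3) actually yields $B^\top(z^{-1})S=z\,\A^\dagger(z)$, so the factor $z$ missing from the last displayed identity in the statement appears to be a typo in the paper rather than a defect of your argument.
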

\begin{proof}
	It follows from Propositions \ref{pro:sym-poly}, \ref{bandedmatrixv01eta} and \ref{pro:RS-sym}.
\end{proof}

Given a matrix polynomial $P(z)\in \C^{r\times r}$, $\deg P=n$,  its reverse (or reciprocal) polynomial
is  $P^*=z^n \bar P(z^{-1})$. Let us introduce 
\begin{Definition}
We define the 	polynomials matrices $Q_n,\mathcal Q_n\in \C^{q\times q}[z]$ and $P_n,\mathcal P_n\in \C^{p\times p}[z]$ as follows:
\[\begin{aligned}
	Q_{2n}(z)&\coloneq z^n L_{2n}^{-1}B_{2n}^{[q]}(z), & 
	Q^*_{2n+1}(z)&\coloneq z^{n+1} \bar \L_{2n+1}^{-1}\A^{[q]}_{2n+1}(z),\\
	P_{2n}(z)&\coloneq z^n A_{2n}^{[p]}(z)\bar U_{2n}^{-1}, & 
	P^*_{2n+1}(z)&\coloneq z^{n+1}\B^{[q]}_{2n+1}(z) \U_{2n+1}^{-1},\\
\mathcal Q_{2n}(z)&\coloneq z^n \B_{2n}^{[q]}(z)\U_{2n}^{-1}, & 
	\mathcal Q^*_{2n+1}(z)&\coloneq z^{n+1} A^{[q]}_{2n+1}(z) \bar U_{2n+1}^{-1},\\
	\mathcal P_{2n}(z)&\coloneq z^n \bar \L_{2n}^{-1}\A_{2n}^{[p]}(z), & 
	\mathcal P^*_{2n+1}(z)&\coloneq z^{n+1}  L_{2n+1}^{-1}B^{[q]}_{2n+1}(z),
\end{aligned}\]
are called the Szeg\H{o} block polynomials.
\end{Definition}
The polynomials are monic, of the the degree bounded as indicated in the subindex.


\subsection{Szeg\H{o} recurrence for  the square matrix case}
For $p=q$, the band in Proposition \ref{pro:stairs} we are lead to
\begin{pro}
	For $p=q$, the following relations are satisfied
	\begin{align*}
		L_{n+1,n}H_n&=\mathcal H_n \u_{n},\\
		\mathcal H_n^{-1} H_{n+1} &=I-\u_{n} \bar u_{n},\\
		I-\bar \l_{n}l_{n}&=\mathcal H_{n+1} H_n^{-1},\\
		\bar \l_{n}H_{n+1}&=\mathcal H_{n+1} \bar u_{n},\\
			\bar\l_{n}\mathcal H_n&= H_n \bar u_{n},\\
		 H_n^{-1} \mathcal H_{n+1} &=I-\bar u_{n} \u_{n},\\
		I-l_{n}\bar\l_{n}&= H_{n+1} \mathcal H_n^{-1},\\
		L_{n+1,n}\mathcal H_{n+1}&= H_{n+1} \u_{n}.
 	\end{align*}
\end{pro}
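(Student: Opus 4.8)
The plan is to read the eight relations off Proposition~\ref{pro:stairs} specialized to $p=q$, combined with the defining identities $H_n=L_n^{-1}\bar U_n^{-1}$ and $\mathcal H_n=\bar\L_n^{-1}\U_n^{-1}$. The point of departure is that, by Definition~\ref{Definition:eta_nu}, each Szeg\H{o} recurrence matrix $R,S,\Rscr,\Sscr$ admits two expressions: one assembled from $L,\bar\L$ and one from $U,\bar U$. Proposition~\ref{pro:stairs} exhibits the common banded ``staircase'' pattern of each of these matrices, whose diagonal staircase blocks are the $2q\times 2q$ matrices $\mathbb L_n$ (from the $L,\bar\L$ side) in one reading and the $2p\times 2p$ matrices $\mathbb U_n$ (from the $U,\bar U$ side) in the other, with the $q\times q$ corner blocks $\mathbb L_{-1}$ and $\mathbb U_{-1}$. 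When $p=q$ the two staircase readings have compatible block sizes, so the identities $R=\bar\L\eta_{[q]}L^{-1}=\U^{-1}\eta_{[p]}\bar U$, $S=L\nu_{[q]}\bar\L^{-1}=\bar U^{-1}\nu_{[p]}\U$ (and likewise for $\Rscr,\Sscr$) force the diagonal staircase blocks to agree, giving $\mathbb L_n=\mathbb U_n$ for every $n\in\N_0$ and $\mathbb L_{-1}=\mathbb U_{-1}$.

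First I would make this matching precise: the two forms of $R$ (equivalently $\Rscr$) give $\mathbb L_{2k-1}=\mathbb U_{2k-1}$, the two forms of $S$ (equivalently $\Sscr$) give $\mathbb L_{2k}=\mathbb U_{2k}$, and the corner block gives $\mathbb L_{-1}=\mathbb U_{-1}$, i.e.\ $\bar\L_0L_0^{-1}=\U_0^{-1}\bar U_0$. Then I would expand the $2q\times 2q$ identity $\mathbb L_n=\mathbb U_n$ blockwise using the explicit entries of $\mathbb L_n$ and $\mathbb U_n$ recorded in the preceding Definition. Each of the four block equalities, after cancelling the outer diagonal factors $\bar\L_n,\bar\L_{n+1}$ on the left and $L_n^{-1},L_{n+1}^{-1}$ on the right against their $U$-side counterparts and recognising $\bar U_nL_n=H_n^{-1}$ and $\U_n\bar\L_n=\mathcal H_n^{-1}$, produces one of the eight relations; for instance the $(1,1)$ block gives $l_nH_n=\mathcal H_n\u_n$, the $(2,1)$ block gives $I-\bar\l_nl_n=\mathcal H_{n+1}H_n^{-1}$, and so on. The remaining four relations come from the same computation applied to $\mathbb L_n^{-1}=\mathbb U_n^{-1}$, whose explicit entries are also listed in the Definition, now using $L_n^{-1}\bar U_n^{-1}=H_n$ and $\bar\L_n^{-1}\U_n^{-1}=\mathcal H_n$ directly. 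As a cross-check I would rederive the same list from \eqref{simeta01}, $\eta_{[q]}M=\mathcal M\eta_{[p]}$, together with the Gauss--Borel factorizations $M=L^{-1}\bar U^{-1}$ and $\mathcal M=\bar\L^{-1}\U^{-1}$: for $p=q$ this reads $\eta_{[q]}L^{-1}\bar U^{-1}=\bar\L^{-1}\U^{-1}\eta_{[q]}$, and using that $\eta_{[q]}$ merely interchanges the consecutive CMV block pairs $(2,3),(4,5),\dots$ and fixes the first block, the $q\times q$ block entries of this identity at the positions allowed by the band of $R$ are exactly the eight relations.

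The main obstacle is the CMV staircase bookkeeping rather than any delicate estimate. Unlike an ordinary block-triangular matrix, $R,S,\Rscr,\Sscr$ carry the Hessenberg-staircase structure of Proposition~\ref{pro:stairs}, in which the blocking that displays the $\mathbb L$-side is shifted relative to the one that displays the $\mathbb U$-side; the key verification is that, for $p=q$, the two staircases in Proposition~\ref{pro:stairs} line up so that $\mathbb L_n$ genuinely occupies the same window as $\mathbb U_n$, which is precisely what forces $\mathbb L_n=\mathbb U_n$ and at the same time makes the off-staircase $*$-blocks match automatically, imposing nothing new. Once this alignment is pinned down, the rest is the routine but careful matrix algebra of peeling off the diagonal factors $L_n,\bar U_n,\bar\L_n,\U_n$ and rewriting through $H_n$ and $\mathcal H_n$; a secondary point is simply to be attentive to the explicit forms of $\mathbb L_n,\mathbb L_n^{-1},\mathbb U_n,\mathbb U_n^{-1}$ in the Definition, since the eight displayed relations are literally their entrywise content.
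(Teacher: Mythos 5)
Your proposal is correct and follows essentially the route the paper intends: for $p=q$ the two block representations of $R,S,\Rscr,\Sscr$ in Proposition \ref{pro:stairs} occupy the same staircase windows, forcing $\mathbb L_n=\mathbb U_n$ (and hence $\mathbb L_n^{-1}=\mathbb U_n^{-1}$), and the eight relations are exactly the entrywise content of these block identities after cancelling the diagonal factors and recognising $H_n=L_n^{-1}\bar U_n^{-1}$, $\mathcal H_n=\bar\L_n^{-1}\U_n^{-1}$. The paper offers no further argument beyond this, so your plan matches it in substance.
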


\begin{pro}
	Let us assume  $p=q$. Then, 
	\begin{enumerate}
		\item If $\mu=\mu^\top$ we have:
\begin{align*}
	\mathbb L_n&= 	\mathbb U_n^\top=\begin{bNiceMatrix}
		-\bar U^\top_n l_{n}L_n^{-1} & \bar U_n L_{n+1}^{-1}\\
		\bar U^\top_{n+1}(I-\bar u^\top_{n}l_{n})L_n^{-1} & \bar U^\top_{n+1}\bar u^\top_{n}L_{n+1}^{-1}
	\end{bNiceMatrix}, 	
		\\
		\mathbb L_n^{-1}&=\mathbb U_n^{\top}= \begin{bNiceMatrix}
				-L_n \bar u^\top_{n}\bar U_n^{-\top} & L_n \bar U_{n+1}^{-\top}\\[2pt]
				L_{n+1}(I-l_{n}\bar u^\top_{n})\bar U_n^{-\top} & L_{n+1}l_{n}\bar U_{n+1}^{-\top}
			\end{bNiceMatrix}, 
\end{align*}
and  $\mathbb L_{-1}=\mathbb U_{-1}^\top= \bar U^\top_0L_0^{-1}$; and $H_n^\top=\mathcal H_n$.

\item  If  $\mu=\mu^\dagger$, we have: 
\begin{align*}
	\mathbb L_n&= 	\mathbb U_n^{-\top}=\begin{bNiceMatrix}
		-\bar \L_n l_{n}L_n^{-1} & \bar \L_n L_{n+1}^{-1}\\
		\bar \L_{n+1}(I-\bar \l_{n}l_{n})L_n^{-1} & \bar \L_{n+1}\bar \l_{n}L_{n+1}^{-1}
	\end{bNiceMatrix}, 	\\
	\mathbb L_n^{-1}&= 	\mathbb U_n^{-\top}= \begin{bNiceMatrix}
		-L_n \bar \l_{n}\bar \L_n^{-1} & L_n \bar \L_{n+1}^{-1}\\[2pt]
		L_{n+1}(I-l_{n}\bar \l_{n})\bar \L_n^{-1} & L_{n+1}l_{n}\bar \L_{n+1}^{-1}
	\end{bNiceMatrix},  
\end{align*}
and $\mathbb L_{-1}=\mathbb U_{-1}^{-\top} =\bar \L_0L_0^{-1}$; and $H_n=\bar L_n^{-1}L_n^{-\top}$ and $\mathcal H_n=\bar \L_n^{-1}L_n^{-\dagger}$ .

\item If  $\mu$=$\bar\mu$ and $\mu=\mu^\top$, we have:
\begin{align*}
	\mathbb L_n&= \overline{\mathbb L_n^{-1}}	=\mathbb U_n^\top=\overline{\mathbb U_n^{-\top}}=\begin{bNiceMatrix}
		-\bar L_n l_{n}L_n^{-1} & \bar L_n^\top L_{n+1}^{-1}\\
		\bar L_{n+1}(I-\bar l_{n}l_{n})L_n^{-1} & \bar L_{n+1}\bar l_{n}L_{n+1}^{-1}
	\end{bNiceMatrix}, 	
\end{align*}
and $H_n=H_n^\dagger=\bar{ \mathcal H}_n=\bar{\mathcal H}_n^\dagger$.
	\end{enumerate}
\end{pro}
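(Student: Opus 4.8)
The plan is to bring every claim down to the level of the $q\times q$ diagonal blocks (recall $p=q$) by means of Corollary~\ref{cor:LU-sym}, and then to substitute the resulting identities into the explicit $2\times2$ block formulas that define $\mathbb L_n$, $\mathbb L_n^{-1}$, $\mathbb U_n$, $\mathbb U_n^{-1}$, $\mathbb L_{-1}$, $\mathbb U_{-1}$.

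\emph{Step 1 (a block dictionary).} Write $L=\operatorname{diag}(L_0,L_1,\dots)\,N_L$ with $N_L$ unit lower block-triangular having first subdiagonal blocks $l_n$, and likewise $\L=\operatorname{diag}(\L_n)\,N_\L$ (subdiagonal blocks $\l_n$), $U=E_U\operatorname{diag}(U_n)$ and $\U=E_\U\operatorname{diag}(\U_n)$ with $E_U,E_\U$ unit upper block-triangular with first superdiagonal blocks $u_n$, $\u_n$. Since a block-triangular matrix with invertible diagonal blocks factors uniquely as (block diagonal)$\,\times\,$(unit block-triangular) — in either order — the matrix identities of Corollary~\ref{cor:LU-sym} descend blockwise to: for $\mu=\mu^\top$, $\L_n=U_n^\top$, $\l_n=u_n^\top$, $\U_n=L_n^\top$, $\u_n=l_n^\top$; for $\mu=\mu^\dagger$, $L_n=U_n^\dagger$, $l_n=u_n^\dagger$, $\L_n=\U_n^\dagger$, $\l_n=\u_n^\dagger$; and for $\mu=\bar\mu=\mu^\top$, in addition $\L_n=L_n$, $\l_n=l_n$ and $U_n=\U_n=L_n^\top$, $u_n=\u_n=l_n^\top$ (so that also $\bar U_n=\bar L_n^\top$, etc.). The key point here is that transposing or conjugate-transposing a product reverses its factors, so the unit-triangular part of $U$ must be matched against the unit-triangular part of $\L$ (and of $U$ against $L$ under $\dagger$), which is precisely the content of the relations $U^\top=\L$, $U^\dagger=L$ in Corollary~\ref{cor:LU-sym}; getting this matching and the bar/transpose bookkeeping right is the only delicate point. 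All inverses and products below make sense entrywise because every matrix involved is banded or block-triangular.

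\emph{Step 2 (substitution).} For item~(1) I would insert $\bar\L_n=\bar U_n^\top$, $\bar\l_n=\bar u_n^\top$ into the definition of $\mathbb L_n$ to obtain the displayed form, and then transpose the definition of $\mathbb U_n$ and substitute $\U_n=L_n^\top$, $\u_n=l_n^\top$; comparing the four blocks gives $\mathbb U_n^\top=\mathbb L_n$, and the same substitution applied to $\mathbb U_n^{-1}$ yields $\mathbb U_n^{-\top}=\mathbb L_n^{-1}$. On the boundary, $\mathbb U_{-1}^\top=(\U_0^{-1}\bar U_0)^\top=\bar U_0^{\top}L_0^{-1}=\bar\L_0L_0^{-1}=\mathbb L_{-1}$, and $\mathcal H_n=\bar\L_n^{-1}\U_n^{-1}=\bar U_n^{-\top}L_n^{-\top}=(L_n^{-1}\bar U_n^{-1})^\top=H_n^\top$. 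Item~(2) is the mirror computation: feeding the Hermitian block relations into $\mathbb U_n^{-1}$ and transposing reproduces the definition of $\mathbb L_n$ verbatim, so $\mathbb L_n=\mathbb U_n^{-\top}$ and, likewise, $\mathbb L_n^{-1}=\mathbb U_n^{-\top}$ from $\mathbb U_n$ itself; the formulas for $\mathbb L_{-1}$, $H_n$ and $\mathcal H_n$ then come out by substituting $\bar U_n$ and $\U_n$ from the dictionary into their definitions. For item~(3) I would combine (1) with the real-measure statement of the preceding Proposition, $\mathbb L_n=\overline{\mathbb L_n^{-1}}$, $\mathbb U_n=\overline{\mathbb U_n^{-1}}$, and simplify using $\L_n=L_n$, $\u_n=u_n=l_n^\top$, $\U_n=U_n=L_n^\top$; then $H_n=L_n^{-1}\bar L_n^{-\top}$, $\mathcal H_n=\bar L_n^{-1}L_n^{-\top}$, whence $\bar{\mathcal H}_n=H_n$, and a one-line check that $L_n^{-1}\bar L_n^{-\top}$ is self-adjoint closes the chain $H_n=H_n^{\dagger}=\bar{\mathcal H}_n=\bar{\mathcal H}_n^{\dagger}$.

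I expect the only genuine obstacle to be the algebra of Step~1: verifying carefully that the matrix symmetries of Corollary~\ref{cor:LU-sym} really descend to the individual blocks (which rests on the uniqueness of the block (block-diagonal)$\,\times\,$(unit-triangular) factorization) and keeping the transposes, conjugates and factor orderings straight. Once that dictionary is fixed, each of the three items, including the accompanying $H_n$/$\mathcal H_n$ identities, is a direct — if somewhat tedious — substitution into the $2\times2$ block matrices, with no further ideas required.
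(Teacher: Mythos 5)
Your proposal is correct and follows essentially the same route as the paper, whose proof simply invokes the factor symmetries $\U=L^\top$, $\L=U^\top$ (resp.\ $U=L^\dagger$, $\U=\L^\dagger$; resp.\ $U=\U=L^\top$, $\L=L$) from Corollary~\ref{cor:LU-sym} and substitutes into the block definitions. Your Step~1, making the blockwise dictionary explicit via uniqueness of the (block-diagonal)$\times$(unit block-triangular) factorization, is exactly the detail the paper leaves implicit.
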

\begin{proof}
	Just recall that for i) $\U=L^\top$ and $\L=U^\top$ , for ii) $U=L^\dag$ and $\U=\L^\dag$ and finally for iii) that $U=L^\top$, $\L=L$ and $\U=L^\top$.
\end{proof}

\begin{pro}[The square matrix case]
	For $p=q$,
	\begin{enumerate}
\item 	For the  real case, $\bar\mu=\mu$, we have  $L=\L$ and $U=\U$ the following relations are satisfied
\begin{align*}
	l_{n}H_n&=\bar H_n u_{n},\\
	\bar H_n^{-1} H_{n+1} &=I-u_{n} \bar u_{n},\\
	I-\bar l_{n}l_{n}&=\bar H_{n+1} H_n^{-1},\\
	l_{n}\bar H_{n+1}&= H_{n+1} u_{n}.
\end{align*}

\item 	For the symmetric case, $\mu=\mu^\top$, we have  $\U=L^\top$ and $\L=U^\top$ the following relations are satisfied
\begin{align*}
	l_{n}H_n&= H^\top_n l^\top_{n+1},\\
	 H_n^{-\top} H_{n+1} &=I-l^\top_{n} \bar u_{n},\\
	\bar u^\top_{n}H_{n+1}&= H^\top_{n+1} \bar u_{n},\\
	\bar u^\top_{n} H^\top_n&= H_n \bar u_{n},\\
	H_n^{-1}  H^\top_{n+1} &=I-\bar u_{n} l^\top_{n},\\
	l_{n} H^\top_{n+1}&= H_{n+1} l^\top_{n}.
\end{align*}

\item For the Hermitian case $\mu=\mu^\dagger$, we have $U=L^\dag$ and $\U=\L^\dag$  the following relations are satisfied
\begin{align*}
	l_{n}H_n&=\mathcal H_n \l^\dagger_{n},\\
	\mathcal H_n^{-1} H_{n+1} &=I-\l^\dagger_{n} l^\top_{n},\\
	I-\bar \l_{n}l_{n}&=\mathcal H_{n+1} H_n^{-1},\\
	\bar \l_{n}H_{n+1}&=\mathcal H_{n+1} l^\top_{n},\\
	\bar\l_{n}\mathcal H_n&= H_n l^\top_{n},\\
	H_n^{-1} \mathcal H_{n+1} &=I-l^\top_{n} \l^\dagger_{n},\\
	I-l_{n}\bar\l_{n}&= H_{n+1} \mathcal H_n^{-1},\\
	l_{n}\mathcal H_{n+1}&= H_{n+1} \l^\dagger_{n}.
\end{align*}

\item For the real symmetric case, $\bar{\mu}=\mu$ and $\mu=\mu^\top$,  we have  $U=L^\top$, $\L=L$ and $\U=L^\top$, and the following relations are satisfied
\begin{align*}
	l_{n}H_n&=\bar H_n l^\top_{n},\\
I-\bar l_{n}l_{n}&=\bar H_{n+1} H_n^{-1},\\
	l_{n}\bar H_{n+1}&= H_{n+1} l^\top_{n}.
\end{align*}
	 
\end{enumerate}
\end{pro}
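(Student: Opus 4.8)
The plan is to obtain the four cases by specialising the eight generic $p=q$ identities proved just above---the list headed by $L_{n+1,n}H_n=\mathcal H_n\u_n$, $\mathcal H_n^{-1}H_{n+1}=I-\u_n\bar u_n$, $I-\bar\l_n l_n=\mathcal H_{n+1}H_n^{-1}$, and so on---through the Gauss--Borel symmetry reductions of Corollary \ref{cor:LU-sym}, together with the dictionary between $H_n$ and $\mathcal H_n$ provided by the preceding propositions. No new analysis is needed; the entire content lies in carrying out these substitutions correctly.

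First I would treat the real case $\bar\mu=\mu$. By Corollary \ref{cor:LU-sym} we have $L=\L$ and $U=\U$, hence $l_n=\l_n$, $u_n=\u_n$, $L_n=\L_n$, $U_n=\U_n$, and from $H_n=L_n^{-1}\bar U_n^{-1}$, $\mathcal H_n=\bar\L_n^{-1}\U_n^{-1}$ one reads off $\mathcal H_n=\bar H_n$, while the subdiagonal block $L_{n+1,n}$ reduces to $l_n$. Substituting all of this into the eight generic identities produces the four displayed relations; the remaining four become the complex conjugates of these and so carry no new information. For the symmetric case $p=q$, $\mu=\mu^\top$, I would insert $\U=L^\top$ and $\L=U^\top$; matching the diagonal and unipotent factors gives $\L_n=U_n^\top$, $\l_n=u_n^\top$, $\u_n=l_n^\top$ (up to the index shift relating a block in position $(n,n+1)$ to its transpose in $(n+1,n)$), whence $\mathcal H_n=H_n^\top$, and feeding this into the generic list while using transposition to pair up identities leaves the six stated relations. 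The Hermitian case $\mu=\mu^\dagger$ is analogous with $U=L^\dagger$, $\U=\L^\dagger$: then $\bar U_n=L_n^\top$, which fixes $H_n$ and $\mathcal H_n$ in terms of $L_n,\L_n$ as in the preceding proposition, with $\bar u_n$ going over to $l_n^\top$ and $\u_n$ to $\l_n^\dagger$; here nothing collapses and all eight generic identities survive in the quoted form. Finally, the real symmetric case follows by imposing both reductions at once, $U=L^\top$, $\L=L$, $\U=L^\top$, which forces $\mathcal H_n=\bar H_n$ together with $H_n=H_n^\dagger$ and renders all but three of the eight identities redundant.

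The main obstacle is purely clerical: keeping exact track of which of the blocks $l_n,\l_n,u_n,\u_n$ and $L_n,U_n,\L_n,\U_n$ survives a given symmetry and in what conjugated, transposed, or adjoint form; handling the index shift between a block and its transpose with care; and then checking that the identities which a priori look independent genuinely coincide after the reduction rather than producing extra constraints. Once the $H_n\leftrightarrow\mathcal H_n$ dictionary and the reduced forms of the off-diagonal blocks are fixed in a given case, each displayed relation is a one-line substitution into an already-proved identity, so I would present the argument compactly, case by case, rather than spelling out all eight substitutions in full.
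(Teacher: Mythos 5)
Your proposal is correct and matches the paper's (implicit) argument: the paper gives no separate proof here, since the proposition is exactly the specialization of the eight generic $p=q$ relations via the reductions $\U=L^\top,\ \L=U^\top$, $U=L^\dagger,\ \U=\L^\dagger$, etc., and the resulting dictionary $\mathcal H_n=\bar H_n$, $H_n^\top$, or $\bar H_n=H_n^\dagger$ together with $\u_n=l_n^\top$, $u_n=l_n^\dagger$, and so on. Your bookkeeping of which identities collapse in each case is also consistent with the stated lists (the $l_{n+1}^\top$ in the first symmetric relation appears to be a typo in the paper for $l_n^\top$, as your identification indicates).
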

\subsection{On the scalar case: Szeg\H{o} polynomials and recurrence relations}
	For $p=q=1$ and $\bar\mu=\mu$, the relations
\[	\begin{aligned}
	RB(z)&=\bar B(z^{-1}), &	zB(z)&=S \bar B(z^{-1}),
	\end{aligned}\]
	can be written in terms of 
	\begin{align*}
		\mathbb L_n&= \overline{\mathbb L_n^{-1}}	=\mathbb U_n^\top=\overline{\mathbb U_n^{-\top}}=\begin{bNiceMatrix}
		-\bar L_n l_{n}L_n^{-1} & \bar L_n L_{n+1}^{-1}\\
		\bar L_{n+1}(1- |l_{n}|^2)L_n^{-1} & \bar L_{n+1}\bar l_{n}L_{n+1}^{-1}
	\end{bNiceMatrix}, 	
	\end{align*}
	as 
	\begin{align}\label{eq:S_system1}
		\begin{bNiceMatrix}
			\bar B_{2n-1}(z^{-1})\\
			\bar B_{2n}(z^{-1})
		\end{bNiceMatrix}&=\mathbb L_{2n-1} 	\begin{bNiceMatrix}
		 B_{2n-1}(z)\\
		 B_{2n}(z)
		\end{bNiceMatrix}, \\\label{eq:S_system2}
			z\begin{bNiceMatrix}
		B_{2n}(z)\\
		B_{2n+1}(z))
		\end{bNiceMatrix} &=\bar{\mathbb L}_{2n} 	\begin{bNiceMatrix}
		\bar B_{2n}(z^{-1})\\
		\bar B_{2n+1}(z^{-1})
		\end{bNiceMatrix}.
	\end{align}
	Note that from the first equation of the linear system \eqref{eq:S_system1}
	\begin{align}\label{eq:S1}
\bar B_{2n-1}(z^{-1})=-\bar L_{2n-1} l_{2n}L_{2n-1}^{-1}	 B_{2n-1}(z)+\bar L_{2n-1} L_{2n}^{-1}B_{2n}(z)
	\end{align}
	we get 
		\begin{align*}
		 B_{2n-1}(z)=- L_{2n-1} \bar l_{2n}\bar L_{2n-1}^{-1}	 \bar B_{2n-1}(z^{-1})+ L_{2n-1} \bar L_{2n}^{-1}\bar B_{2n}(z^{-1})
	\end{align*}
	that using \eqref{eq:S1} transforms into
			\begin{align*}
		B_{2n-1}(z)&=- L_{2n-1} \bar l_{2n}\bar L_{2n-1}^{-1}\left(-\bar L_{2n-1} l_{2n}L_{2n-1}^{-1}	 B_{2n-1}(z)+\bar L_{2n-1} L_{2n}^{-1}B_{2n}(z)\right)+ L_{2n-1} \bar L_{2n}^{-1}\bar B_{2n}(z^{-1})\\
		&=| L_{2n}|^2 B_{2n-1}(z)-L_{2n-1} \bar l_{2n} L_{2n}^{-1}B_{2n}(z)+ L_{2n-1} \bar L_{2n}^{-1}\bar B_{2n}(z^{-1})
	\end{align*}
	that can be written  as
			\begin{align*}
 \bar L_{2n}	(1- |l_{2n}|^2 )	L_{2n-1}^{-1}B_{2n-1}(z)+\bar L_{2n} \bar l_{2n} L_{2n}^{-1}B_{2n}(z)
		&= \bar B_{2n}(z^{-1})
	\end{align*}
	which is second equation of the linear system \eqref{eq:S_system1}. Hence, \eqref{eq:S1} imply \eqref{eq:S_system1}. For \eqref{eq:S_system2} we proceed similarly, the equation
	\begin{align}\label{eq:S2}
	z	B_{2n}(z)=- L_{2n} \bar l_{2n+1}\bar L_{2n}^{-1}	 \bar B_{2n}(z^{-1})+ L_{2n} \bar L_{2n+1}^{-1}\bar B_{2n+1}(z^{-1})
	\end{align}
	implies the second equation in \eqref{eq:S_system2}. In terms of the polynomials
	\begin{align*}
	\Phi_{2n}(z)&\coloneq z^n L_{2n}^{-1}B_{2n}(z),\\
	\Phi_{2n+1}(z)&\coloneq z^n \bar L_{2n+1}^{-1}\bar B_{2n+1}(z^{-1}),\\
	\Phi^*_{2n}(z)&=z^n\bar L_{2n}^{-1}\bar B_{2n}(z^{-1}),\\
	\Phi^*_{2n+1}(z)&= z^{n+1} L_{2n+1}^{-1}B_{2n+1}(z),
	\end{align*}
	the system \eqref{eq:S1} and \eqref{eq:S2} can be written
	\begin{align*}
		\Phi_n&=z\Phi_{n-1}+\alpha_{n} \Phi^*_{n-1},\\
		\Phi^*_n&=z\bar \alpha_{n} \Phi_{n-1}+ \Phi^*_{n-1},
	\end{align*}
	where we use the reciprocal or reverse polynomials $P^*(z)\coloneq z^{\deg P}\bar P(z^{-1})$ and the Verblunsky coefficients given by $\alpha_{2n}= l_{2n}$ and $\alpha_{2n+1}=\bar l_{2n+1}$.
	
	These equations are  the Szeg\H{o}  recurrence relations for the for the Szeg\H{o} orthogonal polynomials in the unit circle $\Phi_n(z)$. Hence we refer to  $R$ and $S$ as the Szeg\H{o} matrices and refer to the corresponding  recurrence relations for the $B$'s as Szeg\H{o} recurrence relations. From the previous discussion is clear that the matrices $l_{n}$ plays a role similar to the one played by the Verblunsky coefficients $\alpha_n$ in the standard Szeg\H{o} case.

\subsection{Recurrences for the multiple orthogonal Laurent polynomials}

Let us discuss the case with only two real measures, $q=1$ and $p=2$,  $\mu_1=\bar \mu_1$ and $\mu_2=\bar \mu_2$, then 
\begin{align*}
	\mathbb L_n&= \overline{\mathbb L^{-1}_n}= \begin{bNiceMatrix}
		-\bar L_n l_{n}L_n^{-1} & \bar L_n L_{n+1}^{-1}\\
		\bar L_{n+1}(I_q- |l_{n}|^2)L_n^{-1} & \bar L_{n+1}\bar l_{n}L_{n+1}^{-1}
	\end{bNiceMatrix}\in\C^{2\times 2}, \\
	\mathbb U_n &=	\overline{\mathbb U^{-1}_n }= \begin{bNiceMatrix}
		-U_n^{-1} u_{n}\bar U_n & U^{-1}_{n}(I_p-u_{n}\bar u_{n})\bar U_{n+1}\\
		U_{n+1} ^{-1}\bar U_n &  U^{-1}_{n+1}\bar u_{n}\bar U_{n+1}
	\end{bNiceMatrix}\in\C^{4\times 4},
\end{align*}
were $L_n,l_{n}\in \C$ and $U_n,u_{n}\in\C^{2\times 2}$ with $L_n\neq 0$ and 
\begin{align*}
	U_n&=\begin{bNiceMatrix}
		U_{n,1,1}& U_{n, 1,2}\\
		0& U_{n,2,2}
	\end{bNiceMatrix}, & U_{n,1,1},U_{n,2,2}&\neq 0.
\end{align*}
For the matrix $S$ providing  the Szeg\H{o} type recursion relations $S\bar B(z^{-1})=zB(z)$ and $\bar A(z^{-1})S=zA(z)$ we have a diagonal block structure 
\begin{align*}
	S=\diag(\bar{\mathbb U}_0, \bar{\mathbb U}_2,\bar{\mathbb U}_4,\dots)
\end{align*}
with 
\begin{align*}
	\bar{\mathbb U}_{2n}
	=\begin{bNiceMatrix}
		\bar {\mathbb L}_{2n}&0_2\\
		U^{-1}_{2n+1}\bar U_{2n} & \bar{ \mathbb L}_{2n+2}
	\end{bNiceMatrix}.
\end{align*}
Hence, we have 
\begin{align*}
I_2-U_{2n,2n+1}\bar U_{2n,2n+1}&=0_2\\
	-U_{2n}^{-1} U_{2n,2n+1}\bar U_{2n} &=\bar {\mathbb L}_{2n},\\
	U^{-1}_{2n+1}\bar U_{2n,2n+1}\bar U_{2n+1}&=\bar{\mathbb L}_{2n+2}.
\end{align*}
Thus, we get
\begin{align*}
	U_{2n,2n+1}=-U_{2n}\bar{\mathbb L}_{2n}\bar U_{2n}^{-1}=\bar U_{2n+1}{ \mathbb L}_{2n+2} U_{2n+1}^{-1}.
\end{align*}
If we write 
\[\begin{aligned}
	\mathbb B_{2n}&\coloneq\begin{bNiceMatrix}
		B_{2n}\\
		B_{2n+1}
	\end{bNiceMatrix}, &	\mathbb A_{2n}&\coloneq\begin{bNiceMatrix}
	A^{(1)}_{2n}&A^{(2)}_{2n+1}\\[5pt]
	A^{(2)}_{2n}&A^{(2)}_{2n+1}
	\end{bNiceMatrix}, 
\end{aligned}\]
we can write the following Szeg\H{o} type recurrence
\begin{align*}
	\bar{\mathbb L}_{4n}\bar{\mathbb B}_{4n}(z^{-1})&=z\mathbb B_{4n}(z),\\
	-U_{2n+1}^{-1}\bar U_{2n}\bar{\mathbb B}_{4n}(z^{-1})+	\bar{\mathbb L}_{4n+2}\bar{\mathbb B}_{4n+2}(z^{-1})&=z\mathbb B_{4n+2}(z),\\
	\bar{\mathbb A}_{4n}(z^{-1})	\bar{\mathbb L}_{4n} +\bar{\mathbb A}_{4+2n}(z^{-1})	U^{-1}_{2n+1}\bar U_{2n} &=z{\mathbb A}_{4n}(z),\\
	\bar{\mathbb A}_{4n+2}(z^{-1})\bar{\mathbb L}_{4n+2}&=z{\mathbb A}_{4n+2}(z).
\end{align*}

%
\subsection{Christoffel--Darboux kernels}

\label{S03-CDkernels}



In this section, we aim to present and analyze the Christoffel--Darboux kernels (CD kernels for short) for the CMV biorthogonal Laurent polynomials. We will define them within the framework of mixed-type multiple orthogonality on the unit circle. We also present the fundamental projection and reproducing properties of these mathematical structures, and additionally we provide an Aitken--Berg--Collar (ABC) theorem in the same fashion as it appears in \cite{S-PSPM08}.

Here and subsequently, we will deal with truncated matrices instead of semi-infinite matrices. Thus, for any semi-infinite matrix $M$, the notation $M^{[n]}$ will denote that we get just the $n$ first rows and columns of $M$. Concerning the CMV Laurent matrix polynomials, $B^{[n]}(z)$ means that we get $n$ rows of $B(z)$, so $B^{[n]}(z)$ will be a $n\times q$ matrix with Laurent polynomials at their entries. Respectively, $A^{[n]}(z^{-1})$ means that we get $n$ columns of $A(z^{-1})$, so $A^{[n]}(z^{-1})$ will be a $p \times n$ matrix with Laurent polynomials at their entries.

\begin{align}
\raisebox{28pt}{$T^{[n]}=\quad$} \renewcommand{\arraystretch}{1.0} %
\begin{NiceMatrix} * & &\Cdots^{2q+1}& & * & & & & & & \\ \Vdots^{2p+1}& & &
& &\Ddots & & & & & \\ & & & & & & * & & & & \\ * & & & & & & & * & & & \\
&\Ddots& & & & & & & & & \\ & & & & & &\Vdots_{2q+1}&\Vdots^{2q}& &\Ddots&
\\ & & & * & &\Cdots^{2p+1}& * & * &\Cdots^{2q}& & * \\[0.4mm] & & & & *
&\Cdots_{2p} & * & & & & \\ & & & & &\Ddots &\Vdots_{2p} & & & & \\ & & & &
& & * & & & & \\ \CodeAfter 
\tikz 
	\draw [arrows = {Latex}-{Latex}] (-7,-3) -- node[above=0.4mm] {$n$} (-2.6,-3);
\tikz	\draw [arrows = {Latex}-{Latex}] (0.1,-1) -- node[above=0.4mm, rotate=270] {$n$} (0.1,3);
\tikz	\draw [draw=white] (-1,1/2)  node[above=0.01mm] {$T^{[n,+]}$};
\tikz	\draw [draw=white] (-4.1,-2.5)  node[above=0.01mm] {$ T^{[n,-]}$ };
\SubMatrix\lceil{1-1}{7-7}. \tikz \draw (-2.55,-3) --
(-2.55,3.0); \tikz \draw (8-|1) -- (8-|12); 
\end{NiceMatrix}
\label{bandedmatrixv02}
\end{align}



\vspace*{5pt}
\begin{pro}[Christoffel--Darboux formula]
The following Christoffel--Darboux formula%
\begin{align}
K^{[n]}(x,y)\coloneq A^{[n]}(x^{-1})B^{[n]}(y)=\frac{1}{x^{-1}-y}\left(
A^{[p,n]}(x^{-1})T^{[n,-]
}B^{[n]}(y)-A^{[n]}(x^{-1})T^{[n,+]}B^{[n,q]}(y)\right)
\label{CDKernel(x,y)}
\end{align}%
holds. The $p\times q$ matrix $K^{[n]}(x,y)$ is the truncated Christoffel--Darboux kernel. Entrywise, for $a\in \{1,\ldots ,p\}$, and $b\in \{1,\ldots ,q\}$, the above expression can be read as
\begin{align}
	K_{a,b}^{[n]}(x,y)\coloneq\left( A^{(a)}(x^{-1})\right) ^{[n]}\left(
	B^{(b)}(y)\right) ^{[n]}=\sum_{k=0}^{n-1}A_{k}^{(a)}(x^{-1})B_{k}^{(b)}(y).
	\label{CDKernel(x,y)ab}
\end{align}
\end{pro}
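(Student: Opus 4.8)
The plan is to start from the defining recurrence relation $TB(z)=z\,B(z)$ of Proposition on recurrence relations, written in truncated form, and run the standard Christoffel--Darboux telescoping argument adapted to the banded (rather than tridiagonal) setting dictated by Proposition \ref{bandedmatrixv01}. First I would record the two dual recurrences in the truncated picture: from $TB(y)=y\,B(y)$ we get, for the first $n$ rows, an identity of the shape $T^{[n]}B^{[n]}(y)+\big(\text{boundary corrections}\big)=y\,B^{[n]}(y)$, where the boundary corrections involve only the $2q$ columns of $T$ lying to the right of the $n$-th column paired against the block tail $B^{[n,q]}(y)$ (the rows of $B$ indexed $n,\dots,n+2q-1$, stacked into a $q$-sized block as in \eqref{BlockPolB}); this is exactly the piece that $T^{[n,+]}$ in \eqref{bandedmatrixv02} isolates. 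Dually, from $\bar A(x^{-1})T=x\,\bar A(x^{-1})$ — equivalently $A^{[n]}(x^{-1})$ times the truncated $T$ — the lower-left overhang of $T$, namely $T^{[n,-]}$, couples $A^{[p,n]}(x^{-1})$ (the trailing $p$-block of columns of $A$) against $B^{[n]}(y)$.

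Next I would form the scalar/matrix combination $A^{[n]}(x^{-1})\,\big(T^{[n]}\big)\,B^{[n]}(y)$ and evaluate it two ways. Multiplying the $A$-recurrence on the left by $B^{[n]}(y)$ on the right gives $x\,A^{[n]}(x^{-1})B^{[n]}(y)$ plus the $T^{[n,-]}$-correction; multiplying the $B$-recurrence on the right by $A^{[n]}(x^{-1})$ on the left gives $y\,A^{[n]}(x^{-1})B^{[n]}(y)$ plus the $T^{[n,+]}$-correction. Subtracting, the common bulk term $A^{[n]}(x^{-1})T^{[n]}B^{[n]}(y)$ cancels, leaving
\[
(x-y)\,A^{[n]}(x^{-1})B^{[n]}(y)=A^{[n]}(x^{-1})T^{[n,+]}B^{[n,q]}(y)-A^{[p,n]}(x^{-1})T^{[n,-]}B^{[n]}(y).
\]
Since $x$ here plays the role of the Cauchy-kernel variable and the biorthogonality in the CMV ordering pairs $B(y)$ with $\bar A(y^{-1})$ (Proposition \ref{PropMatrBiort}), the natural spectral variable for the $A$-side is $x^{-1}$; rewriting $x-y=-x(x^{-1}-y)$ and absorbing signs into the already-signed blocks $T^{[n,\pm]}$ produces exactly \eqref{CDKernel(x,y)}. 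Finally, the entrywise form \eqref{CDKernel(x,y)ab} is immediate: $K^{[n]}(x,y)=A^{[n]}(x^{-1})B^{[n]}(y)$ is by definition the product of a $p\times n$ matrix and an $n\times q$ matrix, so its $(a,b)$ entry is $\sum_{k=0}^{n-1}A_k^{(a)}(x^{-1})B_k^{(b)}(y)$ by the decomposition of $A(z^{-1})$ into rows $A^{(a)}$ and $B(z)$ into columns $B^{(b)}$ in \eqref{defAB}.

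The main obstacle is purely bookkeeping rather than conceptual: correctly identifying which off-diagonal blocks of the pentadiagonal-type matrix $T$ survive truncation, and checking that the $2q$ overhanging columns on the right reorganize precisely into $T^{[n,+]}$ acting on the block tail $B^{[n,q]}(y)$, while the $2p$ overhanging rows below reorganize into $T^{[n,-]}$ acting from $A^{[p,n]}(x^{-1})$ — the asymmetry $2q$ versus $2p$ being forced by the $q\times q$ versus $p\times p$ block sizes in $B$ and $A$ and by the band count $2q$ superdiagonals, $2p$ subdiagonals of Proposition \ref{bandedmatrixv01}. I would verify the truncation identity $A^{[n]}(x^{-1})\,T\,B(y)$ splits as $A^{[n]}(x^{-1})T^{[n]}B^{[n]}(y)+A^{[n]}(x^{-1})T^{[n,+]}B^{[n,q]}(y)$ (and symmetrically on the other side) by a direct block computation using the explicit band picture \eqref{bandedmatrixv02}, and then the cancellation and the passage to the $x^{-1}-y$ denominator are routine.
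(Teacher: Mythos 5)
Your overall strategy — truncate the banded recurrence $TB(z)=zB(z)$ together with its dual on the $A$ side, multiply the two truncated identities by $A^{[n]}(x^{-1})$ and $B^{[n]}(y)$ respectively, and subtract so that the bulk term $A^{[n]}(x^{-1})T^{[n]}B^{[n]}(y)$ cancels leaving only the overhang blocks $T^{[n,\pm]}$ — is exactly the paper's argument, and your identification of which overhangs pair with $B^{[n,q]}(y)$ and with $A^{[p,n]}(x^{-1})$ is correct.

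However, there is a genuine gap on the $A$-side recurrence, and your proposed repair does not work. The relation $\bar A(x^{-1})T=x\,\bar A(x^{-1})$ is a statement about $\bar A$, not about $A$; it is not ``equivalently $A^{[n]}(x^{-1})$ times the truncated $T$.'' The paper instead uses the relation $A(z^{-1})T=z^{-1}A(z^{-1})$, whose eigenvalue $z^{-1}$ is precisely what produces the denominator $x^{-1}-y$ in \eqref{CDKernel(x,y)}. By feeding the eigenvalue $x$ (taken from the $\bar A$ recurrence) into the kernel built from $A$, you land on the identity with prefactor $(x-y)$, and the algebraic patch you invoke is false: $-x(x^{-1}-y)=xy-1\neq x-y$ except on the set $y=1$. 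One cannot ``absorb'' the discrepancy into the signs of $T^{[n,\pm]}$, since $(x-y)K=\mathrm{RHS}$ and $(x^{-1}-y)K=\mathrm{RHS}$ are genuinely different statements. To close the gap you must establish and use the recurrence for $A(z^{-1})$ itself (with eigenvalue $z^{-1}$), i.e. $\left(A(z^{-1})T\right)^{[n]}=z^{-1}A^{[n]}(z^{-1})$ as in the paper's proof, rather than the recurrence for $\bar A(z^{-1})$; the rest of your bookkeeping then goes through verbatim, as does your entrywise reading of $K^{[n]}_{a,b}$.
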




\begin{proof}
Coming back to the matrix $T$ defined in \eqref{Tmatrix}, let us consider
the triangular matrices $T^{[n,+]}$ and $T^{[n,-]}$
showed in \eqref{bandedmatrixv02}. Observe that $T^{[n,+]}$
is a lower triangular $(2q-1)\times (2q-1)$ matrix, and 
$T^{[n,-]}$ is an upper triangular $(2p-1)\times (2p-1)$ matrix.
From \eqref{TBz} we know $TB(z)=zB(z)$, respectively $%
A(z^{-1})T=z^{-1}A(z^{-1})$, so based in these expressions, we consider the
truncations%
\[\begin{aligned}
\left( TB(z)\right) ^{[n]} =&zB^{[n]}(z), \\
\left( A(z^{-1})T\right) ^{[n]} =&z^{-1}A^{[n]}(z^{-1}).
\end{aligned}\]%
Analyzing the action of $T^{[n]}$ in this scenario, we see%
\begin{align*}
\left( TB(z)\right) ^{[n]} =&T^{[n]}B^{[n]}(z)+
T^{[n,-]}B^{[n,q]}(z)=zB^{[n]}(z), \\
\left( A(z^{-1})T\right) ^{[n]}
=&A^{[n]}(z^{-1})T^{[n]}+A^{[p,n]}(z^{-1})
T^{[n,-]}
=z^{-1}A^{[n]}(z^{-1}).
\end{align*}%

Thus, considering the independent complex variables $y,x\in \mathbb{T}$, we
rewrite the above equations as%
\begin{align*}
T^{[n]}B^{[n]}(y) =&yB^{[n]}(y)-T^{[n,+]}B^{[n,q]}(y), \\
A^{[n]}(x^{-1})T^{[n]}
=&x^{-1}A^{[n]}(x^{-1})-A^{[p,n]}(x^{-1})
T^{[n,-]},
\end{align*}%
and multiplying the first one by $A^{[n]}(x^{-1})$ on the l.h.s., and the
second one by $B^{[n]}(y)$ on the r.h.s., we finally deduce%

\begin{align*}
\left( x^{-1}-y\right)
A^{[n]}(x^{-1})B^{[n]}(y)=A^{[p,n]}(x^{-1})
T^{[n,-]}
B^{[n]}(y)-A^{[n]}(x^{-1})T^{[n,+]}B^{[n,q]}(y).
\end{align*}%
Hence, with the notation%
\begin{align}
K^{[n]}(x,y)\coloneq A^{[n]}(x^{-1})B^{[n]}(y),  \label{KernelAB[n]}
\end{align}%
where $K^{[n]}(x,y)$ being a $p\times q$ matrix, the statement follows. 
\end{proof}



Our aim now is to stablish a significant expression concerning the reproducing property of the truncated Christoffel--Darboux kernel $K^{[n]}(x,y)$. In order to achieve this outcome, let us define the following semi-infinite operator as the semi-infinite matrix
\begin{align*}
\pi ^{[n]}\coloneq
\left[\begin{NiceArray}{c|c|cc}
	I_n & 0_n &0_n&\Cdots\\
	\hline
		0_n & 0_n &0_n&\Cdots\\
		\hline
		0_n & 0_n &0_n&\Cdots\\
		\Vdots& \Vdots&\Vdots&
\end{NiceArray}\right]
\end{align*}

such that 
\begin{align}
\left( \pi ^{[ n]}\right) ^{2}=\pi ^{[ n]},\quad \pi ^{[n]}I\pi ^{[ n]}=\pi ^{[ n]},  \label{propPIn}
\end{align}%
with $I$ being the semi-infinite identity matrix. We use this block
projection operator to prove our next



\begin{Theorem}[reproducing property]
For the definition of the Christoffel--Darboux kernel \eqref{CDKernel(x,y)}, the following reproducing property holds
\begin{align}
\oint_{\mathbb{T}}K^{[n]}(x,y)\d\mu (y)K^{[n]}(y,z)=K^{[n]}(x,z),
\label{reproKn}
\end{align}%
where $\d\mu (y)$ is a $q\times p$ matrix with Borel measures in their entries, and all the $K^{[n]}$'s involved are $p\times q$ matrices with Laurent polynomials in their entries.

In terms of the components, for $a,c\in \{1,\ldots ,p\}$, and $b,d\in \{1,\ldots ,q\}$, the above expression can be read as
\begin{align}
\sum_{c=1}^{p}\sum_{b=1}^{q}\oint_{\mathbb{T}}K_{a,b}^{[n]}(x,y)%
\mu _{b,c}(y)K_{c,d}^{[n]}(y,z)dy=K_{a,d}^{[n]}(x,z)  \label{reproKnab}
\end{align}%
Observe that, as they where defined in \eqref{Apol(a)} and \eqref{Bpol(b)} are row (respectively column) vector with polynomial entries.
\end{Theorem}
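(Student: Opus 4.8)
The plan is to derive the reproducing property directly from the matrix biorthogonality of Proposition \ref{PropMatrBiort} together with the defining identity $K^{[n]}(x,y)=A^{[n]}(x^{-1})B^{[n]}(y)$ from \eqref{KernelAB[n]}, using the block projection operator $\pi^{[n]}$ to truncate the semi-infinite biorthogonality relation cleanly. The central observation is that $\pi^{[n]}$ selects exactly the first $n$ rows/columns, so $A^{[n]}(x^{-1}) = A(x^{-1})\pi^{[n]}$ (in the sense of picking the first $n$ columns of the $p\times\infty$ matrix $A(x^{-1})$, equivalently $A(x^{-1})$ composed with the projection) and likewise $B^{[n]}(y)=\pi^{[n]}B(y)$ picks the first $n$ rows of $B(y)$. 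The idempotency and the sandwich property \eqref{propPIn}, namely $(\pi^{[n]})^2=\pi^{[n]}$ and $\pi^{[n]}I\pi^{[n]}=\pi^{[n]}$, are precisely what is needed to make the truncated product collapse.

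The key steps, in order, are as follows. First, write $K^{[n]}(x,y)=A(x^{-1})\pi^{[n]}B(y)$ and $K^{[n]}(y,z)=A(y^{-1})\pi^{[n]}B(z)$, so that the left-hand side of \eqref{reproKn} becomes $\oint_{\mathbb T} A(x^{-1})\pi^{[n]}B(y)\,\d\mu(y)\,A(y^{-1})\pi^{[n]}B(z)$. Second, pull the $x$- and $z$-dependent factors outside the integral (they do not depend on $y$), obtaining $A(x^{-1})\,\pi^{[n]}\Big(\oint_{\mathbb T} B(y)\,\d\mu(y)\,\bar A(y^{-1})\Big)\pi^{[n]}\,B(z)$, where one must be careful that the relevant biorthogonality relation from Proposition \ref{PropMatrBiort} is $\oint_{\mathbb T}B(z)\,\d\mu(z)\,\bar A(z^{-1})=I$ with $I$ the semi-infinite identity; note that on $\mathbb{T}$ one has $\bar A(y^{-1})=A(y^{-1})$ in the sense used throughout (the overline/argument-inversion is already built into the definition of the dual monomials), so the middle factor is exactly $I$. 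Third, apply \eqref{propPIn}: $\pi^{[n]}I\pi^{[n]}=\pi^{[n]}$, which gives $A(x^{-1})\pi^{[n]}B(z)=K^{[n]}(x,z)$. Fourth, obtain the componentwise form \eqref{reproKnab} by expanding the matrix products using \eqref{Apol(a)} and \eqref{Bpol(b)}: the $(a,d)$ entry of the matrix identity is $\sum_{c=1}^p\sum_{b=1}^q\oint_{\mathbb T}K^{[n]}_{a,b}(x,y)\,\mu_{b,c}(y)\,K^{[n]}_{c,d}(y,z)$, since the $q\times p$ matrix measure $\d\mu$ is sandwiched between the $p\times q$ kernel matrices, and the finite sum over $c\in\{1,\dots,p\}$ (column index of the first kernel matched to row index of $\d\mu$) and $b\in\{1,\dots,q\}$ (row index of $\d\mu$... ) runs over exactly the internal matrix indices.

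The main obstacle I anticipate is bookkeeping the index conventions and the $\bar{(\cdot)}$/argument-inversion bookkeeping consistently: the kernel $K^{[n]}(x,y)$ is built from $A^{[n]}(x^{-1})$ and $B^{[n]}(y)$, but the biorthogonality relation in Proposition \ref{PropMatrBiort} involves $\bar A(y^{-1})$, so I must verify that the object appearing after factoring the integral really is the identity $I$ and not some conjugated variant — this hinges on the identity $\bar A(z^{-1}) = A(z^{-1})$ valid on $\mathbb{T}$ given the definition in \eqref{Apol} (since the coefficients of $U$ are not conjugated in $A$ and the monomial conjugation $\overline{Z_{[p]}^\top(z)} = Z_{[p]}^\top(z^{-1})$ from \eqref{cojugaZ} is already absorbed). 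A secondary, purely clerical point is making sure the finite double sum in \eqref{reproKnab} has its summation ranges correctly attached to the matrix dimensions: the inner index of $K^{[n]}(x,y)\,\d\mu(y)$ runs over $\{1,\dots,q\}$ (the $q$ rows of $\d\mu$, matching the $q$ columns of the first $K$), and the inner index of $\d\mu(y)\,K^{[n]}(y,z)$ runs over $\{1,\dots,p\}$ (the $p$ columns of $\d\mu$, matching the $p$ rows of the second $K$); I will state the componentwise identity exactly as in \eqref{reproKnab} and note that it is the entrywise transcription of \eqref{reproKn}. Everything else is a routine consequence of the already-established biorthogonality and the projection identities \eqref{propPIn}.
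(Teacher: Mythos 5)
Your proof follows essentially the same route as the paper's: write $K^{[n]}(x,y)=A(x^{-1})\,\pi^{[n]}\,B(y)$, pull the $x$- and $z$-dependent factors out of the integral, collapse the middle factor to the semi-infinite identity via biorthogonality, and finish with $\pi^{[n]}I\pi^{[n]}=\pi^{[n]}$; the componentwise form is just the entrywise transcription. One caveat: the identity $\bar A(z^{-1})=A(z^{-1})$ you invoke is not actually valid for complex $U$ (it would force $\bar U=U$), so strictly speaking the middle factor $\oint_{\mathbb T}B(y)\,\d\mu(y)\,A(y^{-1})$ equals $\bar U^{-1}U$ rather than $I$; however, the paper's own proof makes exactly the same silent identification of $A(y^{-1})$ with $\bar A(y^{-1})$ when it declares the parenthesized integral to be $I$ "by biorthogonality," so your argument matches the published one, including this conflation.
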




\begin{proof}
To prove \eqref{reproKn} we simply observe that
\begin{align*}
K^{[n]}(x,y)=A^{[n]}(x^{-1})B^{[n]}(y)=A(x^{-1})\,\pi ^{[ n]}\,B(y).
\end{align*}%
Thus, 
\begin{align*}
\oint_{\mathbb{T}}K^{[n]}(x,y)\d\mu (y)K^{[n]}(y,z)
=&\oint_{\mathbb{T}}A(x^{-1})\,\pi ^{[ n]}\,B(y)\d\mu (y) A(y^{-1})\,\pi ^{[ n]}\,B(z) \\
=&A(x^{-1})\,\pi ^{[ n]}\left( \oint_{\mathbb{T}}B(y)\d\mu
(y)A(y^{-1})\right) \pi ^{[ n]}\,B(z).
\end{align*}%
By biorthogonality  the term in parentheses is just the
semi-infinite identity matrix $I$, and hence by \eqref{propPIn} we get%
\begin{align*}
\oint_{\mathbb{T}}K^{[n]}(x,y)\d\mu (y)K^{[n]}(y,z)=A(x^{-1})\,\pi^{[ n]}\,B(z)=K^{[n]}(x,z),
\end{align*}%
which proves \eqref{reproKn}. We recall that $\d\mu (y)$ is a $q\times p$ matrix whose entries are measures, so therefore $K^{[n]}$ are in turn $p\times q$ matrices.

For the entrywise version \eqref{reproKnab}, we begin by considering \eqref{CDKernel(x,y)ab}. Thus, for $a\in \{1,\ldots ,p\}$, and $b\in \{1,\ldots
,q\}$, we have%
\begin{align*}
K_{a,b}^{[n]}(x,y)=\left( A^{(a)}(x^{-1})\right) ^{[n]}\left(
B^{(b)}(y)\right) ^{[n]}=\sum_{k=0}^{n-1}A_{k}^{(a)}(x^{-1})B_{k}^{(b)}(y).
\end{align*}%
Notice that we also have%
\begin{align*}
K_{a,b}^{[n]}(x,y)=\left( A^{(a)}(x^{-1})\right) ^{[n]}\left(
B^{(b)}(y)\right) ^{[n]}=\left( \left( Z_{[p]}^{(a)}(z^{-1})\right) ^{\top
}\right) ^{[n]}U^{[n]}L^{[n]}\left( Z_{[q]}^{(b)}(y)\right) ^{[n]}.
\end{align*}%
Taking into account \eqref{CDKernel(x,y)ab}, entrywise we have%
\begin{align*}
\sum_{c=1}^{p}\sum_{b=1}^{q}\oint_{\mathbb{T}}K_{a,b}^{[n]}(x,y)\,%
\mu _{b,c}(y)K_{c,d}^{[n]}(y,z)dy=
\end{align*}%
\begin{align}
\sum_{c=1}^{p}\sum_{b=1}^{q}\sum_{k=0}^{n-1}\sum_{\ell
=0}^{n-1}\oint_{\mathbb{T}}A_{k}^{(a)}(x^{-1})B_{k}^{(b)}(y)\,\mu
_{b,c}(y)A_{\ell }^{(c)}(y^{-1})B_{\ell }^{(d)}(z)dy.  \label{sumasSigma}
\end{align}%
Next, by the biorthogonality property, it is a simple
matter to see that%
\begin{align*}
\sum_{c=1}^{p}\sum_{b=1}^{q}\oint_{\mathbb{T}}B_{k}^{(b)}(y)\,\mu
_{b,c}(y)\,A_{\ell }^{(c)}(y^{-1})dy=\delta _{k,\ell }
\end{align*}%
and therefore \eqref{sumasSigma} becomes%
\begin{align*}
\sum_{k=0}^{n-1}\sum_{\ell =0}^{n-1}\delta _{k,\ell
}\,A_{k}^{(a)}(x^{-1})B_{\ell
}^{(d)}(z)=%
\sum_{k=0}^{n-1}A_{k}^{(a)}(x^{-1})B_{k}^{(d)}(z)=K_{a,d}^{[n]}(x,z).
\end{align*}

\end{proof}



Moving forward with the above property at hand, and in like manner as in the scalar case, we find that the Christoffel--Darboux kernel \eqref{CDKernel(x,y)} can be interpreted as the integral kernel of a projection, and we next discuss this particular property and the linear spaces involved. Note that, due to the matrix nature of the situation under study, the CD kernel can carry out the projection in two different ways. To conduct a more comprehensive analysis, let us consider two different types of matrices, whose entries are integrable functions. Let us consider the $r\times q$ matrix $f_{B}^{r\times q}:\mathbb{C}\rightarrow \mathbb{C}^{r\times q}$, (respectively the $p\times r$ matrix $f_{A}^{p\times r}:\mathbb{C}\rightarrow \mathbb{C}^{p\times r}$) whose entries are integrable functions. To keep things simple, and without loss of generality, we can set $r=1$ in the analysis that follows. Thus let us consider the $1\times q$ row vector $f_{B}:\mathbb{C}\rightarrow \mathbb{C}^{q}$, such that $f_{B}(z)=\begin{bmatrix}
f_{B}^{(1)}(z) & \cdots & f_{B}^{(q)}(z)%
\end{bmatrix}%
$ (respectively, the $p\times 1$ column vector $f_{A}:\mathbb{C}\rightarrow 
\mathbb{C}^{p}$, such that $f_{A}(z)=%
\begin{bmatrix}
f_{A}^{(1)}(z) & \cdots & f_{A}^{(p)}(z)%
\end{bmatrix}%
^{\top }$), with integrable functions on the complex plane at their respective entries. Let $\mathcal{F}_{q}=\left\{ f_{B}(z)\text{ such that } f_{B}:\mathbb{C}\rightarrow \mathbb{C}^{q}\right\} $ (respectively $\mathcal{F}_{p}=\left\{ f_{A}(z)\text{ such that }f_{A}:\mathbb{C}\rightarrow \mathbb{C}^{p}\right\} $). Let us define now the linear space of $1\times q$ row vectors, whose entries are Laurent polynomials of complex coefficients $\mathcal{H}_{B}$, which is in turn the linear span $\mathcal{H}_{B}=\,$span$\{B_{k}^{(b^{\prime })}\}_{k=0}^{n-1}$ for $b,b^{\prime }\in \{1,\ldots ,q\}$ (respectively the linear space of $p\times 1$ column vectors, whose entries are Laurent polynomials of complex coefficients $\mathcal{H}_{A}$, which is in turn the linear span $\mathcal{H}_{A}=\,$span$\{A_{k}^{(a^{\prime })}\}_{k=0}^{n-1}$ for $a,a^{\prime }\in \{1,\ldots ,p\}$) of the CMV matrix Laurent polynomial $B(z)$ (respectively $A(z^{-1})$).

Hence, we need to consider two different kinds of projection operators $\pi _{B}^{[n]}:  \mathcal{F}_{q}  \rightarrow  \mathcal{H}_{B} $ and $\pi _{A}^{[n]}:  \mathcal{F}_{p}  \rightarrow  \mathcal{H}_{A} $
given by
\[
\begin{aligned}
 f_{B}(x) & \longmapsto & \left( \pi _{B}^{[n]}f_{B}\right) (y)%
\coloneq\oint_{\mathbb{T}}f_{B}(x)\d\mu, &  f_{A}(y) & \longmapsto & \left( \pi _{A}^{[n]}f_{A}\right) (x)%
	\coloneq\displaystyle\oint_{\mathbb{T}}K^{[n]}(x,y)\d\mu
		(x)f_{A}(y).
\end{aligned}
\]

Thus, the truncated kernel polynomial ${K^{[n]}(x,y)}$ is the integral
kernel of the above projection operators.

To finish this Section, we prove the following noteworthy finding



\begin{Theorem}[ABC Theorem]
For the definition of the Christoffel--Darboux kernel \eqref{CDKernel(x,y)},
the following identity holds%
\begin{align*}
K^{[n]}(x,y)=\left( Z_{[p]}^{\top }(x^{-1})\right) ^{[n]}\left( M%
^{[n]}\right) ^{-1}\left( Z_{[q]}(y)\right) ^{[n]}.
\end{align*}%
Componentwise, with $a\in \{1,\ldots ,p\}$ and $b\in \{1,\ldots ,q\}$, for
the definition of the Christoffel--Darboux kernel \eqref{CDKernel(x,y)ab},
the following alternative identity holds%
\begin{align*}
K_{a,b}^{[n]}(x,y)=\left( \left( Z_{[p]}^{(a)}(x^{-1})\right) ^{[n]}\right)
^{\top }\left( M^{[n]}\right) ^{-1}\left( Z_{[q]}^{(b)}(y)\right)
^{[n]}.
\end{align*}
\end{Theorem}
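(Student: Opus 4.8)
The plan is to unwind both sides using the Gauss--Borel factorization $M = L^{-1}\bar U^{-1}$ from \eqref{LUMMleft} together with the definitions $B(z) = LZ_{[q]}(z)$ and $A(z^{-1}) = Z_{[p]}^{\top}(z^{-1})U$ (note that on $\mathbb T$ we have $A(z^{-1}) = (Z_{[p]}(z))^{\dag}U$ and the relevant quantity in the kernel is $A^{[n]}(x^{-1})$). First I would observe, exactly as in the proof of the reproducing property, that the truncated kernel is $K^{[n]}(x,y) = A(x^{-1})\,\pi^{[n]}\,B(y)$, and since $\pi^{[n]}$ acts as the projection onto the first $n$ block-coordinates, this coincides with $A^{[n]}(x^{-1})B^{[n]}(y)$. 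The key point is that $\pi^{[n]}$ factors through the truncations: $A(x^{-1})\pi^{[n]}B(y) = \left(Z_{[p]}^{\top}(x^{-1})U\right)^{[n]}\left(LZ_{[q]}(y)\right)^{[n]}$, where $(\cdot)^{[n]}$ extracts the first $n$ rows (resp. columns). Because $U$ is upper triangular and $L$ is lower triangular, the product of the two truncated pieces only involves $U^{[n]}$ and $L^{[n]}$, i.e. no entries outside the top-left $n\times n$ corner enter, so $K^{[n]}(x,y) = \left(Z_{[p]}^{\top}(x^{-1})\right)^{[n]} U^{[n]} L^{[n]} \left(Z_{[q]}(y)\right)^{[n]}$.

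The next step is to identify $U^{[n]}L^{[n]}$ with $\left(M^{[n]}\right)^{-1}$. From \eqref{LUMMleft}, $M = L^{-1}\bar U^{-1}$, hence $M^{-1} = \bar U L$ in the semi-infinite sense; but we cannot simply truncate a product of infinite matrices, so I would argue through triangularity instead. Since $L$ is lower triangular and $\bar U$ (equivalently $U$, up to the conjugation bookkeeping which I would track carefully) is upper triangular, the leading principal $n\times n$ block of $M = L^{-1}\bar U^{-1}$ is $M^{[n]} = (L^{-1})^{[n]}(\bar U^{-1})^{[n]} = (L^{[n]})^{-1}(\bar U^{[n]})^{-1}$, using the standard fact that truncation commutes with inversion for triangular matrices and with multiplication when the left factor is lower triangular and the right factor is upper triangular. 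Therefore $\left(M^{[n]}\right)^{-1} = \bar U^{[n]} L^{[n]}$, which is precisely the middle factor appearing above (again modulo reconciling $\bar U$ versus $U$ via the identity $A(z^{-1}) = Z_{[p]}^{\top}(z^{-1})U$ and the reality/conjugation conventions in \eqref{cojugaZ} and Proposition~\ref{pro:sym}). Substituting yields the claimed matrix identity.

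For the componentwise statement, I would simply restrict the matrix identity to the $a$-th row of $Z_{[p]}^{\top}(x^{-1})$ and the $b$-th column of $Z_{[q]}(y)$, i.e. replace $Z_{[p]}^{\top}(x^{-1})$ by $(Z_{[p]}^{(a)}(x^{-1}))^{\top}$ and $Z_{[q]}(y)$ by $Z_{[q]}^{(b)}(y)$, which is legitimate because the identity holds at the level of full matrices and extracting a fixed row on the left and a fixed column on the right is linear. Combined with \eqref{CDKernel(x,y)ab} this gives the second formula.

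I expect the main obstacle to be the careful handling of the conjugation and transpose conventions: the kernel involves $A^{[n]}(x^{-1})$ with $A(z) = Z_{[p]}^{\top}(z)U$, so one must check that $A(x^{-1}) = Z_{[p]}^{\top}(x^{-1})U$ pairs correctly against $M^{[n]} = (L^{[n]})^{-1}(\bar U^{[n]})^{-1}$ to produce $\bar U^{[n]}L^{[n]}$ rather than $U^{[n]}L^{[n]}$ — this is where the biorthogonality relation $\oint_{\mathbb T} B(z)\,\d\mu(z)\,\bar A(z^{-1}) = I$ from Proposition~\ref{PropMatrBiort} should be invoked to pin down the correct placement of the bar, or alternatively one works directly with the factorization $I = \oint_{\mathbb T} LZ_{[q]}(z)\,\d\mu(z)\,Z_{[p]}^{\top}(z^{-1})\bar U$ and reads off that the object dual to $B = LZ_{[q]}$ is $Z_{[p]}^{\top}(z^{-1})\bar U$, whose truncation-inverse relation with $M^{[n]}$ is then immediate. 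The triangularity argument for commuting truncation with the product is routine but should be stated explicitly since it is the structural heart of the ABC phenomenon.
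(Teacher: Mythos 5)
Your proposal is correct and follows essentially the same route as the paper: the paper's proof likewise writes $\left(M^{[n]}\right)^{-1}=U^{[n]}L^{[n]}$ from the Gauss--Borel factorization, expands $K^{[n]}(x,y)=\left(Z_{[p]}^{\top}(x^{-1})U\right)^{[n]}\left(LZ_{[q]}(y)\right)^{[n]}=\left(Z_{[p]}^{\top}(x^{-1})\right)^{[n]}U^{[n]}L^{[n]}\left(Z_{[q]}(y)\right)^{[n]}$ using exactly the triangularity-of-truncations argument you describe, and substitutes. Your concern about reconciling $\bar U$ versus $U$ is well placed --- the paper's own proof silently writes $M^{[n]}=\left(L^{[n]}\right)^{-1}\left(U^{[n]}\right)^{-1}$ even though \eqref{LUMMleft} reads $M=L^{-1}\bar U^{-1}$ --- so your explicit tracking of the conjugation via the biorthogonality relation is, if anything, more careful than the source.
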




\begin{proof}
First, from \eqref{LUMMleft} we know that a truncation of size $n$ of the
matrix of moments can be given by%
\begin{align*}
M^{[n]}=\left( L^{[n]}\right) ^{-1}\left( U^{[n]}\right) ^{-1},
\end{align*}%
so therefore%
\begin{align}
\left( M^{[n]}\right) ^{-1}=U^{[n]}L^{[n]}.  \label{MUL-1}
\end{align}%
Next, replacing the definitions \eqref{Bpol} and \eqref{Apol}, namely 
\begin{align*}
B(z)=LZ_{[q]}(z)\text{, and }A(z^{-1})=Z_{[p]}^{\top }(z^{-1})U.
\end{align*}%
Thus, from \eqref{KernelAB[n]} we deduce%
\begin{align*}
K^{[n]}(x,y) =&A^{[n]}(x^{-1})B^{[n]}(y)=\left( Z_{[p]}^{\top
}(x^{-1})U\right) ^{[n]}\left( LZ_{[q]}(y)\right) ^{[n]} \\
=&\left( Z_{[p]}^{\top }(z^{-1})\right) ^{[n]}U^{[n]}L^{[n]}\left(
Z_{[q]}(y)\right) ^{[n]}.
\end{align*}%
Replacing \eqref{MUL-1} in the above expression, the statement follows.

Concerning the componentwise expression, combining formulas \eqref{Bpol(b)} and \eqref{Apol(a)} with \eqref{CDKernel(x,y)ab}, and making the above
computations in the same manner, the expression follows.
\end{proof}

\section{Christoffel perturbations}

\label{S04-ChristPerturb}

In this section, we will examine the theory of Christoffel transformations of the $q\times p$ matrix of measures $\d\mu (z)$ defined in \eqref{MatrixMeasures}. It means that we will discuss the effect of the multiplication of $\d\mu (z)$ by certain appropriate $r\times r$, $r\in \mathbb{N}$ diagonal matrices $W_{[r]}(z)$ with Laurent polynomials at the entries of the main diagonal, which we will define in detail a little later. Since we are working with matrices, let us observe that these transformations can come from the left hand side (LHS for short in the sequel) multiplication of $\d\mu (z)$, namely $W_{[q]}(z)\d\mu (z)$, and also in a dual manner they can come from the right hand side (RHS for short in the sequel) multiplication of $\d\mu (z)$, namely $\d\mu (z)W_{[p]}(z) $. We will begin with a thorough development of the LHS Christoffel perturbations of $\d\mu (z)$, and towards the last part of the section, as the results are expected to be similar, we will provide a less detailed presentation of the results for the RHS Christoffel perturbations of $\d\mu (z)$.

First of all, we will start by introducing some structures that will be
useful to us in the development of the main ideas. Thus, let us define the
following matrices based on the identity matrix. For $a,r\in \mathbb{N}$, $%
a\in \{1,\ldots ,r\}$, $\nu =0,1,2,\ldots $, the matrix $I_{[r]}^{(a)}$ will
be a semi-infinite diagonal matrix with ones just at the positions $(r\nu
+a,r\nu +a)$ in the main diagonal, and zeros everywhere else. For a given $r$%
, these matrices satisfy%
\begin{align}
I_{[r]}^{(a)}I_{[r]}^{(b)}=\delta _{a,b}I_{[r]}^{(a)},  \label{IIdI}
\end{align}%
for $\delta _{a,b}$ being the Kronecker delta, and also we find%
\begin{align*}
\sum_{a=1}^{r}I_{[r]}^{(a)}=I
\end{align*}%
for $I$ being the semi-infinite identity matrix. For example, for $r=2$ we
have%
\[\begin{aligned}
I_{[2]}^{(1)}&\coloneq\diag(1,0|1,0|1,0,\dots),%
& I_{[2]}^{(2)}&\coloneq\diag(0,1|0,1|0,1|\dots)
.
\end{aligned}\]%
From \eqref{IIdI} we know that 
\begin{align}
I_{[r]}^{(a)}I_{[r]}^{(a)}=\left( I_{[r]}^{(a)}\right) ^{2}=I_{[r]}^{(a)},
\label{powersI}
\end{align}%
and also observe that%
\begin{align}
I_{[r]}^{(a)}Z_{[r]}(z)=Z_{[r]}^{(a)}.  \label{IZaZa}
\end{align}%
In this framework, from \eqref{prop02}, we can define the following $r$ projector matrices $\Upsilon_{[r]}^{(a)}$, $a\in\{1,\ldots ,r\}$, such that%
\begin{align}
\Upsilon_{[r]}^{(a)}=\Upsilon _{[r]}I_{[r]}^{(a)}=I_{[r]}^{(a)}\Upsilon_{[r]},  \label{UpsilonID}
\end{align}%
satisfying%
\begin{align}
\sum_{a=1}^{r}\Upsilon_{[r]}^{(a)}=\Upsilon_{[r]}\sum_{a=1}^{r}I_{[r]}^{(a)}=\Upsilon_{[r]}I_{[r]}=\Upsilon_{[r]}.  \label{sumUpsilona}
\end{align}%
The action of these projector matrices $\Upsilon_{[r]}^{(a)}$ on the
monomial matrices \eqref{ZnmonCMV} is such that $\Upsilon _{[
r]}^{(a)} $ multiplying $Z_{[r]}(z)$ yields $z$ by a monomial matrix $%
Z_{[r]}^{(a)}(z)$ of size $\infty \times r$, which are all zeros except the $%
a$-th colum, satisfying%
\begin{align*}
\sum_{a=1}^{r}Z_{[r]}^{(a)}(z)=Z_{[r]}(z).
\end{align*}%
Moreover, the action of these matrices on a semi-infinite column vector of the monomial matrix $Z_{[r]}(z)$ can be seen as follows
\begin{align}
\Upsilon_{[r]}^{(a)}Z_{[r]}^{(b)}(z)=z\delta _{a,b}Z_{[r]}^{(b)}(z).
\label{prop02ab}
\end{align}%
Notice that these columns $Z_{[r]}^{(a)}$ are previously defined in \eqref{ZnmonCMVj}, and note also the following subtle detail: with this notation,
and taking \eqref{powersI} into account, we find that%
\begin{align}
\Upsilon_{[r]}Z_{[r]}^{(a)}(z)=\Upsilon _{[r]}I_{[r]}^{(a)}Z_{[r]}(z)=\Upsilon_{[r]}\left( I_{[r]}^{(a)}\right)
^{2}Z_{[r]}(z)=\Upsilon_{[r]}^{(a)}Z_{[r]}^{(a)}(z).
\label{subtleDetail}
\end{align}%
Although these semi-infinite projection matrices $\Upsilon _{[r]}^{(a)}$ have a countable infinity number of columns identically zero, it
is a simple matter to check that if we restrict their action just to their own subspace, the transpose of $\Upsilon _{[r]}^{(a)}$ is also its inverse, that is, $\left( \Upsilon_{[r]}^{(a)}\right) ^{\top}=\left( \Upsilon_{[r]}^{(a)}\right) ^{-1}$.

For reasons that will become clear a little further ahead, here we are going
to analyze perturbations of a very specific form. Similar perturbations were
described for the first time in \cite{amt}, and there they were called
\textquotedblleft \textit{prepared Laurent polynomials}\textquotedblright .
To describe the particular kind of modifications in which we are interested
here, we first define a \textit{balanced Laurent polynomial} as follows%
\begin{align}
W(z)=c(z-z_{1})(z-z_{2})\cdots (z-z_{2d})z^{-d}=\sum_{i=-d}^{d}\omega
_{i}z^{i},  \label{polyPerturbadorW}
\end{align}%
for which $\deg^{\pm }W(z)= d$, $d\in 
\mathbb{N}$, with $W(z)$ having exactly $2d$ different complex zeros. Let us
define now a $r\times r$ diagonal matrix%
\begin{align}
W_{[r]}(z)\coloneq\diag\left( W_{1}(z),\ldots ,W_{r}(z)\right) ,
\label{mbLp[r]}
\end{align}%
with all the entries in its main diagonal being well-poised Laurent polynomials of the same degrees $\pm d$, and different zeros. By extension, we will refer to $W_{[r]}(z)$ as \textit{matrix balanced Laurent polynomial}. Under this framework, $W_{[r]}(z)$ has a total of $2dr$\ different and simple zeros, and given a matrix of measures \eqref{MatrixMeasures}, in what follows we will consider the new perturbed matrix of measures%
\begin{align}
\d\hat{\mu}(z)\coloneq W_{[q]}(z)\d\mu (z),  \label{ChristPertLHS}
\end{align}%
where $W_{[q]}(z)$ is a matrix prepared Laurent polynomial. The
corresponding Christoffel $W$-modified CMV left ($\mathcal{\hat{M}}$) (resp.
right ($\tilde{\mathcal{\hat{M}}}$)) moment matrices are%
\begin{align}
\mathcal{\hat{M}}=\oint_{\mathbb{T}}Z_{[q]}(z)\d\hat{\mu}%
(z)\,Z_{[p]}^{\top }(z^{-1})=\oint_{\mathbb{T}}Z_{[q]}(z)%
\,W_{[q]}(z)\d\mu (z)\,Z_{[p]}^{\top }(z^{-1})  \label{Mhat}
\end{align}%
and%
\begin{align}
\tilde{\mathcal{\hat{M}}}=\oint_{\mathbb{T}}Z_{[q]}(z^{-1})\d\hat{\mu}(z) Z_{[p]}^{\top }(z)=\oint_{\mathbb{T}}Z_{[q]}(z^{-1})\,W_{[q]}(z)\d\mu (z) Z_{[p]}^{\top }(z).
\label{MhatWave}
\end{align}

Next, with the notation%
\begin{align}
\mathcal{W}=\sum_{b=1}^{q}W_{b}\left( \Upsilon_{[q]}^{(b)}\right) ,
\label{Wsuma}
\end{align}%
we prove the following result concerning the connection between the Gram
matrices $M$ and $\mathcal{\hat{M}}$.



\begin{pro}
The Christoffel $W$ perturbed  CMV moment matrix $\mathcal{\hat{M}}$ defined
in \eqref{Mhat}, can be rewritten as%
\begin{align*}
\mathcal{\hat{M}}=\oint_{\mathbb{T}}Z_{[q]}(z)\,W_{[q]}(z)\d\mu (z) Z_{[p]}^{\top }(z^{-1})=\mathcal{W}\oint_{\mathbb{T}}Z_{[q]}(z)\d\mu (z) Z_{[p]}^{\top }(z^{-1})=\mathcal{WM}.
\end{align*}
\end{pro}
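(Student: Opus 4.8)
The plan is to reduce the identity to the single pointwise statement that right‑multiplication of the monomial column $Z_{[q]}(z)$ by the matrix balanced Laurent polynomial $W_{[q]}(z)$ is reproduced by left‑multiplication by the constant matrix $\mathcal{W}$, i.e.
\begin{align*}
	Z_{[q]}(z)\,W_{[q]}(z)=\mathcal{W}\,Z_{[q]}(z),\qquad z\in\mathbb{T}.
\end{align*}
Granting this, the proposition is immediate: substitute it into \eqref{Mhat}, observe that $\mathcal{W}=\sum_{b=1}^{q}W_b\bigl(\Upsilon_{[q]}^{(b)}\bigr)$ from \eqref{Wsuma} is a fixed, $z$‑independent semi‑infinite (in fact banded) matrix since each $W_b$ has finite degrees $\pm d$, pull $\mathcal{W}$ out of the contour integral, and recognise the remaining integral $\oint_{\mathbb{T}}Z_{[q]}(z)\d\mu(z)\,Z_{[p]}^{\top}(z^{-1})$ as the unperturbed CMV left moment matrix $M$ of \eqref{MMleft}.

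For the pointwise identity I would argue column by column. Using the splitting $Z_{[q]}(z)=\sum_{a=1}^{q}Z_{[q]}^{(a)}(z)$, which follows from \eqref{IZaZa} and $\sum_a I_{[q]}^{(a)}=I$, and the fact that $W_{[q]}(z)=\diag\bigl(W_1(z),\dots,W_q(z)\bigr)$ from \eqref{mbLp[r]}, the $b$‑th column of $Z_{[q]}(z)W_{[q]}(z)$ is $W_b(z)Z_{[q]}^{(b)}(z)$, so $Z_{[q]}(z)W_{[q]}(z)=\sum_{b=1}^{q}W_b(z)Z_{[q]}^{(b)}(z)$. On the other side, writing $W_b(z)=\sum_{i=-d}^{d}\omega_{b,i}z^{i}$ as in \eqref{polyPerturbadorW} and invoking the eigen‑relation \eqref{prop02ab}, $\Upsilon_{[q]}^{(b)}Z_{[q]}^{(a)}(z)=z\,\delta_{a,b}\,Z_{[q]}^{(a)}(z)$, an easy induction (together with the conventions $\bigl(\Upsilon_{[q]}^{(b)}\bigr)^{0}=I_{[q]}^{(b)}$ and $\bigl(\Upsilon_{[q]}^{(b)}\bigr)^{-1}=\bigl(\Upsilon_{[q]}^{(b)}\bigr)^{\top}$ noted just before \eqref{polyPerturbadorW}, and \eqref{subtleDetail}) gives $\bigl(\Upsilon_{[q]}^{(b)}\bigr)^{i}Z_{[q]}^{(a)}(z)=z^{i}\delta_{a,b}Z_{[q]}^{(a)}(z)$ for all $i\in\{-d,\dots,d\}$, whence $W_b\bigl(\Upsilon_{[q]}^{(b)}\bigr)Z_{[q]}^{(a)}(z)=\delta_{a,b}W_b(z)Z_{[q]}^{(b)}(z)$. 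Summing over $a$ and $b$ and using $\sum_a Z_{[q]}^{(a)}(z)=Z_{[q]}(z)$ yields $\mathcal{W}Z_{[q]}(z)=\sum_{b=1}^{q}W_b(z)Z_{[q]}^{(b)}(z)$, which coincides with the expression obtained for $Z_{[q]}(z)W_{[q]}(z)$.

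The only point that needs care — and the one I expect to be the main obstacle in a fully rigorous write‑up — is the treatment of the negative powers: $\Upsilon_{[q]}^{(b)}$ is not invertible as a semi‑infinite matrix (it has countably many zero columns), so $\bigl(\Upsilon_{[q]}^{(b)}\bigr)^{-1}$ must be understood as its transpose, which acts as a genuine inverse only on the invariant subspace spanned by its nonzero columns — exactly the subspace containing $Z_{[q]}^{(b)}(z)$. I would make this explicit by deriving $\bigl(\Upsilon_{[q]}^{(b)}\bigr)^{\top}Z_{[q]}^{(b)}(z)=z^{-1}Z_{[q]}^{(b)}(z)$ from $\Upsilon_{[q]}^{(b)}Z_{[q]}^{(b)}(z)=z\,Z_{[q]}^{(b)}(z)$ (legitimate since $z\neq0$ on $\mathbb{T}$) together with the fact that $\bigl(\Upsilon_{[q]}^{(b)}\bigr)^{\top}\Upsilon_{[q]}^{(b)}$ restricts to the identity on that subspace, and then iterating. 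Everything else — the diagonal bookkeeping and pulling the constant $\mathcal{W}$ through the integral — is routine.
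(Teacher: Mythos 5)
Your proof is correct and follows essentially the same route as the paper: both reduce the claim to the pointwise intertwining identity $Z_{[q]}(z)\,W_{[q]}(z)=\mathcal{W}\,Z_{[q]}(z)$, established column by column from the eigenvalue relations of $\Upsilon_{[q]}$ and its projections $\Upsilon_{[q]}^{(b)}$, after which the constant banded matrix $\mathcal{W}$ is pulled out of the moment integral. Your extra care with the conventions for the zeroth and negative powers of the non-invertible $\Upsilon_{[q]}^{(b)}$ addresses a point the paper's proof passes over silently, but it does not change the argument.
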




\begin{proof}
From, \eqref{prop01}-\eqref{prop03} we have%
\begin{align}
\left( \Upsilon_{[r]}\right)
^{k}Z_{[r]}(z)=z^{k}\,Z_{[r]}(z)=Z_{[r]}(z)\,z^{k},\quad k\in \mathbb{Z}.
\label{powersUpsilon}
\end{align}%
This in turn implies that, for any Laurent polynomial $p(z)$ we have%
\begin{align*}
p\left( \Upsilon_{[r]}\right) Z_{[r]}(z)=Z_{[r]}(z)\,p(z).
\end{align*}%
In particular, for any given matrix prepared Laurent polynomial $W_{[q]}(z)$
we get%
\begin{align}
W_{[q]}\left( \Upsilon_{[q]}\right)
Z_{[q]}(z)=Z_{[q]}(z)W_{[q]}\left( z\right) .  \label{dosLados}
\end{align}%
The right hand side above can be written as%
\begin{align*}
Z_{[q]}(z)W_{[q]}(z) =&%
\begin{bmatrix}
Z_{[q]}^{(1)} & Z_{[q]}^{(2)} & \cdots & Z_{[q]}^{(q)}%
\end{bmatrix}%
\begin{bmatrix}
W_{1}(z) &  &  \\ 
& \ddots &  \\ 
&  & W_{q}(z)%
\end{bmatrix}
\\
=&%
\begin{bmatrix}
Z_{[q]}^{(1)}W_{1}(z) & Z_{[q]}^{(2)}W_{2}(z) & \cdots & 
Z_{[q]}^{(q)}W_{q}(z)%
\end{bmatrix}%
.
\end{align*}%
Every entry in the above vector is, in fact, a semi-infinite column vector with Laurent polynomials in their entries, and taking \eqref{IZaZa} into
account, they can be rewritten as
\begin{align*}
Z_{[q]}^{(a)}W_{a}(z)=I_{[q]}^{(a)}Z_{[q]}(z)W_{a}(z),
\end{align*}%
which yields%
\begin{align}
Z_{[q]}(z)W_{[q]}(z)=%
\begin{bmatrix}
I_{[q]}^{(1)}W_{1}(z) & I_{[q]}^{(2)}W_{2}(z) & \cdots & 
I_{[q]}^{(q)}W_{q}(z)%
\end{bmatrix}%
Z_{[q]}(z).  \label{ladoD}
\end{align}%
On the other hand,%
\begin{align*}
W_{[q]}\left( \Upsilon_{[q]}\right) Z_{[q]}(z) =&%
\begin{bmatrix}
W_{1}(\Upsilon_{[q]}) &  &  \\ 
& \ddots &  \\ 
&  & W_{q}(\Upsilon_{[q]})%
\end{bmatrix}%
\begin{bmatrix}
Z_{[q]}^{(1)} & Z_{[q]}^{(2)} & \cdots & Z_{[q]}^{(q)}%
\end{bmatrix}
\\
=&%
\begin{bmatrix}
W_{1}(\Upsilon_{[q]})Z_{[q]}^{(1)} & W_{2}(\Upsilon_{[q]})Z_{[q]}^{(2)} & \cdots & W_{q}(\Upsilon_{[q]})Z_{[q]}^{(q)}%
\end{bmatrix}%
\end{align*}%
Now, every entry in the above vector is, in fact, a semi-infinite column vector with Laurent polynomials in their entries, and taking \eqref{IZaZa} into account, they can be rewritten as
\begin{align*}
W_{a}(\Upsilon_{[q]})Z_{[q]}^{(a)}=W_{a}(\Upsilon _{[q]})I_{[q]}^{(a)}Z_{[q]}(z)=W_{a}(\Upsilon_{[q]}^{(a)})Z_{[q]}(z),
\end{align*}%
which leads to%
\begin{align}
W_{[q]}\left( \Upsilon_{[q]}\right) Z_{[q]}(z)=%
\begin{bmatrix}
W_{1}(\Upsilon_{[q]}^{(1)}) & W_{2}(\Upsilon_{[q]}^{(2)}) & 
\cdots & W_{q}(\Upsilon_{[q]}^{(q)})%
\end{bmatrix}%
Z_{[q]}(z).  \label{ladoI}
\end{align}%
Hence, from \eqref{dosLados}, \eqref{IIdI}, \eqref{prop02ab} and (\eqref{UpsilonID} we finally obtain
\begin{align*}
Z_{[q]}(z)W_{[q]}(z) =&%
\begin{bmatrix}
W_{1}(\Upsilon_{[q]}^{(1)})Z_{[q]}^{(1)} & W_{2}(\Upsilon _{[
q]}^{(2)})Z_{[q]}^{(2)} & \cdots & W_{2}(\Upsilon _{[
q]}^{(q)})Z_{[q]}^{(q)}%
\end{bmatrix}
\\
=&\left( W_{1}\left( \Upsilon_{[q]}^{(1)}\right) +W_{2}\left(
\Upsilon_{[q]}^{(2)}\right) +\cdots +W_{q}\left( \Upsilon _{[
q]}^{(q)}\right) \right) Z_{[q]}(z) \\
=&\left( \sum_{b=1}^{q}W_{b}\left( \Upsilon_{[q]}^{(b)}\right)
\right) Z_{[q]}(z).
\end{align*}%

\end{proof}


From now on, by analogy, we will denote by%
\begin{align*}
\hat{B}(z)\coloneq\hat{L}Z_{[q]}(z)\text{, \ and \ \ }\hat{A}(z^{-1})%
\coloneq Z_{[p]}^{\top }(z^{-1})\hat{U}
\end{align*}%
the CMV matrix Laurent polynomials, biorthogonal with respect to the
Christoffel $W$-modified matrix of measures $\d\hat{\mu}$.


\subsection{Connector matrix and connection formulas}



Next we consider in turn the Gauss factorization of the moment matrix $%
\mathcal{\hat{M}}$, which is%
\begin{align*}
\mathcal{\hat{M}}=\hat{L}^{-1}\hat{U}^{-1}=\mathcal{W\,}L^{-1}U^{-1}.
\end{align*}%
Multiplying the last equation by $U$ on the r.h.s., and by $\hat{L}$ on the
l.h.s., we call the resulting semi-infinite upper triangular connection
matrix%
\begin{align}
N_{C}\coloneq\hat{U}^{-1}U=\hat{L}\mathcal{W}L^{-1}.
\label{defmatrixN}
\end{align}

The matrix $N_{C}$\ is know as a \textit{connector}, or \textit{%
connection matrix} (see \cite[Sec. 6.3]{M-EIBPOA21}), because it links the
original with the modified sequences of orthogonal polynomials. We next
prove the following important result concerning the connection properties of $N_{C}$



\begin{pro}[Connection formulas]
The following connection formulas, between CMV matrix Laurent polynomials
biorthogonals with respect to $\d\mu (z)$ and $\d\hat{\mu}(z)$, hold:%
\begin{align}
\begin{array}{lllll}
\mathcal{W}\,\hat{B}(z)=\mathcal{N\,}B(z), &  &  &  & \hat{B}^{(b)}(z)={%
\displaystyle\frac{1}{W_{b}(z)}}\,\mathcal{N\,}B^{(b)}(z), \\ 
&  &  &  &  \\ 
\hat{A}(z^{-1})\,N_{C}=A(z^{-1}), &  &  &  & \hat{A}^{(a)}(z^{-1})\,%
N_{C}=A^{(a)}(z^{-1}).%
\end{array}
\label{relatAB}
\end{align}
\end{pro}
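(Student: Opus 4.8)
The plan is to derive the connection formulas directly from the definition of the connector matrix $N_C$ in \eqref{defmatrixN} together with the intertwining identity $Z_{[q]}(z)W_{[q]}(z)=\mathcal{W}Z_{[q]}(z)$ established in \eqref{dosLados}, and the defining relations $\hat B(z)=\hat L Z_{[q]}(z)$, $\hat A(z^{-1})=Z_{[p]}^{\top}(z^{-1})\hat U$, $B(z)=LZ_{[q]}(z)$, $A(z^{-1})=Z_{[p]}^{\top}(z^{-1})U$. There are really two things to prove: the identity involving $\mathcal{W}$ and $\hat B$, $B$, and the one involving $N_C$ and $\hat A$, $A$. Each is a short computation; the componentwise versions then follow by reading off the relevant rows/columns.

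First I would prove $\mathcal{W}\,\hat B(z)=N_C\,B(z)$. Starting from $N_C=\hat L\mathcal{W}L^{-1}$, right-multiply by $B(z)=LZ_{[q]}(z)$ to get $N_C B(z)=\hat L\mathcal{W}L^{-1}LZ_{[q]}(z)=\hat L\mathcal{W}Z_{[q]}(z)$. Now I need to commute $\mathcal{W}$ past $\hat L$: since $\mathcal{W}=\sum_{b=1}^q W_b(\Upsilon_{[q]}^{(b)})$ and, by \eqref{dosLados}, $\mathcal{W}Z_{[q]}(z)=Z_{[q]}(z)W_{[q]}(z)$, I can instead write $\hat L\mathcal{W}Z_{[q]}(z)=\hat L Z_{[q]}(z)W_{[q]}(z)$. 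But $\hat L Z_{[q]}(z)=\hat B(z)$, so $N_C B(z)=\hat B(z)W_{[q]}(z)$. Hmm — this gives $\hat B(z)W_{[q]}(z)$ rather than $\mathcal{W}\hat B(z)$. To land on the stated form, I instead keep $\mathcal{W}$ on the left: $\mathcal{W}\hat B(z)=\mathcal{W}\hat L Z_{[q]}(z)$, and I must show this equals $\hat L\mathcal{W}Z_{[q]}(z)$, i.e. that $\mathcal{W}$ commutes with $\hat L$. Since $\hat L$ is lower triangular and $\mathcal{W}$ is a polynomial in the $\Upsilon^{(b)}_{[q]}$ that acts diagonally-by-blocks on the CMV grading, the natural route is: apply $W_{[q]}(\Upsilon_{[q]})$ on the appropriate side of $\hat L Z_{[q]}(z)=\hat B(z)$ using the relation $W_{[q]}(\Upsilon_{[q]})=\hat L^{-1}\mathcal{W}\hat L$ inherited from the factorization $\hat{\mathcal{M}}=\hat L^{-1}\hat U^{-1}=\mathcal{W}L^{-1}U^{-1}$, which forces $\hat L\mathcal{W}L^{-1}=\hat U^{-1}U$ to be upper triangular and hence gives the needed compatibility. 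Concretely: $\mathcal{W}\hat B(z)=\mathcal{W}\hat L Z_{[q]}(z)$, and writing $\mathcal{W}\hat L=\hat L\,(\hat L^{-1}\mathcal{W}\hat L)$ with $\hat L^{-1}\mathcal{W}\hat L=\hat L^{-1}\hat U^{-1}U\,U^{-1}\hat U\,\hat L^{-1}\mathcal{W}\hat L=\cdots$; cleanly, from \eqref{defmatrixN}, $\hat L\mathcal{W}L^{-1}=N_C$ gives $\mathcal{W}=\hat L^{-1}N_C L$, so $\mathcal{W}\hat B(z)=\hat L^{-1}N_C L\hat L Z_{[q]}(z)$ — this does not simplify directly. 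The clean statement to use is simply $N_C B(z)=\hat L\mathcal{W}L^{-1}\cdot LZ_{[q]}(z)=\hat L\mathcal{W}Z_{[q]}(z)$ and, separately, $\mathcal{W}\hat B(z)=\mathcal{W}\hat L Z_{[q]}(z)$; equality of these two reduces exactly to $\mathcal{W}\hat L=\hat L\mathcal{W}$, which in turn follows because $\hat L^{-1}\mathcal{W}\hat L=W_{[q]}(\Upsilon_{[q]})$ is lower triangular (it conjugates the grading-compatible $\mathcal{W}$ by the lower-triangular $\hat L$) while also being a similarity transform keeping it in the same banded class — I would spell this out as: both $\hat L^{-1}\mathcal{W}\hat L$ and $\mathcal{W}$ act the same way on $Z_{[q]}(z)$ by \eqref{dosLados} and \eqref{powersUpsilon}, and since $Z_{[q]}(z)$ has full rank as a formal object, $\hat L^{-1}\mathcal{W}\hat L=\mathcal{W}$, hence they commute.

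For the dual relation, I would start from the other expression in \eqref{defmatrixN}, namely $N_C=\hat U^{-1}U$. Then $\hat A(z^{-1})N_C=Z_{[p]}^{\top}(z^{-1})\hat U\cdot\hat U^{-1}U=Z_{[p]}^{\top}(z^{-1})U=A(z^{-1})$, which is immediate — no obstruction here. The componentwise version $\hat A^{(a)}(z^{-1})N_C=A^{(a)}(z^{-1})$ follows by taking the $a$-th row of both sides, using \eqref{Apol(a)}. For the $\hat B$ component formula $\hat B^{(b)}(z)=\tfrac{1}{W_b(z)}N_C B^{(b)}(z)$, I would take the $b$-th column of the matrix relation, using $\mathcal{W}\hat B(z)=N_C B(z)$ rewritten (via \eqref{dosLados}) as $\hat B(z)W_{[q]}(z)=N_C B(z)$, whose $b$-th column reads $\hat B^{(b)}(z)W_b(z)=N_C B^{(b)}(z)$; dividing by the scalar Laurent polynomial $W_b(z)$ gives the claim, and one notes that the right side is indeed divisible by $W_b(z)$ because each zero of $W_b$ is annihilated on the left. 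The main obstacle is the commutation step $\mathcal{W}\hat L=\hat L\mathcal{W}$ (equivalently, recognizing which of the two equivalent forms of the connection identity to write with $\mathcal{W}$ on the left versus $W_{[q]}(z)$ on the right); everything else is bookkeeping with the definitions already in place.
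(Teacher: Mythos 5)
Your core computations are correct and follow exactly the paper's route: from $N_{C}=\hat{U}^{-1}U=\hat{L}\mathcal{W}L^{-1}$ you get $N_{C}B(z)=\hat{L}\mathcal{W}Z_{[q]}(z)=\hat{L}Z_{[q]}(z)W_{[q]}(z)=\hat{B}(z)W_{[q]}(z)$, whose $b$-th column is $W_{b}(z)\hat{B}^{(b)}(z)=N_{C}B^{(b)}(z)$, and the dual identity $\hat{A}(z^{-1})N_{C}=Z_{[p]}^{\top}(z^{-1})\hat{U}\hat{U}^{-1}U=A(z^{-1})$ is immediate. This is precisely the paper's proof.

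The detour you flag as ``the main obstacle'' is, however, a misreading of the notation rather than a real obstacle, and your attempted resolution of it is wrong. The symbol $\mathcal{W}\,\hat{B}(z)$ in the statement is to be read as the action of the perturbation on $\hat{B}$, i.e.\ as $\hat{B}(z)W_{[q]}(z)$ (right multiplication by the diagonal matrix of Laurent polynomials); this is forced by the componentwise version $W_{b}(z)\hat{B}^{(b)}(z)=N_{C}B^{(b)}(z)$ and is exactly how the analogous Geronimus formula $N_{G}\check{B}(z)=B(z)W_{[q]}(z)$ is written later in the paper. It is \emph{not} the literal left product of the semi-infinite matrix $\mathcal{W}$ with $\hat{B}(z)$. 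Your argument that $\hat{L}^{-1}\mathcal{W}\hat{L}=\mathcal{W}$ --- and hence that $\mathcal{W}$ commutes with $\hat{L}$ --- does not hold: the claim that $\hat{L}^{-1}\mathcal{W}\hat{L}$ acts on $Z_{[q]}(z)$ as multiplication by $W_{[q]}(z)$ is precisely what would need to be proved, and it fails in general, since $\mathcal{W}$ is determined by the relation $\mathcal{W}Z_{[q]}(z)=Z_{[q]}(z)W_{[q]}(z)$ and a banded matrix such as $\mathcal{W}$ does not commute with a generic lower triangular $\hat{L}$. Delete that step; nothing in the proposition depends on it. With that reading fixed, your proof is complete and coincides with the paper's.
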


\begin{proof}
From \eqref{ZnmonCMVj} and \eqref{Bpol} we can therefore study the action of the matrix $N_{C}$ on $B(z)$, namely
\begin{align*}
\mathcal{N\,}B(z)=\hat{L}\mathcal{W}L^{-1}LZ_{[q]}(z)=\hat{L}\mathcal{W}%
Z_{[q]}(z)=\mathcal{W}\,\hat{B}(z),
\end{align*}%
and therefore, on the columns of $B(z)$, we have%
\begin{align}
\mathcal{N\,}B^{(b)}=\hat{L}\mathcal{W}Z_{[q]}^{(b)}=W_{b}(z)\hat{B}^{(b)},
\label{conformB0}
\end{align}%
which provides the following connection formula for the polynomials $\hat{B}%
^{(b)}(z)$ and $B^{(b)}(z)$%
\begin{align}
\hat{B}^{(b)}(z)=\frac{1}{W_{b}(z)}\mathcal{N\,}B^{(b)}(z).
\label{connformB}
\end{align}%
Proceeding in a similar fashion, for $a\in \{1,\ldots ,p\}$ and taking into account \eqref{Apol} we find
\begin{align*}
A(z^{-1})=Z_{[p]}^{\top }(z^{-1})U=%
\begin{bmatrix}
A^{(1)}(z^{-1}) \\ 
A^{(2)}(z^{-1}) \\ 
\vdots \\ 
A^{(p)}(z^{-1})%
\end{bmatrix}%
=%
\begin{bmatrix}
\left( Z_{[p]}^{(1)}(z^{-1})\right) ^{\top } \\ 
\left( Z_{[p]}^{(2)}(z^{-1})\right) ^{\top } \\ 
\vdots \\ 
\left( Z_{[p]}^{(p)}(z^{-1})\right) ^{\top }%
\end{bmatrix}%
U
\end{align*}%
and therefore, the inverse of the semi-infinite upper triangular matrix $N_{C}$, acting on the r.h.s. $A(z^{-1})$, yields%
\begin{align*}
A(z^{-1})\,N_{C}^{-1}=Z_{[p]}^{\top }(z^{-1})UU^{-1}\hat{U}%
=Z_{[p]}^{\top }(z^{-1})\hat{U}=\hat{A}(z^{-1}),
\end{align*}%
which provides the following connection formula for polynomials $\hat{A}%
^{(a)}(z^{-1})$ and $A^{(a)}(z^{-1})$%
\begin{align}
\hat{A}^{(a)}(z^{-1})N_{C}=A^{(a)}(z^{-1}).  \label{connformA}
\end{align}%

\end{proof}



From \eqref{defmatrixN}, it is clear that the structure of the connection matrix $N_{C}$ depends on the polynomials at the entries of the perturbation matrix $W_{[q]}(z)$. Indeed, we observe that evaluating \eqref{conformB0} at the zeros of every prepared Laurent polynomial at the main diagonal of $W_{b}(z)$ (see \eqref{polyPerturbadorW}, one gets
\begin{align*}
\mathcal{N\,}B^{(b)}(z_{b,\ell })=0,\text{ for every }\ell \in \{1,\ldots
,2d\}.
\end{align*}

Moreover, denoting by $N_{C,i,j}$ the elements of $N_{C}$, and
taking into account that $N_{C}$ is an upper triangular matrix (i.e., $%
N_{C,i,j}=0$ if $i>j$), thus for $i=0,1,2,\ldots $ we can assert that%
\begin{align}
\sum_{j=i}^{i+2dq}N_{C,i,j}\,B_{j}^{(b)}(z_{b,\ell })=0,\quad
b=1,\ldots ,q,\text{ and }\ell \in \{1,\ldots ,2d\}.  \label{sumaNB}
\end{align}%
We are interested in solving the above linear system of equations for the
entries $N_{C,i,j}$, $j\geq i$, and in order to do so, we restrict
ourselves to the situation where all the zeros $z_{b,\ell }$ are simple, and
different for every $b=1,\ldots ,q$.

Next we discuss the reason why we need perturbations as prepared Laurent
polynomials of the same degree in $W_{[q]}(z)$, and this relies on the
particular structure of the matrix $\Upsilon_{[q]}$ and its powers.
From \eqref{Wsuma} and \eqref{defmatrixN} we know%
\begin{align*}
N_{C}=\hat{U}^{-1}U=\hat{L}\left( \sum_{b=1}^{q}W_{b}\left( \Upsilon
_{[ q]}^{(b)}\right) \right) L^{-1}.
\end{align*}%
On one side $\hat{U}^{-1}U$ means that $N_{C}$ must be an
upper triangular matrix, and on the other side one encounters that $\hat{L}%
\mathcal{W}L^{-1}$ must be a lower Hessenberg matrix depending on
the particular structure of the matrix $\sum_{b=1}^{q}W_{b}\left( \Upsilon
_{[ q]}^{(b)}\right) $, that in turn depends on the degrees $\pm d$,
and the number $q$ of the prepared Laurent polynomials at the diagonal of $%
W_{[q]}\left( \Upsilon_{[q]}\right) $. From \eqref{powersUpsilon},
the matrix $W_{[q]}\left( \Upsilon_{[q]}\right) $ will be a $\left(
4dq+1\right) $-diagonal matrix, which in turn implies that its highest
nonzero upper-diagonal will be at position $(2dq+1)$. Putting together the
two bolded requirements mentioned above, it follows that $N_{C}$ will
be a semi-infinite upper $(2dq+1)$-banded matrix, so in order to determine
its entries we need $2dq$ different zeros for the prepared Laurent
polynomials in $W_{[q]}\left( \Upsilon_{[q]}\right) $. It means that
all the zeros in the diagonal entries of the perturbation $W_{[q]}$ must be
a total of $2dq$ different zeros. The shape of $N_{C}$ is shown in \eqref{bandedmatrixv03}.



\begin{align}
N_{C}=\, \renewcommand{\arraystretch}{1.0} 
\begin{bNiceMatrix} * &
\Cdots_{2dq+1}& & * & & & \\ & \Ddots & & &\Ddots & & \\ & & \Ddots &
&\phantom{*}&\Ddots & \\ & & &\Ddots &\phantom{*}&\Ddots &\phantom{*}\\ & &
& &\Ddots &\Ddots &\phantom{*}\\ & & & & &\Ddots &\phantom{*}\\ & & & & &
&\phantom{*}\\ 
\end{bNiceMatrix}
\label{bandedmatrixv03}
\end{align}



\subsubsection{Connection formula between the CD kernels}



Next, we are going to connect the above kernel \eqref{CDKernel(x,y)} with
the corresponding kernel of the modified matrix measure $\d\hat{\mu}(z)$,
that is, we find a connection formula between $K^{[n]}(x,y)$ and $\hat{K}%
^{[n]}(x,y)$.



From \eqref{relatAB}, we define the matrix $N_{C}^{[n,2dq ]}$, showed in \eqref{bandedmatrixv04}, where
the truncation of the product $N_{C}B(z)$ is also shown graphically.
The matrix $N_{C}^{[n,2dq ]}$ satisfies
\begin{align}
\left( N_{C}B(z)\right) ^{[n]}=N_{C}^{[n]}B^{[n]}(z)+\left( 
N_{C}^{[n,2dq ]}B^{[n,2dq]}(z)\right)
_{n\times q}=W(z)\,\hat{B}^{[n]}(z).  \label{truncaNBn}
\end{align}


\enlargethispage{1cm}
\begin{align}
\raisebox{32pt}{$
\renewcommand{\arraystretch}{1.0}
\begin{bNiceMatrix}
\ast          &              &\Cdots_{2dq+1}&     &\ast          &       &              &               &      &     &     \\ 
              &\Ddots        &              &     &              &\Ddots &              &               &      &     &     \\
              &              &              &     &              &       & \ast         &               &      &     &     \\ 
              &              &              &     &              &       &              &\blacktriangle &      &     &     \\
              &              &              &     &              &       &              &               &      &     &     \\
							&              &              &     &              &       &\Vdots_{2dq+1}&\Vdots^{2dq}   &      &\Ddots& \\
							&              &              &     &              &       & \ast &\blacktriangle &\Cdots_{2dq+1}&     &\blacktriangle \\
\CodeAfter \tikz \draw (1-|8) -- (8-|8);
\end{bNiceMatrix}
$} \renewcommand{\arraystretch}{1.0} \begin{bNiceMatrix} \ast &\Cdots &\ast
\\ \Vdots & &\Vdots \\ & & \\ & & \\ & & \\ & & \\ & & \\ \ast &\Cdots &\ast
\\ \blacktriangle &\Cdots &\blacktriangle \\ \Vdots^{2dq} & &\Vdots \\ & &
\\ \blacktriangle &\Cdots^{q} &\blacktriangle \\ \CodeAfter \tikz \draw
[shorten > = 0.5em, shorten < = 0.5em](9-|1) -- (9-|last) ;
\end{bNiceMatrix} 
\raisebox{28pt}{$
\mbox{\Large $ = $}
\renewcommand{\arraystretch}{1.0}
\begin{bNiceMatrix}
\ast       &\Cdots     &\ast       \\
\Vdots     &           &\Vdots     \\
           &           &           \\
           &           &           \\
           &           &           \\
					 &           &           \\
           &           &           \\
\ast       &\Cdots^{q} &\ast       \\
\end{bNiceMatrix}
\mbox{\Large $+$}
\renewcommand{\arraystretch}{1.0}
\begin{bNiceMatrix}
              &            &              \\
              &            &              \\
              &            &              \\
					    &            &              \\
\blacktriangle  &\Cdots      &\blacktriangle  \\
\Vdots^{2dq}  &            &\Vdots       \\
              &            &              \\
\blacktriangle  &\Cdots^{q}  &\blacktriangle  \\
\end{bNiceMatrix}
$}  \label{bandedmatrixv04}
\end{align}
\begin{tikzpicture}[remember picture, overlay]
 \draw [draw=white] (4.2,4.3) -- (4.21,4.3)  node[above=0.01mm] {\mbox{\normalsize $N_{C}^{[n]}$} };
 \draw [draw=white] (7.8,4.3) -- (7.81,4.3)  node[above=0.01mm] {\mbox{\normalsize $N_{C}^{[n,2dq]}$}};
 \draw [draw=white] (10.2,4.30) -- (10.21,4.30)  node[above=0.01mm] {\mbox{\normalsize $B^{[n]}(z)$ }};
 \draw [draw=white] (8.2,1.0) -- (8.21,1.0)  node[above=0.01mm] {\mbox{\normalsize $B^{[n,2dq]}(z)$ }};
 \draw [draw=white] (12.2,1.7) -- (12.21,1.7)  node[above=0.01mm] {\mbox{\normalsize $N_{C}^{[n]}B^{[n]}(z)$} };
 \draw [draw=white] (15.00,1.7) -- (15.01,1.7)  node[above=0.01mm] {\mbox{\scriptsize $N_{C}^{[n,2dq]}B^{[n,2dq]}(z)$} };
\end{tikzpicture}



Observe that the above expression takes a different form for polynomials $%
A(z^{-1})$, because the matrix $N_{C}$ is a banded upper triangular
matrix, so we only have zeros under the main diagonal, and therefore we
actually have 
\begin{align}
\hat{A}^{[n]}(z^{-1})N_{C}^{[n]}=A^{[n]}(z^{-1}).  \label{truncaANn}
\end{align}


Taking into account column by column the above expressions, and the
definition \eqref{CDKernel(x,y)ab}, the modified kernel is given by%
\begin{align*}
W_{b}(y)\hat{K}_{a,b}^{[n]}(x,y) =&W_{b}(y)\left( \hat{A}%
^{(a)}(x^{-1})\right) ^{[n]}\left( \hat{B}^{(b)}(y)\right) ^{[n]} \\
=&\left( \hat{A}^{(a)}(x^{-1})\right) ^{[n]}\left( N_{C}^{[n]}\left(
B^{(b)}(y)\right) ^{[n]}+N_{C}^{[n,2dq
]}\left( B^{(b)}(y)\right) ^{[n,2dq]}\right) \\
=&\left( \hat{A}^{(a)}(x^{-1})\right) ^{[n]}N_{C}^{[n]}\left(
B^{(b)}(y)\right) ^{[n]}+\left( \hat{A}^{(a)}(x^{-1})\right) ^{[n]}N_{C}^{[n,2dq]}\left( B^{(b)}(y)\right) ^{[n,2dq]} \\
=&\left( A^{(a)}(x^{-1})\right) ^{[n]}\left( B^{(b)}(y)\right)
^{[n]}+\left( \hat{A}^{(a)}(x^{-1})\right) ^{[n]}N_{C}^{[n,2dq ]}\left( B^{(b)}(y)\right) ^{[n,2dq]}
\end{align*}%
so, we thus conclude%
\begin{align}
W_{b}(y)\hat{K}_{a,b}^{[n]}(x,y)=K_{a,b}^{[n]}(x,y)+\left( \hat{A}%
^{(a)}(x^{-1})\right) ^{[n]}N_{C}^{[n,2dq]}\,\left( B^{(b)}(y)\right) ^{[n,2dq]}.  \label{connKernels}
\end{align}



\subsection{Christoffel formula for $\hat{B}(z)$}



Next, we provide the corresponding Christoffel formula for the CMV Laurent
polynomials $\hat{B}(z)$. With the same notation $\tilde{N_{C}}%
_{i} $ as above, for \eqref{sumaNB} we have%
\begin{align*}
\sum_{m=n}^{n+2dq-1} N_{C,n,m}\,B_{m}^{(b)}(z_{b,\ell })&=-\tilde{N}_{C,n}\,B_{n+2dq}^{(b)}(z_{b,\ell }),& \begin{aligned}
	b&\in\{1,\ldots ,q\},&\ell &\in \{1,\ldots ,2d\}.
\end{aligned}
\end{align*}%
In the sequel we use the following notation
\begin{align} \label{Bgotica}
	\mathcal B_n\coloneq 			\begin{bmatrix}
		B_{n}^{(1)}(z_{1,1}) & \cdots & B_{n}^{(1)}(z_{1,2d}) & \cdots & 
		B_{n}^{(q)}(z_{q,1}) & \cdots & B_{n}^{(q)}(z_{q,2d}) \\[3pt]
		B_{n+1}^{(1)}(z_{1,1}) & \cdots & B_{n+1}^{(1)}(z_{1,2d}) & \cdots & 
		B_{n+1}^{(q)}(z_{q,1}) & \cdots & B_{n+1}^{(q)}(z_{q,2d}) \\ 
		\vdots &  & \vdots &  & \vdots &  & \vdots \\ \\
		B_{n+2dq-1}^{(1)}(z_{1,1}) & \cdots & B_{n+2dq-1}^{(1)}(z_{1,2d}) & \cdots & 
		B_{n+2dq-1}^{(q)}(z_{q,1}) & \cdots & B_{n+2dq-1}^{(q)}(z_{q,2d})%
	\end{bmatrix}.%
\end{align}%
This squared $2dq\times 2dq$ matrix comes from the evaluation of the
matrix $\left( B^{(b)}(z_{b,\ell })\right) ^{[n,2dq]}$ at the $2dq$
different zeros $z_{b,\ell }$ , $b\in\{1,\ldots ,q\}$, $\ell \in \{1,\ldots ,2d\}$
of the perturbation matrix $W_{[q]}$ defined in \eqref{mbLp[r]}.
Then, in matrix form, for every $n\in\N_0 $ we get%
\begin{align*}
\begin{bmatrix}
N_{C,n,n} & N_{C,n,n+1} & \cdots & N_{C,n,n+2dq-1}
\end{bmatrix} \mathcal B_n%
=-\tilde{N}_{C,n}%
\begin{bmatrix}
B_{n+2dq}^{(1)}(z_{1,1}) & B_{i+2dq}^{(2)}(z_{2,2}) & \cdots & 
B_{n+2dq}^{(q)}(z_{q,2d})%
\end{bmatrix}.%
\end{align*}



The solution of this equation provides all the nonzero entries of the connection matrix $N_{C}$, which are our unknowns. Thus, replacing all these values in \eqref{connformB} leads to
\begin{align*}
W_{b}(z)\frac{1}{\tilde{N}_{C,n}}\hat{B}_{i}^{(b)}(z)=B_{n+2dq}^{(b)}(z)-
\begin{bmatrix}
B_{n+2dq}^{(1)}(z_{1,1}) & \cdots & B_{n+2dq}^{(q)}(z_{q,2d})%
\end{bmatrix}%
{%
\mathcal B_n
}^{-1}%
\begin{bmatrix}
B_{n}^{(b)}(z) \\ 
\vdots \\ 
B_{n+2dq-1}^{(b)}(z)%
\end{bmatrix}.%
\end{align*}%
Now, let us define the quasi-determinant $\Theta _{\ast }$\ of a given block
matrix as follows (see, for example \cite[Sec. 3.1]{M-EIBPOA21}, \cite%
{GGRW-AM05})%
\begin{align}
\Theta _{\ast }\begin{bNiceMatrix}[margin,hvlines] A & b \\ \hline c & d \\
\end{bNiceMatrix}\coloneq d-cA^{-1}b=\frac{%
\begin{vNiceMatrix}[margin,hvlines] A & b \\ \hline c & d \\
\end{vNiceMatrix}}{\left\vert A\right\vert }.  \label{QuasiDet}
\end{align}%
From this, we found that the CMV mixed multiple Laurent OPUC on the step
line $\hat{B}(z)$ associated with the Christoffel $W$-modified matrix of
measures supported on the unit circle $\d\hat{\mu}(z)=W_{[q]}\d\mu (z)$,
can be represented in terms of the polynomials $B(z)$ as the following
quasi-determinant%
\begin{align*}
\frac{1}{\tilde{N}_{C,i}}\hat{B}_{n}^{(b)}(z)=\frac{1}{W_{b}(z)}%
\Theta _{\ast }{%
\begin{bmatrix}
B_{n}^{(1)}(z_{1,1}) & \cdots & B_{n}^{(q)}(z_{q,2d}) & B_{n}^{(b)}(z) \\ 
\vdots &  & \vdots & \vdots \\ 
B_{n+2dq-1}^{(1)}(z_{1,1}) & \cdots & B_{n+2dq-1}^{(q)}(z_{q,2d}) & 
B_{n+2dq-1}^{(b)}(z) \\[3pt]
B_{n+2dq}^{(1)}(z_{1,1}) & \cdots & B_{n+2dq-1}^{(q)}(z_{q,2d}) & 
B_{n+2dq}^{(b)}(z)%
\end{bmatrix}%
.}
\end{align*}

Finally, taking again into account \eqref{QuasiDet}, we have proved the
following



\begin{pro}
The Christoffel formula for the perturbed CMV mixed multiple Laurent OPUC on
the step-line $\hat{B}(z)$ can be written as follows%
\begin{align*}
\frac{1}{\tilde{N}_{C,n}}\hat{B}_{n}^{(b)}(z)=\frac{1}{W_{b}(z)}{%
\frac{{%
\begin{vmatrix}
B_{n}^{(1)}(z_{1,1}) & \cdots & B_{n}^{(q)}(z_{q,2d}) & B_{n}^{(b)}(z) \\ 
\vdots &  & \vdots & \vdots \\ 
B_{n+2dq-1}^{(1)}(z_{1,1}) & \cdots & B_{n+2dq-1}^{(q)}(z_{q,2d}) & 
B_{n+2dq-1}^{(b)}(z) \\ 
B_{n+2dq}^{(1)}(z_{1,1}) & \cdots & B_{n+2dq-1}^{(q)}(z_{q,2d}) & 
B_{n+2dq}^{(b)}(z)%
\end{vmatrix}%
}}{{%
\begin{vmatrix}
B_{n}^{(1)}(z_{1,1}) & \cdots & B_{n}^{(q)}(z_{q,2d}) \\ 
\vdots &  & \vdots \\ 
B_{n+2dq-1}^{(1)}(z_{1,1}) & \cdots & B_{n+2dq-1}^{(q)}(z_{q,2d})%
\end{vmatrix}%
}}.}
\end{align*}
\end{pro}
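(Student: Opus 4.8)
The plan is to read off the nonzero entries of the connection matrix $N_C$ from the conditions that it must annihilate $B^{(b)}$ at the zeros of $W_{[q]}$, and then to recognise the resulting formula for $\hat B_n^{(b)}$ as a quasi-determinant, so that the identity \eqref{QuasiDet} converts it into the stated ratio of ordinary determinants. First I would recall from \eqref{sumaNB} — equivalently, from $\mathcal N\,B^{(b)}(z_{b,\ell})=0$ together with the fact, established around \eqref{bandedmatrixv03}, that $N_C$ is upper triangular and $(2dq+1)$-banded — that for each fixed row index $n\in\mathbb N_0$ the entries $N_{C,n,n},\dots,N_{C,n,n+2dq-1}$ satisfy the $2dq$ linear equations
\begin{align*}
\sum_{m=n}^{n+2dq-1}N_{C,n,m}\,B_m^{(b)}(z_{b,\ell}) = -\,\tilde N_{C,n}\,B_{n+2dq}^{(b)}(z_{b,\ell}),\qquad b\in\{1,\dots,q\},\ \ell\in\{1,\dots,2d\},
\end{align*}
which in matrix form read $\begin{bmatrix} N_{C,n,n} & \cdots & N_{C,n,n+2dq-1}\end{bmatrix}\mathcal B_n = -\tilde N_{C,n}\begin{bmatrix} B_{n+2dq}^{(1)}(z_{1,1}) & \cdots & B_{n+2dq}^{(q)}(z_{q,2d})\end{bmatrix}$, with $\mathcal B_n$ the $2dq\times 2dq$ matrix \eqref{Bgotica}.

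Second, under the standing regularity hypothesis that the $2dq$ zeros $z_{b,\ell}$ are simple and pairwise distinct, the evaluation matrix $\mathcal B_n$ is nonsingular, so the row of unknowns is determined: $\begin{bmatrix} N_{C,n,n} & \cdots & N_{C,n,n+2dq-1}\end{bmatrix} = -\tilde N_{C,n}\begin{bmatrix} B_{n+2dq}^{(1)}(z_{1,1}) & \cdots & B_{n+2dq}^{(q)}(z_{q,2d})\end{bmatrix}\mathcal B_n^{-1}$. Substituting this back into the connection formula \eqref{connformB} written in the generic variable $z$ — where, once more because $N_C$ is $(2dq+1)$-banded and upper triangular, only the band $B_n^{(b)}(z),\dots,B_{n+2dq}^{(b)}(z)$ enters its $n$-th row — yields
\begin{align*}
W_b(z)\,\frac1{\tilde N_{C,n}}\,\hat B_n^{(b)}(z) = B_{n+2dq}^{(b)}(z) - \begin{bmatrix} B_{n+2dq}^{(1)}(z_{1,1}) & \cdots & B_{n+2dq}^{(q)}(z_{q,2d})\end{bmatrix}\mathcal B_n^{-1}\begin{bmatrix} B_n^{(b)}(z)\\ \vdots\\ B_{n+2dq-1}^{(b)}(z)\end{bmatrix}.
\end{align*}

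Third, I would identify the right-hand side as the quasi-determinant $\Theta_{\ast}$ of the bordered block matrix whose top-left block is $A=\mathcal B_n$, whose bottom row is $\begin{bmatrix} B_{n+2dq}^{(1)}(z_{1,1}) & \cdots & B_{n+2dq}^{(q)}(z_{q,2d})\end{bmatrix}$, whose right column is $\begin{bmatrix} B_n^{(b)}(z) & \cdots & B_{n+2dq-1}^{(b)}(z)\end{bmatrix}^{\top}$ and whose corner entry is $B_{n+2dq}^{(b)}(z)$. By \eqref{QuasiDet} this quasi-determinant equals the determinant of the full $(2dq+1)\times(2dq+1)$ bordered matrix divided by $\det\mathcal B_n$; hence $W_b(z)\tilde N_{C,n}^{-1}\hat B_n^{(b)}(z)$ is precisely that quotient, and dividing by $W_b(z)$ gives the asserted Christoffel formula. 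The one genuinely substantive point is the invertibility of $\mathcal B_n$ — a Haar-type nondegeneracy of the family $\{B_n^{(b)},\dots,B_{n+2dq-1}^{(b)}\}_{b=1}^q$ sampled at the nodes $\{z_{b,\ell}\}$ — which I would absorb into the regularity assumption already imposed when requiring the zeros to be simple and distinct (without it the $n$-th row of $N_C$, and hence $\hat B_n^{(b)}$, would not be uniquely pinned down); everything else is routine bookkeeping with the banded upper-triangular structure displayed in \eqref{bandedmatrixv03}–\eqref{bandedmatrixv04}.
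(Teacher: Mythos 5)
Your proposal is correct and follows essentially the same route as the paper: you set up the linear system \eqref{sumaNB} for the banded entries of $N_C$ using the evaluation matrix $\mathcal B_n$, invert it under the nondegeneracy assumption of Remark \ref{remBgotic copy(1)}, substitute into the connection formula \eqref{connformB}, and convert the resulting quasi-determinant into the stated ratio of determinants via \eqref{QuasiDet}. The only difference is presentational — you make the invertibility hypothesis and the role of the band structure slightly more explicit than the paper does.
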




\begin{Remark}
\label{remBgotic copy(1)} It is important to note that the entries of the
determinant at the denominator of the above expression depend on the $2dq$
different zeros of the matrix balanced Laurent polynomials $W_{[q]}$, so in
the sequel we only consider those set of $2dq$ zeros such that the
aforementioned determinant is not zero.
\end{Remark}



\subsection{Christoffel formula for $\hat{A}(z^{-1})$}



We next consider the Christoffel formula for the CMV Laurent polynomials $A(z^{-1})$, which come in terms of the CD kernels of the above section \eqref{S03-CDkernels}. Evaluating formula \eqref{connKernels} for a fixed $b\in\{1,\ldots ,q\}$, at the zeros of the polynomials $W_{b}(y)$, for $y=z_{b,\ell}$, $\ell \in \{1,\ldots ,2d\}$, yields $W_{b}(z_{b,\ell })=0$.
We will use the notation
\begin{align}
	\kappa _{a,b,\ell}^{[n]}(x)&\coloneq K_{a,b}^{[n]}(x,z_{b,\ell }), & \kappa _{a}^{[n]}(x)\coloneq%
	\begin{bmatrix}
		\kappa _{a,1,1}^{[n]}(x)&\cdots & 	\kappa _{a,1,2d}^{[n]}(x) &\cdots & \kappa _{a,q,1}^{[n]}(x) & \cdots & \kappa
		_{a,q,2d}^{[n]}(x)%
	\end{bmatrix}%
\end{align}

%
%
%
%
%
\begin{Remark}
\label{remBgotic} It is important to note that the entries in $\Pi $ depend
on the $2dq$ different zeros of the matrix balanced Laurent polynomials $%
W_{[q]}$, so in the sequel we only consider those set of $2dq$ zeros such
that $\det \Pi \neq 0$. That is, we only deal with those cases in which $\Pi 
$ is nonsingular, and therefore it is an invertible matrix.
\end{Remark}



Thus,we get 

\begin{align*}
\kappa^{[n]}_a(x)%
=-\left( \hat{A}^{(a)}(x^{-1})\right) ^{[n]}N_{C}^{[n,2dq ]}\,%
\mathcal B_n,
\end{align*}
Next, let $\boldsymbol{e}_{s}^{[r]}$, $s,r\in \mathbb{N}$, $1\leq s\leq r$ be the $r\times 1$ column vector
\begin{align}
	\boldsymbol{e}_{s}^{[r]}=%
\begin{bNiceMatrix}[last-row,code-for-last-row = \scriptsize]
\Cdots[shorten=6pt] & 0 & \Cdots[shorten=5pt] \, 1 \, \Cdots[shorten=5pt] & 0 & \Cdots[shorten=6pt]	 \\
		&   & \overset{\uparrow }{\text{s-th position}} & &        \\  
	\CodeAfter
	\UnderBrace[shorten,yshift=18pt]{1-1}{1-5}{\text{\scriptsize $r$}}
\end{bNiceMatrix}^{\top }
\label{ecolumnvec}
\end{align}

\vspace*{1cm}
with just one at position $s$ and zeros everywhere else. Also we use $	\boldsymbol{e}_{s}=	\boldsymbol{e}_{s}^{[s]}$.

 According to remark \ref{remBgotic}, we only consider the cases in which the square matrix $\Pi $%
\ is invertible, so we can write%
\begin{align*}
\kappa _{a}^{[n]}(x)\mathcal B_n^{-1}	\boldsymbol{e}_{2dq}=-\left( \hat{A}^{(a)}(x^{-1})\right) ^{[n]}\,N_{C}^{[n,2dq ]}\,,
\end{align*}%
and taking into account \eqref{ecolumnvec}, and the structure of the matrix $N_{C}^{[n,2dq ]}$ we can finally state
\begin{align*}
\kappa _{a}^{[n]}(x)\mathcal B_n ^{-1}\boldsymbol{e}^{[2dq]}_{2dq}=-\left( \hat{A}^{(a)}(x^{-1})\right) ^{[n]}\,\tilde{N}_{C,n-1}\,,
\end{align*}%
where we denoted by $\tilde{N}_{C,n}=N_{C,n,n+2dq+1}$.

Thus, we have proved the following



\begin{Theorem}
The mixed multiple Laurent OPUC on the step line $\hat{A}(z^{-1})$
associated with the Christoffel $W$-modified matrix of measures supported on
the unit circle $\d\hat{\mu}(z)=W_{[q]}\d\mu (z)$, can be represented in
terms of the polynomials $A(z^{-1})$ as follows%
\begin{align*}
\hat{A}^{(a)}(x^{-1})=\frac{-1}{\tilde{N}_{C,n-1}}\kappa
_{a}^{[n]}(x)\mathcal B_n ^{-1}.
\end{align*}
\end{Theorem}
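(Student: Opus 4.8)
The plan is to extract the statement for $\hat A^{(a)}(x^{-1})$ directly from the truncated connection identities already developed, together with the truncated CD-kernel connection formula \eqref{connKernels}. First I would recall from \eqref{truncaANn} that the connector $N_C$, being upper triangular and banded, gives exactly $\hat A^{[n]}(z^{-1})N_C^{[n]}=A^{[n]}(z^{-1})$ with no spill-over tail, and likewise from \eqref{truncaNBn} the spill-over for $B$ sits in the rectangular corner block $N_C^{[n,2dq]}$. Substituting these two facts into $W_b(y)\hat K_{a,b}^{[n]}(x,y)=(\hat A^{(a)}(x^{-1}))^{[n]}(W_b(y)\hat B^{(b)}(y))^{[n]}$ produces \eqref{connKernels}. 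Evaluating \eqref{connKernels} at $y=z_{b,\ell}$, where $W_b(z_{b,\ell})=0$, kills the left-hand side and the unperturbed kernel term $K_{a,b}^{[n]}(x,z_{b,\ell})$ equals $\kappa_{a,b,\ell}^{[n]}(x)$; collecting these over all $b\in\{1,\dots,q\}$ and $\ell\in\{1,\dots,2d\}$ assembles the row vector identity $\kappa_a^{[n]}(x)=-(\hat A^{(a)}(x^{-1}))^{[n]}N_C^{[n,2dq]}\,\mathcal B_n$, exactly as displayed just before the theorem.

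Next I would invert $\mathcal B_n$, which is permissible by Remark \ref{remBgotic} (we restrict to zero-sets for which this $2dq\times2dq$ matrix is nonsingular), obtaining $\kappa_a^{[n]}(x)\mathcal B_n^{-1}=-(\hat A^{(a)}(x^{-1}))^{[n]}N_C^{[n,2dq]}$. The crucial structural observation is the shape of the corner block $N_C^{[n,2dq]}$: since $N_C$ is upper $(2dq+1)$-banded, only its very last row contributes a single nonzero entry, namely $\tilde N_{C,n-1}=N_{C,n-1,n-1+2dq+1}$ (the notation convention stated just before the theorem). Hence $(\hat A^{(a)}(x^{-1}))^{[n]}N_C^{[n,2dq]}$ picks out only the bottom component of $(\hat A^{(a)}(x^{-1}))^{[n]}$ times $\tilde N_{C,n-1}$, so that right-multiplying the previous identity by the column selector $\boldsymbol e_{2dq}^{[2dq]}$ isolates $\kappa_a^{[n]}(x)\mathcal B_n^{-1}\boldsymbol e_{2dq}^{[2dq]}=-\tilde N_{C,n-1}\,\hat A^{(a)}(x^{-1})$, and rearranging gives the stated formula $\hat A^{(a)}(x^{-1})=\dfrac{-1}{\tilde N_{C,n-1}}\kappa_a^{[n]}(x)\mathcal B_n^{-1}$.

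The main obstacle I anticipate is purely bookkeeping: tracking the index shift between the truncation level $n$ and the position $n-1$ where the relevant off-band entry of $N_C$ lives, and making sure the column-vector $\boldsymbol e_{2dq}^{[2dq]}$ really extracts the last ($2dq$-th) column of $N_C^{[n,2dq]}$ — i.e. verifying that the only surviving entry of that corner, given the band width, is in the final row/column position. One must also be careful that $\kappa_a^{[n]}(x)$ as a row vector is built in the \emph{same} ordering of the pairs $(b,\ell)$ as the columns of $\mathcal B_n$ in \eqref{Bgotica}, so that the matrix product is the one intended. Once the indexing is pinned down and the nonsingularity hypothesis on $\mathcal B_n$ is invoked, the derivation is just the chain of substitutions above; no genuinely hard estimate or new idea is needed beyond what \eqref{connKernels}, \eqref{truncaNBn}, \eqref{truncaANn}, and the banded shape \eqref{bandedmatrixv03} already provide.
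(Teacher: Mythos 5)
Your proposal is correct and follows essentially the same route as the paper: evaluate the kernel connection formula \eqref{connKernels} at the zeros $z_{b,\ell}$ of $W_b$, assemble the resulting identities into $\kappa_a^{[n]}(x)=-(\hat A^{(a)}(x^{-1}))^{[n]}N_C^{[n,2dq]}\mathcal B_n$, invert $\mathcal B_n$ under the nonsingularity hypothesis of Remark \ref{remBgotic}, and extract the last column of the corner block with $\boldsymbol e_{2dq}$ to isolate the single surviving band entry $\tilde N_{C,n-1}$. Only a minor caution: the corner block $N_C^{[n,2dq]}$ is not a single nonzero entry but a triangular array (rows $n-2dq$ through $n-1$); what is true, and what your use of the selector $\boldsymbol e_{2dq}^{[2dq]}$ correctly exploits, is that its \emph{last column} contains exactly one nonzero entry, sitting in the last row.
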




Hence, with definition \eqref{QuasiDet} at hand, we can rewrite $\hat{A}%
^{(a)}(x^{-1})$ in terms of quasi-determinants as follows 
\begin{align*}
\hat{A}^{(a)}(x^{-1})=\frac{-1}{\tilde{N}_{C,n-1}}\kappa
_{a}^{[n]}(x)\,)\mathcal B_n^{-1}\boldsymbol{e}_{2dq}=\frac{1}{\tilde{N}_{C,n-1}}\Theta_{\ast }\begin{bNiceMatrix}[margin,hvlines] \mathcal B_n &
\boldsymbol{e}_{2dq} \\ \hline \kappa _{a}^{[n]}(x) & 0 \\ \end{bNiceMatrix}=%
\frac{1}{\tilde{N}_{C,n-1}}\frac{%
\begin{vNiceMatrix}[margin,hvlines]\mathcal B_n& \boldsymbol{e}_{2dq} \\ \hline \kappa
_{a}^{[n]}(x) & 0 \\ \end{vNiceMatrix}}{\left\vert \mathcal B_n\right\vert }
\end{align*}%
or, more explicitly, expanding the determinant of the numerator by the last
column%
\begin{align*}
\hat{A}^{(a)}(x^{-1})=\frac{-1}{\tilde{N}_{C,n-1}}{\scriptsize {%
\frac{{%
\begin{vmatrix}
B_{i}^{(1)}(z_{1,1}) & \cdots & B_{i}^{(1)}(z_{1,2d}) & \cdots & 
B_{i}^{(q)}(z_{q,1}) & \cdots & B_{i}^{(q)}(z_{q,2d}) & 0 \\ 
B_{i+1}^{(1)}(z_{1,1}) & \cdots & B_{i+1}^{(1)}(z_{1,2d}) & \cdots & 
B_{i+1}^{(q)}(z_{q,1}) & \cdots & B_{i+1}^{(q)}(z_{q,2d}) & 0 \\ 
\vdots &  & \vdots &  & \vdots &  & \vdots & \vdots \\ 
B_{i+2dq-1}^{(1)}(z_{1,1}) & \cdots & B_{i+2dq-1}^{(1)}(z_{1,2d}) & \cdots & 
B_{i+2dq-1}^{(q)}(z_{q,1}) &  & B_{i+2dq-1}^{(q)}(z_{q,2d}) & 1 \\ 
\kappa _{a,1}^{[n]}(x) & \cdots & \kappa _{a,1}^{[n]}(x) & \cdots & \kappa
_{a,q}^{[n]}(x) &  & \kappa _{a,q}^{[n]}(x) & 0%
\end{vmatrix}%
}}{{%
\begin{vmatrix}
B_{i}^{(1)}(z_{1,1}) & \cdots & B_{i}^{(1)}(z_{1,2d}) & \cdots & 
B_{i}^{(q)}(z_{q,1}) & \cdots & B_{i}^{(q)}(z_{q,2d}) \\ 
B_{i+1}^{(1)}(z_{1,1}) & \cdots & B_{i+1}^{(1)}(z_{1,2d}) & \cdots & 
B_{i+1}^{(q)}(z_{q,1}) & \cdots & B_{i+1}^{(q)}(z_{q,2d}) \\ 
\vdots &  & \vdots &  & \vdots &  & \vdots \\ 
B_{i+2dq-1}^{(1)}(z_{1,1}) & \cdots & B_{i+2dq-1}^{(1)}(z_{1,2d}) & \cdots & 
B_{i+2dq-1}^{(q)}(z_{q,1}) & \cdots & B_{i+2dq-1}^{(q)}(z_{q,2d})%
\end{vmatrix}%
}}.}}
\end{align*}

\section{Geronimus perturbation}

\label{S05-GeronPerturb}

In this section we are going to study the so called Geronimus perturbations of the matrix measure, corresponding to divisions of $\d\mu (z)$ by certain appropiated Laurent polynomials. As in the previous section, given the matrix nature of the problem we will find again two types of Geronimus modifications, namely the LHS and the RHS Geronimus perturbations of $\d\mu (z)$. In a similar way, and since the theoretical development of the RHS case is the dual of the LHS case, we will perform in detail just the LHS Geronimus perturbations of $\d\mu (z)$, and for the dual case we will restrict ourselves to presenting the main results in the last part of this section.

In the sequel, we follow the ideas in \cite{AGMM-BMS19}, \cite{amt} and \cite{AAM}, and as in those papers, the so called second kind functions will play an important role in the subsequent discussion. Hence, let us define certain $q\times p$, with $q,p\in \mathbb{N}$\ matrix of complex measures $\d\check{\mu}(z)$, supported on the unit circle $\mathbb{T}$, and such that the following identity holds%
\begin{align}
\d\mu (z)=W_{[q]}(z)\d\check{\mu}(z)=%
\begin{bmatrix}
W_{1}(z) &  &  \\ 
& \ddots &  \\ 
&  & W_{q}(z)%
\end{bmatrix}%
\d\check{\mu}(z).  \label{measuGeron}
\end{align}%
$W_{[q]}(z)$ is a matrix prepared Laurent polynomial, as that defined in section \ref{S04-ChristPerturb}. Concerning the related matrices of moments, $M$ was define in \eqref{MMleft}, namely $M=L^{-1}U^{-1}$, and we define LHS Geronimus $W$-modified moment matrix $\mathcal{\check{M}}$ as follows
\begin{align}
\mathcal{\check{M}}\coloneq\oint_{\mathbb{T}}Z_{[q]}(z)\d\check{\mu}(z)\,Z_{[p]}^{\top }(z^{-1})  \label{Mcheck}
\end{align}

\begin{pro}
With the notation \eqref{Wsuma}, the LHS Geronimus perturbed CMV moment
matrix $\mathcal{\check{M}}$, and the moment matrix $M$, can be
related as follows%
\begin{align*}
\mathcal{W\check{M}}=M
\end{align*}
\end{pro}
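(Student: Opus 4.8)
The plan is to mirror exactly the computation that was carried out for the Christoffel case, where the analogous identity $\hat{\mathcal M}=\mathcal W M$ was established. The key input is the operator identity $W_{[q]}(\Upsilon_{[q]})Z_{[q]}(z)=Z_{[q]}(z)W_{[q]}(z)$ from \eqref{dosLados}, together with the reorganisation \eqref{ladoI} that writes the left-hand side as $\left(\sum_{b=1}^{q}W_b(\Upsilon_{[q]}^{(b)})\right)Z_{[q]}(z)=\mathcal W Z_{[q]}(z)$. These are statements about monomial matrices only, so they hold verbatim here; nothing about the measure is used in their proof.

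First I would start from the defining relation \eqref{measuGeron}, namely $\d\mu(z)=W_{[q]}(z)\,\d\check\mu(z)$, and substitute it into the expression \eqref{MMleft} for $M$:
\begin{align*}
M=\oint_{\mathbb{T}}Z_{[q]}(z)\,\d\mu(z)\,Z_{[p]}^{\top}(z^{-1})=\oint_{\mathbb{T}}Z_{[q]}(z)\,W_{[q]}(z)\,\d\check\mu(z)\,Z_{[p]}^{\top}(z^{-1}).
\end{align*}
Then I would use \eqref{dosLados}, i.e. $Z_{[q]}(z)W_{[q]}(z)=W_{[q]}(\Upsilon_{[q]})Z_{[q]}(z)=\mathcal W Z_{[q]}(z)$ with $\mathcal W=\sum_{b=1}^{q}W_b(\Upsilon_{[q]}^{(b)})$ as in \eqref{Wsuma}, to pull the matrix $\mathcal W$ — which is independent of $z$ — outside the integral:
\begin{align*}
M=\oint_{\mathbb{T}}\mathcal W\,Z_{[q]}(z)\,\d\check\mu(z)\,Z_{[p]}^{\top}(z^{-1})=\mathcal W\oint_{\mathbb{T}}Z_{[q]}(z)\,\d\check\mu(z)\,Z_{[p]}^{\top}(z^{-1})=\mathcal W\check{\mathcal M},
\end{align*}
where the last equality is simply the definition \eqref{Mcheck} of $\check{\mathcal M}$. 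This is the asserted identity $\mathcal W\check{\mathcal M}=M$.

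There is essentially no obstacle here: the whole argument is the Christoffel computation read backwards, since the only algebraic fact needed is that a diagonal matrix Laurent polynomial evaluated on $Z_{[q]}$ equals a left multiplication by the banded matrix $\mathcal W=W_{[q]}(\Upsilon_{[q]})$. The one point worth a sentence of care is the interchange of $\mathcal W$ with the integral, which is legitimate because $\mathcal W$ is a fixed semi-infinite matrix not depending on the integration variable $z$, and because each matrix entry of the integrand is a finite Laurent-polynomial combination of the moments $c_n$ (so convergence of the relevant sums is not in question). I would therefore present the proof in three displayed lines exactly as above, citing \eqref{measuGeron}, \eqref{dosLados}--\eqref{Wsuma}, and \eqref{Mcheck} in turn, with the remark that it is the dual of the proof of the proposition preceding \eqref{relatAB}.
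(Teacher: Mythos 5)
Your proof is correct and follows essentially the same route as the paper's: substitute $\d\mu=W_{[q]}\,\d\check\mu$ into the definition of $M$, use the identity $Z_{[q]}(z)W_{[q]}(z)=\mathcal{W}\,Z_{[q]}(z)$ established in the Christoffel section, and pull the $z$-independent matrix $\mathcal{W}$ out of the integral to recognise $\check{\mathcal{M}}$. The extra remark about the legitimacy of interchanging $\mathcal{W}$ with the integral is a small bonus of care beyond what the paper records.
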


\begin{proof}
The proof is quite obvious from the definition of the matrices $\mathcal{%
\check{M}}$ and $M$. Thus%
\begin{align*}
M =&\oint_{\mathbb{T}}Z_{[q]}(z)\d\mu
(z)\,Z_{[p]}^{\top }(z^{-1})=\oint_{\mathbb{T}}Z_{[q]}(z)\,W_{[q]}(z)\d\check{\mu}(z)\,Z_{[p]}^{\top }(z^{-1}) \\
=&\left( \sum_{b=1}^{q}W_{b}\left( \Upsilon_{[q]}^{(b)}\right)
\right) \oint_{\mathbb{T}}Z_{[q]}(z)\d\check{\mu}%
(z)\,Z_{[p]}^{\top }(z^{-1})=\mathcal{W\check{M}}.
\end{align*}
\end{proof}

By analogy, in the sequel we will denote by 
\begin{align}
\check{B}(z)\coloneq\check{L}Z_{[q]}(z)\text{, \ and \ \ }\check{A}(z^{-1})%
\coloneq Z_{[p]}^{\top }(z^{-1})\check{U}  \label{BApolGer}
\end{align}%
the CMV matrix Laurent polynomials, biorthogonal with respect to the
Geronimus $W$-modified matrix of measures $\d\check{\mu}$.

\subsection{Connector matrix and connection formulas}

Next we consider the Gauss factorization of the moment matrix $%
\mathcal{\check{M}}=\check{L}^{-1}\check{U}^{-1}$, and taking into account
the last proposition, we have%
\begin{align*}
L^{-1}U^{-1}=\mathcal{W\check{M}}=\mathcal{W}\check{L}^{-1}\check{U}^{-1}.
\end{align*}

Multiplying the last equation by $\check{U}$ on the r.h.s., and by $L$ on
the l.h.s., we call the resulting semi-infinite upper triangular \textit{%
connection matrix}%
\begin{align}
N_{G}\coloneq U^{-1}\check{U}=L\sum_{b=1}^{q}W_{b}(\Upsilon ^{(b)})%
\check{L}^{-1}=L\,\mathcal{W\,}\check{L}^{-1}.  \label{defmatrixNGer}
\end{align}

In this case, the connector, or connection matrix is given by $N_{G}$,
and it links the original with the Geronimus $W$-modified sequences of
orthogonal polynomials. As in the Christoffel modifiying case, we can state
as follows the connection properties of $N_{G}$

\begin{pro}[Connection formulas]
The following connection formulas, between CMV matrix Laurent polynomials
biorthogonals with respect to $\d\mu (z)$ and $\d\check{\mu}(z)$, hold:%
\begin{align}
\begin{array}{lllll}
N_{G}\,\check{B}(z)=B(z)W_{[q]}(z), &  &  &  & N_{G}\,\check{B}%
^{(b)}(z)=B^{(b)}(z)W_{b}(z), \\ 
&  &  &  &  \\ 
A(z^{-1})\,N_{G}=\check{A}(z^{-1}), &  &  &  & A^{(a)}(z^{-1})N^G=\check{A}^{(a)}(z^{-1}).%
\end{array}
\label{relatABGer}
\end{align}
\end{pro}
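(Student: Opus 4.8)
The plan is to replay, almost verbatim, the three-line argument that established the Christoffel connection formulas, now feeding in the two expressions for the Geronimus connector recorded in \eqref{defmatrixNGer}, namely $N_{G}=U^{-1}\check U=L\,\mathcal{W}\,\check L^{-1}$. The only auxiliary fact needed is the commutation identity $\mathcal{W}\,Z_{[q]}(z)=Z_{[q]}(z)\,W_{[q]}(z)$, which was proved inside the corresponding Christoffel proposition (see \eqref{dosLados} together with the computation $Z_{[q]}(z)W_{[q]}(z)=\big(\sum_{b=1}^{q}W_{b}(\Upsilon_{[q]}^{(b)})\big)Z_{[q]}(z)=\mathcal{W}Z_{[q]}(z)$); taking the $b$-th column of that identity, and using that $\mathcal{W}$ acts column by column while $W_{[q]}(z)$ is diagonal (equivalently, using \eqref{prop02ab}), yields the columnwise version $\mathcal{W}\,Z_{[q]}^{(b)}(z)=Z_{[q]}^{(b)}(z)\,W_{b}(z)$.

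For the $B$-side I would compute, from $\check B(z)=\check L Z_{[q]}(z)$ and the factorization $N_{G}=L\mathcal{W}\check L^{-1}$,
\[
N_{G}\,\check B(z)=L\,\mathcal{W}\,\check L^{-1}\check L\,Z_{[q]}(z)=L\,\mathcal{W}\,Z_{[q]}(z)=L\,Z_{[q]}(z)\,W_{[q]}(z)=B(z)\,W_{[q]}(z),
\]
and then restrict to the $b$-th column, where the same chain with $Z_{[q]}^{(b)}$ in place of $Z_{[q]}$ gives $N_{G}\,\check B^{(b)}(z)=L\,Z_{[q]}^{(b)}(z)\,W_{b}(z)=B^{(b)}(z)\,W_{b}(z)$. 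For the $A$-side the argument is purely algebraic: from $A(z^{-1})=Z_{[p]}^{\top}(z^{-1})U$ and $N_{G}=U^{-1}\check U$,
\[
A(z^{-1})\,N_{G}=Z_{[p]}^{\top}(z^{-1})\,U\,U^{-1}\check U=Z_{[p]}^{\top}(z^{-1})\,\check U=\check A(z^{-1}),
\]
and replacing $Z_{[p]}^{\top}(z^{-1})$ by its $a$-th row $\big(Z_{[p]}^{(a)}(z^{-1})\big)^{\top}$ produces $A^{(a)}(z^{-1})\,N_{G}=\check A^{(a)}(z^{-1})$.

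I do not expect a genuine obstacle: the whole computation is routine once \eqref{defmatrixNGer} and the identity $\mathcal{W}Z_{[q]}=Z_{[q]}W_{[q]}$ are available. The one point worth a remark rather than any work is the asymmetry with the Christoffel case: here the perturbing factor $W_{[q]}(z)$ appears on the \emph{right} of the unperturbed polynomial $B(z)$ (and $W_{b}(z)$ on the right of $B^{(b)}(z)$), which is the natural effect of reading $\d\mu=W_{[q]}\d\check\mu$ as building $\mu$ out of $\check\mu$, dual to the Christoffel identity $\mathcal{W}\hat B(z)=\mathcal{N}B(z)$. It is also worth recording, exactly as in \eqref{connformA}, that $N_{G}=U^{-1}\check U$ is upper triangular (a product of upper triangular matrices), so the connection formula for $A$ carries no Hessenberg/remainder correction.
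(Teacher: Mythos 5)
Your proposal is correct and follows essentially the same route as the paper: both arguments insert the two factorizations $N_{G}=U^{-1}\check U=L\,\mathcal{W}\,\check L^{-1}$ into $\check B(z)=\check L Z_{[q]}(z)$ and $A(z^{-1})=Z_{[p]}^{\top}(z^{-1})U$, and invoke the commutation identity $\mathcal{W}\,Z_{[q]}(z)=Z_{[q]}(z)\,W_{[q]}(z)$ from the Christoffel section to move the perturbing factor to the right of $B(z)$. If anything, your explicit final step $L\,Z_{[q]}(z)\,W_{[q]}(z)=B(z)\,W_{[q]}(z)$ states the conclusion more cleanly than the paper's own display, which leaves it as $\mathcal{W}\,B(z)$ before passing to columns.
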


\begin{proof}
From \eqref{ZnmonCMVj} and \eqref{BApolGer} we can therefore study the
action of the matrix $N_{G}$ on $\check{B}(z)$, namely%
\begin{align*}
N_{G}\mathcal{\,}\check{B}(z)=L\,\mathcal{W\,}\check{L}^{-1}\check{L}%
\,Z_{[q]}(z)=L\mathcal{W}Z_{[q]}(z)=\mathcal{W}\,B(z),
\end{align*}%
and therefore, on the columns of $B(z)$, we have%
\begin{align}
N_{G}\mathcal{\,}\check{B}^{(b)}=L\mathcal{W}%
Z_{[q]}^{(b)}=W_{b}(z)B^{(b)},  \label{conformB0Ger}
\end{align}%
which provides the following connection formula for the polynomials $\check{B%
}^{(b)}(z)$ and $B^{(b)}(z)$%
\begin{align}
W_{b}(z)\check{B}^{(b)}(z)=N_{G}\mathcal{\,}B^{(b)}(z).
\label{connformBGer}
\end{align}%
Proceeding in a similar manner, for $a\in \{1,\ldots ,p\}$ we find that%
\begin{align*}
\check{A}(z^{-1})=Z_{[p]}^{\top }(z^{-1})\check{U}=%
\begin{bmatrix}
\check{A}^{(1)}(z^{-1}) \\ 
\check{A}^{(2)}(z^{-1}) \\ 
\vdots \\ 
\check{A}^{(p)}(z^{-1})%
\end{bmatrix}%
=%
\begin{bmatrix}
\left( Z_{[p]}^{(1)}(z^{-1})\right) ^{\top } \\ 
\left( Z_{[p]}^{(2)}(z^{-1})\right) ^{\top } \\ 
\vdots \\ 
\left( Z_{[p]}^{(p)}(z^{-1})\right) ^{\top }%
\end{bmatrix}%
\check{U}
\end{align*}%
and therefore, the action of the upper triangular matrix $N_{G}$ on the
r.h.s. $A(z^{-1})$, yields%
\begin{align*}
A(z^{-1})\,N_{G}=Z_{[p]}^{\top }(z^{-1})UU^{-1}\check{U}=Z_{[p]}^{\top
}(z^{-1})\check{U}=\check{A}(z^{-1}),
\end{align*}%
which provides a connection formula between $A(z^{-1})$ and $\check{A}%
(z^{-1})$%
\begin{align}
A(z^{-1})\,N_{G}=\check{A}(z^{-1}).  \label{connformAGer}
\end{align}%
Component-wise for $\check{A}^{(a)}(z^{-1})$ and $A^{(a)}(z^{-1})$, the above
expression easily follows.
\end{proof}

As in section \ref{S04-ChristPerturb}, we briefly discuss the shape of the connection matrix reason why we need perturbations as prepared Laurent polynomials of the same degree in $W_{[q]}(z)$, and this relies on the particular structure of the matrix $\Upsilon_{[q]}$\ and its powers. From \eqref{Wsuma} and \eqref{defmatrixN} we know
\begin{align*}
N_{G}=U^{-1}\check{U}=L\left(\sum_{b=1}^{q}W_{b}(\Upsilon^{(b)})\right) \check{L}^{-1}=L\mathcal{W}\check{L}^{-1}.
\end{align*}
On one side $U^{-1}\check{U}$ means that $N_{G}$ must be an upper triangular matrix, and on the other side one encounters that $L\,\mathcal{W}\check{L}^{-1}$ must be a lower Hessenberg matrix depending on the particular structure of the matrix $\sum_{b=1}^{q}W_{b} \left( \Upsilon_{[q]}^{(b)}\right) $, that in turn depends on the degrees $\pm d$, and the number $q$ of the prepared Laurent polynomials at the diagonal of $W_{[q]}\left( \Upsilon_{[q]}\right) $. 
From \eqref{powersUpsilon}, the matrix $W_{[q]}\left( \Upsilon_{[q]}\right)$ will be a $\left( 4dq+1\right) $-diagonal matrix, which in turn implies that its highest nonzero upper-diagonal will be at position $(2dq+1)$. 
Putting together the two  requirements mentioned above, it follows that $N_{G}$ will be, as in the Christoffel case, a semi-infinite upper $(2dq+1)$-banded matrix. Thus, the shape of $N_{G}$ is
\vspace*{.5cm}
\begin{align}
\renewcommand{\arraystretch}{1.0}N_{G}=\begin{bNiceMatrix} \ast &
&\Cdots_{2dq+1}& &\ast & & & & & & \\ &\Ddots & & & &\Ddots & & & & & \\ & &
& & & & \ast & & & & \\ & & & & & & &\blacktriangle & & & \\ & & & & & & & &
& & \\ & & & & & &\Vdots_{2dq+1}&\Vdots^{2dq} & &\Ddots& \\ & & & & & & \ast
&\blacktriangle &\Cdots_{2dq+1}& &\blacktriangle \\ \CodeAfter \tikz \draw
(1-|8) -- (8-|8); \end{bNiceMatrix}  \label{bandedmatrixv06}
\end{align}%
\begin{tikzpicture}[remember picture, overlay]
  \draw [draw=white] (6.1,3.9) -- (6.15,3.9)  node[above=0.15mm] {${\scriptstyle N_{G,n,\ell}}$};
  \draw [draw=white] (9.2,3.9) -- (9.25,3.9)  node[above=0.15mm] {${\scriptstyle \tilde N_{G,\ell} = N_{G,n,n+2dq}}$};
\end{tikzpicture}

We next prove the following result concerning the connection between $C(z)$, $D(z)$, $\check{C}(z)$ and $\check{D}(z)$:
\begin{pro}
For $C(z)$, $D(z)$, $\check{C}(z)$ and $\check{D}(z)$, the following connection formulas follow
\begin{align}
W_{[q]}(z)\check{C}(z)-C(z)\,N_{G}=&\oint_{\mathbb{T}}\frac{%
W_{[q]}(z)-W_{[q]}(x)}{z-x}\d\check{\mu}(x)\check{A}(x^{-1}),
\label{skfCconn} \\
\,N_{G}\,\check{D}(z) =&D(z).  \label{skfDconn}
\end{align}
\end{pro}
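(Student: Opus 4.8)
The plan is to work directly from the integral definitions of the second kind functions and the connection formulas for the Laurent polynomials already established. Recall from \eqref{skfD} and \eqref{skfC} that
\[
D(z)=\oint_{\mathbb T}\frac{1}{z-u^{-1}}B(u)\,\d\mu(u),\qquad
C(z)=\oint_{\mathbb T}\frac{1}{z^{-1}-u}\,\d\mu(u)A(u^{-1}),
\]
and similarly $\check D(z)$ and $\check C(z)$ are obtained by replacing $B,A,\mu$ with $\check B,\check A,\check\mu$. The key inputs are the connection formulas \eqref{relatABGer}, namely $N_G\,\check B(z)=B(z)W_{[q]}(z)$ and $A(z^{-1})\,N_G=\check A(z^{-1})$, together with the Geronimus relation \eqref{measuGeron}, $\d\mu(z)=W_{[q]}(z)\d\check\mu(z)$.

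First I would prove \eqref{skfDconn}. Starting from $\check D(z)=\oint_{\mathbb T}\frac{1}{z-u^{-1}}\check B(u)\,\d\check\mu(u)$, I multiply on the left by $N_G$ and pull it inside the integral (it is a constant semi-infinite matrix), obtaining $N_G\check D(z)=\oint_{\mathbb T}\frac{1}{z-u^{-1}}\bigl(N_G\check B(u)\bigr)\d\check\mu(u)$. Using the first connection formula this is $\oint_{\mathbb T}\frac{1}{z-u^{-1}}B(u)W_{[q]}(u)\,\d\check\mu(u)$, and then \eqref{measuGeron} identifies $W_{[q]}(u)\,\d\check\mu(u)=\d\mu(u)$, so the integral collapses to $\oint_{\mathbb T}\frac{1}{z-u^{-1}}B(u)\,\d\mu(u)=D(z)$. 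This is the easy half; it is a one-line manipulation once the bookkeeping of which side $W_{[q]}$ sits on is tracked carefully (it appears to the right of $B$, matching the way $W_{[q]}$ multiplies $\d\check\mu$ from the left in \eqref{measuGeron}, which is consistent because $B(u)$ is $\infty\times q$ and $W_{[q]}$ is $q\times q$).

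The more delicate part is \eqref{skfCconn}, because here the perturbing polynomial does not cancel cleanly and produces the explicit integral correction term. I would start from $C(z)\,N_G=\oint_{\mathbb T}\frac{1}{z^{-1}-u}\,\d\mu(u)A(u^{-1})\,N_G$, use \eqref{relatABGer} in the form $A(u^{-1})N_G=\check A(u^{-1})$, and substitute $\d\mu(u)=W_{[q]}(u)\d\check\mu(u)$ to get $C(z)N_G=\oint_{\mathbb T}\frac{W_{[q]}(u)}{z^{-1}-u}\d\check\mu(u)\check A(u^{-1})$. On the other hand, $W_{[q]}(z)\check C(z)=W_{[q]}(z)\oint_{\mathbb T}\frac{1}{z^{-1}-u}\d\check\mu(u)\check A(u^{-1})$ where $W_{[q]}(z)$ (the $q\times q$ diagonal matrix evaluated at the external point $z$) may be pulled inside the integral as a constant in $u$. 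Subtracting the two expressions gives
\[
W_{[q]}(z)\check C(z)-C(z)N_G=\oint_{\mathbb T}\frac{W_{[q]}(z)-W_{[q]}(u)}{z^{-1}-u}\,\d\check\mu(u)\check A(u^{-1}),
\]
which is exactly \eqref{skfCconn} after matching the kernel variable naming ($x$ in place of $u$); the difference quotient $\bigl(W_{[q]}(z)-W_{[q]}(x)\bigr)/(z-x)$ is a (matrix) Laurent polynomial in both arguments, so the integral is well-defined. The main obstacle, and the only real subtlety, is keeping the left/right placement of the matrix factors and the two variables $z$ (external, evaluation point of the Cauchy kernel) versus $u$ (integration variable) straight throughout, and confirming that $W_{[q]}(z)$, being diagonal and $u$-independent, commutes past the integral sign while the connector $N_G$ must stay on the correct side of $\check A$; once that is verified the computation is purely formal.
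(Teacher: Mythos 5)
Your overall strategy is exactly the paper's: prove \eqref{skfDconn} by pushing $N_{G}$ through the integral, using $N_{G}\check B=BW_{[q]}$ and $W_{[q]}\d\check\mu=\d\mu$; prove \eqref{skfCconn} by computing $W_{[q]}(z)\check C(z)$ and $C(z)N_{G}$ separately with \eqref{relatABGer} and \eqref{measuGeron} and subtracting. The $\check D$ half is correct as written.

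For the $\check C$ half there is one concrete problem. You work from the literal definition \eqref{skfC}, with Cauchy kernel $\frac{1}{z^{-1}-u}$, and so your subtraction produces the difference quotient $\frac{W_{[q]}(z)-W_{[q]}(u)}{z^{-1}-u}$. You then assert this ``is exactly \eqref{skfCconn} after matching the kernel variable naming,'' but the stated right-hand side has denominator $z-x$, not $z^{-1}-x$; these are genuinely different integrands, not a relabelling. Moreover your remark that the quotient is a matrix Laurent polynomial in both arguments is only true for $\frac{W_{[q]}(z)-W_{[q]}(x)}{z-x}$ (each $z^{k}-x^{k}$, $k\in\Z$, is divisible by $z-x$ up to a monomial factor); division by $z^{-1}-x$ does not yield a Laurent polynomial, and that polynomiality is what is used later (see \eqref{degFactor}) to kill the correction term for large degree. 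The source of the mismatch is that the paper itself silently switches conventions: in Section 5 it uses $C(z)=\oint_{\mathbb T}\frac{1}{z-x}\d\mu(x)A(x^{-1})$, and the proposition as stated presupposes that kernel. So either you adopt that convention, in which case your computation goes through verbatim and yields \eqref{skfCconn}, or you keep \eqref{skfC} literally, in which case the identity you actually proved is the one with denominator $z^{-1}-u$ and you must not claim it coincides with the stated formula. As written, the final identification step is wrong.
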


\begin{proof}
Multipliying $\check{C}(z)$ in \eqref{skfC} by $W_{[q]}(z)$ on the left hand
side, we have%
\begin{align}
W_{[q]}(z)\check{C}(z)=\oint_{\mathbb{T}}W_{[q]}(z)\frac{1}{z-x}\d\check{\mu}(x)\check{A}(x^{-1}).  \label{Ger1}
\end{align}%
On the other hand, multiplying $C(z)$ in \eqref{skfC} by $N_{G}$ on the right hand side, and combining with \eqref{measuGeron} and \eqref{relatABGer}, yields
\begin{align}
C(z)\,N_{G}=\oint_{\mathbb{T}}\frac{1}{z-x}\d\mu (x)A(x^{-1})\,%
N_{G}=\oint_{\mathbb{T}}\frac{1}{z-x}W_{[q]}(x)\d\check{\mu}(x)%
\check{A}(x^{-1}).  \label{Ger2}
\end{align}%
Next, subtracting \eqref{Ger1} minus \eqref{Ger2}, implies%
\begin{align*}
W_{[q]}(z)\check{C}(z)-C(z)N_{G}
\end{align*}%
\begin{align*}
=\oint_{\mathbb{T}}W_{[q]}(z)\frac{1}{z-x}\d\check{\mu}(x)\check{A}%
(x^{-1})-\oint_{\mathbb{T}}\frac{1}{z-x}W_{[q]}(x)\d\check{\mu}(x)%
\check{A}(x^{-1})
\end{align*}%
which proves \eqref{skfCconn}. To prove \eqref{skfDconn} it is enough to multiply $\check{D}(z)$  by $N_{G}$ on the left hand
side, and combining with \eqref{measuGeron} and \eqref{relatABGer}, yields
\begin{align*}
N_{G}\,\check{D}(z)=\oint_{\mathbb{T}}N_{G}\,\frac{1}{z-x}%
\check{B}(x)\d\check{\mu}(x)=\oint_{\mathbb{T}}\frac{1}{z-x}%
\mathcal{W}B(x)\d\check{\mu}(x)=\oint_{\mathbb{T}}\frac{1}{z-x}%
B(x)\d\mu (x)=D(z).
\end{align*}%
\end{proof}

\subsubsection{Connection formula between the CD kernels}

In this framework of Geronimus $W$-modifications, note that from above formulas it is a simple matter to obtain the connection formula between the corresponding truncated kernels $K^{[n]}(x,y)$ and $\check{K}^{[n]}(x,y)$.
Concerning the Christoffel--Darboux kernels, coming back to their definition \eqref{KernelAB[n]} in section \ref{S03-CDkernels}, we have
\begin{align}
K^{[n]}(x,y)=A^{[n]}(x^{-1})B^{[n]}(y)=A(x^{-1})\,\pi ^{[ n]}\,B(y).
\label{defKer}
\end{align}%
Thus, considering the truncations of $\left( N_{G}\,\check{B}(y)\right) ^{[n]}$ and $\left( A(x)\,N_{G}\right) ^{[n]}$ we have a very similar
situation as in \eqref{bandedmatrixv04}, but having the connection matrix $N_{G}$ instead of $N_{C}$. This leads in a straightforward way to the following expressions
\begin{align*}
\left( N_{G}\,\check{B}(y)\right) ^{[n]} =&N_{G}^{[n]}\check{B}%
^{[n]}(y)+N_{G}^{[n,2dq]}\check{B}^{[n,2dq]}(y), \\
\left( A(x^{-1})\,N_{G}\right) ^{[n]} =&A^{[n]}(x^{-1})N_{G}%
^{[n]}=\check{A}^{[n]}(x^{-1}),
\end{align*}%
respectively analog to \eqref{truncaNBn} and \eqref{truncaANn}. From the above expressions we have
\begin{align}
\left( N_{G}\,\check{B}(y)\right) ^{[n]}=N_{G}^{[n]}\,\check{B}^{[n]}(y)+N_{G}^{[n,2dq]}\check{B}^{[n,2dq]}(y)=B^{[n]}(y)W_{[q]}(z).
\label{NBcheckn}
\end{align}%
Taking into account column by column the above expressions, defining
\begin{align}
\check{K}^{[n]}(x,y)=\check{A}^{[n]}(x^{-1})\check{B}^{[n]}(y)=\check{A}
(x^{-1})\,\pi ^{[ n]}\,\check{B}(y)  \label{defKerGen}
\end{align}%
and the definition \eqref{CDKernel(x,y)ab}, the Geronimus $W$-modified kernel is given by \eqref{conformB0Ger}
\begin{align*}
W_{b}(y)K_{a,b}^{[n]}(x,y) =&\left( A^{(a)}(x^{-1})\right) ^{[n]}\left( 
N_{G}\mathcal{\,}\check{B}^{(b)}(y)\right) ^{[n]} \\
=&\left( \check{A}^{(a)}(x^{-1})\right) ^{[n]}(\check{B}^{(b)}(y))^{[n]}+
\left( A^{(a)}(x^{-1})\right) ^{[n]}N_{G}^{[n,2dq]}(\check{B}%
^{(b)}(y))^{[n,2dq]} \\
=&\check{K}_{a,b}^{[n]}(x,y)+\left( A^{(a)}(x^{-1})\right) ^{[n]}N_{G}%
^{[n,2dq]}(\check{B}^{(b)}(y))^{[n,2dq]}.
\end{align*}%
Thus, we conclude%
\begin{align}
\check{K}^{[n]}(x,y)-K^{[n]}(x,y)W(y)=\left( A(x^{-1})\right) ^{[n]}\mathsf{N}^{[n,2dq]}\check{B}^{[n]}(y).  \label{connKerGer}
\end{align}

\subsection{Geronimus perturbation with singular part}

We next consider a singular part $\d\mu _{s}(z)$ in the definition of the Geronimus $W$-mofication of the matrix of measures $\d\mu (z)$. Thus, we will have
\begin{align}
\d\check{\mu}(z)\coloneq\left( W_{[q]}(z)\right) ^{-1} \d\mu (z)+ \d\mu_{s}(z).  \label{MeasuGeronSing}
\end{align}%
Taking into account that the entries in the diagonal matrix%
\begin{align*}
W_{[q]}(z)=\text{diag }\left( W_{1}(z),\ldots ,W_{r}(z)\right) ,
\end{align*}
are well-poised Laurent polynomials of the same degrees $\pm d$ as we defined in section \ref{S04-ChristPerturb}, $W_{[q]}(z)$ has a total of $2dq$ different and simple zeros. It is importan to note that the singular part of the measure must fulfilled
\begin{align*}
W_{[q]}(z)\d\mu _{s}(z)=\boldsymbol{0}_{q\times p}
\end{align*}%
in order to verify%
\begin{align*}
W_{[q]}(z)\d\check{\mu}(z)=W_{[q]}(z)\left( W_{[q]}(z)\right) ^{-1}\d\mu
(z)+W_{[q]}(z)\d\mu _{s}(z)=\d\mu (z)
\end{align*}%
and thus recover \eqref{measuGeron}. Under these conditions, we choose $\d\mu _{s}(z)$ to be a $q\times p$ matrix whose entries are $2dq$ Dirac
deltas located exactly at the $2dq$ simple zeros of the perturbation matrix $%
W_{[q]}(z)$. We denote these zeros as $z_{b,j}$ for every $j\in \{1,\ldots
,2d\}$, and $b\in \{1,\ldots ,q\}$. Being $E_{b,a}$ a $q\times p$ matrix
with one at position $(b,a)$ and zeros everywhere else, the singular part of
the $q\times p$ matrix measure $\d\mu _{s}(z)$, can be written as%
\begin{align}
\d\mu
_{s}(z)=\sum_{a=1}^{p}\sum_{b=1}^{q}\sum_{j=1}^{2d}E_{b,a}\,m_{b,a,j}\,%
\delta (z-z_{b,j})=\sum_{b=1}^{q}\boldsymbol{e}_{b}^{[q]}\boldsymbol{\upsilon }%
_{b}^{[p]}(z)  \label{SigmaSingular01}
\end{align}%
where%
\begin{align*}
\boldsymbol{\upsilon }_{b}^{[p]}(z)\coloneq\sum_{a=1}^{p}\sum_{j=1}^{2d}(%
\boldsymbol{e}_{a}^{[p]})^{\top }m_{b,a,j}\,\delta (z-z_{b,j}).
\end{align*}

\begin{Remark}
\label{remSupportSing} The $b$-th row, with $b\in \{1,\ldots ,q\}$, of the
singular part $\d\mu _{s}(z)$ of the matrix measure $\d\check{\mu}(z)$,
is supported at the $2d$ different zeros of the Laurent polynomial $W_{b}(z)$
at the $(b,b)$ diagonal entry of the matrix $W_{[q]}(z)$.
\end{Remark}

Next, replacing \eqref{MeasuGeronSing} in the expression for \eqref{skfC}, and taking into account $A(z^{-1})\,N_{G}=\check{A}(z^{-1})$ in \eqref{relatABGer}, we know
\begin{align*}
\check{C}(z) =&\oint_{\mathbb{T}}\frac{1}{z-x}\d\check{\mu}(x)%
\check{A}(x^{-1}) \\
=&\oint_{\mathbb{T}}\frac{1}{z-x}\left( \left( W_{[q]}(x)\right)
^{-1}\d\mu (x)+\d\mu _{s}(x)\right) \check{A}(x^{-1}) \\
=&\oint_{\mathbb{T}}\frac{1}{z-x}\left( \left( W_{[q]}(x)\right)
^{-1}\d\mu (x)+\d\mu _{s}(x)\right) A(x^{-1})\,N_{G}\\
=&\mathcal{C}(z)\,\mathsf{N},
\end{align*}%
where we denote%
\begin{align}
\mathcal{C}(z)\coloneq\oint_{\mathbb{T}}\frac{1}{z-x}\left(
W_{[q]}(x)\right) ^{-1}\d\mu (x)A(x^{-1})+\left\langle \mu _{s},\frac{1%
}{z-x}A(x^{-1})\right\rangle ,  \label{CCurlycheck}
\end{align}%
being this $\mathcal{C}(z)$ a singular function with simple poles at the $2dq $ simple zeros of the perturbation matrix $W_{[q]}(z)$. Notice that we express the second term of the above expression \eqref{CCurlycheck} as a functional. This is because the set of discrete points where $\d\mu _{s}$ is supported, namely the singular part of the measure $\d\check{\mu}$, is not located on the unit circle $\mathbb{T}$. That is, $\d\mu _{s}$ is not a matrix measure supported on the unit circle.

\begin{align*}
W_{[q]}(z)\check{C}(z)-C(z)\,N_{G}=\oint_{\mathbb{T}}\frac{%
W_{[q]}(z)-W_{[q]}(x)}{z-x}\d\check{\mu}(x)\check{A}(x^{-1})
\end{align*}%
Hence, in terms of $\mathcal{C}(z)$, formula \eqref{skfCconn} reads%
\begin{align}
\left( W_{[q]}(z)\mathcal{C}(z)-C(z)\right) \,N_{G}=\oint\limits_{%
\mathbb{T}}\frac{W_{[q]}(z)-W_{[q]}(x)}{z-x}\d\check{\mu}(x)\check{A}%
(x^{-1}).  \label{newCcheck}
\end{align}

\begin{Remark}
\label{RemfactorPol}We need to briefly analyze the quotient on the right hand side of the above equation. The matrix $W_{[q]}$ is defined in \eqref{mbLp[r]} as a diagonal matrix, so we define
\begin{align}
\delta W_{[q]}(z,x)\coloneq\frac{W_{[q]}(z)-W_{[q]}(x)}{z-x},
\label{defdeltaW}
\end{align}
and we have at the $(b,b)$, $b\in \{1,\ldots ,q\}$ diagonal entry, the term
\begin{align}
\delta W_{b}(z,x)=\frac{W_{b}(z)-W_{b}(x)}{z-x}.  \label{factorPol}
\end{align}
If $W_{b}$ is a Laurent polynomial described in \eqref{polyPerturbadorW},
their degrees are exactly $\deg ^{-}W_{b}=-d$ and $\deg ^{+}W_{b}=d$, so
therefore \eqref{factorPol} it is in turn a Laurent polynomial of precise
degrees%
\begin{align}
\deg ^{-}\left( \delta W_{b}(z,x)\right) =-d,\qquad \deg ^{+}\left( \delta
W_{b}(z,x)\right) =d-1.  \label{degFactor}
\end{align}%
This trivially follows from the fact that, when $k\in \mathbb{N}$, the term $%
(z^{k}-x^{k})$ is always divisible by $(z-x)$.
\end{Remark}

Next, our aim is to determine the entries of $N_{G}$. In order to do
that, we first consider the next significant finding concerning the
expression \eqref{newCcheck}

\begin{pro}
For every $b\in \{1,\ldots ,q\}$, with the definition \eqref{CCurlycheck},
the expression \eqref{newCcheck} becomes%
\begin{align}
\left( W_{b}(z)\mathcal{C}^{(b)}(z)-C^{(b)}(z)\right) \,N_{G}=0
\label{SuperCigual0}
\end{align}%
if the degree $n$ satisfies%
\begin{align*}
n\geq 2dq-2+b.
\end{align*}
\end{pro}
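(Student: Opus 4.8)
The plan is to start from the connection identity \eqref{newCcheck}, namely
\[
\left( W_{[q]}(z)\mathcal{C}(z)-C(z)\right)\,N_{G}=\oint_{\mathbb{T}}\delta W_{[q]}(z,x)\,\d\check{\mu}(x)\,\check{A}(x^{-1}),
\]
and to analyze it row by row, fixing $b\in\{1,\ldots,q\}$ and looking at the $b$-th component. Since $W_{[q]}$ is diagonal, the $b$-th row of the left-hand side is precisely $\left(W_b(z)\mathcal{C}^{(b)}(z)-C^{(b)}(z)\right)N_G$, while the $b$-th row of the right-hand side is $\oint_{\mathbb{T}}\delta W_b(z,x)\,\d\check\mu_{b,\cdot}(x)\,\check A(x^{-1})$, where $\d\check\mu_{b,\cdot}$ denotes the $b$-th row of the matrix of measures. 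The strategy is then to show that, for the entry indexed by $n$ with $n\geq 2dq-2+b$, this right-hand-side contribution vanishes, which forces \eqref{SuperCigual0} at that entry.

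The key point is a degree/orthogonality argument. By Remark \ref{RemfactorPol}, $\delta W_b(z,x)$ is, as a function of $x$, a Laurent polynomial with $\deg^-\left(\delta W_b(z,\cdot)\right)=-d$ and $\deg^+\left(\delta W_b(z,\cdot)\right)=d-1$; that is, it is a linear combination of the monomials $x^{k}$ with $k\in\{-d,\ldots,d-1\}$. Hence, expanding $\check A(x^{-1})=Z_{[p]}^\top(x^{-1})\check U$, the $n$-th entry of the right-hand side is a finite combination of moment-type integrals
\[
\oint_{\mathbb{T}} x^{k}\left(\sum_{a=1}^p \check A_n^{(a)}(x^{-1})\,\d\check\mu_{b,a}(x)\right),\qquad k\in\{-d,\ldots,d-1\}.
\]
Now I would invoke the mixed multiple orthogonality relations for the Geronimus-modified polynomials $\check A$ — the analogue of the third family of relations in the Corollary on mixed multiple orthogonality, applied to $\d\check\mu$ — which state that the combination $\sum_{a}\check A_n^{(a)}(x^{-1})\,\d\check\mu_{b,a}(x)$ is orthogonal to $x^{k}$ for all $k$ in the range $\{-\lceil(n+2-b)/2q\rceil+1,\ldots,\lceil(n+2-b-q)/2q\rceil\}$ (with the appropriate sign conventions). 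One then checks that $n\geq 2dq-2+b$ guarantees $\{-d,\ldots,d-1\}$ is contained in this orthogonality window — this is where the precise threshold $2dq-2+b$ is pinned down, by comparing $d-1$ and $-d$ against the two ceiling expressions and using that $n\geq 2dq-2+b$ makes $\lceil(n+2-b-q)/2q\rceil\geq d$ and $-\lceil(n+2-b)/2q\rceil+1\leq -d$. Consequently every such integral vanishes, the $n$-th entry of the right-hand side is zero, and \eqref{SuperCigual0} follows.

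The main obstacle I expect is the careful bookkeeping of the degree bounds: one must keep track of which monomials in $x$ actually appear when $\check A_n^{(a)}(x^{-1})$ is multiplied by $\delta W_b(z,x)$, match these against the orthogonality window furnished by Lemma \ref{degreesMLP} (in its $\check A$ version) and the mixed multiple orthogonality Corollary, and verify that the inequality $n\geq 2dq-2+b$ is exactly the condition ensuring containment. A secondary technical care is that $\d\check\mu$ may itself carry a singular part (as in \eqref{MeasuGeronSing}); however the singular part is supported at the zeros $z_{b,j}$ of $W_b$, where $\delta W_b(z,z_{b,j})=\left(W_b(z)-W_b(z_{b,j})\right)/(z-z_{b,j})=W_b(z)/(z-z_{b,j})$ remains finite and, more importantly, the same orthogonality relations for $\check A$ with respect to $\d\check\mu$ already incorporate this singular contribution, so no separate treatment is needed. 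Once the degree comparison is done, the conclusion is immediate.
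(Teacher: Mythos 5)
Your proposal follows essentially the same route as the paper: pass to the entrywise form of \eqref{newCcheck}, observe from \eqref{degFactor} that $\delta W_{b}(z,\cdot)$ only involves the monomials $x^{k}$ with $-d\le k\le d-1$, and then annihilate the right-hand side using the mixed multiple orthogonality of $\check A$ with respect to $\d\check\mu$ once $n$ is large enough. The one slip is in which of the four orthogonality windows you invoke: for $\sum_{a}\check A_{n}^{(a)}(x^{-1})\,\d\check\mu_{b,a}(x)$ the relevant ($A$-type) window is $-\left\lceil\frac{n+2-b-q}{2q}\right\rceil\le k\le\left\lceil\frac{n+2-b}{2q}\right\rceil-1$, which is what the paper uses, whereas you quote the reflected ($\mathcal B$-type) window $\left\{-\left\lceil\frac{n+2-b}{2q}\right\rceil+1,\dots,\left\lceil\frac{n+2-b-q}{2q}\right\rceil\right\}$; with that window your own boundary check fails at $n=2dq-2+b$, since the lower endpoint is then $-d+1$ and the required monomial $x^{-d}$ falls outside it. With the correct window both containments hold at $n=2dq-2+b$ (the window is exactly $[-d,d-1]$ there), so after this substitution your argument coincides with the paper's proof; your remark that the singular part of $\d\check\mu$ is already absorbed into the orthogonality relations for $\check A$ is also consistent with how the paper treats it.
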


\begin{proof}
We first come back to the entrywise expression of \eqref{newCcheck}, which is%
\begin{align}
\sum_{n=\ell }^{\ell +2dq}\left( W_{b}(z)\mathcal{C}%
_{n}^{(b)}(z)-C_{n}^{(b)}(z)\right) \,N_{G,n,\ell }=\oint\limits_{%
\mathbb{T}}\delta W_{b}(z,x)\d\check{\mu}_{b,a}(x)\check{A}%
_{n}^{(a)}(x^{-1}),  \label{newCcheckDelta}
\end{align}%
where $\delta W_{b}(z,x)$ is the Laurent polynomial with degrees given in \eqref{degFactor}, to observe that its right hand side will not always be different from zero, and that this fact depends directly on the degrees of the Laurent polynomial $\delta W_{b}(z,x)$ (considered as a polynomials in the $x$ variable). Next, from the orthogonality of the $A$ polynomials we easily obtain the component-wise orthogonality for the Geronimus $W$-modified measure, namely
\begin{align*}
\begin{aligned}
	\sum_{a=1}^{p}\oint_{\mathbb{T}}\check{A}_{n}^{(a)}(x^{-1})\,x^{k}\d \check{\mu}_{b,a}(x) =&0, &
&\text{for $-{\left\lceil \frac{n+2-b-q}{2q}\right\rceil } \leq  k\leq \left\lceil \frac{n+2-b}{2q}\right\rceil -1$},& b&\in \{1,\ldots ,q\}.
\end{aligned}
\end{align*}%
Therefore, replacing the degrees \eqref{degFactor} in the above expression, and solving the resulting inequalities, we can easily derive the statement of the proposition.
\end{proof}

On the other hand, taking \eqref{CCurlycheck} and \eqref{ecolumnvec} into
account, we need to deeply analyze the product $$(\boldsymbol{e}_{b}^{[q]})^{\top
}W_{[q]}(z)\check{C}(z).$$ We are going to obtain its limit when $%
z\rightarrow z_{b,j}$, which are the aforementioned $2dq$ zeros of the
perturbation matrix $W_{[q]}(z)$. Thus observe that%
\begin{align*}
\lim_{z\rightarrow z_{b,j}}(\boldsymbol{e}_{b}^{[q]})^{\top }W_{[q]}(z)\check{C}%
(z)
&=\begin{multlined}[t][.7\textwidth]
	(\boldsymbol{e}_{b}^{[q]})^{\top }\left( \oint_{\mathbb{T}}\frac{1}{z-x%
}W_{[q]}(z)\left( W_{[q]}(x)\right) ^{-1} \d\mu
(x)A(x^{-1})\right.\\+\left.W_{[q]}(z)\left\langle \d\mu _{s}(x),\frac{1}{z-x}%
A(x^{-1})\right\rangle \right) N_{G}
\end{multlined}\\
&=\lim_{z\rightarrow z_{b,j}}(\boldsymbol{e}_{b}^{[q]})^{\top
}W_{[q]}(z)\left\langle \d\mu _{s}(x),\frac{1}{z-x}A(x^{-1})\right\rangle 
N_{G}.
\end{align*}%
Next, from \eqref{SigmaSingular01}, the above expression becomes%
\begin{align*}
\lim_{z\rightarrow z_{b,j}}(\boldsymbol{e}_{b}^{[q]})^{\top
}W_{[q]}(z)\left\langle \d\mu _{s}(x),\frac{1}{z-x}A(x^{-1})\right\rangle 
N_{G}
&=\lim_{z\rightarrow z_{b,j}}(\boldsymbol{e}_{b}^{[q]})^{\top
}W_{[q]}(z)\left\langle \sum_{b^{\prime }=1}^{q}\boldsymbol{e}_{b^{\prime
}}^{[q]}\boldsymbol{\upsilon }_{b^{\prime }}^{[p]}(x),\frac{1}{z-x}%
A(x^{-1})\right\rangle N_{G}\\
&=\begin{multlined}[t][.4\textwidth]
	\lim_{z\rightarrow z_{b,j}}(\boldsymbol{e}_{b}^{[q]})^{\top }\sum_{b^{\prime
}=1}^{q}\sum_{a=1}^{p}\sum_{j^{\prime }=1}^{2d}W_{b^{\prime }}(z)\boldsymbol{e}%
_{b^{\prime }}^{[q]}(\boldsymbol{e}_{a}^{[p]})^{\top }m_{b^{\prime },a,j^{\prime
}}\\\times \left\langle \delta (z-z_{b^{\prime },j^{\prime }}),\frac{1}{z-x}%
A(x^{-1})\right\rangle N_{G}
\end{multlined}
\end{align*}

Next, the Kronecker delta $\delta _{b,b^{\prime }}=(\boldsymbol{e}%
_{b}^{[q]})^{\top }\boldsymbol{e}_{b^{\prime }}^{[q]}$ removes the sum $b$'s,
and knowing that%
\begin{align*}
\left\langle \delta (z-z_{b^{\prime },j^{\prime }}),\frac{1}{z-x}%
A(x^{-1})\right\rangle =\frac{1}{z-z_{b^{\prime },j^{\prime }}}%
A(z_{b^{\prime },j^{\prime }}^{-1}),
\end{align*}%
and%
\begin{align*}
W_{b^{\prime }}(z)=\prod\limits_{s=1}^{2d}(z-z_{b^{\prime },s}),
\end{align*}%
we get%
\begin{align*}
\lim_{z\rightarrow z_{b,j}}(\boldsymbol{e}_{b}^{[q]})^{\top
}W_{[q]}(z)\left\langle \d\mu _{s},\frac{1}{z-x}A(x^{-1})\right\rangle 
N_{G}&=\lim_{z\rightarrow z_{b,j}}\left( \sum_{a=1}^{p}\sum_{j^{\prime
}=1}^{2d}\prod\limits_{s=1}^{2d}(z-z_{b,s})(\boldsymbol{e}_{a}^{[p]})^{\top
}m_{b,a,j^{\prime }}\frac{1}{z-z_{b,j^{\prime }}}A(z_{b,j^{\prime
}}^{-1})\right) N_{G}\\
&=\lim_{z\rightarrow z_{b,j}}\left( \sum_{a=1}^{p}\sum_{j^{\prime
}=1}^{2d}\prod\limits_{s=1,s\neq j^{\prime }}^{2d}(z-z_{b,s})(\boldsymbol{e}%
_{a}^{[p]})^{\top }m_{b,a,j^{\prime }}A(z_{b,j^{\prime }}^{-1})\right) 
N_{G}.
\end{align*}%
Finally, having the limit $z\rightarrow z_{b,j}$ of the above expression,
only survive the terms with $j=j^{\prime }$, so we conclude%
\begin{align}
\lim_{z\rightarrow z_{b,j}}(\boldsymbol{e}_{b}^{[q]})^{\top }W_{[q]}(z)\check{C}%
(z)=\sum_{a=1}^{p}(\boldsymbol{e}_{a}^{[p]})^{\top }\prod\limits_{s=1,s\neq
j}^{2d}(z_{b,j}-z_{b,s})m_{b,a,j}A(z_{b,j}^{-1})N_{G}.
\label{RemainGer}
\end{align}



\subsection{Christoffel--Geronimus formula for $\check{A}(z)$}



Next, we provide the corresponding Christoffel--Geronimus formula for the CMV
Laurent polynomials $\check{A}(z)$. With analogous notation as in the
Christoffel case, the highest nonzero band in the connection matrix $N_G$ will be denoted as $N_{G,\ell }$ see \eqref{bandedmatrixv06}.

From \eqref{defmatrixNGer} we see that the structure of the connection
matrix $N_{G}$\ depends on the polynomials at the entries of the
perturbation matrix $W_{[q]}(z)$. Indeed, denoting by $N_{G,n,\ell }$
the elements of the upper triangular matrix $N_{G}$ (i.e., $N_{G,\ell,m}$ if $n>\ell $), thus for $\ell \in\N_0$, $b\in
\{1,\ldots ,q\}$ we get%
\begin{align*}
\sum_{n=\ell }^{\ell +2dq}\left( W_{b}(z)\mathcal{C}%
_{n}^{(b)}(z)-C_{n}^{(b)}(z)\right) N_{G,n,\ell }=0.
\end{align*}%
Taking \eqref{RemainGer} into account, at the $2dq$\ zeros of the
perturbation matrix $W_{[q]}(z)$ we reach%
\begin{align*}
\sum_{n=\ell }^{\ell +2dq}\left( \left(
\sum_{a=1}^{p}\prod\limits_{s=1,s\neq
j}^{2d}(z_{b,j}-z_{b,s})m_{b,a,j}\,A_{n}^{(a)}(z_{b,j}^{-1})\right)
-C_{n}^{(b)}(z_{b,j})\right) N_{G,n,\ell }=0.
\end{align*}

As in the Christoffel case, we are interested in solving the above linear
system of equations for the entries $N_{G,n,\ell }$, $n\geq \ell $,
and therefore we restrict ourselves to the situation where all the zeros $%
z_{b,j}$ are simple, and different for every $b\in1,\ldots ,q$. Thus,%
\begin{multline*}
\sum_{n=\ell }^{\ell +2dq-1}\left( \left(
\sum_{a=1}^{p}\prod\limits_{s=1,s\neq
j}^{2d}(z_{b,j}-z_{b,s})m_{b,a,j}\,A_{n}^{(a)}(z_{b,j}^{-1})\right)
-C_{n}^{(b)}(z_{b,j})\right) N_{G,n,\ell }  
\\=-\left( \left( \sum_{a=1}^{p}\prod\limits_{s=1,s\neq
j}^{2d}(z_{b,j}-z_{b,s})m_{b,a,j}\,A_{\ell +2dq}^{(a)}(z_{b,j}^{-1})\right)
-C_{\ell +2dq}^{(b)}(z_{b,j})\right) \tilde N_{G,\ell}  \notag
\end{multline*}%
Next, let us define%
\begin{align*}
F_{n,j}^{(b)}\coloneq C_{n}^{(b)}(z_{b,j})-\sum_{a=1}^{p}\prod%
\limits_{s=1,s\neq
j}^{2d}(z_{b,j}-z_{b,s})m_{b,a,j}\,A_{n}^{(a)}(z_{b,j}^{-1}).
\end{align*}

Taking into account the connection expression $\check{A}%
^{(a)}(z^{-1})=A^{(a)}(z^{-1})\,N_{G}$ we finally obtain%
\begin{multline}
\frac{1}{{\tilde N_{G,\ell}}}\check{A}_{\ell
}^{(a)}(z^{-1})=A_{\ell +2dq}^{(a)}(z^{-1})-{%
\begin{bmatrix}
F_{\ell +2dq,1}^{(1)} & \cdots & F_{\ell +2dq,2d}^{(1)} & \cdots & F_{\ell
+2dq,1}^{(q)} & \cdots & F_{\ell +2dq,2d}^{(q)}%
\end{bmatrix}%
}\\
\times {%
\begin{bmatrix}
F_{\ell ,1}^{(1)} & \cdots & F_{\ell ,2d}^{(1)} & \cdots & F_{\ell ,1}^{(q)}
& \cdots & F_{\ell ,2d}^{(q)} \\ 
F_{\ell +1,1}^{(1)} & \cdots & F_{\ell +1,2d}^{(1)} & \cdots & F_{\ell
+1,1}^{(q)} & \cdots & F_{\ell +1,2d}^{(q)} \\ 
\vdots &  & \vdots &  & \vdots &  & \vdots \\ 
F_{\ell +2dq-1,1}^{(1)} & \cdots & F_{\ell +2dq-1,2d}^{(1)} & \cdots & 
F_{\ell +2dq-1,1}^{(q)} & \cdots & F_{\ell +2dq-1,2d}^{(q)}%
\end{bmatrix}%
}^{-1}%
\begin{bmatrix}
A_{\ell }^{(a)}(z^{-1}) \\ 
A_{\ell +1}^{(a)}(z^{-1}) \\ 
\vdots \\ 
A_{\ell +2dq-1}^{(a)}(z^{-1})%
\end{bmatrix}%
.  \label{GerBfinal}
\end{multline}

As a quasi-determinant in \eqref{QuasiDet}, we write%
\begin{align*}
\frac{1}{{\tilde N_{G,\ell}}}\check{A}^{(a)}(z^{-1})=\Theta
_{\ast }{%
\begin{bmatrix}
F_{\ell ,1}^{(1)} & \cdots & F_{\ell ,2d}^{(q)} & A_{\ell }^{(a)}(z^{-1}) \\ 
\vdots &  & \vdots & \vdots \\ 
F_{\ell +2dq-1,1}^{(1)} & \cdots & F_{\ell +2dq-1,2d}^{(q)} & A_{\ell
+2dq-1}^{(a)}(z^{-1}) \\ 
F_{\ell +2dq,1}^{(1)} & \cdots & F_{\ell +2dq,2d}^{(q)} & A_{\ell
+2dq}^{(a)}(z^{-1})%
\end{bmatrix}%
.}
\end{align*}

Thus, we have proved the following



\begin{pro}
The Christoffel--Geronimus formula for the perturbed CMV mixed multiple
Laurent OPUC on the step-line $\check{A}(z^{-1})$ can be written as%
\begin{align*}
\frac{1}{{\tilde N_{G,\ell}}}\check{A}^{(a)}(z^{-1})={\frac{%
\begin{vmatrix}
F_{\ell ,1}^{(1)} & \cdots & F_{\ell ,2d}^{(q)} & A_{\ell }^{(a)}(z^{-1}) \\ 
\vdots &  & \vdots & \vdots \\ 
F_{\ell +2dq-1,1}^{(1)} & \cdots & F_{\ell +2dq-1,2d}^{(q)} & A_{\ell
+2dq-1}^{(a)}(z^{-1}) \\ 
F_{\ell +2dq,1}^{(1)} & \cdots & F_{\ell +2dq,2d}^{(q)} & A_{\ell
+2dq}^{(a)}(z^{-1})%
\end{vmatrix}%
}{%
\begin{vmatrix}
F_{\ell ,1}^{(1)} & \cdots & F_{\ell ,2d}^{(q)} \\ 
\vdots &  & \vdots \\ 
F_{\ell +2dq-1,1}^{(1)} & \cdots & F_{\ell +2dq-1,2d}^{(q)}%
\end{vmatrix}%
}.}
\end{align*}
\end{pro}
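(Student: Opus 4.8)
The plan is to derive the final Christoffel--Geronimus quasi-determinantal formula for $\check{A}^{(a)}(z^{-1})$ by the same route as the penultimate display, simply unpacking the quasi-determinant $\Theta_{\ast}$ via definition \eqref{QuasiDet}. Concretely, I would start from the matrix linear system for the entries $N_{G,n,\ell}$ obtained by evaluating \eqref{newCcheck} at the $2dq$ simple zeros $z_{b,j}$ of the matrix balanced Laurent polynomial $W_{[q]}(z)$, combined with \eqref{RemainGer} and Proposition on \eqref{SuperCigual0}, which produces the system with coefficients $F^{(b)}_{n,j}$. Writing the $2dq\times 2dq$ matrix of the $F^{(b)}_{n,j}$ for $n=\ell,\dots,\ell+2dq-1$ as a block $\mathsf F_\ell$ and the row $(F^{(1)}_{\ell+2dq,1},\dots,F^{(q)}_{\ell+2dq,2d})$ as the last row, the connection formula $\check{A}^{(a)}(z^{-1})=A^{(a)}(z^{-1})\,N_{G}$ from \eqref{relatABGer} together with the banded shape of $N_G$ in \eqref{bandedmatrixv06} turns \eqref{GerBfinal} into exactly $\tfrac{1}{\tilde N_{G,\ell}}\check A^{(a)}_\ell(z^{-1})=\Theta_\ast$ of the indicated $(2dq+1)\times(2dq+1)$ block matrix.

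The key steps, in order, are: (i) recall from \eqref{newCcheckDelta} and the orthogonality of the $A$-polynomials that for $n\geq 2dq-2+b$ the right-hand side vanishes, so that $\sum_{n=\ell}^{\ell+2dq}F^{(b)}_{n,j}N_{G,n,\ell}=0$; (ii) isolate the highest band $\tilde N_{G,\ell}=N_{G,\ell,\ell+2dq}$ and rewrite this as a square linear system in the unknowns $N_{G,\ell,n}/\tilde N_{G,\ell}$, $n=\ell,\dots,\ell+2dq-1$, whose coefficient matrix is $\mathsf F_\ell$ (invertible for a generic choice of zeros, see Remark \ref{remBgotic}); (iii) substitute the solution, obtained by Cramer's rule, into the connection relation $\check A^{(a)}_\ell(z^{-1})=\sum_{n=\ell}^{\ell+2dq}A^{(a)}_n(z^{-1})N_{G,\ell,n}$, yielding \eqref{GerBfinal}; (iv) recognize the right-hand side of \eqref{GerBfinal} as the quasi-determinant \eqref{QuasiDet} with $A\leftrightarrow\mathsf F_\ell$, $b\leftrightarrow (A^{(a)}_\ell,\dots,A^{(a)}_{\ell+2dq-1})^\top$, $c\leftrightarrow(F^{(1)}_{\ell+2dq,1},\dots)$, $d\leftrightarrow A^{(a)}_{\ell+2dq}(z^{-1})$; (v) finally, apply the determinantal identity $\Theta_\ast\!\left[\begin{smallmatrix}A&b\\c&d\end{smallmatrix}\right]=\dfrac{1}{|A|}\left|\begin{smallmatrix}A&b\\c&d\end{smallmatrix}\right|$ from \eqref{QuasiDet} to pass to the ratio of two ordinary determinants, which is precisely the asserted formula.

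The main obstacle I anticipate is bookkeeping rather than conceptual: one must be careful that the $F^{(b)}_{n,j}$ genuinely package both contributions — the $C^{(b)}_n(z_{b,j})$ term coming from $C(z)N_G$ and the residue term $\sum_a\prod_{s\neq j}(z_{b,j}-z_{b,s})\,m_{b,a,j}\,A^{(a)}_n(z_{b,j}^{-1})$ coming from the singular part via \eqref{RemainGer} — with the correct sign, so that the vanishing statement \eqref{SuperCigual0} applies to $W_b(z)\mathcal C^{(b)}(z)-C^{(b)}(z)$ evaluated at the zeros and expressed through $F^{(b)}_{n,j}$. One also has to track the index shift between $N_{G,\ell}$ and $\tilde N_{G,\ell}$ consistently (the band at position $2dq$), and to check that the degree bound $n\geq 2dq-2+b$ really guarantees the linear system truncates to a $2dq\times 2dq$ square system for every $b$ and $j$ simultaneously. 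Once these are aligned, the passage from the quasi-determinant to the determinant ratio is a direct invocation of \eqref{QuasiDet}, and the invertibility assumption of Remark \ref{remBgotic} legitimizes the division by $|\mathsf F_\ell|$. I do not expect any analytic subtlety beyond what is already encapsulated in the earlier propositions on the banded structure of $N_G$ and the orthogonality relations.
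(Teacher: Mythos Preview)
Your proposal is correct and follows essentially the same route as the paper: set up the $2dq\times 2dq$ linear system for the band entries $N_{G,n,\ell}$ from the vanishing relation \eqref{SuperCigual0} evaluated at the zeros $z_{b,j}$ via \eqref{RemainGer}, solve it to obtain \eqref{GerBfinal} through the connection formula $A(z^{-1})N_G=\check A(z^{-1})$, recognize the result as a quasi-determinant, and then invoke \eqref{QuasiDet} to rewrite it as the stated ratio of determinants. The bookkeeping concerns you flag (the sign in the definition of $F^{(b)}_{n,j}$, the identification of $\tilde N_{G,\ell}$ with the extreme band entry, and the degree threshold ensuring the system truncates) are exactly the points the paper handles in the displays leading up to the proposition, so nothing is missing.
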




\subsection{Cauchy transforms of the kernel}



In order to obtain the Christoffel--Geronimus formula for $\check{B}(z^{-1})$%
, we first need to obtain the Cauchy transforms of the kernels. We can
consider two Cauchy transforms of the kernel, one with respect to the
variable $x$ and one with respect to the variable $y$. From \eqref{CDKernel(x,y)} we have the $p\times q$ matrix $%
K^{[n]}(x,y)=A^{[n]}(x^{-1})B^{[n]}(y)$. We know from \eqref{skfC} and \eqref{skfD}
\begin{align*}
C(z)=\oint_{\mathbb{T}}\frac{1}{z-x}\d\mu (x)A(x^{-1}),\qquad
D(z)=\oint_{\mathbb{T}}\frac{1}{z-x}B(x)\d\mu (x)
\end{align*}%
where $C(z)$ is $q\times \infty $ and $D(z)$ is has size $\infty \times p$.
Thus, we define the Cauchy transforms of the kernel%
\begin{align*}
K_{C}^{[n]}(z,y)\coloneq\oint_{\mathbb{T}}\d\mu (x)\frac{%
K^{[n]}(x,y)}{z-x} =&\oint_{\mathbb{T}}\d\mu (x)\frac{%
A^{[n]}(x^{-1})B^{[n]}(y)}{z-x}=C^{[n]}(z)B^{[n]}(y), \\
K_{D}^{[n]}(x,z)\coloneq\oint_{\mathbb{T}}\frac{K^{[n]}(x,y)}{z-y} \d\mu (y) =&\oint_{\mathbb{T}}\frac{A^{[n]}(x^{-1})B^{[n]}(y)}{z-y}%
\d\mu (y)=A^{[n]}(x^{-1})D^{[n]}(z),
\end{align*}%
where $K_{C}^{[n]}(z,y)$ is $q\times q$ and $K_{D}^{[n]}(x,z)$ is $p\times p$%
. Thus 
\begin{align}
\check{K}_{D}^{[n]}(x,z)\coloneq\oint_{\mathbb{T}}\frac{\check{K}%
^{[n]}(x,y)}{z-y}\d\check{\mu}(y)=\oint_{\mathbb{T}}\frac{\check{A}%
^{[n]}(x^{-1})\check{B}^{[n]}(y)}{z-y}\d\check{\mu}(y)=\check{A}%
^{[n]}(x^{-1})\check{D}^{[n]}(z).  \label{KDcheck}
\end{align}%
Applying $A(z^{-1})\,N_{G}=\check{A}(z^{-1})$ from \eqref{relatABGer}, $%
N_{G}\,\check{D}(z)=D(z)$ from \eqref{skfCconn}, and the shape of $%
N_{G}$, the above becomes%
\begin{align*}
\check{K}_{D}^{[n]}(x,z) =&\check{A}^{[n]}(x^{-1})\check{D}^{[n]}(z)=\left(
A(z^{-1})\,N_{G}\right) ^{[n]}\check{D}^{[n]}(z) \\
=&A^{[n]}(z^{-1})\,N_{G}^{[n]}\mathsf{\,}\check{D}%
^{[n]}(z)=A^{[n]}(z^{-1})\d^{[n]}(z)-A^{[n]}(z^{-1})N_{G}^{[n,2dq]}%
\check{D}^{[n,2dq ]}(z) \\
=&K_{D}^{[n]}(x,z)-A^{[n]}(z^{-1})N_{G}^{[n,2dq]}\check{D}%
^{[n,2dq ]}(z).
\end{align*}

We thus conclude%
\begin{align}
\check{K}_{D}^{[n]}(x,z)=K_{D}^{[n]}(x,z)-A^{[n]}(z^{-1})N_{G}^{[n,2dq]}%
\check{D}^{[n,2dq ]}(z).  \label{ConnDCauchy}
\end{align}

On the other hand, concerning the Cauchy transform of the other kernel, we
have%
\begin{align}
\check{K}_{C}^{[n]}(z,y)\coloneq\oint_{\mathbb{T}}\d\check{\mu}(x)%
\frac{\check{K}^{[n]}(x,y)}{z-x}=\oint_{\mathbb{T}}\d\check{\mu}(x)%
\frac{\check{A}^{[n]}(x^{-1})\check{B}^{[n]}(y)}{z-x}=\check{C}^{[n]}(z)%
\check{B}^{[n]}(y).  \label{KCcheck}
\end{align}%
Thus%
\begin{align}
W_{[q]}(z)\check{K}_{C}^{[n]}(z,y)=\left( W_{[q]}(z)\check{C}%
^{[n]}(z)\right) \,\check{B}^{[n]}(y).  \label{WKcheck}
\end{align}%
From \eqref{skfCconn}, and the sape of $N_{G}$, the above becomes%
\begin{align*}
W_{[q]}(z)\check{K}_{C}^{[n]}(z,y) =&\left( W_{[q]}(z)\check{C}%
^{[n]}(z)\right) \check{B}^{[n]}(y) \\
=&\left( \oint_{\mathbb{T}}\frac{W_{[q]}(z)-W_{[q]}(x)}{z-x}\d\check{\mu}(x)\check{A}^{[n]}(x^{-1})+C^{[n]}(z)\,N_{G}^{[n]}\right) \check{B}^{[n]}(y) \\
=&C^{[n]}(z)\,N_{G}^{[n]}\check{B}^{[n]}(y)+\oint_{\mathbb{T}}%
\frac{W_{[q]}(z)-W_{[q]}(x)}{z-x}\d\check{\mu}(x)\check{A}^{[n]}(x^{-1})%
\check{B}^{[n]}(y) \\
=&C^{[n]}(z)\,N_{G}^{[n]}\check{B}^{[n]}(y)+\oint_{\mathbb{T}}%
\frac{W_{[q]}(z)-W_{[q]}(x)}{z-x}\d\check{\mu}(x)\check{K}^{[n]}(x,y).
\end{align*}%
Having into account the projection properties of the kernels, and being $%
n>2d $, we obtain (see \eqref{defdeltaW})%
\begin{align*}
\oint_{\mathbb{T}}\frac{W_{[q]}(z)-W_{[q]}(x)}{z-x}\d\check{\mu}(x)%
\check{K}^{[n]}(x,y)=\frac{W_{[q]}(z)-W_{[q]}(y)}{z-y}=\delta W_{[q]}(z,y).
\end{align*}%
Next, applying $N_{G} \check{B}(z)=B(z)W_{[q]}(z)$ from \eqref{relatABGer}
\begin{align*}
W_{[q]}(z)\check{K}_{C}^{[n]}(z,y) =&C^{[n]}(z)\,N_{G}^{[n]}\check{B}%
^{[n]}(y)+\delta W_{[q]}(z,y) \\
=&C^{[n]}(z)B^{[n]}(y)W_{[q]}(y)-C^{[n]}(z)N_{G}^{[n,2dq]}\check{B}%
^{[n,2dq]}(y)+\delta W_{[q]}(z,y) \\
=&K_{C}^{[n]}(z,y)W_{[q]}(y)-C^{[n]}(z)N_{G}^{[n,2dq]}\check{B}%
^{[n,2dq]}(y)+\delta W_{[q]}(z,y)
\end{align*}%
We therefore conclude%
\begin{align}
W_{[q]}(z)\check{K}_{C}^{[n]}(z,y)=K_{C}^{[n]}(z,y)W_{[q]}(y)-C^{[n]}(z)%
N_{G}^{[n,2dq]}\check{B}^{[n,2dq]}(y)+\delta W_{[q]}(z,y).
\label{ConnCCauchy}
\end{align}



\subsection{Christoffel--Geronimus formula for $\check{B}(z^{-1})$}



We next give the Christoffel--Geronimus formula for the CMV Laurent
polynomials $\check{B}(z)$. We begin with \eqref{ConnCCauchy} above%
\begin{align*}
W_{[q]}(z)\check{K}_{C}^{[n]}(z,y)=K_{C}^{[n]}(z,y)W_{[q]}(y)-C^{[n]}(z)%
N_{G}^{[n,2dq]}\check{B}^{[n,2dq]}(y)+\delta W_{[q]}(z,y),
\end{align*}%
where $W_{[q]}(z)$, $\check{K}_{C}^{[n]}(z,y)$, $K_{C}^{[n]}(z,y)$,  $W_{[q]}(y)$, and $\delta W_{[q]}(z,y)$ are $q\times q$ matrices, and $C^{[n]}(z) $ is a $q\times n$ matrix, $N_{G}^{[n,2dq]}$ is a $n\times 2dq$ matrix, and $\check{B}^{[n,2dq]}(y)$ is a $2dq\times q$ matrix.

From \eqref{WKcheck} we recall that
\begin{align*}
W_{[q]}(z)\check{K}_{C}^{[n]}(z,y)=\left( W_{[q]}(z)\check{C}%
^{[n]}(z)\right) \,\check{B}^{[n]}(y),
\end{align*}%
where $W_{[q]}(z)$ is $q\times q$, $\check{C}^{[n]}(z)$ is $q\times n$, and $%
\check{B}^{[n]}(y)$ is $n\times q$. Next, we multiply above by $(\boldsymbol{e}%
_{b}^{[q]})^{\top }$ and compute the limit $z\rightarrow z_{b,j}$, reaching%
\begin{align*}
\lim_{z\rightarrow z_{b,j}}(\boldsymbol{e}_{b}^{[q]})^{\top }W_{[q]}(z)\check{K}%
_{C}^{[n]}(z,y) =&\lim_{z\rightarrow z_{b,j}}(\boldsymbol{e}_{b}^{[q]})^{\top
}\left( W_{[q]}(z)\check{C}^{[n]}(z)\right) \,\check{B}^{[n]}(y) \\
=&\lim_{z\rightarrow z_{b,j}}(\boldsymbol{e}_{b}^{[q]})^{\top }\left( W_{[q]}(z)%
\mathcal{C}^{[n]}(z)\right) \,N_{G}^{[n]}\check{B}^{[n]}(y)
\end{align*}%
which, taking \eqref{NBcheckn} and \eqref{RemainGer} into account, and replacing in \eqref{ConnCCauchy} yields
\begin{multline*}
\left( \sum_{a=1}^{p}(\boldsymbol{e}_{a}^{[p]})^{\top }\prod\limits_{s=1,s\neq
j}^{2d}(z_{b,j}-z_{b,s})m_{b,a,j}A^{[n]}(z_{b,j}^{-1})\right) \left( -%
N_{G}^{[n,2dq]}\check{B}^{[n,2dq]}(y)+B^{[n]}(y)W_{[q]}(y)\right) \\
=(\boldsymbol{e}_{b}^{[q]})^{\top }K_{C}^{[n]}(z_{b,j},y)W_{[q]}(y)-(\boldsymbol{e}%
_{b}^{[q]})^{\top }C^{[n]}(z_{b,j})N_{G}^{[n,2dq]}\check{B}%
^{[n,2dq]}(y)+(\boldsymbol{e}_{b}^{[q]})^{\top }\delta W_{[q]}(z_{b,j},y).
\end{multline*}

From here we get%
\begin{multline*}
\left( (\boldsymbol{e}_{b}^{[q]})^{\top }C^{[n]}(z_{b,j})-\sum_{a=1}^{p}(%
\boldsymbol{e}_{a}^{[p]})^{\top }\prod\limits_{s=1,s\neq
j}^{2d}(z_{b,j}-z_{b,s})m_{b,a,j}A^{[n]}(z_{b,j}^{-1})\right) N_{G}^{[n,2dq]}\check{B}^{[n,2dq]}(y) \\
=\left( (\boldsymbol{e}_{b}^{[q]})^{\top }C^{[n]}(z_{b,j})-\left(
\sum_{a=1}^{p}(\boldsymbol{e}_{a}^{[p]})^{\top }\prod\limits_{s=1,s\neq
j}^{2d}(z_{b,j}-z_{b,s})m_{b,a,j}A^{[n]}(z_{b,j}^{-1})\right) \right)
B^{[n]}(y)W_{[q]}(y) \\
+(\boldsymbol{e}_{b}^{[q]})^{\top }\delta W^{(b)}(z_{b,j},y).
\end{multline*}%
Next, let us define%
\begin{align*}
F_{n,j}^{(b)}\coloneq C_{n}^{(b)}(z_{b,j})-\sum_{a=1}^{p}\prod%
\limits_{s=1,s\neq
j}^{2d}(z_{b,j}-z_{b,s})m_{b,a,j}A_{n}^{(a)}(z_{b,j}^{-1}),
\end{align*}%
which are the entries of%
\begin{align*}
\mathcal{F}=%
\begin{bmatrix}
F_{n,1}^{(1)} & \cdots & F_{n+2dq-1,1}^{(1)} \\ 
\vdots &  & \vdots \\ 
F_{n,2dq}^{(q)} & \cdots & F_{n+2dq-1,2dq}^{(q)}%
\end{bmatrix},
\end{align*}%
and%
\begin{align*}
\mathcal{K}_{b,b^{\prime },j}^{[n]} =&\left( K_{C}^{[n]}\right)
_{b,b^{\prime }}(z_{b,j},y) -\sum_{a=1}^{p}\prod\limits_{s=1,s\neq
j}^{2d}(z_{b,j}-z_{b,s})m_{b,a,j}K_{a,b^{\prime
}}^{[n]}(z_{b,j},y)W_{[q]}^{(b^{\prime })}(y)+\delta
W^{(b)}(z_{b,j},y)\delta _{b,b^{\prime }},
\end{align*}%
which are the entries of the $2dq\times q$ matrix
\begin{align*}
\mathcal{K}^{[n]}(y)=%
\begin{bmatrix}
\mathcal{K}_{1,1,1}^{[n]}(y) & \cdots & \mathcal{K}_{1,q,1}^{[n]}(y) \\ 
\vdots &  & \vdots \\ 
\mathcal{K}_{q,1,2d}^{[n]}(y) & \cdots & \mathcal{K}_{q,q,2d}^{[n]}(y)%
\end{bmatrix}.
\end{align*}%
Observe that $\mathcal{F}$ is $2dq\times 2dq$, $N_{G}^{[n,2dq]}\check{B}%
^{[n,2dq]}(y)$ is $2dq\times q$, $(\boldsymbol{e}_{1}^{[q]})^{\top }\delta
W_{[q]}(z_{b,j},y)$ and $\mathcal{K}$ are row vectors with size $1\times q$.
In terms of quasi-determinants, for $b\in {1,\ldots ,q}$ we have%
\begin{align*}
\mathcal{F}=%
\begin{bmatrix}
F_{n-2d+1}(z_{1,1}) & \cdots & F_{n}(z_{1,1}) \\ 
\vdots &  & \vdots \\ 
F_{n-2d+1}(z_{q,2dq}) & \cdots & F_{n}(z_{q,2dq})%
\end{bmatrix},
\end{align*}%
Observe that $\mathcal{F}$ is $2dq\times 2dq$, $N_{G}^{[n,2dq]}\check{B}%
^{[n,2dq]}(y)$ is $2dq\times q$ matrix, $(\boldsymbol{e}_{1}^{[q]})^{\top }\delta
W_{[q]}(z_{b,j},y)$ and $\mathcal{K}$ are row vectors with size $1\times q$.
In terms of quasi-determinants, for $b\in {1,\ldots ,q}$ we have

\begin{align*}
\check{B}^{(b)}&=\frac{-1}{\tilde N_{G,2dq}}\Theta _{\ast }
\left[\begin{NiceArray}{c|c} \mathcal{F} & \mathcal{K}_{b}^{[n]} \\
\hline (\boldsymbol{e}_{1}^{[2dq]})^{\top } & 0 
\end{NiceArray}\right]=\frac{-1}{%
\tilde N_{G,2dq}}(\boldsymbol{e}_{1}^{[2dq]})^{\top }\mathcal{F}^{-1}%
\mathcal{K}_{b}^{[n]}=\frac{-1}{\tilde N_{G,2dq}}\frac{%
\begin{vNiceMatrix}[margin,hvlines] \mathcal{F} & \mathcal{K}_{b}^{[n]} \\
\hline (\boldsymbol{e}_{1}^{[2dq]})^{\top } & 0 \\ \end{vNiceMatrix}}{\left\vert 
\mathcal{F}\right\vert },\\
\check{B}_{n}^{(b)}&=\frac{-1}{\tilde N_{G,2dq}} \frac{
\begin{vNiceMatrix}
 F_{n+1,1}^{(1)} & \Cdots & F_{n+2dq-1,1}^{(1)} &   \mathcal{K}_{1,b,1}^{[n]}(y) \\
\Vdots & & \Vdots &  \Vdots \\ 
F_{n+1,2dq}^{(q)} & \Cdots & F_{n+2dq-1,1}^{(q)} & \mathcal{K}_{q,b,2d}^{[n]}(y)
\end{vNiceMatrix}
}{
\begin{vNiceMatrix} 
F_{n,1}^{(1)} & \Cdots & F_{n+2dq-1,1}^{(1)} \\
 \Vdots & & \Vdots \\
F_{n,2dq}^{(q)} & \Cdots & F_{n+2dq-1,1}^{(q)}
\end{vNiceMatrix}
} .
\end{align*}

\section*{Conclusions and Outlook}

This paper develops the framework for mixed multiple orthogonal Laurent polynomials, beginning with CMV mixed multiple Laurent orthogonality and constructing the moment matrix along with its Gauss–Borel factorization. It then explores the orthogonality properties, recurrence relations, Szegő matrices, and Christoffel–Darboux kernels that define these polynomials.

The analysis includes both Christoffel and Geronimus perturbations, examining their connector matrices, connection formulas, and specific formulas that modify polynomial functions. Geronimus perturbations are further detailed with singular parts and Cauchy transforms of the kernel, enhancing the understanding of these polynomials and their structural behavior.

The orthogonality developed in this paper does not connect directly with that in \cite{MV-CA08}; however, it is related to a variant weighted version of it. In fact, there are alternative approaches to constructing the moment matrix, inspired by properties in the scalar case, which also lead to a consistent theory. Both approaches represent promising lines of research. Additionally, it would be valuable to identify sufficiently general perturbations beyond the diagonal case for which Christoffel perturbations can be established, following the developments in the real case (see \cite{AAM,MR,MR1}).












\section*{Acknowledgments}



EJH acknowledges Direcci\'{o}n General de Investigaci\'{o}n e Innovaci\'{o}n, Consejer\'{i}a
de Educaci\'{o}n e Investigaci\'{o}n of the Comunidad de Madrid (Spain)and
Universidad de Alcal\'{a}, research project CM/JIN/2021-014, \textit{Proyectos de I+D
para J\'{o}venes Investigadores de la Universidad de Alcal\'{a} 2021}, and the
Ministerio de Ciencia e Innovaci\'{o}n-Agencia Estatal de Investigaci\'{o}n MCIN/AEI/10.13039/501100011033 and the European Union
``NextGenerationEU''/PRTR, research project TED2021-129813A-I00. This research was conducted while EJH was visiting the ICMAT (Instituto de Ciencias Matem\'{a}ticas) from jan-2023 to jan-2024, under the Program \textit{Ayudas de Recualificaci\'{o}n del Sistema Universitario Espa\~{n}ol para 2021-2023 (Convocatoria 2022) - R.D. 289/2021 de 20 de abril (BOE de 4 de junio de 2021)}. This author also wish to thank the ICMAT, Universidad de Alcal\'{a}, and the Plan de Recuperaci\'{o}n, Transformaci\'{o}n y Resiliencia (NextGenerationEU) of the Spanish Government for their support.

MM acknowledges Spanish \textquotedblleft Agencia Estatal de Investigaci\'{o}n\textquotedblright\ research project  [PID2021-
122154NB-I00], Ortogonalidad y Aproximaci\'{o}n con Aplicaciones en Machine Learning y Teor\'{i}a de la Probabilidad.



\section*{Declarations}

\begin{itemize}
\item \textbf{Conflict of interest/Competing interests: The authors declare
that they have no conflict of interest.}
\end{itemize}

\begin{itemize}
\item \textbf{Availability of data and materials: Data sharing not
applicable to this article as no datasets were generated or analyzed during
the current study.}
\end{itemize}

\end{document}